\documentclass[a4paper]{amsart}
\textwidth = 5.9 in
\textheight = 9.0 in
\oddsidemargin = 0.2 in
\evensidemargin = 0.2 in
\setlength{\parskip}{0.2\baselineskip}

\setlength{\parindent}{0pt}

\usepackage{enumitem}  
\usepackage{cite}
\usepackage{tikz-cd}
\usetikzlibrary{er,positioning}
\usepackage{amsmath,amssymb,amsthm,graphicx,yfonts,mathrsfs,color, mathtools}
\usepackage{amscd}  
\usepackage{stmaryrd,colonequals}
\usepackage[all]{xy} 
\usepackage{multirow} 
\usepackage{array}
\usepackage[utf8]{inputenc}
\usepackage{braket}
\usepackage{rotating}
\usepackage{setspace,comment}
\usepackage{multirow}
\usepackage{rotating}
\usepackage{pifont}
\usepackage{listings}
\PassOptionsToPackage{hyphens}{url}
\definecolor{darkgreen}{rgb}{0,0.5,0}
\usepackage[colorlinks, breaklinks]{hyperref}
\hypersetup{%
  colorlinks = true,
  linkcolor  = black,
  citecolor = darkgreen
}

\usepackage{xcolor}

\definecolor{codegreen}{rgb}{0,0.6,0}
\definecolor{codegray}{rgb}{0.5,0.5,0.5}
\definecolor{codepurple}{rgb}{0.58,0,0.82}
\definecolor{backcolour}{rgb}{0.95,0.95,0.92}

\lstdefinestyle{mystyle}{
  backgroundcolor=\color{white},   commentstyle=\color{codegreen},
  keywordstyle=\color{magenta},
  numberstyle=\tiny\color{codegray},
  stringstyle=\color{codepurple},
  basicstyle=\ttfamily\footnotesize,
  breakatwhitespace=false,         
  breaklines=true,                 
  captionpos=b,                    
  keepspaces=true,                 
  numbers=left,                    
  numbersep=5pt,                  
  showspaces=false,                
  showstringspaces=false,
  showtabs=false,                  
  tabsize=2
}
\lstset{style=mystyle}

\usepackage[colorinlistoftodos]{todonotes}
\usepackage{thmtools}
\usepackage{thm-restate}
\theoremstyle{plain}
\newtheorem{thm}{Theorem}[section]
\newtheorem{lemma}[thm]{Lemma}

\newtheorem{prop}[thm]{Proposition}
\newtheorem{cor}[thm]{Corollary}

\newtheorem{claim*}{Claim}

\numberwithin{equation}{section}

\theoremstyle{definition}

\newtheorem{rmk}[thm]{Remark}

\newtheorem{example}[thm]{Example}

\newtheorem{ass}[thm]{Assumption}
\newtheorem{mydef}[thm]{Definition}

\makeatletter
\def\th@plain{%
  \thm@notefont{}
  \itshape 
}
\def\th@definition{%
  \thm@notefont{}
  \normalfont 
}
\makeatother

\DeclareMathOperator{\Is}{Is}

\newcommand{\ordnop}{\mathop{\mathrm{ord}}\nolimits}

\DeclareMathOperator{\st}{st}

\DeclareMathOperator{\tors}{tors}
\newcommand{\C}{\mathbb{C}}
\newcommand{\G}{\mathbb{G}}
\newcommand{\N}{\mathbb{N}}
\newcommand{\Q}{\mathbb{Q}}
\newcommand{\R}{\mathbb{R}}
\newcommand{\Z}{\mathbb{Z}}
\newcommand{\F}{\mathbb{F}}

\newcommand{\Zp}{\mathbb{Z}_{p}}

\DeclareMathOperator{\Tr}{Tr}

\renewcommand{\div}{\operatorname{div}}
\newcommand{\Log}{\mathop{\mathrm{Log}}\nolimits}

\newcommand{\A}{\mathbb{A}}
\newcommand{\PP}{\mathbb{P}}

\DeclareMathOperator{\Res}{Res}

\newcommand{\dr}{\textup{dR}}
\newcommand{\MW}{\textup{MW}}

\newcommand{\OO}{\mathcal{O}}

\newcommand{\cyc}{\textup{cyc}}

\newcommand{\Div}{\operatorname{Div}}

\newcommand{\Spec}{\operatorname{Spec}}

\renewcommand{\div}{\operatorname{div}}

\newcommand{\inv}{\textup{inv}}
\DeclareMathOperator{\Der}{Der}
\DeclareMathOperator{\Frac}{Frac}
\DeclareMathOperator{\Supp}{Supp}
\DeclareMathOperator{\dR}{\operatorname{dR}}
\DeclareMathOperator{\cris}{\operatorname{cris}}
\DeclareMathOperator{\naive}{\textup{naive}}
\DeclareMathOperator{\Disc}{\operatorname{Disc}}

\author{Francesca Bianchi}
\title{$p$-adic sigma functions and heights on Jacobians of genus $2$ curves}
\date{\today}
\begin{document}
\maketitle

\begin{abstract}
Let $C$ be a genus $2$ hyperelliptic curve over a number field $K$, with a Weierstrass point $\infty$ at infinity, let $J$ be its Jacobian, let $\Theta$ be the theta divisor with respect to $\infty$, and let $p$ be any prime number. We give an explicit construction of a $p$-adic height $h_p\colon J(\overline{\Q})\to \Q_p$ by means of $p$-adic analogues of N\'eron functions of divisor $2\Theta$. We define such N\'eron functions using division polynomials and a generalisation of Blakestad's $p$-adic sigma function on the formal group of $J$. We prove that our $p$-adic N\'eron function $\lambda_v$ at a non-archimedean place $v$ of $K$ is the image, under a suitable trace map, of a symmetric $v$-adic Green function of divisor $\Theta$ \`a la Colmez. We use this to relate $\lambda_v$ and $h_p$ to local and global extended Coleman--Gross (and hence Nekov\'a\v{r}) $p$-adic height pairings. We provide examples of our implementation, including one for a prime $p$ greater than $10^6$, and explain how similar techniques can be used to compute $p$-adic integrals of differentials of the first, second and third kind on $C$ independently of the reduction type. As an application, we also give an explicit quadratic Chabauty function vanishing on the rational points on certain genus $4$ bihyperelliptic curves.
\end{abstract}
\tableofcontents
\section{Introduction}
Let $C$ be a genus $2$ curve over a number field $K$ and let $J$ be its Jacobian. We assume that $C$ is given by an affine equation of the form $y^2 = f(x)$, where $f(x)$ is a monic polynomial of degree $5$ with coefficients in the ring of integers $\OO$ of $K$ and no repeated roots.  Let $\iota\colon C\xhookrightarrow{} J$ be the embedding with respect to the unique point at infinity of $C$, and let $\Theta$ be the corresponding theta divisor of $J$. Fix a (any) rational prime $p$ and a continuous idele class character $\chi\colon \A_K^{\times}/K^{\times}\to \Q_p$. With respect to this data, we explicitly construct, at every non-archimedean place $v$ of $K$, $p$-adic analogues of real-valued N\'eron functions with respect to $2\Theta$ (\S \ref{subsec:local_Neron}). 
 Let 
\begin{equation*}
\lambda_v\colon J(K_v)\setminus\Supp(\Theta)\to \Q_p
\end{equation*}
be such a $p$-adic N\'eron function at $v$. We obtain a quadratic form $h_p\colon J(K)\to \Q_p$ by extending quadratically to $J(K)$ the following function on $ J(K)\setminus\Supp(\Theta)$:
\begin{equation*}
h_p(P) =\frac{1}{[K:\Q]} \sum_v n_v\lambda_v(P), 
\end{equation*}
where $n_v = [K_v:\Q_{\ell}]$ if $v\mid \ell$; we call $h_p$ a global $p$-adic height.

To define $\lambda_v$, we use the explicit embedding of $J$ into $\PP^8$ due to Grant \cite{Grant1990} and genus $2$ hyperelliptic division polynomials of Kanayama \cite{Kanayama, Kanayama_corrections} and Uchida \cite{Uchida}, a generalisation of the standard division polynomials on elliptic curves (see \S \ref{subsec:jac}). 
If $\chi$ is unramified at $v$, i.e.\ the local component $\chi_v$ is identically zero on $\OO_v^{\times}$ \footnote{Note that the places of ramification of $\chi$ are a subset of the places of $K$ above $p$.}, this is all that is needed for the definition of the N\'eron function at $v$ (cf.\ Definition \ref{def:Neron_fct_away_p}). Moreover, up to a multiplicative constant, this is the same as the real-valued N\'eron function at $v$, as explicitly constructed by Uchida \cite{Uchidacanloc}.

At the places $v$ at which $\chi$ is ramified, the $p$-adic N\'eron function $\lambda_v$ depends on the choice of a subspace $W_v$ of $H^1_{\dR}(C/K_v)$ that is complementary to the space of holomorphic forms and isotropic with respect to the cup product pairing. In particular, we attach to $W_v$ a unique $v$-adic analogue $\sigma_v$ of the complex hyperelliptic sigma function for $C$ (see Section \ref{sec:sigma_functions}). This extends work of Blakestad. Indeed, suppose that $K_v$ is an unramified extension of $\Q_p$, that $v$ is of good reduction for $C$, ordinary reduction for $J$ and that $p\geq 5$. Then the $v$-adic sigma function attached to the unit root subspace of the Frobenius endomorphism of $H^1_{\dR}(C/K_v)$ is the ``canonical'' $v$-adic sigma function of \cite{blakestadsthesis} (see Theorem \ref{thm:Blakestad_main}, Remark \ref{rmk:inver_H1} and Proposition \ref{prop:Blakestad_space_unit_root}). In fact, Blakestad's thesis was the motivation for the work presented in this paper: for elliptic curves, the Mazur--Tate $p$-adic height  \cite{mazur-tate} can be described explicitly using $v$-adic analogues of the Weierstrass sigma function \cite{padicsigma,MST} (see \S \ref{subsec:ell_curves_analogues}). Our initial goal was to extend this to genus $2$. We will show in forthcoming work with Kaya and M\"uller \cite{BKM22} that our explicit definition of $h_p$ in the canonical case indeed recovers the canonical Mazur--Tate $p$-adic height for $J$.

 The complex sigma function is a holomorphic function $\C^2\to \C$, which depends on an analytic isomorphism $J(\C)\cong \C^2/\Lambda$ (\S \ref{subsec:jac}). In general, no uniformisation theorem is at our disposal in the $v$-adic setting; however, the kernel of reduction of $J$ modulo $v$, which we denote by $J_1(K_v)$, is the group associated to a commutative formal group law $F(T_1,T_2)$ of dimension $2$ over $\OO_v$. Upon base-changing to $K_v$, the formal group law $F$ becomes isomorphic to the additive group $\G_a^2$ (see \S \ref{subsec:formalgps_gen},\ref{subsec:formal}).
 
 Using a suitable isomorphism, we can mimic the complex theory and define the sigma function $\sigma_v$ as the solution of a  system of differential equations on the formal group\footnote{This will not be surprising to the reader familiar with $v$-adic sigma functions on elliptic curves (\S \ref{subsec:ell_curves_analogues}): see \cite{Perrin-Riou, bernardi, padicsigma}.}. 
All this can be made explicit thanks to Grant's description of formal group parameters $T_1$ and $T_2$ \cite{Grant1990}, and the formal power series $\sigma_v(T)\colonequals \sigma_v(T_1,T_2)$ converges on a finite index subgroup $H_v$ of $J_1(K_v)$, and hence of $J(K_v)$ (Proposition \ref{prop:finite_index_subgp}).  When it exists, Blakestad's is the unique $v$-adic sigma function that is given as a power series in $T_1$ and $T_2$ with $v$-adically integral coefficients.  

The connection between the complex sigma function and real-valued N\'eron functions was proved by Yoshitomi \cite{Yoshitomi} (see also \cite{Uchida} for a generalisation to arbitrary genus). By construction, the $v$-adic sigma function $\sigma_v$ satisfies similar properties to the complex one; therefore, it can be used to define a $p$-adic analogue of a N\'eron function: for $P\in J(K_v)\setminus \Supp(\Theta)$, we let
\begin{equation*}
n_v\lambda_v(P) = -\frac{2}{ m^2}\cdot \chi_{v}\left(\frac{\sigma_v(T(mP))}{\phi_m(P)}\right),
\end{equation*}
where $m=k[J(K_v):H_v]$ for some $k\in\{1,2\}$ and  $\phi_m$ is the $m$-th division polynomial (see Definition \ref{def:Neron_fct_above_p}).

 A natural question that arises is how the global height $h_p$ and the $p$-adic N\'eron functions $\lambda_v$ compare to well-known global and local $p$-adic height theories that apply to Jacobians of curves in general. We explore this question for the $p$-adic height pairing of Coleman--Gross \cite{ColemanGross} (and its extension to bad reduction due to Colmez \cite{Colm96} and Besser \cite{Besser_pArakelov, BesserPairing}); by \cite{Besser:CG_and_Nekovar, BesserPairing}, this is equal to the $p$-adic height of Nekov\'a\v{r} \cite{Nekovar} if the curve has semistable reduction at every prime above $p$. 
 
 The extended\footnote{i.e.\ ``extended to bad reduction''.} Coleman--Gross height for $C$ is defined in a crucially different way from our $h_p$. In particular, the local part of the theory is given by pairings on degree $0$ divisors on $C$ with disjoint support, and it is defined using Coleman--Colmez--Vologodsky integration on $C$ at the places of ramification of $\chi$ and arithmetic intersection theory at the other places.

Just for this introduction, we denote by $\langle D_1,D_2\rangle_v$ the extended Coleman--Gross local height pairing at $v$, evaluated on the pair of $K_v$-rational degree $0$ divisors $D_1,D_2$ with disjoint support.  As is the case for $h_p$, the local pairings $\langle D_1,D_2\rangle_v$ at the primes $v$ of ramification for $\chi$ depend on a choice of isotropic subspace of $H_{\dR}^1(C/K_v)$, complementary to $H^0(C/K_v,\Omega^1)$.
 
 We prove that, provided that we make the same choices of subspaces at the primes of ramification, the global height $h_p$ is the same as the extended Coleman--Gross height for $C$ (Corollary \ref{cor:global_CG_same_as_this}) and that the local Coleman--Gross height pairing at any $v$ is equal to a suitable linear combination of pullbacks of $\lambda_v$ to $C$: see Corollary \ref{cor:lambda_eq_CG} for a precise statement. For the reader's convenience, we state here a simplified version of the local and global comparison:
 \begin{thm}\label{thm:intro_1}
Let $P_1,P_2,Q_1,Q_2\in C(K_v)$ such that $Q_i\neq P_j$ for all $i,j\in\{1,2\}$. Then there exists a finite set of points $S\subset C(K_v)$, such that for all $R\in C(K_v)\setminus S$, we have:
 \begin{equation*}
\langle P_1-P_2,Q_1-Q_2\rangle_v = -\frac{n_v }{2}\sum_{1\leq i,j\leq 2} (-1)^{i+j}\lambda_v(\iota(Q_i)  -\iota(P_j)-\iota(R)). 
 \end{equation*}
 Moreover,
 \begin{equation*}
 \sum_{v}\langle P_1 - P_2, Q_1 - Q_2 \rangle_v =\frac{1}{2}( h_p([P_1+Q_1-P_2-Q_2])-h_p([P_1-P_2])-h_p([Q_1-Q_2])).
 \end{equation*}
 \end{thm}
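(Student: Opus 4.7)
The plan is to derive Theorem \ref{thm:intro_1} from two ingredients already set up in the excerpt: the identification of $\lambda_v$ as the image, under a trace map, of a symmetric $v$-adic Green function $G_v$ for the theta divisor $\Theta$ \`a la Colmez, and the standard expression of the extended Coleman--Gross local pairing as a combination of Green-function values on $C$ (via arithmetic intersection away from $p$ and via Coleman--Colmez--Vologodsky integration at the places of ramification of $\chi$).

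For the local identity, I would pull the trace relation back along $\iota\colon C\hookrightarrow J$. Writing $D_{i,j,R} = \iota(Q_i) - \iota(P_j) - \iota(R)$ and expanding $\lambda_v(D_{i,j,R})$ via $G_v$ and the linear equivalence relating $\Theta$ on $J$ to the diagonal and $\infty$-divisors on $C$, each $\lambda_v(D_{i,j,R})$ becomes an explicit combination of Green-function values on pairs of points drawn from $\{P_j, Q_i, R, \infty\}$. Forming the signed sum $\sum_{i,j}(-1)^{i+j}\lambda_v(D_{i,j,R})$, most of these contributions cancel: any term involving only $R$, only $\infty$, only one of the $P_j$, or only one of the $Q_i$ telescopes to zero because $\{P_1, P_2\}$ and $\{Q_1, Q_2\}$ enter as degree-zero cycles. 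What remains is the bilinear sum $\sum_{i,j}(-1)^{i+j}g_v(P_j, Q_i)$ that reproduces $\langle P_1 - P_2, Q_1 - Q_2\rangle_v$ via the Coleman--Gross--Colmez--Besser construction; the scalar $-n_v/2$ accounts for $\lambda_v$ being a N\'eron function for $2\Theta$ rather than $\Theta$, together with the sign convention between $G_v$ and $\chi_v$. The exceptional set $S$ is the finite set of $R\in C(K_v)$ for which some $D_{i,j,R}$ fails to lie off $\Supp(\Theta)$.

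For the global identity, both sides are symmetric bilinear forms in $[P_1-P_2]$ and $[Q_1-Q_2]$: the right-hand side is the polarisation of the quadratic form $h_p = \frac{1}{[K:\Q]}\sum_v n_v \lambda_v$, while the left-hand side is the extended Coleman--Gross global height. I would choose a single auxiliary point $R\in C(K)$ satisfying the local exclusion condition at every place simultaneously, sum the local identity over $v$, and observe that the terms still depending on $R$ vanish by the product formula for $\chi$ (being of the shape $\chi(f)$ for rational functions $f\in K(C)^{\times}$ with support contained in $\{R, \infty, P_j, Q_i\}$). The remaining terms reassemble into the polarisation identity on the right.

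The main obstacle is the careful matching, at a place $v\mid p$ where $\chi$ is ramified, of the Colmez Green function $G_v$ built from the $p$-adic sigma function $\sigma_v$ with the Coleman--Colmez--Vologodsky integrals appearing in the Coleman--Gross pairing: both sides depend on the choice of isotropic complement $W_v\subset H^1_{\dR}(C/K_v)$, and this choice has to be tracked coherently to avoid a spurious discrepancy. This is where the explicit construction of $\sigma_v$ from the earlier sections becomes essential. Away from $p$, the argument reduces to an unramified non-archimedean N\'eron-symbol identity in the spirit of Uchida \cite{Uchidacanloc}, with $\chi_v$ replacing the normalised valuation.
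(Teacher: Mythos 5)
Your two declared ingredients are the correct ones, and your global step is essentially sound: the right-hand side of the second display is the polarisation of the quadratic form $h_p$, and after summing the local identity over $v$ every term involving the auxiliary point $R$ dies for bilinearity reasons (in the paper this is packaged via the quasi-parallelogram law of Proposition \ref{prop:properties_neron_fcts} together with the vanishing of $\chi$ on principal ideles; see the proof of Corollary \ref{cor:global_CG_same_as_this}).

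The local step, however, has a genuine gap. You propose to expand each $\lambda_v(\iota(Q_i)-\iota(P_j)-\iota(R))$ into pairwise Green-function values $g_v(\cdot,\cdot)$ on $C\times C$ (via a linear equivalence between $\Theta$ and diagonal-type divisors), cancel everything not bilinear in $(i,j)$, and then assert that the surviving kernel $g_v(P_j,Q_i)$ ``reproduces'' the extended Coleman--Gross pairing. Neither the decomposition of $G_\Theta$ restricted to (differences of) points of $C$ into pairwise terms, nor the identification of the resulting kernel with the Coleman--Gross kernel, is established anywhere in the paper, and supplying them amounts to proving the $p$-adic Faltings--Hriljac statement you are trying to derive; as written the step is circular. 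The paper's argument runs in the opposite direction and needs no such decomposition: the differential $\omega_{D_1,W_v}$ whose Colmez/Vologodsky integral computes $\langle D_1,D_2\rangle_v$ is by construction purely of type $(1,1)$, so Proposition \ref{prop:dec_third_kind} says its primitive is already, up to an additive constant (irrelevant since $D_2$ has degree zero), the signed sum $\sum_j(-1)^{j+1}G_{\Theta,W_v}(\iota(x)-u_j)$ with $u_j=\iota(P_j)+\iota(R)$ as in Example \ref{eg:finding_u1_u2}. Evaluating at $D_2=Q_1-Q_2$, applying the trace map, and invoking $n_v\lambda_v=-2T_v(G_{\Theta,W_v})$ (Theorem \ref{thm:comparison}) together with Theorem \ref{thm:NT_CG_eq_Colmez} gives the displayed local formula at once (this is Corollary \ref{cor:lambda_eq_CG}), with $S$ consisting of those $R$ for which some $u_j$ fails to be generic or some $\iota(Q_i)-u_j$ lands on $\Supp(\Theta)$. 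If you wish to keep your cancellation-based route, you must first prove the pairwise decomposition of the symmetric Green function of $\Theta$ along the image of $C\times C$; that is extra work the paper deliberately avoids.
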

 \begin{rmk}
 This is essentially a $p$-adic analogue of the real-valued theorem of Faltings \cite{faltings_calculus_arithm_surfaces} and Hriljac \cite{hriljac} (specialised to the setting of a genus $2$ odd degree hyperelliptic curve).
 \end{rmk}
 In order to prove these comparison results, we take a detour into Colmez's theory of $p$-adic integration on curves and symmetric $p$-adic Green functions of theta divisors on their Jacobians \cite{Colm96}, which we apply to $C$ and to the divisor $\Theta$ on $J$. The link to the above problem is the following. In the original paper of Coleman--Gross \cite{ColemanGross}, at a place $v$ of ramification for $\chi$, the local height paring $\langle D_1,D_2\rangle_v$ is given by
 \begin{equation*}
\langle D_1,D_2\rangle_v =  t_v\biggl(\int_{D_2}\omega_{D_1}\biggr),
 \end{equation*}
 where the integral is a $K_v$-valued (Coleman \cite{Col82, coleman, ColemandeShalit}) integral of a differential on $C$, and $t_v\colon K_v\to \Q_p$ is a trace map. 
The trace map is uniquely determined by $\chi$, and so is the branch of the $p$-adic logarithm that the integral depends on (for certain choices of divisors).

The dependency of $v$-adic integration on the branch of the $p$-adic logarithm is resolved in Colmez's theory (which, unlike Coleman's, makes no assumption on the reduction) in a different manner: by viewing the value $\Log{p}$ of the logarithm at $p$ as a variable. In fact, the Colmez integral of $\omega_{D_1}$ takes values in $\mathscr{L}(K_v)\colonequals K_v\oplus 	\Q\Log{p}$.  With such a convention, Colmez gives a unified theory of (extended) Coleman--Gross local height pairings: in terms of his integration theory, for any $v$ (including the unramified primes), we have
 \begin{equation}\label{eq:D1D2v_colmez}
\langle D_1,D_2\rangle_v = T_v\biggl(\int_{D_2}\omega_{D_1}\biggr),
 \end{equation}
 for some $\Q$-linear map $T_v\colon \mathscr{L}(K_v)\to \Q_p$ uniquely determined by $\chi$. At an unramified prime $v\mid q$, this trace map $T_v$ satisfies $T_v(a+b\Log{q}) = b\chi_v(q)$. In other words, the intersection multiplicity appearing in Coleman--Gross's definition of $\langle D_1,D_2\rangle_v$ at such a prime is replaced by the coefficient of $\Log{q}$ of a $v$-adic integral, constructed analogously to the one appearing at the primes of ramification for $\chi$.  
 
Moreover, Colmez constructs $v$-adic analogues of Green functions on abelian varieties. He shows that the integral appearing in \eqref{eq:D1D2v_colmez} can be expressed in terms of pullbacks to $C$ of a suitable symmetric\footnote{In our case, ``symmetric'' is a synonym of ``even'' (cf.\ \S\ref{subsec:comparison})} $v$-adic Green function $G_{\Theta}$ associated to the divisor $\Theta$ on $J$. The choice of $G_{\Theta}$ at a ramified prime $v$ depends on $W_v$. We prove (see Theorem \ref{thm:comparison} for a more precise statement): 
 \begin{thm}
There exists a symmetric $v$-adic Green function $G_{\Theta}$ of divisor $\Theta$ such that
 \begin{equation*}
 n_v\lambda_{v} = -2T_{v}(G_{\Theta}).
 \end{equation*}
 Moreover, if $\chi$ is ramified at $v$ and $\lambda_v$ corresponds to the choice of $W_v\subset H^1_{\dR}(C/K_v)$, so does $G_{\Theta}$.
\end{thm}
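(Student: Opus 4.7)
The plan is to identify $-\tfrac{n_v}{2}\lambda_v$ with $T_v(G_\Theta)$ by showing that, once pulled back to $J(K_v)$, both sides satisfy the characterizing properties of a symmetric $p$-adic Néron function of divisor $2\Theta$ associated to the data $(\chi,W_v)$, and then invoking a uniqueness statement. Concretely, I would split into two cases depending on whether $\chi$ is ramified at $v$, since the geometric input to the construction on the $G_\Theta$ side (the choice of $W_v$) is only present in the ramified case, while at unramified places Colmez's $G_\Theta$ reduces to an intersection-theoretic object on a regular model.

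\textbf{Unramified case.} Here $\lambda_v$ is, up to the factor $\chi_v(q)$, the real-valued Néron function of Uchida at $v\mid q$, which in turn admits an intersection-theoretic description on the Néron model of $J$. On the other hand, by construction, Colmez's symmetric $v$-adic Green function $G_\Theta$ at a prime $v$ of good behaviour for $\chi$ outputs a value in $\mathscr{L}(K_v)$ whose $\Log q$-coefficient coincides with the intersection multiplicity defining the Néron function. Since $T_v(a+b\Log q)=b\,\chi_v(q)$ and the global normalization factor $n_v$ matches $[K_v:\Q_\ell]$ that enters Uchida's formula, the identity $n_v\lambda_v=-2T_v(G_\Theta)$ follows after carefully matching signs and the factor $2$ (which comes from $2\Theta$ vs.\ $\Theta$).

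\textbf{Ramified case.} I would work on the formal group $J_1(K_v)$ (or its finite-index subgroup $H_v$ on which $\sigma_v$ converges), where both objects become formal power series in $T_1,T_2$. By the results of Section \ref{sec:sigma_functions}, the sigma function $\sigma_v$ attached to $W_v$ is uniquely characterized by an explicit system of differential equations, a symmetry condition, and the leading term coming from Grant's formal parameters. I would translate the defining properties of Colmez's $G_\Theta$ (its singular part along $\Theta$, its symmetry, and the specific differential/cocycle condition encoding the choice of $W_v$) into a system of relations on the corresponding power series on the formal group, after applying $T_v$. The comparison then reduces to showing that, up to the global factor $-n_v/2$, $T_v(G_\Theta)$ satisfies the very equations that pin down $-\chi_v(\sigma_v(T(P))^2/\phi_1(P)^2)$ from the definition of $\lambda_v$. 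The required passage from $\sigma_v(T(mP))/\phi_m(P)$ to $\sigma_v(T)^2$-behaviour on the formal group is handled by the compatibility of $\sigma_v$ with $[m]$ (a property established for $\sigma_v$ in Section \ref{sec:sigma_functions}) and by the fact that division polynomials account exactly for the change induced by an isogeny, so the formula is independent of the auxiliary integer $m$.

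\textbf{Main obstacle.} The hardest step will be showing that the translation/quasi-periodicity property of $\sigma_v$ under $T\mapsto T\oplus T(Q)$ for $Q\in J(K_v)$ matches the cocycle condition that Colmez imposes on $G_\Theta$ at the ramified primes, and verifying that the subspace $W_v$ really enters both constructions in the same way (i.e.\ that the freedom in $G_\Theta$ parameterized by isotropic complements of $H^0(C_{K_v},\Omega^1)$ corresponds bijectively to the freedom in $\sigma_v$ parameterized by such $W_v$). This requires comparing the differential equations defining $\sigma_v$ on the formal group with the bi-extension/connection data used by Colmez, most likely via the explicit identification of the de Rham cohomology of $J$ with that of $C$ via $\iota^*$ and via Grant's formal parameters. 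Once this compatibility is established, a uniqueness lemma for symmetric functions with prescribed divisor $2\Theta$ and prescribed behaviour under translation on the formal group finishes the proof, and the matching of additive constants is fixed by evaluating both sides at a convenient point (e.g.\ a two-torsion point, or a limit at $\iota(\infty)$).
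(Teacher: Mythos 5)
Your overall strategy---compare the two sides on the formal group, let the division polynomials govern the extension to all of $J(K_v)$, and match the parameterizations by isotropic complements on both sides---is the right one, and you have correctly located the hard step. But as written the plan has a structural problem and leaves the decisive verification open. The problem is the order of operations: you propose to apply $T_v$ first and then characterize $T_v(G_{\Theta})$ by differential equations and a uniqueness lemma. The trace $T_v\colon \mathscr{L}(K_v^{\times})\to \Q_p$ is only $\Q$-linear, kills the $K_v$-component entirely when $\chi$ is unramified at $v$, and in general does not commute with the $K_v$-linear differential operators $D_i$ that pin down $\sigma_v$; so the system of relations characterizing $\Log(\sigma_v)$ does not descend to a characterization of the $\Q_p$-valued function $T_v(G_{\Theta})$. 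The comparison has to happen at the level of $\mathscr{L}(K_v)$-valued functions, with $T_v$ applied only at the very end (via Lemma \ref{lemma:Tvp} and Remark \ref{rmk:trace_ext}).

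The paper closes the gap you flag as the ``main obstacle'' by a construction rather than a uniqueness argument: one shows directly that $\Log(\sigma_v)$ \emph{is} a local Green function of divisor $\Theta$, i.e.\ that it satisfies Colmez's cube identity $\Delta^{[3]}\Log(\sigma_v)=\Log f_{\Theta}^{(4)}$ on a finite-index subgroup (Lemma \ref{lemma:log_sigma_loc_Green}). This is checked only for the naive sigma function---any two sigma functions differ multiplicatively by the exponential of a quadratic form in $\mathcal{L}_1,\mathcal{L}_2$, which $\Delta^{[3]}$ annihilates---and for the naive one it follows from the quasi-periodicity and divisor of the \emph{complex} sigma function. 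Colmez's extension result then produces a global even $G_{\Theta}$ extending $\Log(\sigma_v)$ and satisfying $G_{\Theta}(nx)-n^2G_{\Theta}(x)=\Log(\phi_n(x))$ (Proposition \ref{prop:sigma_extends}), which is exactly the recipe by which $\lambda_v$ is extended off the formal group, so the identity $n_v\lambda_v=-2T_v(G_{\Theta})$ is immediate. Finally, the matching of the subspaces $W_v$ is not obtained by an abstract bijection of parameter spaces but by computing $d(\partial_i G_{\Theta})=(-X_{1i}+c_{1i})\Omega_1+(-X_{i2}+c_{i2})\Omega_2$ and pulling back along $\iota$ to recover precisely the $\eta_i^{(c)}$ of \eqref{eq:eta_12_c} (Lemmas \ref{lemma:explicit_subspace_H1dRJ} and \ref{lemma:explicit_subspace_H1dRX}); this is the concrete content your sketch would still need to supply. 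Your separate treatment of the unramified case via Uchida's intersection-theoretic N\'eron function is an unnecessary detour: the sigma-function argument is uniform in $v$, and at unramified places both sides simply become independent of $W_v$.
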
 
  From this, Theorem \ref{thm:intro_1} and the more general Corollaries \ref{cor:lambda_eq_CG} and \ref{cor:global_CG_same_as_this} follow easily. 

We now turn to the computational aspects of the paper (Section \ref{sec:implementation}). Our \texttt{SageMath} \cite{sage} implementation is available at \cite{github_padic_g2}. In recent years, algorithms for $p$-adic heights have attracted considerable interest due to their crucial role in explicit versions of the Chabauty--Kim method \cite{KimP1, Kimunipotent} for determining rational points on curves: see especially the groundbreaking \cite{BDQCI} and \cite{SplitCartan} on quadratic Chabauty. We therefore believe that an implementation of the local $p$-adic N\'eron functions could have some applications beyond the ones that we already present in this paper.

Given a subspace $W_v$, computing the $v$-adic sigma function associated to it requires working explicitly with the formal group law of $J$. In particular, one needs to compute expansions of functions in the formal group parameters and a formal group isomorphism of the base-change of $J_1$ to $K$ (or $K_v$) with the formal additive group of dimension $2$. All this can be achieved essentially by implementing the explicit results of \cite{Grant1990}.

For instance, we can compute a ``naive'' $v$-adic sigma function (\S \ref{subsec:naive_sigma}), a genus $2$ analogue of Bernardi's sigma function for elliptic curves \cite{bernardi}. This sigma function is quite useful for computational purposes, because its coefficients as a series in $T_1,T_2$ belong to $K$ and are independent of the prime $v$, which allows us to consider large primes. 

However, if $J$ has good ordinary reduction at all places $v$ of ramification of $\chi$, there are instances where one might want to instead compute Blakestad's canonical $v$-adic sigma function. 
Indeed, as mentioned above, we prove that Blakestad's $v$-adic sigma function corresponds to the unit root subspace of Frobenius, and by the comparison result with the extended Coleman--Gross height (Corollary \ref{cor:global_CG_same_as_this}), the global height $h_p$ with respect to this choice is related to a $p$-adic analogue of the Birch and Swinnerton-Dyer conjecture \cite{BaMuSt12}. In addition, in light of its integrality, Blakestad's $v$-adic sigma function has better convergence properties than any other and this might make it a preferable choice in various computational situations. In \S \ref{subsec:implementation_canonical_sigma} we discuss how we can compute it when $K_v\cong \Q_p$ using Kedlaya's algorithm \cite{kedlaya}.

The final ingredients for the computation of $p$-adic N\'eron functions are division polynomials, continuous idele class characters and the index $[J(K_v):J_1(K_v)]$. For division polynomials, we improve an implementation of de Jong--M\"uller \cite{muller_de_jong} (\S \ref{subsec:division_polynomials}); for the characters we could use \cite{QCnfs} to address the general case (but have currently restricted the implementation to the cyclotomic character), and for the index we use work of Bruin--Stoll \cite{Bruin-Stoll:MWSieve} (see \S \ref{subsec:implementation_Neron_fcts} for details). 

To illustrate our implementation, we present two examples. In \S \ref{subsec:eg_large_p}, we consider the curve $C\colon y^2 = x^5-1$ over $\Q$  and we compute the canonical cyclotomic $p$-adic height of a non-torsion point in $J(\Q)$, for the good ordinary prime $p= 10^6 + 81$. What allows us to consider such a large prime and yet compute the canonical $p$-adic height (to $p$-adic precision 20, in approximately 15 seconds) 
 is that, in this case, we can find the unit root subspace without appealing to Kedlaya's algorithm, as a consequence of the fact that $C$ has extra automorphisms over $\Q(\zeta_5)$.

In \S \ref{subsec:example_hts_Neron_fcts}, we consider a curve for which some computations of canonical Coleman--Gross heights were carried out in \cite{BBM0}. We use our comparison results to replicate these computations using $p$-adic N\'eron functions in place of Coleman integrals and intersection multiplicities. 

The two examples that we have chosen to present are of curves over $\Q$ that have good ordinary reduction at $p$. The good and ordinary assumptions can both (or either one) be removed in our implementation if we replace the unit root subspace of Frobenius with any explicitly given isotropic complementary subspace. In forthcoming work with Kaya and M\"uller \cite{BKM22} we address the problem of computing a canonical sigma function also in the semistable ordinary case. Finally, the assumption that the curve is defined over $\Q$ is also not essential for the algorithms (however, if our choices require computing unit root eigenspaces of Frobenius using Kedlaya's algorithm, we need to assume that $K_v\cong \Q_p$ at every prime $v$ of ramification of $\chi$, due to current limitations in the \texttt{SageMath} \cite{sage} implementation). 

Compared to other explicit approaches to $p$-adic heights, such as the algorithms of \cite{Bes-Bal10, BaMuSt12} for Coleman--Gross heights, ours has the clear disadvantage that it assumes that $C$ is of genus $2$. On the other hand, we believe there are features that might make our approach preferable in the genus $2$ case. For example, its essential insensitivity to the reduction type at the primes $v\mid p$. 
This is inherited from the N\'eron functions definition. Namely, the N\'eron function at $v$ is first defined on the model-dependent kernel of reduction $J_1(K_v)$, and then extended to $J(K_v)$ by combining the facts that $[J(K_v):J_1(K_v)]$ is finite and that $\lambda_v$ is ``almost'' a quadratic function (Proposition \ref{prop:properties_neron_fcts}).

 By Theorem \ref{thm:intro_1} (and, more generally, Corollary \ref{cor:lambda_eq_CG}), we can then compute local Coleman--Gross heights for arbitrary reduction. In \S \ref{subsec:diff_first_2nd_third} we explain how, by replacing quasi-quadraticity with a suitable transformation property under multiplication on $J$, we may also express integrals of differentials of the first and second kind on $J$ (and hence on $C$, by pullback) in terms of integrals on $J_1(K_v)$. In summary, the same or similar techniques to the ones for heights can be applied to compute integrals of differentials of the first, second and third kind on $C$, without any assumptions on the reduction. 

The final section (Section \ref{sec:application_bihyper}) is devoted to an elementary quadratic Chabauty-type criterion for the rational points on certain genus $4$ hyperelliptic curves that admit degree two maps to two genus $2$ curves (as $C$), each with a rank $2$ Jacobian. See Proposition \ref{prop:QC_bihyper}.

\subsection{Elliptic curve analogues}\label{subsec:ell_curves_analogues}
Essentially all of the results of this paper admit an elliptic curve analogue, which either already appears in the literature or could be deduced in a similar way to the genus $2$ case.  The construction of the genus $2$ naive sigma function and the analysis of its convergence properties that we present in \S \ref{subsec:naive_sigma} are based on the work of Bernardi for elliptic curves \cite{bernardi}; Blakestad's sigma function (\S \ref{subsec:infty_sigma}) is an analogue of the canonical sigma function of Mazur--Tate \cite{padicsigma}. That the elliptic curve canonical sigma function for $E/K_v$ is related to the unit root eigenspace of $H^1_{\dR}(E/K_v)$ follows from \cite[Proposition 3]{padicsigma} and \cite{Katz, Katzinterpolation}
and is used in the algorithms of Mazur--Stein--Tate \cite[\S 3.2]{MST} and Harvey \cite[Sections 3,4]{harvey}. 

$p$-Adic heights on elliptic curves constructed using $p$-adic sigma functions are again subject of \cite{bernardi, padicsigma, MST,harvey} and they are $p$-adic heights in the sense of \cite{mazur-tate}. See also \cite{Perrin-Riou-comptes, Perrin-Riou, wuthrichheights}. Typically, such sigma functions are used to  define a global $p$-adic height. The point of view of decomposing this as a sum of local $p$-adic N\'eron functions with respect to the divisor given by twice the point at infinity is often not pursued. Perhaps this is because the natural domain of a $v$-adic sigma function (for $v\mid p$) is a subgroup of the formal group. Therefore, if we are only interested in values of the global $p$-adic height, it is more convenient to extend the latter using quadraticity, rather than extending each local N\'eron function using formulae involving division polynomials. 

However, the point of view of $p$-adic N\'eron functions is natural because it is a direct analogue of the real-valued setting (for which, see for instance \cite[Chapter VI]{silvermanadvancedtopics}), and it is convenient in the context of the Chabauty--Kim method. It is thus taken in \cite{nonabelianconjecture} at the primes away from $p$, and in \cite[\S\S 2.A, 4.A]{Bianchi20} at all primes. 

Direct analogues of the local comparison results of \S \ref{subsec:comparison} are,  to our knowledge, not explicitly stated in the literature, but they could be derived in an analogous manner. Some remarks are however in order. A relation between Colmez Green functions and the work of Mazur--Tate \cite{padicsigma} is hinted by Colmez himself before \cite[Proposition II.2.20]{Colm96}. The equality of the global Coleman--Gross and Mazur--Tate height is known, at least in the good ordinary reduction case \cite{Coleman:universal, Besser:CG_and_Nekovar} (see also the introduction to \cite{ColemanGrosspAdicSigma}). A different kind of local comparison in the good ordinary case is provided by \cite[Corollary 4.2]{ColemanGrosspAdicSigma}.

\subsection{Notation} $\N$ includes $0$. Given a number field $K$ and a non-archimedean place $v$, the notation $\ordnop_v$ is used for the $v$-adic valuation on $K_v^{\times}$, normalised so that $\ordnop_v(K_v^{\times}) = \Z$. The $v$-adic absolute value $|\cdot |_v$ on $K_v$ extends the standard $\ell$-adic absolute value, if $v$ lies above $\ell$.  Various ``logarithms'' appear in the paper. In order to help the reader overcome potential confusion, we compiled a list here:
\begin{itemize}
\item We use the symbol $\log$ in logarithmic derivatives. That is, if $f$ is a power series over some field and $D$ is a differential operator, by $D(\log(f))$ we mean $\frac{Df}{f}$. We also use $\log$ for the real logarithm.
\item $\Log$ and $\Log_v$ are logarithms in the sense of Colmez (Section \ref{sec:Colmez}, Equation \eqref{eq:Log_colmez} and \S\ref{subsubsec:notation_colm_heights}). 
\item $\log_p$ is the branch of the $p$-adic logarithm vanishing at $p$ (\S\ref{subsec:implementation_Neron_fcts}).
\item $\mathcal{L} = (\mathcal{L}_1,\dots, \mathcal{L}_n)$ is a formal group logarithm and, usually, the strict one (Definition \ref{def:strict_log}).
\item A \emph{one-logarithm} of an abelian variety $X$ over a $p$-adic field $K$ is a locally analytic group homomorphism $X(K)\to K$ (\S \ref{subsec:Colm_int}).
\item Given a smooth, projective, geometrically irreducible curve $X$ over a $p$-adic field $K$ with Jacobian $J$, the logarithm $\Log_{\tilde{J}}$ is the canonical homomorphism $T(K)/T_{\ell}(K)\to H^1_{\dR}(X)$ from differentials of the third kind on $X$ modulo logarithmic differentials to the first de Rham cohomology of $X$ (Definition \ref{def:LogJtilde}, Remark \ref{rmk:logJtilde}).
\end{itemize}

\subsection*{Acknowledgements}
First and foremost, I am extremely grateful to Steffen M\"uller for suggesting that Blakestad's work on $p$-adic sigma functions could be used in the context of heights, as well as for helpful discussions and generous feedback at various stages of this project. I would also like to thank Clifford Blakestad for sharing insights about his work,  Stevan Gajovi\'c for coming up with Lemma \ref{lemma:stevan} and for helpful comments, Enis Kaya for useful feedback on an earlier draft and for inspiring conversations on $p$-adic heights, and Remco Wouts for assistance with the computations. This work was supported by an NWO Vidi grant.

\section{Preliminaries}
\subsection{Formal group laws}\label{subsec:formalgps_gen}
In this subsection we recall some definitions and properties of formal group laws. We are mainly interested in an explicit description of the isomorphisms of certain formal group laws with the formal additive group of a suitable dimension: see Theorem \ref{thm:log_iso_Ga}, Proposition \ref{prop:exp_exp_log} and Lemma \ref{lemma:log_and_inv_derivations} below. The main references are \cite{Freije, zink, cassels_flynn}. In particular, up to and including Definition \ref{def:strict_log} we follow \cite{Freije} closely; Proposition \ref{prop:exp_exp_log} is then a stronger version of results in \cite{cassels_flynn}; finally the discussion on invariant derivations is based on \cite{zink}.

Let $R$ be a commutative ring with identity, let $n\in \Z_{\geq 1}$, let $X = (x_1,\dots, x_n)$ and let $R[[X]]$ be the power series ring in $x_1,\dots,x_n$ over $R$. 

\begin{mydef}
A \emph{formal group law} $F(X,Y)$ of dimension $n$ over $R$ is an $n$-tuple of power series $F_i(X,Y)\in R[[X,Y]]$ such that
\begin{enumerate}[label = (\roman*)]
\item\label{it:formal_gp_1} $F(X,Y) = X+Y + O(X,Y)^2$, where $O(X,Y)^2$ means terms of total degree at least $2$;
\item\label{it:formal_gp_2} $F(X,F(Y,Z)) = F(F(X,Y),Z)$. 
\end{enumerate}
\end{mydef}
The formal group law $F(X,Y)$ is said to be \emph{commutative} if $F(Y,X) = F(X,Y)$. 

It follows from the formal group axioms that there exists a unique \emph{formal inverse}, that is an $n$-tuple of power series $i(X)\in R[[X]]^n$ such that $F(X,i(X)) = F(i(X),X) = 0$ and $i(X) =  -X +O(X)^2$. Moreover, $F(X,0) = F(0,X) = X$.

\begin{mydef}
Given formal group laws $F(X,Y)$ and $G(X,Y)$ of dimensions $n$ and $m$ over $R$, a \emph{formal group homomorphism} $\alpha\colon F\to G$ is an $m$-tuple of power series $\alpha_i(X)\in XR[[X]]$ such that $\alpha(F(X,Y)) = G(\alpha(X),\alpha(Y))$. The homomorphism $\alpha$ is an \emph{isomorphism} if there exists a formal group homomorphism $\beta\colon G\to F$ such that $\alpha(\beta(Y)) = Y$ and $\beta(\alpha(X)) = X$.
\end{mydef} 
An example of a commutative formal group law of dimension $n$ is the additive group $\G_a^{n}(X,Y) = X+Y$. If $R$ is a $\Q$-algebra, any commutative formal group law of dimension $n$  over $R$ is isomorphic to $\G_a^{n}$:  see Theorem \ref{thm:log_iso_Ga} below. 

In order to describe the isomorphism explicitly, we first recall the notion of invariant differentials on a commutative formal group law $F(X,Y)$ of dimension $n$ over an arbitrary commutative ring $R$ with identity.

We define the $R[[X]]$-module of \emph{differential $1$-forms} as $\Omega = \sum_{i=1}^n R[[X]] dx_i$, and a corresponding total derivative map $d\colon R[[X]]\to \Omega$, $d(f) = \sum_{i=1}^n \frac{\partial f}{\partial x_i} dx_i$. A differential in $\Omega$ is \emph{exact} if it is in the image of $d$. Given $\omega = \sum_{i=1}^n \varphi_i(X) dx_i \in \Omega$, with $\varphi_i(X)\in R[[X]]$, we say that $\omega$  is \emph{(translation) invariant} if
\begin{equation*}
\omega(F(X,T)) \colonequals \sum_{i=1}^n \varphi_i(F(X,T)) dF_i(X,T)  = \omega;
\end{equation*}
here in $dF_i(X,T)$ we consider $T$ fixed and take partial derivatives with respect to $X$.

The definition of the formal group law implies that the matrix $\left(\frac{\partial F_i}{\partial x_j}(0,X)\right) $ is invertible over $R[[X]]$ and we have: 
\begin{lemma}\label{lemma:inv_diff}
The differential $\omega = \sum_{i=1}^n \varphi_i(X)dx_i\in\Omega$ is invariant if and only if 
\begin{equation*}
\omega = (a_1,\dots,a_n)\left(\frac{\partial F_i}{\partial x_j}(0,X)\right)^{-1}\left(\begin{matrix}
           dx_{1} \\
           \vdots \\
           dx_{n}
         \end{matrix}\right)
\end{equation*}
for some $(a_1,\dots, a_n)\in R^n$. In particular, the invariant differentials of $F(X,Y)$ form an $R$-module of rank $n$. 
\end{lemma}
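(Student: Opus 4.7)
The plan is to first extract a necessary formula for $\varphi = (\varphi_1, \ldots, \varphi_n)$ from the invariance condition by evaluating at a special point, and then to verify the converse via a matrix identity coming from the associativity and commutativity of $F$.

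First, I unwind the invariance condition $\omega(F(X,T)) = \omega$. Expanding $dF_i(X,T) = \sum_j \frac{\partial F_i}{\partial x_j}(X,T)\, dx_j$ and collecting coefficients of $dx_j$, invariance becomes the matrix identity
\begin{equation*}
(\varphi_1(F(X,T)), \ldots, \varphi_n(F(X,T))) \cdot J(X,T) = (\varphi_1(X), \ldots, \varphi_n(X))
\end{equation*}
in $R[[X,T]]^n$, where $J(X,T)$ is the Jacobian with $(i,j)$-entry $\frac{\partial F_i}{\partial x_j}(X,T)$. Set $X = 0$: since $F(0,T) = T$ and $J(0,T) = A(T)$ (with $A(X) := (\frac{\partial F_i}{\partial x_j}(0,X))$, which is invertible over $R[[X]]$ because $A(0) = I_n$), this forces $\varphi(T) \cdot A(T) = \varphi(0)$, i.e.\ $\varphi(X) = \varphi(0)\, A(X)^{-1}$. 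This gives the forward direction with $a_i = \varphi_i(0)$.

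For the converse, given any $a \in R^n$ I need to check that the differential $\omega_a$ defined by $\varphi(X) := a\, A(X)^{-1}$ is invariant, which (quantifying over $a$) amounts to the matrix identity
\begin{equation*}
J(X,T) = A(F(X,T))\, A(X)^{-1}, \quad \text{i.e.\ } J(X,T) \cdot A(X) = A(F(X,T)).
\end{equation*}
I will derive this by differentiating the associativity relation $F(F(X,Y),T) = F(X,F(Y,T))$ with respect to $T_j$ and setting $T=0$. On the left this yields $\frac{\partial F_i}{\partial y_j}(F(X,Y), 0)$, which equals $A_{ij}(F(X,Y))$ thanks to commutativity (which gives $\frac{\partial F_i}{\partial x_j}(0,Z) = \frac{\partial F_i}{\partial y_j}(Z,0)$). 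On the right the chain rule, together with $F(Y,0) = Y$, produces $\sum_l \frac{\partial F_i}{\partial y_l}(X,Y)\, A_{lj}(Y)$. Writing $K(X,Y)$ for the matrix of second-argument partials, this reads $A(F(X,Y)) = K(X,Y)\cdot A(Y)$. Commutativity again gives $K(X,Y) = J(Y,X)$; swapping $X \leftrightarrow Y$ and using $F(Y,X) = F(X,Y)$ yields exactly $A(F(X,Y)) = J(X,Y) \cdot A(X)$, as required.

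The main (and really only) nontrivial step is the three-way interplay of associativity, commutativity, and the unit axiom $F(X,0) = X$ in deriving the matrix identity above; once it is in hand, both directions of the characterisation follow by direct substitution. The rank statement is immediate: the assignment $a \mapsto \omega_a$ is $R$-linear and injective (since $A(X)^{-1}$ is invertible, $a\, A(X)^{-1} = 0$ forces $a = 0$), and by the forward direction it surjects onto the $R$-module of invariant differentials, exhibiting the latter as a free $R$-module of rank $n$ with basis given by the rows of $A(X)^{-1}(dx_1, \ldots, dx_n)^T$.
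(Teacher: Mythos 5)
Your proof is correct, and since the paper simply cites \cite[p.\,243]{Freije} for this lemma rather than giving an argument, you have supplied the standard self-contained proof: evaluating the invariance identity at $X=0$ forces $\varphi(X)=\varphi(0)A(X)^{-1}$, and differentiating associativity in the third variable at the identity (together with commutativity and $F(X,0)=X$) yields the matrix identity $J(X,T)A(X)=A(F(X,T))$ needed for the converse. This is essentially the same route as the cited reference, so nothing further is needed.
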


\begin{proof}
See for example \cite[p.\,243]{Freije}.
\end{proof}

\begin{lemma}[{\hspace{1sp}\cite[Proposition 1.3, Lemma 1.4]{honda}}]
\label{lemma:honda}
If $R$ is a $\Q$-algebra and $F$ is a commutative formal group law over $R$, then every invariant differential is exact. 
\end{lemma}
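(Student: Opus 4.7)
The plan is to construct an explicit formal logarithm for $F$ and use it to exhibit an $R$-basis of invariant differentials consisting of exact forms. Concretely, I would build an $n$-tuple of power series $\mathcal{L}(X) = (L_1(X), \ldots, L_n(X)) \in R[[X]]^n$, with $L_i(X) \equiv x_i$ modulo terms of degree $\geq 2$, satisfying the cocycle identity
\begin{equation*}
\mathcal{L}(F(X,Y)) = \mathcal{L}(X) + \mathcal{L}(Y).
\end{equation*}
Differentiating this identity with respect to $X$ for fixed $Y$ yields $d\mathcal{L}(F(X,Y)) = d\mathcal{L}(X)$, so each $dL_i$ is translation-invariant; the normalisation $L_i \equiv x_i$ modulo degree $\geq 2$ then ensures that $dL_1, \ldots, dL_n$ reduce to the standard basis at $X = 0$, and are therefore $R$-linearly independent. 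By Lemma \ref{lemma:inv_diff}, the $R$-module of invariant differentials is free of rank $n$, so the $dL_i$ form a basis of it, and every invariant differential is an $R$-linear combination of the exact forms $dL_i$, hence exact.

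I would build $\mathcal{L}$ by induction on total degree. Suppose $\mathcal{L}^{(k-1)} \in R[[X]]^n$ satisfies the cocycle identity modulo terms of total degree at least $k$ in $(X,Y)$. The degree-$k$ homogeneous part of the error $\mathcal{L}^{(k-1)}(X) + \mathcal{L}^{(k-1)}(Y) - \mathcal{L}^{(k-1)}(F(X,Y))$, call it $P_k(X,Y)$, is symmetric by the commutativity of $F$ and satisfies the additive $2$-cocycle identity by the associativity of $F$. The inductive step then requires a homogeneous degree-$k$ $n$-tuple of polynomials $L_k$ with
\begin{equation*}
L_k(X+Y) - L_k(X) - L_k(Y) = P_k(X,Y);
\end{equation*}
setting $\mathcal{L}^{(k)} = \mathcal{L}^{(k-1)} + L_k$ then extends the logarithm one degree further.

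The main obstacle is producing the primitive $L_k$ for a given symmetric polynomial $2$-cocycle $P_k$. In characteristic zero this is always possible: the splitting admits an explicit formula after division by appropriate positive integers, e.g.\ using $L_k(X) = P_k(X,X)/(2^k - 2)$ in the simplest cases, as in Lazard's classical analysis of symmetric $2$-cocycles on $\G_a^n$. This is precisely where the $\Q$-algebra hypothesis is essential, and it is the content of Honda's argument cited in the statement.
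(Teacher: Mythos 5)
The paper does not actually prove this lemma — it is quoted verbatim from Honda — so I am comparing your argument with the standard one behind that citation. Your route is sound but runs the paper's logic in reverse: you construct the formal logarithm $\mathcal{L}$ over $R$ first (by inductive splitting of symmetric $2$-cocycles, i.e.\ the Lazard comparison machinery over a $\Q$-algebra) and then deduce exactness of invariant differentials from the identity $d\mathcal{L}(F(X,T)) = d\mathcal{L}(X)$. The paper instead uses Lemma \ref{lemma:honda} as the \emph{input} to Theorem \ref{thm:log_iso_Ga}, which is where the logarithm gets built; Honda's own proof of the exactness statement is more direct and avoids cocycles altogether: one checks that an invariant differential is \emph{closed} (a computation using the commutativity and associativity of $F$ applied to the invariance relation), and then invokes the formal Poincar\'e lemma — a closed form in $R[[X]]$ is exact when $R$ is a $\Q$-algebra, by termwise integration. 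Your approach therefore proves the stronger statement (existence of the logarithm over $R$) en route, at the cost of importing the multivariable cocycle-splitting lemma, which is the entire substance of your argument and which you only cite rather than prove; given that the paper itself defers to Honda, that is acceptable, but the closed-implies-exact route localises the $\Q$-algebra hypothesis in a single elementary integration step.

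Two small repairs. First, "$dL_1,\dots,dL_n$ are $R$-linearly independent and the module of invariant differentials is free of rank $n$, therefore they form a basis" is not a valid inference over a general ring ($2\in\Z$ is independent but not a basis). The fix is immediate: by Lemma \ref{lemma:inv_diff} each $dL_i$ equals $(a_{i1},\dots,a_{in})\left(\frac{\partial F_k}{\partial x_j}(0,X)\right)^{-1}(dx_j)$ for some $a_{ij}\in R$, and evaluating at $X=0$ (where the matrix is the identity) gives $a_{ij}=\delta_{ij}$, so the $dL_i$ are exactly the basis $\omega_i$ of Definition \ref{def:strict_log}; every invariant differential is then $\sum_i a_i\, dL_i = d\bigl(\sum_i a_i L_i\bigr)$. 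Second, your indicative formula should read $L_k(X) = P_k(X,X)/(2-2^k)$, since $\delta L_k(X,X) = 2L_k(X) - L_k(2X) = (2-2^k)L_k(X)$; and one must still verify that this candidate actually splits an arbitrary symmetric cocycle, which is the nontrivial part of Lazard's lemma.
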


From these two lemmas, one deduces
\begin{thm}\label{thm:log_iso_Ga}
Let $F$ be a commutative formal group law over a $\Q$-algebra $R$ and let $\omega_1,\dots,\omega_n$ be an $R$-basis for the invariant differentials of $F$. Let $\mathcal{L}_i(X)\in R[[X]]$ be the unique power series such that
\begin{equation*}
d\mathcal{L}_i = \omega_i,\qquad \mathcal{L}_i(0) = 0.
\end{equation*}
Then $\mathcal{L} = (\mathcal{L}_1,\dots, \mathcal{L}_n)\colon F\to \mathbb{G}_a^n$ is a formal group isomorphism. Conversely, any isomorphism $F\to \G_a^{n}$ is of this form.
\end{thm}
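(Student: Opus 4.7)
The plan is to first construct each $\mathcal{L}_i$ via formal integration, then show $\mathcal{L}$ is a group homomorphism using invariance, then verify the linear part is invertible so $\mathcal{L}$ admits a formal inverse, and finally obtain the converse by pulling back the standard coordinates of $\G_a^n$.

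For existence and uniqueness of $\mathcal{L}_i$: since $R$ is a $\Q$-algebra, a formal $1$-form $\sum_j \varphi_{ij}(X)\,dx_j$ is exact with $\mathcal{L}_i(0)=0$ if and only if $\partial_k \varphi_{ij} = \partial_j \varphi_{ik}$ for all $j,k$, in which case $\mathcal{L}_i$ is uniquely recovered by termwise integration. By Lemma \ref{lemma:honda}, each invariant differential $\omega_i$ is exact, so each $\mathcal{L}_i\in R[[X]]$ exists and is unique.

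For the homomorphism property, fix $T$ and consider $\mathcal{L}_i(F(X,T))\in R[[X,T]]$ as a power series in $X$ with coefficients in $R[[T]]$. By the formal chain rule, its differential with respect to $X$ is exactly the pullback $\omega_i(F(X,T)) = \sum_j \varphi_{ij}(F(X,T))\,dF_j(X,T)$ appearing in the definition of invariance. Since $\omega_i$ is invariant, this equals $\omega_i(X) = d\mathcal{L}_i(X)$. Hence $\mathcal{L}_i(F(X,T)) - \mathcal{L}_i(X)$ has vanishing differential in $X$, so it lies in $R[[T]]$; setting $X=0$ and using $F(0,T)=T$ and $\mathcal{L}_i(0)=0$ identifies it with $\mathcal{L}_i(T)$, giving $\mathcal{L}(F(X,T)) = \mathcal{L}(X) + \mathcal{L}(T)$.

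To upgrade $\mathcal{L}$ to an isomorphism, I will examine its linear part. Since $F(0,X) = X$, Lemma \ref{lemma:inv_diff} implies that the $R$-module map sending an invariant form to its constant term vector $(a_1,\dots,a_n)\in R^n$ is an isomorphism. As $\omega_1,\dots,\omega_n$ form an $R$-basis, the resulting $n\times n$ matrix of constant terms is invertible in $R$, which exactly says that the Jacobian of $\mathcal{L}$ at the origin is invertible. The formal inverse function theorem then yields $\mathcal{L}^{-1}\in R[[Y]]^n$ with $\mathcal{L}^{-1}(0)=0$, and a direct formal computation shows $\mathcal{L}^{-1}$ is also a homomorphism.

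For the converse, suppose $\mathcal{L}' = (\mathcal{L}'_1,\dots,\mathcal{L}'_n)\colon F\to \G_a^n$ is any formal group isomorphism. The coordinate differentials $dy_1,\dots,dy_n$ on $\G_a^n$ are visibly invariant under addition, and being a homomorphism, $\mathcal{L}'$ pulls invariant forms back to invariant forms, so $\omega'_i\colonequals d\mathcal{L}'_i$ is invariant on $F$. Since $\mathcal{L}'$ is an isomorphism, its linear part is invertible, and by the correspondence of Lemma \ref{lemma:inv_diff} the $\omega'_i$ are an $R$-basis of invariant differentials; then $\mathcal{L}'_i$ is the unique antiderivative of $\omega'_i$ vanishing at $0$, so $\mathcal{L}'$ is of the asserted form for the basis $\{\omega'_i\}$. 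The main technical point is the computation in the second step, where invariance of $\omega_i$ must be translated cleanly into the statement that $d_X\mathcal{L}_i(F(X,T))$ is independent of $T$; the remaining steps are essentially formal applications of the inverse function theorem and functoriality.
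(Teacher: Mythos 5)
Your proposal is correct and follows essentially the same route as the paper: the converse direction is exactly the paper's argument (a homomorphism has invariant differentials $d\alpha_i$, and invertibility of the Jacobian at the origin corresponds via Lemma \ref{lemma:inv_diff} to these forming an $R$-basis), while for the forward direction you write out in full the standard argument (exactness via Lemma \ref{lemma:honda}, the vanishing-differential-in-$X$ trick for the homomorphism property, and the formal inverse function theorem) that the paper simply delegates to the cited reference. No gaps.
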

\begin{proof}
For the first statement see for example \cite[Theorem 1]{Freije}. For the second statement, if $\alpha = (\alpha_1,\dots,\alpha_n)\colon F\to \G_a^n$ is a homomorphism, then $\alpha_i(F(X,Y)) = \alpha_i(X) + \alpha_i(Y)$, thus $d\alpha_i$ is an invariant differential.  By \cite[Lemma 1.4]{zink}, $\alpha$ is an isomorphism if and only if the Jacobian matrix $ \big(\frac{\partial \alpha_i}{\partial x_j}(0)\big)$ is invertible over $R$. By Lemma \ref{lemma:inv_diff} this happens if and only if $d\alpha_1,\dots,d\alpha_n$ is a basis for the $R$-module of invariant differentials.
\end{proof}

\begin{mydef}\label{def:strict_log}
The isomorphism of Theorem \ref{thm:log_iso_Ga} is called a \emph{logarithm} of $F$. The \emph{strict logarithm} is the logarithm corresponding to the basis $\omega_1,\dots,\omega_n$ where, for $k\in\{1,\dots,n\}$,
\begin{equation*}
\omega_k = (\delta_{ik})\left(\frac{\partial F_i}{\partial x_j}(0,X)\right)^{-1}\left(\begin{matrix}
           dx_{1} \\
           \vdots \\
           dx_{n}
         \end{matrix}\right).
\end{equation*}
Here $\delta_{ik}$ is the Kronecker delta. 
The inverse of a formal group logarithm is called a \emph{formal group exponential}; the inverse of the strict logarithm is the \emph{strict exponential}.
\end{mydef}
By Theorem \ref{thm:log_iso_Ga}, if $R$ is an integral domain of characteristic $0$ and $F$ is a commutative formal group law over $R$, we can consider an isomorphism to the additive group of a suitable dimension upon base-changing to the fraction field of $R$. For our intended applications, we would like to understand what denominators can occur in the series expansions of the strict formal logarithm and exponential.

\begin{prop}\label{prop:exp_exp_log}
Let $R$ be a GCD domain of characteristic $0$ and let $K$ be its field of fractions. Let $F$ be a commutative formal group law over $R$ and let $F_K$ be its base-change to $K$. Then the strict logarithm $\mathcal{L} = (\mathcal{L}_1,\dots, \mathcal{L}_n)$ and strict exponential $\mathcal{E} = (\mathcal{E}_1,\dots,\mathcal{E}_n)$ of $F_K$ are of the form: 
\begin{alignat*}{2}
\mathcal{L}_i &= x_i + \sum_{\substack{j_1,\dots, j_n\in \N\\
j_1+\cdots + j_n \geq 2}} \frac{a_{j_1,\dots, j_n}}{\gcd(j_1,\dots, j_n)} x_1^{j_1}\cdots x_n^{j_n}, \qquad &&\text{where } a_{j_1,\dots, j_n}\in R;\\
\mathcal{E}_i &=x_i + \sum_{\substack{j_1,\dots, j_n\in \N\\
j_1+\cdots + j_n \geq 2}} \frac{b_{j_1,\dots, j_n}}{j_1!\cdots j_n!} x_1^{j_1}\cdots x_n^{j_n}, \qquad &&\text{where } b_{j_1,\dots, j_n}\in R.
\end{alignat*}
\end{prop}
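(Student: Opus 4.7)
The strategy is to force the partial derivatives of $\mathcal{L}_i$ and $\mathcal{E}_i$ into $R[[X]]$ and then read off denominator constraints one monomial at a time. First I would observe that the $n\times n$ matrix $M(X):=(\partial F_i/\partial x_j(0,X))$ has entries in $R[[X]]$ and satisfies $M(0)=I$, so its inverse has entries in $R[[X]]$ as well. Hence by Lemma \ref{lemma:inv_diff}, the differentials of the strict basis have coefficients in $R[[X]]$, so $\partial\mathcal{L}_i/\partial x_j\in R[[X]]$ for every $i,j$.

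For the logarithm, write $\mathcal{L}_i = x_i + \sum_{|k|\geq 2} c_k^{(i)}\,x_1^{k_1}\cdots x_n^{k_n}$ with $c_k^{(i)}\in K$. Extracting the coefficient of $x_1^{k_1}\cdots x_j^{k_j-1}\cdots x_n^{k_n}$ in $\partial\mathcal{L}_i/\partial x_j$ shows that $k_j c_k^{(i)}\in R$ for every $j$ with $k_j\geq 1$. I would then invoke the following elementary fact, valid in any GCD domain $R$ with fraction field $K$: if $c\in K$ and $b_1,\dots,b_r\in R\setminus\{0\}$ satisfy $b_tc\in R$ for all $t$, then $\gcd(b_1,\dots,b_r)\cdot c\in R$ (write $c=a/b$ in lowest terms and observe that $b$ divides each $b_t$, whence $b$ divides $\gcd_t b_t$). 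Applied with the $b_t$ running over the nonzero $k_j$, this yields $\gcd(k_1,\dots,k_n)\, c_k^{(i)}\in R$, which is the claim.

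For the exponential, I would first derive the system $\partial\mathcal{E}_i/\partial x_j(X) = M_{ij}(\mathcal{E}(X))$ by differentiating $\mathcal{L}(\mathcal{E}(X))=X$ with respect to $x_j$ and using that the Jacobian of $\mathcal{L}$ is $M(Z)^{-1}$; the key point is that $M_{ij}\in R[[Z]]$. Writing $\mathcal{E}_i = x_i + \sum_{|l|\geq 2} d_l^{(i)}\,x_1^{l_1}\cdots x_n^{l_n}$, I would induct on $|l|$ to show that $l_1!\cdots l_n!\,d_l^{(i)}\in R$. For $l$ with $|l|\geq 2$, pick $j$ with $l_j\geq 1$ and compare coefficients of $x_1^{l_1}\cdots x_j^{l_j-1}\cdots x_n^{l_n}$ on both sides. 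The left side is $l_j d_l^{(i)}$; the right is an $R$-linear combination of products $\prod_s d_{l^{(s)}}^{(i_s)}$ indexed by tuples of multi-indices with $\sum_s l^{(s)} = l - \eps_j$ (where $\eps_j$ denotes the $j$-th standard basis vector). By the inductive hypothesis each factor has denominator dividing $\prod_t l^{(s)}_t!$, so each product has denominator dividing $\prod_t\prod_s l^{(s)}_t!$. Since $\sum_s l^{(s)}_t = l_t$ for $t\neq j$ and $\sum_s l^{(s)}_j = l_j-1$, the integrality of the multinomial coefficients forces $\prod_t\prod_s l^{(s)}_t!$ to divide $l_1!\cdots l_n!/l_j$. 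Multiplying $l_j d_l^{(i)}$ through by $l_1!\cdots l_n!/l_j$ therefore lands in $R$, closing the induction.

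The main obstacle is organising the multi-index bookkeeping in the exponential step; the crucial combinatorial input is the integrality of the multinomial coefficients, which lets the nested factorials $\prod_s l^{(s)}_t!$ collapse cleanly into $l_1!\cdots l_n!/l_j$. With that in hand, both claims reduce to one step of coefficient extraction plus, in the exponential case, a straightforward induction.
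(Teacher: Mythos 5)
Your treatment of the logarithm is the same as the paper's: integrality of $\partial\mathcal{L}_i/\partial x_k$ (via Lemma \ref{lemma:inv_diff} and the fact that $\bigl(\tfrac{\partial F_i}{\partial x_j}(0,X)\bigr)$ is invertible over $R[[X]]$) gives $j_k\,c^{(i)}_{j_1,\dots,j_n}\in R$, and writing the coefficient in lowest terms together with Euclid's lemma for GCD domains bounds the denominator by $\gcd(j_1,\dots,j_n)$. Where you genuinely add something is the exponential: the paper disposes of that half by citing Cassels--Flynn, p.\,68, whereas you give a complete argument from the differential system $\partial\mathcal{E}_i/\partial x_j(X)=\tfrac{\partial F_i}{\partial x_j}(0,\mathcal{E}(X))$, whose right-hand side is a series over $R$ evaluated at $\mathcal{E}$, followed by induction on total degree; the key combinatorial point, that the integrality of multinomial coefficients collapses $\prod_{s,t} l^{(s)}_t!$ into a divisor of $l_1!\cdots l_n!/l_j$, is exactly right, and the induction closes because every multi-index $l^{(s)}$ occurring on the right has total degree at most $|l|-1$ and the degree-one coefficients of $\mathcal{E}$ lie in $R$. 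So the proposal is correct; it reproduces the paper's proof for $\mathcal{L}$ and makes self-contained what the paper delegates to a reference for $\mathcal{E}$.
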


\begin{proof}
Write $\mathcal{L}_i = \sum c_{j_1,\dots, j_n} x_1^{j_1}\cdots x_n^{j_n}$ for $c_{j_1,\dots, j_n}\in K$. For every $k\in \{1,\dots, n\}$, we have
$\frac{\partial \mathcal{L}_i}{\partial x_k}\in R[[X]]$, so
\begin{equation*}
j_k c_{j_1,\dots, j_n} \in R.
\end{equation*} 
If $c_{j_1,\dots,j_n}$ is non-zero, writing $c_{j_1,\dots, j_n} = \frac{\alpha}{\beta}$ for coprime $\alpha,\beta\in R$, we deduce that $\beta\mid j_k$ for every $k$ (by Euclid's lemma for GCD domains) and so $\beta\mid \gcd(j_1,\dots,j_n)$.  The claim on the leading term follows from the fact that $\mathcal{L}$ is the strict logarithm. 
For the coefficients of $\mathcal{E}_i$, see \cite[p.\,68]{cassels_flynn}.
\end{proof}
Let $F$ be a commutative formal group law over a $\Q$-algebra $R$. Theorem \ref{thm:log_iso_Ga} characterises any formal group isomorphism $\mathcal{L}\colon F\to \G_a^n$ by the choice of a basis for the $R$-module of invariant differentials of $F$.  We will now see that the inverse of $\mathcal{L}$ admits an explicit description in terms of a suitable choice of basis of a related $R$-module: that of invariant derivations of $F$. 
A \emph{derivation} is an $R$-linear map $D\colon R[[X]]\to R[[X]]$ such that $D(fg) = gDf + fDg$, and a derivation $D$ is defined to be \emph{invariant} for $F$ if
\begin{equation*}
D(f(F(X,T))) = (Df)(F(X,T))\qquad \text{for all } f \in R[[X]].
\end{equation*}
 It follows from the definition that any derivation $D$ satisfies
\begin{equation}\label{eq:formula_D}
D = \sum_{i=1}^n Dx_i \cdot \frac{\partial}{\partial x_i};
\end{equation}
thus $D$ is invariant if and only if, for every $j\in \{1,\dots,n\}$, we have
\begin{equation*}
\sum_{i=1}^n Dx_i\cdot \frac{\partial F_j(X,T)}{\partial x_i} = (Dx_j)(F(X,T)).
\end{equation*}
By \cite[Theorem 1.15]{zink}, the invariant derivations form an $R$-module of rank $n$. Moreover, denoting by $\Der R[[X]]^{\inv}$ the $R$-module of invariant derivations, and by $\Omega^{\inv}$ the $R$-module of invariant differentials, by \cite[Theorem 1.19]{zink} there is a perfect pairing
\begin{equation}\label{eq:dual_inv_diff_der}
(\cdot, \cdot)\colon \Der R[[X]]^{\inv}\times \Omega^{\inv}\to R, \qquad (D,gdf) = gDf. 
\end{equation}
In particular, if $\mathcal{L}$ is the strict logarithm, the dual to $d\mathcal{L}_i$ is
\begin{equation}\label{eq:Di}
D_i = \sum_{j=1}^n \frac{\partial F_j}{\partial x_i}(0,X)\frac{\partial}{\partial x_j}.
\end{equation}
\begin{lemma}
\label{lemma:log_and_inv_derivations}
Let $R$ be a $\Q$-algebra, $F$ a commutative formal group law over $R$. Let $\mathcal{L}=(\mathcal{L}_1,\dots, \mathcal{L}_n)$ be a formal logarithm $F\to \G_a^n$, with inverse $\mathcal{E}\colon \G_a^n\to F$. Suppose $\mathcal{L}$ corresponds to the choice of basis for $\Omega^{\inv}$ dual to the basis $(D_1,\dots,D_n)$ for $\Der R[[X]]^{\inv}$. Then
\begin{equation*}
\frac{\partial f(\mathcal{E}(T))}{\partial t_i} = (D_i f)(\mathcal{E}(T))\qquad \text{for all } f\in R[[X]].
\end{equation*}
\end{lemma}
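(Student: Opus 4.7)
The plan is to reduce the identity to the case $f = x_j$ and then identify both sides as entries of the same matrix inverse. By the chain rule,
\begin{equation*}
\frac{\partial f(\mathcal{E}(T))}{\partial t_i} = \sum_{j=1}^n \frac{\partial f}{\partial x_j}(\mathcal{E}(T)) \cdot \frac{\partial \mathcal{E}_j(T)}{\partial t_i},
\end{equation*}
while Equation \eqref{eq:formula_D} gives
\begin{equation*}
(D_i f)(\mathcal{E}(T)) = \sum_{j=1}^n (D_i x_j)(\mathcal{E}(T)) \cdot \frac{\partial f}{\partial x_j}(\mathcal{E}(T)).
\end{equation*}
Thus it suffices to establish the reduced identity $\frac{\partial \mathcal{E}_j(T)}{\partial t_i} = (D_i x_j)(\mathcal{E}(T))$ for all $i,j$.

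For this, consider the Jacobian matrix $M(X) := \bigl(\frac{\partial \mathcal{L}_k}{\partial x_j}(X)\bigr)_{k,j}$; since $\mathcal{L}$ is a formal group isomorphism, its linear part is invertible over $R$, so $M(X)$ is invertible over $R[[X]]$. Differentiating the identity $\mathcal{L}_k(\mathcal{E}(T)) = t_k$ with respect to $t_i$ shows that the matrix $\bigl(\frac{\partial \mathcal{E}_j(T)}{\partial t_i}\bigr)_{j,i}$ is precisely $M(\mathcal{E}(T))^{-1}$. On the other hand, the duality hypothesis $(D_i, d\mathcal{L}_k) = D_i\mathcal{L}_k = \delta_{ik}$ combined with \eqref{eq:formula_D} reads as the identity $\sum_j (D_ix_j)(X) \cdot \frac{\partial \mathcal{L}_k}{\partial x_j}(X) = \delta_{ik}$ in $R[[X]]$; substituting $X = \mathcal{E}(T)$ shows that $\bigl((D_ix_j)(\mathcal{E}(T))\bigr)_{j,i}$ is also equal to $M(\mathcal{E}(T))^{-1}$. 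This proves the reduced identity and hence the lemma.

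The only technical point requiring care is the invertibility of $M$ and the legitimacy of the substitutions at the level of formal power series; these are routine. A more conceptual alternative would be to define $D'_i f(X) := \bigl(\frac{\partial}{\partial t_i} f(\mathcal{E}(T))\bigr)|_{T = \mathcal{L}(X)}$, check that $D'_i$ is an invariant derivation using the key identity $F(\mathcal{E}(S), \mathcal{E}(T)) = \mathcal{E}(S+T)$ (which expresses that $\mathcal{L}$ is a homomorphism), verify $D'_i \mathcal{L}_k = \delta_{ik}$, and invoke the perfect pairing \eqref{eq:dual_inv_diff_der} to conclude $D'_i = D_i$; the stated formula would then follow by evaluating at $X = \mathcal{E}(T)$.
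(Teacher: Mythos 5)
Your proof is correct, and it takes a genuinely different --- and more elementary --- route than the paper's. The paper's argument forms the one-parameter subgroup $\gamma^{(i)}(t_i) = \mathcal{E}(T)|_{t_j=0,\, j\neq i}$, invokes \cite[Theorem 1.26]{zink} to produce an invariant derivation $\tilde{D}_i$ from it, computes $\tilde{D}_i x_j$ by differentiating $\mathcal{E}(X+Y)=F(\mathcal{E}(X),\mathcal{E}(Y))$, checks $(\tilde{D}_i,\omega_k)=\delta_{ik}$, and concludes $\tilde{D}_i=D_i$ by perfectness of the pairing \eqref{eq:dual_inv_diff_der}; this is exactly the ``more conceptual alternative'' you sketch in your closing sentence. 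Your main argument bypasses invariance entirely: after reducing to $f=x_j$ via the chain rule and \eqref{eq:formula_D}, you identify both $\bigl(\partial\mathcal{E}_j/\partial t_i\bigr)$ and $\bigl((D_ix_j)(\mathcal{E}(T))\bigr)$ with the inverse of the Jacobian matrix of $\mathcal{L}$ evaluated at $\mathcal{E}(T)$ --- the former from differentiating $\mathcal{L}\circ\mathcal{E}=\mathrm{id}$, the latter from the duality hypothesis $D_i\mathcal{L}_k=\delta_{ik}$ unwound through \eqref{eq:formula_D}. This uses only that $\mathcal{L}$ and $\mathcal{E}$ are mutually inverse with invertible linear part and the definition of the pairing, not the formal group structure or Zink's theorem; it buys a shorter, self-contained proof, at the cost of the geometric byproduct the paper records in the remark following the lemma, namely that $\gamma^{(i)}$ is the integral curve of $D_i$. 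The substitution and invertibility points you flag as routine are indeed routine at the level of formal power series, since the $\mathcal{E}_j$ have no constant term and a matrix over $R[[X]]$ with invertible constant term is invertible.
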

\begin{proof}
Let $\gamma^{(i)}(t_i) = \mathcal{E}(T)|_{t_j = 0, j\neq i}$. Then $\gamma^{(i)}$ defines a formal group homomorphism $\G_a\to F$. Therefore, by \cite[Theorem 1.26]{zink}, the following formula defines an invariant derivation
\begin{equation}\label{eq:Ditilde_formula}
(\tilde{D}_i x_j)(X)= \frac{\partial F_j(X,\gamma^{(i)}(t_i))}{\partial t_i}\bigg\vert_{t_i =0}= \sum_{\ell=1}^n\frac{\partial F_j}{\partial y_{\ell}}(X,0) \frac{\partial \mathcal{E}_{\ell}}{\partial t_i}(T)\bigg\vert_{T = 0} = \frac{\partial \mathcal{E}_j}{\partial x_i}(X)\bigg\vert_{X = \mathcal{L}(X)}
\end{equation}
(the last equality follows from differentiating $\mathcal{E}(X+Y) = F(\mathcal{E}(X),\mathcal{E}(Y))$ with respect to $Y$ and evaluating at $Y=0$ and $X=\mathcal{L}(X)$). Therefore, for every $k\in \{1,\dots,n\}$,
\begin{equation*}
(\tilde{D}_i, \omega_k) = \sum_{j=1}^n \frac{\partial \mathcal{L}_k}{\partial x_j}\tilde{D}_i x_j=\frac{\partial \mathcal{L}_k(\mathcal{E}(X))}{\partial x_i}\bigg\vert_{ X = \mathcal{L}(X)} = \delta_{ik},
\end{equation*}
since $\mathcal{L}(\mathcal{E}(X)) = \mathcal{E}(\mathcal{L}(X))= X$. This shows that $\tilde{D}_i = D_i$. The lemma follows from \eqref{eq:Ditilde_formula} and \eqref{eq:formula_D}. 
\end{proof}
\begin{rmk}
With the notation of Lemma \ref{lemma:log_and_inv_derivations} and its proof, we have, in particular, that $\gamma^{(i)}(t_i)$ is the integral curve of $D_i$ (cf. \cite[Theorems 1.23, 1.26]{zink}).
\end{rmk}

\subsection{Jacobians of genus 2 curves: algebraically, analytically and projectively}\label{subsec:jac}
Let $C$ be a hyperelliptic curve of genus $2$ over a field $K$ of characteristic different from $2$, described explicitly by the closure in the weighted projective space $\PP_{1,3,1}$ of a smooth affine curve
\begin{equation}
\label{eq:Grant}
y^2 = x^5 + b_1x^4 + b_2x^3 + b_3 x^2 + b_4 x + b_5, \qquad b_i\in K.
\end{equation}
If $K$ is either $\C$ or an algebraically closed field, any smooth, projective, geometrically irreducible curve of genus $2$ is birationally equivalent over $K$ to one in this form; for other fields $K$, 
this is not the case; see for example \cite[Chapter 1]{cassels_flynn}.

There is a unique point on $C$ that does not belong to the provided affine patch; we will denote this point by $\infty$. Given a point $P$ we denote by $P^{-}$ the image of $P$ under the hyperelliptic involution, which maps $(x,y)$ to $(x,-y)$ and $\infty$ to itself. 
The (algebraic) Jacobian $J$ of $C$ is an abelian variety of dimension $2$ over $K$ whose $K$-rational points are $K$-rational classes of degree zero divisors on $C$ modulo linear equivalence. Every such class can be represented by a $K$-rational divisor of the form
\begin{equation*}
P_1 + P_2 - 2\infty, \qquad \text{for some} \quad P_1,P_2\in C.
\end{equation*}
In fact, such a divisor is unique for every point in $J(K)\setminus \{0\}$, whereas the zero class is represented by 
\begin{equation*}
P + P^{-}  - 2\infty, \qquad \text{for every} \quad P\in C.
\end{equation*}
Therefore, there is a surjection from the symmetric square of $C$, denoted $C^{(2)}$, to $J$, and we can identify $J$ with $C^{(2)}$ blown down at the origin (see for instance \cite[IIIa, \S 2]{MumfordTata2} for the case $K=\C$). 
 We also consider the $\Theta$ divisor of $J$, whose support is given by
\begin{equation*}
\Supp(\Theta) = \{[P-\infty]:P\in C\}.
\end{equation*} 

While we are ultimately interested in working with Jacobians of curves over finite extensions of $\Q_p$, for a prime $p$, it is instructive and useful for multiple reasons to first review some of the complex analytic theory. So assume now that $K=\C$. Our main references are \cite{Grant1990, Uchida, Baker_multiply_period, MumfordTata1, MumfordTata2}.
A basis for the space $H^0(C/\C,\Omega^1)$ of holomorphic $1$-forms on $C$ is given by
\begin{equation}\label{eq:omega_12}
\omega_1 = \frac{dx}{2y},\qquad \omega_2 = \frac{xdx}{2y};
\end{equation}
using the Hodge filtration, we identify this with a $2$-dimensional subspace of the first algebraic de Rham cohomology of $C/\C$, which we denote by $H^1_{\dR}(C/\C)$. Under this identification, we have
\begin{equation*}
H^1_{\dR}(C/\C) \cong H^0(C/\C,\Omega^1) \oplus W_0,
\end{equation*}
where $W_0$ is the space spanned by the classes of the differentials of the second kind
\begin{equation}\label{eq:compl_sub_complex}
\eta_1 = (-3x^3 - 2b_1x^2 - b_2 x)\frac{dx}{2y}, \qquad \eta_2 =- \frac{x^2 dx}{2y}.
\end{equation}
The space $W_0$ is isotropic with respect to the algebraic cup product pairing; moreover, $[\eta_1]$ and $[\eta_2]$ is a dual basis to $[\omega_1],[\omega_2]$, in the sense that $[\eta_i]\cup [\omega_j] = \delta_{ij}$, where $\delta_{ij}$ is the Kronecker delta, so $[\omega_1], [\omega_2], [-\eta_1],[-\eta_2]$ is a symplectic basis for $H^1_{\dR}(C/\C)$ with respect to the cup product pairing. 

Analogously, we also pick a symplectic basis $A_1,A_2,B_1,B_2$ for $H_1(C,\Z)$ with respect to the intersection product of cycles. It is explained in \cite[IIIa, \S 5]{MumfordTata2} how to do this using the fact that $C$ is a double cover of $\PP^1$.

We then define the period matrices 
\begin{equation*}
\Omega =(\Omega_{ij}) =  \biggr(\int_{A_j} \omega_i\biggl), \qquad \Omega^{\prime} =(\Omega^{\prime}_{ij}) =  \biggr(\int_{B_j} \omega_i\biggl).
\end{equation*} 
The matrix $\tau\colonequals \Omega^{-1}\Omega^{\prime}$ is well-defined, symmetric and its imaginary part is positive definite; i.e.\ $\tau$ belongs to the Siegel upper half space of dimension $2$. Let $\Lambda = \Omega \Z^2  + \Omega^{\prime} \Z^2$. The analytic Jacobian of $C$ is $\C^2/\Lambda$.  The isomorphism between the algebraic and analytic Jacobian is induced by the integration map
\begin{equation}\label{eq:Phi}
\Phi\colon C^{(2)} \to \C^2/\Lambda, \qquad (P_1,P_2)\mapsto \int_{\infty}^{P_1}(\omega_1,\omega_2) + \int_{\infty}^{P_2}(\omega_1,\omega_2)  \bmod{\Lambda}.
\end{equation}
To $\tau$ as above and any $a,b\in \Q^2$, we can attach a so-called \emph{theta function with characteristic}:
\begin{equation*}
\theta\begin{bmatrix}
a\\
b
\end{bmatrix}(z) = \sum_{n\in\Z^2} \exp(\pi i (n+a)^{T}\tau(n+a) + 2\pi i (n+a)^T(z+b)), \qquad z\in \C^2.
\end{equation*}
By \cite[II, Proposition 1.1 and p.\,123]{MumfordTata1}, this is a holomorphic function on $\C^2$. Moreover, it is quasi-periodic with respect to $\Omega^{-1}\Lambda$ in the sense of \cite[p.\,123]{MumfordTata1}: for every $m\in \Z^2$, 
\begin{align}\label{al:quasi_period1}
\theta\begin{bmatrix}
a\\
b
\end{bmatrix}(z+m) &= \exp(2\pi i a^{T} m)\cdot \theta\begin{bmatrix}
a\\
b
\end{bmatrix}(z)\\
\theta\begin{bmatrix}
a\\
b
\end{bmatrix}(z+\tau m) &= \exp(-2\pi i b^{T} m -\pi i m^T \tau m - 2\pi i m^T z)\cdot \theta\begin{bmatrix}
a\\
b
\end{bmatrix}(z). \label{al:quasi_period2}
\end{align}
From the quasi-periodicity \eqref{al:quasi_period1}-\eqref{al:quasi_period2} it is immediate that, writing $z=(z_1,z_2)$, for every $i,j\in\{1, 2\}$, the following is a meromorphic function on $\C^2/\Lambda$:
\begin{equation}
\frac{\partial^2}{\partial z_i \partial z_j}\log\left(\theta\begin{bmatrix}
a\\
b
\end{bmatrix}(\Omega^{-1}z)\right).
\end{equation}
Other meromorphic functions on $\C^2/\Lambda$ arise in the same way if we replace $\theta\begin{bmatrix}
a\\
b
\end{bmatrix}(z)$ with itself multiplied by the exponential of a polynomial in $z_1,z_2$ of degree at most $2$. Amongst such choices of polynomials and vectors $a,b\in \Q^2$ there are some that are particularly useful. 

First, when
\begin{equation*}
a = \delta \colonequals (1/2,1/2)\qquad \text{and} \qquad b =\delta^{\prime}\colonequals (1,1/2), 
\end{equation*}
the theta function is odd \cite[II, Proposition 3.14]{MumfordTata1} and vanishes to order $1$ precisely on those $z\in \C^2$ such that $\Omega z \bmod{\Lambda} = \Phi(P,\infty)$ for some $P\in C$, that is, on the pullback of the (analytic) theta divisor under $\C^2 \to \C^2/\Omega^{-1}\Lambda \xrightarrow{\sim}\C^2/\Lambda$. See \cite[3.80-3.85, 3.89]{MumfordTata2}.

Secondly, if we set
\begin{equation*}
\tilde{\sigma}(z) = \exp\left(\frac{1}{2}z^T H\Omega^{-1}z\right)\theta\begin{bmatrix}
\delta\\
\delta^{\prime}
\end{bmatrix}(\Omega^{-1} z),\qquad \text{where}\quad  H = (H_{ij}) = \biggl(\int_{A_j} \eta_i\biggr),
\end{equation*}
then the second logarithmic derivatives of $\tilde{\sigma}$, viewed as functions on $C^{(2)}$, can be described as quotients of elements in $\Z[b_1,\dots, b_5][x_1,x_2,y_1,y_2]$; see \cite[p.\,38]{Baker_multiply_period}.

Finally, we define the \emph{hyperelliptic sigma function} of $C$ to be
\begin{equation}\label{eq:sigma}
\sigma(z) = c\tilde{\sigma}(z),
\end{equation}
where $c\in \C^{\times}$ is such that the Taylor expansion of $\sigma(z)$ around $0$ is of the form
\begin{equation*}
\sigma(z) = z_1 + O(z_1,z_2)^3.
\end{equation*}
With such a normalisation, the Taylor expansion has coefficients in $\Q[b_i]$ (see e.g.\ \cite[Proposition 2.1]{Uchida}).

 For $i,j,\dots,k\in \{1,2\}$, let 
\begin{equation}
\label{eq:diff_equation_cx_sigma}
\wp_{ij\cdots k}(z) = -\frac{\partial}{\partial z_i}\frac{\partial}{\partial z_j}\cdots\frac{\partial}{\partial z_k}\log(\sigma(z)) \qquad \text{and}\qquad \wp = \wp_{11}\wp_{22} - \wp_{12}^2.
\end{equation}
It follows from above that $\wp_{ij\cdots k}$ and $\wp$ are meromorphic functions on $\C^2/\Lambda$. On the image of $\Phi$ these are expressible as rational functions dependent on the coefficients of $C$, and these definitions apply more generally when $\C$ is replaced by any field $K$ of characteristic different from $2$.  For example, if $z = \Phi((x_1,y_1),(x_2,y_2))$, then
\begin{align}
\label{eq:p12_p22}
\wp_{12}(z) = -x_1x_2,\qquad \wp_{22}(z) = x_1+x_2.
\end{align}
See \cite[(1.4)]{Grant1990} for the other $\wp_{ij}$ and for $\wp_{ijk}$. 

Following Kanayama \cite{Kanayama, Kanayama_corrections} and Uchida \cite{Uchida}, we also introduce the following functions on the Jacobian: for $m\geq 1$, the \emph{$m$-th division polynomial} is
\begin{equation}
\label{eq:def_div_poly}
\phi_m(z) = \frac{\sigma(mz)}{\sigma(z)^{m^2}}.
\end{equation}
By \cite[Theorem 5.8, Example 5.9]{Uchida}, $\phi_m(z)$ is a polynomial in $\wp_{ij}$ and $\wp_{ijk}$ ($1\leq i,j,k\leq 2$) with coefficients in $\Z\left[\frac{1}{2},b_1,\dots, b_5\right]$ and, conjecturally, in $\Z[b_1,\dots, b_5]$ \cite[Conjecture 4.14]{Uchida}. In particular, also division polynomials make sense over arbitrary fields of characteristic different from $2$, and given a curve over such a field, we will view $\phi_m$ as a function on its Jacobian $J$, satisfying
\begin{equation}
\label{eq:divisor_div_poly}
\div(\phi_m) = [m]^{*}\Theta - m^2\Theta.
\end{equation}

Other properties of division polynomials that we need are (cf.\ \cite[Proposition 4.9, Example 2.16]{Uchida}):
\begin{align}
\phi_{mn}(z) &= \phi_m(nz)\phi_n(z)^{m^2} \label{eq:div_quad}\\
\frac{\phi_m(u+v)\phi_m(u-v)}{\phi_m(u)^2\phi_m(v)^2} &= \frac{-\wp_{11}(mu) + \wp_{11}(mv) - \wp_{12}(mu)\wp_{22}(mu) + \wp_{22}(mu)\wp_{12}(mv)}{(-\wp_{11}(u) + \wp_{11}(v) - \wp_{12}(u)\wp_{22}(v) + \wp_{22}(u)\wp_{12}(v))^{m^2}}. \label{eq:div_par}
\end{align}

The analytic theory that we have outlined is related to our goal of defining $p$-adic heights in at least two ways. First, suppose that $K$ is a number field and let $v$ be an archimedean place of $K$. Then the (unique up to an additive constant) \emph{real-valued} N\'eron function at the place $v$, associated with $2\Theta$, admits an explicit formula in terms of the hyperelliptic sigma function $\sigma$: see \cite[Corollary 2.5]{Yoshitomi}. Moreover, such a N\'eron function satisfies transformation properties under scalar multiplication and addition on the Jacobian that can be described using the division polynomials and the functions $\wp_{ij}$, respectively \cite[Theorem 7.5]{Uchida}. In Section \ref{sec:sigma_functions}, extending work of Blakestad \cite{blakestadsthesis},
we construct $p$-adic analogues of the genus $2$ complex sigma function, and in Section \ref{sec:padic_hts} we use these to define $p$-adic N\'eron functions.

\begin{rmk}
Hyperelliptic sigma functions, division polynomials and their applications to real-valued N\'eron functions all admit generalisations to hyperelliptic curves of arbitrary genus: see \cite{Uchida}. For elliptic curves, see \cite[Chapter VI]{silvermanadvancedtopics}.
\end{rmk}

The second application of the analytic theory that we are interested in is an explicit set of  equations for the Jacobian as a variety in $\PP^8$. This is worked out in \cite{Grant1990}: for any field $K$ of characteristic different from $2$, \cite[Corollary 2.15]{Grant1990} provides
a set of $13$ homogeneous equations\footnote{\label{note:typoGrant}There is a typo in $f_{10}$: the first term should be $X_{112}^2$, rather than $X_{122}^2$.} in $K[X_0,X_{11}, X_{12}, X_{22}, X_{111}, X_{112}, X_{122}, X_{222}, X]$, defining in $\mathbb{P}^8$ the Jacobian of a curve described by an equation of the form \eqref{eq:Grant} with coefficients in the field $K$. Under this embedding, the identity of $J$ is mapped to
\begin{equation*}
O = (0:0:0:0:1:0:0:0:0).
\end{equation*}
In \cite[Theorem 3.3, \S 4]{Grant1990} Grant also gives explicit formulae for the group law on $J$ with respect to this embedding in $\PP^8$. 

Note that the projective coordinates $X_0, X_{ij}, X_{ijk}, X$ are related to the functions $\wp_{ij},\wp_{ijk},\wp$ on $C^{(2)}$ by
\begin{align}
\label{eq:X_and_p}
\frac{X_{ij}}{X_0} = \wp_{ij},\qquad \frac{X_{ijk}}{X_0} = \frac{1}{2}\wp_{ijk}, \qquad \frac{X}{X_0} = \frac{1}{2}(\wp +b_2\wp_{12} - b_4);
\end{align}
for notational convenience from now on we dehomogenise with respect to $X_0$, i.e.\ we set
\begin{equation*}
X_0 \colonequals 1. 
\end{equation*}

We end this subsection with a remark about invariant differentials and derivations on $J$. The complex analytic isomorphism $J\xrightarrow{\sim}\C^2/\Lambda$ is defined using the basis \eqref{eq:omega_12} for the space of holomorphic differentials on $C$ and the embedding $\iota\colon C\xhookrightarrow{} J$ mapping $\infty$ to the identity of $J$. 

The differentials $\omega_1,\omega_2$ of \eqref{eq:omega_12} are a basis for $H^0(C/K,\Omega^1)$ (that is, not only when $K = \C$), and the embedding $\iota$ induces an isomorphism $\iota^{*}\colon H^0(J/K,\Omega^1)\to H^0(C/K,\Omega^1)$ between the space of holomorphic differentials on $J$ and on $C$. Holomorphic differentials on $J$ are translation-invariant, and they correspond, by duality, to translation-invariant derivations.

 \begin{mydef}\label{def:basis_inv_diff_inv_der}
 For each $i\in \{1,2\}$, let $\Omega_i$ be the invariant differential on $J$ satisfying 
 \begin{equation*}
 \iota^{*} \Omega_i = \omega_i,
 \end{equation*}
and let $\partial_i$ be the invariant derivation dual to $\Omega_i$.
 \end{mydef}

\pagebreak
 \begin{lemma}\label{lemma:inv_difder}\leavevmode
\begin{enumerate}
\item \label{lemma_part:inv_difder_1} The basis $\{\Omega_1,\Omega_2\}$ for $H^0(J/K,\Omega^1)$ of Definition \ref{def:basis_inv_diff_inv_der} is given explicitly by
 \begin{align*}
 \Omega_1 = \frac{1}{2(X_{111}X_{122} - X_{112}^2)} (X_{122}\cdot dX_{11} - X_{112} \cdot dX_{12});\\
 \Omega_2 = \frac{1}{2(X_{111}X_{122} - X_{112}^2)} (X_{111}\cdot dX_{12} - X_{112}\cdot dX_{11}).
 \end{align*}
\item \label{lemma_part:inv_difder_2} The dual invariant derivations $\partial_1$ and $\partial_2$ satisfy
 \begin{equation*}
 \partial_1(X_{ij}) = 2X_{1ij},\qquad \partial_2(X_{ij}) = 2X_{ij2}.
 \end{equation*}
 \end{enumerate}
 \end{lemma}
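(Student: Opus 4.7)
The plan is to establish part \ref{lemma_part:inv_difder_2} first and then derive part \ref{lemma_part:inv_difder_1} from it by a routine dualisation. Both statements are equalities of rational functions on $J$ with coefficients in $\Z[\tfrac{1}{2},b_1,\ldots,b_5]$ — the $X_{ij}$ and $X_{ijk}$ being such by Grant's construction together with \eqref{eq:X_and_p} — and since the formation of $\Omega_i,\partial_i$ commutes with base change, it will suffice to verify everything over $K=\C$.

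For part \ref{lemma_part:inv_difder_2}, I would exploit the complex uniformisation $\Phi\colon J(\C)\xrightarrow{\sim}\C^2/\Lambda$ of \eqref{eq:Phi}. Since $\iota^{*}\Omega_i = \omega_i = dz_i$ under $\Phi$, the invariant differential $\Omega_i$ on $J$ transports to $dz_i$ on $\C^2/\Lambda$, and hence the dual derivation $\partial_i$ transports to $\partial/\partial z_i$. Combining $\wp_{ij}=-\partial_i\partial_j\log\sigma$ from \eqref{eq:diff_equation_cx_sigma} with the substitutions $X_{ij}=\wp_{ij}$ and $X_{ijk}=\tfrac{1}{2}\wp_{ijk}$ of \eqref{eq:X_and_p} (valid after dehomogenising $X_0\colonequals 1$), I would then compute
\[
\partial_k(X_{ij}) \;=\; \partial_k(\wp_{ij}) \;=\; -\partial_i\partial_j\partial_k\log\sigma \;=\; \wp_{ijk} \;=\; 2X_{ijk},
\]
and specialise $k$ to $1$ and $2$, invoking the total symmetry of $X_{ijk}$ in its indices (inherited from that of $\wp_{ijk}$) to rewrite $X_{kij}$ as $X_{1ij}$ or $X_{ij2}$.

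For part \ref{lemma_part:inv_difder_1}, my approach is to use the perfect duality $\Omega_i(\partial_j)=\delta_{ij}$ together with part \ref{lemma_part:inv_difder_2}. On the dense open subset $U\subset J$ where $X_{111}X_{122}-X_{112}^2$ is non-zero, the symmetric matrix
\[
M \;=\; \bigl(\partial_j(X_{1k})\bigr)_{j,k\in\{1,2\}} \;=\; 2\begin{pmatrix}X_{111} & X_{112}\\ X_{112} & X_{122}\end{pmatrix}
\]
is invertible, so $(X_{11},X_{12})$ supplies \'etale local coordinates on $U$ and each invariant differential $\Omega_i$ admits a unique expression on $U$ as $A_i\,dX_{11}+B_i\,dX_{12}$ with $A_i,B_i$ rational on $J$. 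The duality relations then take the form $(A_i,B_i)\,M=(\delta_{i1},\delta_{i2})$, and inverting $M$ produces exactly the four coefficients listed in the statement. Since an invariant differential on $J$ is determined by its restriction to any non-empty open, these identities extend globally.

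The main subtleties I anticipate are bookkeeping rather than substantive: one must work consistently on the chart $X_0\neq 0$ when passing between $\wp_{ij}$ and $X_{ij}$, and keep track of the symmetries of $X_{ijk}$ when matching indices in part \ref{lemma_part:inv_difder_2}. Neither is a serious obstacle.
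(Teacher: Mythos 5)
Your proposal is correct, and its overall architecture coincides with the paper's: establish part \ref{lemma_part:inv_difder_2} first, then obtain part \ref{lemma_part:inv_difder_1} by pairing against the $\partial_j$ and inverting the $2\times 2$ matrix $2\bigl(\begin{smallmatrix}X_{111}&X_{112}\\X_{112}&X_{122}\end{smallmatrix}\bigr)$ — that matrix inversion is exactly the ``explicit computation'' the paper alludes to, and your coefficients check out. The one place where you genuinely diverge is part \ref{lemma_part:inv_difder_2}: the paper simply cites p.\,64 of Blakestad's thesis, whereas you supply a self-contained argument via the complex uniformisation, identifying $\Omega_i$ with $dz_i$ and $\partial_i$ with $\partial/\partial z_i$ under $\Phi$ and reading off $\partial_k(\wp_{ij})=\wp_{ijk}=2X_{ijk}$ from \eqref{eq:diff_equation_cx_sigma} and \eqref{eq:X_and_p}; this is in the spirit of how the paper treats the $\wp$-functions elsewhere and is a perfectly good substitute for the citation. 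Two small points deserve one more sentence each in a written-up version. First, the lemma is stated over an arbitrary field of characteristic $\neq 2$, so the reduction to $\C$ should be phrased as verifying the identity for indeterminate coefficients $b_1,\dots,b_5$ (an identity of rational functions over $\Q(b_1,\dots,b_5)$, embedded in $\C$) and then specialising — your remark that everything is defined over $\Z[\tfrac12,b_1,\dots,b_5]$ is the right ingredient, but the specialisation step is what actually handles positive characteristic. Second, you implicitly assume that $X_{111}X_{122}-X_{112}^2$ is not identically zero (otherwise your $U$ is empty rather than dense); this is easily confirmed from the expansions \eqref{eq:Xij_exp}--\eqref{eq:Xijk_exp}, which give $X_{111}X_{122}-X_{112}^2=T_1^{-6}\bigl(T_1T_2+O(T_1,T_2)^3\bigr)$, but it should be said.
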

 \begin{proof}
Part \eqref{lemma_part:inv_difder_2} is proved in \cite[p.\,64]{blakestadsthesis}, Part \eqref{lemma_part:inv_difder_1} then follows from this and an explicit computation. 
 \end{proof}

\subsection{Grant's formal group law} \label{subsec:formal}
We now apply the general theory of formal group laws presented in \S \ref{subsec:formalgps_gen} to the setting of \S \ref{subsec:jac}. In particular, we recall Grant's description of a pair of local parameters at the origin of a Jacobian as in \S \ref{subsec:jac} (over a suitable field) and the induced formal group law. Moreover, we relate the corresponding strict logarithm to the basis of invariant differentials of Definition \ref{def:basis_inv_diff_inv_der}. Finally, we consider a group associated to the formal group law.

Let $R$ be either the ring of integers of a number field, or a ring of characteristic $0$, complete with respect to a non-archimedean valuation. In both cases, let $K$ be the fraction field of $R$. As in \S \ref{subsec:jac}, we consider a hyperelliptic curve $C$ over $K$ defined by an equation of the form \eqref{eq:Grant}, but we further assume that $b_1,\dots, b_5\in R$. We denote by $J$ the projective variety in $\mathbb{P}^8$ defined in \S \ref{subsec:jac}, and we consider the following functions 
\begin{equation}\label{eq:local_par}
T_1 = -\frac{X_{11}}{X_{111}}, \qquad T_2 = -\frac{X}{X_{111}}.
\end{equation}
\begin{thm}[{\hspace{1sp}\cite[Theorem 4.2]{Grant1990}}]\label{thm:exp_Xij_Xijk}
The functions $\frac{1}{X_{111}}=\frac{X_0}{X_{111}}, \frac{X_{ij}}{X_{111}}, \frac{X_{ijk}}{X_{111}}$ can be expanded as formal power series in $T_1,T_2$ with coefficients in $\Z[b_1,\dots, b_5]$. 
\end{thm}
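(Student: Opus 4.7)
My plan is to apply a multivariate formal implicit function theorem to Grant's defining equations of $J \subset \PP^8$, working in the affine chart containing $O$. Take the chart $X_{111} \neq 0$ of $\PP^8$ with affine coordinates $V_\alpha = X_\alpha/X_{111}$ for $\alpha \in \{0, 11, 12, 22, 112, 122, 222\}$ and $V = X/X_{111}$; then $O = (0{:}0{:}0{:}0{:}1{:}0{:}0{:}0{:}0)$ corresponds to the origin $V_0 = \dots = V = 0$, and after rehomogenising Grant's $13$ equations from \cite[Corollary 2.15]{Grant1990} and clearing $X_{111}$, one obtains $13$ polynomials in $\Z[b_1,\dots,b_5][V_0, V_{11}, V_{12}, V_{22}, V_{112}, V_{122}, V_{222}, V]$ vanishing at the origin. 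The theorem then amounts to asserting that $V_0, V_{12}, V_{22}, V_{112}, V_{122}, V_{222}$ can each be written as a power series in $T_1 = -V_{11}$ and $T_2 = -V$ with coefficients in $\Z[b_1,\dots,b_5]$ (noting that $V_{11} = -T_1$ and $V = -T_2$ are trivially such series).

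The heart of the argument is then to identify a subset of $6$ of these $13$ equations whose $6 \times 6$ Jacobian matrix with respect to the coordinates $(V_0, V_{12}, V_{22}, V_{112}, V_{122}, V_{222})$ has determinant a unit (necessarily $\pm 1$) in $\Z[b_1,\dots,b_5]$ when evaluated at $O$. In other words, six of the equations should take the form
\[V_\alpha - (\text{integer linear combination of } V_{11}, V) + (\text{higher-order terms}) = 0\]
for $\alpha \in \{0, 12, 22, 112, 122, 222\}$. Once this integrality of the Jacobian is verified by direct inspection of Grant's equations, the Newton--Hensel iteration carried out over $\Z[b_1,\dots,b_5]$ converges and produces unique formal solutions $V_0, V_{12}, \dots, V_{222} \in \Z[b_1,\dots,b_5][[V_{11}, V]]$ of the chosen six equations. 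These series automatically satisfy the remaining seven equations of Grant as well, since $J$ is smooth of dimension $2$ at $O$ and the chosen six already cut out a smooth formal $2$-dimensional subscheme through $O$, so the two formal completions at $O$ must coincide. Substituting $V_{11} = -T_1$ and $V = -T_2$ gives the asserted expansions.

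The main obstacle is the integral Jacobian verification, which is the only step where the precise shape of Grant's $13$ equations enters in a nontrivial way. As an analytic plausibility check, the leading terms $\wp_{11}(z) \sim 1/z_1^2$ and $\wp_{111}(z) \sim -2/z_1^3$ near $z=0$ give $V_{11} = 2\wp_{11}/\wp_{111} = -z_1 + O(z)^2$, so $T_1 = z_1 + O(z)^2$ and in particular $dT_1|_O \neq 0$; an analogous computation for $T_2$, combining \eqref{eq:p12_p22}, \eqref{eq:diff_equation_cx_sigma} and the expansion $\sigma(z) = z_1 + O(z)^3$, would show that $(dT_1, dT_2)$ is a basis of the cotangent space at $O$, confirming that $T_1, T_2$ are at least local parameters over $K$. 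Upgrading this to integrality—unit determinant over $\Z[b_1,\dots,b_5]$ rather than just nonzero over a field—is exactly what the rigid polynomial structure of Grant's equations is expected to provide, and constitutes the one step I would carry out by an explicit calculation from \cite{Grant1990}.
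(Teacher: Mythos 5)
Your Newton--Hensel/implicit-function-theorem argument is essentially the paper's (i.e.\ Grant's) proof in different packaging: the paper simply cites \cite[Theorem 4.2]{Grant1990} and describes the same order-by-order recursion on the thirteen defining equations of $J\subset\PP^8$, whose solvability over $\Z[b_1,\dots,b_5]$ at each total degree is exactly your unit-Jacobian condition at $O$. The one step you defer --- inspecting Grant's equations to exhibit six of the form $V_\alpha = (\text{integral linear combination of }V_{11},V) + (\text{higher-order terms})$ --- is precisely the content of the cited recursion, so your outline matches what the paper (and Grant) actually does.
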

In particular, we have
\begin{equation} \label{eq:Xij_exp}
\begin{aligned}
\frac{1}{X_{111}} &= \sum_{i,j}\alpha_{ij}T_1^iT_2^j = T_1^3\biggl(-1+\sum_{\substack{i\geq 3\\
i+j>3}}\alpha_{ij}T_1^{i-3}T_2^j\biggr)\\
\frac{X_{22}}{X_{111}} &=\sum_{i,j}\beta_{ij}T_1^iT_2^j =  T_1\biggl(-2T_1T_2 + \sum_{\substack{i\geq 1\\
i+j>3}} \beta_{ij}T_1^{i-1}T_2^j\biggr)\\
\frac{X_{12}}{X_{111}} &=\sum_{i,j}\gamma_{ij}T_1^iT_2^j = T_1\biggl(T_2^2 +\sum_{\substack{i\geq 1\\
i+j>3}}\gamma_{ij} T_1^{i-1}T_2^j\biggr), 
\end{aligned}
\end{equation}
where $\alpha_{ij},\beta_{ij},\gamma_{ij}\in \Z[b_1,\dots,b_5]$ can be found recursively as follows. Suppose we have computed $\alpha_{rs},\beta_{rs},\gamma_{rs}$ for all $r,s$ such that $r+s<m$. Then we can compute $\alpha_{ij}, \beta_{ij}, \gamma_{ij}$ for all $i,j$ such that $i+j = m$ using the defining equations of the projective variety $J$. A similar recursion allows one to recover the following expansions:
\begin{equation}\label{eq:Xijk_exp}
\begin{aligned}
\frac{X_{112}}{X_{111}} &=\sum_{i,j}\delta_{ij}T_1^i  T_2^j =  -T_2^2 +\sum_{i+j\geq 4}\delta_{ij}T_1^iT_2^j\\
\frac{X_{122}}{X_{111}} &=\sum_{i,j}\epsilon_{ij}T_1^i  T_2^j = T_1T_2+\sum_{i+j\geq 4}\epsilon_{ij}T_1^i T_2^j\\
\frac{X_{222}}{X_{111}} &= \sum_{i,j}\zeta_{ij}T_1^i  T_2^j =-T_1^2+\sum_{i+j\geq 4}\zeta_{ij}T_1^iT_2^j,
\end{aligned}
\end{equation}
for some $\delta_{ij},\epsilon_{ij},\zeta_{ij}\in \Z[b_1,\dots, b_5]$.

\begin{rmk}\label{rmk:odd_and_even_expansions}
The functions 
\begin{equation*}
T_1 = -\frac{X_{11}}{X_{111}}, \quad\frac{1}{X_{111}}, \quad  \frac{X_{22}}{X_{111}},\quad \frac{X_{12}}{X_{111}}, \quad T_2 = -\frac{X}{X_{111}}
\end{equation*}
are all odd functions on $J$, while
\begin{equation*}
\frac{X_{112}}{X_{111}}, \quad \frac{X_{122}}{X_{111}}, \quad \frac{X_{222}}{X_{111}}
\end{equation*}
are even. It follows that
\begin{align*}
&\alpha_{ij} = \beta_{ij} = \gamma_{ij} = 0\quad &&\text{for all } i+j\in 2\Z;\\
&\delta_{ij} = \epsilon_{ij} = \zeta_{ij} = 0 \quad &&\text{for all } i+j\in 1+2\Z.
\end{align*}
\end{rmk}

The group law on $J$ induces a two-dimensional formal group law in $T_1,T_2$ over $R$, as follows. Given points $u,v\in J$, \cite[Theorem 3.3]{Grant1990} expresses each of $X_{ij}(u+v), X_{ijk}(u+v), X(u+v)$ as a quotient of polynomials in $\Z[b_1,\dots,b_5][X_{ij}(u), X_{ij}(v), X_{ijk}(u), X_{ijk}(v)]$. 

Let $T = (T_1,T_2)$ and $S = (S_1,S_2)$ be the local parameters \eqref{eq:local_par} for $u$ and $v$, respectively. Then one can express each of
\begin{equation*}
F_1(u,v) = -\frac{X_{11}(u+v)}{X_{111}(u+v)}, \qquad F_2(u,v) = -\frac{X(u+v)}{X_{111}(u+v)}.
\end{equation*}
as elements in $\Frac(\Z[b_1,\dots, b_5][T_1,T_2,S_1,S_2])$. To see this, you first have to multiply each of the numerator and denominators of $F_i(u,v)$ by the same suitable rational function.

\begin{thm}[{\hspace{1sp}\cite[Theorem 4.6]{Grant1990}}]\label{thm:formal_gp_law}
For each $i\in \{1,2\}$, we have
\begin{equation*}
F_i(T,S) = T_i + S_i + O(T,S)^2\in R[[T,S]].
\end{equation*}
Therefore $F(T,S) = (F_1,F_2)(T,S)$ is a commutative formal group law over $R$.
\end{thm}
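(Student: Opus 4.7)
The plan is to derive the formal group law axioms from Grant's explicit description of the group law on $J$ (\cite[Theorem 3.3]{Grant1990}) combined with the local expansions of Theorem~\ref{thm:exp_Xij_Xijk}.

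First, by \cite[Theorem 3.3]{Grant1990}, each of $X_{ij}(u+v)$, $X_{ijk}(u+v)$, and $X(u+v)$ is a rational function in $X_{jk}(u),X_{jkl}(u),X_{jk}(v),X_{jkl}(v)$ with coefficients in $\Z[b_1,\dots,b_5]\subseteq R$. Dividing the numerator and denominator of $F_i(u,v)$ by a sufficiently high power of $X_{111}(u)X_{111}(v)$ writes $F_i$ as a quotient of polynomials in the affine functions $X_{jk}/X_{111}$, $X_{jkl}/X_{111}$, $1/X_{111}$ at $u$ and at $v$, still with $R$-coefficients. Theorem~\ref{thm:exp_Xij_Xijk} then expresses each of these affine functions as an element of $R[[T_1,T_2]]$ (resp.\ $R[[S_1,S_2]]$); scheme-theoretically, it identifies the completed local ring $\widehat{\OO}_{J,O}$ of $J$ at the $R$-rational identity section $O$ with $R[[T_1,T_2]]$.

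Since $m\colon J\times J\to J$ is an $R$-morphism and $m(O,O)=O$, it induces a continuous $R$-algebra homomorphism
\[
m^{*}\colon R[[T_1,T_2]] = \widehat{\OO}_{J,O}\longrightarrow \widehat{\OO}_{J\times J,(O,O)} = R[[T_1,T_2,S_1,S_2]],
\]
which by construction sends $T_i$ to $F_i(T,S)$. Hence $F_i\in R[[T,S]]$, settling the integrality. Substituting $v=O$ (i.e.\ $S=0$) gives $u+O=u$, so $F_i(T,0)=T_i$, and symmetrically $F_i(0,S)=S_i$. Combined with $F_i\in R[[T,S]]$, this forces $F_i(T,S)=T_i+S_i+O(T,S)^2$, which is axiom~\ref{it:formal_gp_1}. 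Axiom~\ref{it:formal_gp_2} is the pullback under $m^{*}$ of the associativity of addition on $J$: the two morphisms $J^3\to J$ given by $(u,v,w)\mapsto (u+v)+w$ and $(u,v,w)\mapsto u+(v+w)$ coincide, so their pullbacks of $T_i$, namely $F_i(F(T,S),U)$ and $F_i(T,F(S,U))$, agree as elements of $R[[T,S,U]]$. Commutativity is analogous, using $u+v=v+u$ on $J$.

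The main obstacle is the integrality over $R$ as opposed to merely over $K=\Frac R$: Grant's group-law formula shows a priori only that $F_i\in\Frac(\Z[b_1,\dots,b_5][T,S])$, and naive clearing of denominators could in principle force inverting elements of $R$. This is resolved by the observation that the formal completion at the $R$-rational identity section has no hidden denominators, i.e.\ $\widehat{\OO}_{J,O}\cong R[[T_1,T_2]]$ on the nose, which is precisely the content of Theorem~\ref{thm:exp_Xij_Xijk}. Once this identification is granted, the formal group law axioms follow formally from the corresponding group-theoretic identities on $J$.
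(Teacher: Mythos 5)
There is a genuine gap at the heart of your argument: the integrality over $R$, which is the whole content of the theorem, is assumed rather than proved. You write $F_i$ as a quotient $G/H$ with $G,H\in R[[T,S]]$ (correct, via Grant's Theorem 3.3 and Theorem \ref{thm:exp_Xij_Xijk}), and then declare that the difficulty is ``resolved by the observation that $\widehat{\OO}_{J,O}\cong R[[T_1,T_2]]$ on the nose'' together with the claim that $m\colon J\times J\to J$ is an $R$-morphism inducing $m^*$ on completed local rings over $R$. But Theorem \ref{thm:exp_Xij_Xijk} only says that certain functions on $J$ have $R$-integral expansions at $O$; it says nothing about the group law. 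Grant's group law is given by \emph{rational} functions with $R$-coefficients, and the projective $R$-model cut out by his equations need not be a smooth group scheme (the paper explicitly notes its special fibre ``may or may not be an abelian variety''). So the assertion that $m^*$ carries $\widehat{\OO}_{J,O}$ into $R[[T,S]]$ — equivalently, that the denominator $H$ is a unit of $R[[T,S]]$, i.e.\ $H(0,0)\in R^{\times}$ — is precisely the statement $F_i\in R[[T,S]]$ that you are trying to prove. Over $K$ this is automatic (the Jacobian is a smooth group variety over $K$, so $F_i\in K[[T,S]]$), but descending from $K[[T,S]]$ to $R[[T,S]]$ is the nontrivial step.

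For comparison, the paper does not reprove this: it cites Grant, whose proof assumes $R$ complete non-archimedean, uses the analytic group structure over $\Frac(R)$ to get $F_i\in K[[T,S]]$, and then applies a version of Gauss's lemma to the identity $F_iH=G$ to clear the denominators and conclude $F_i\in R[[T,S]]$. The paper then adds a local-global remark to cover the case where $R$ is the ring of integers of a number field (a power series over $K$ lying in $R_v[[T,S]]$ for every finite place $v$ lies in $R[[T,S]]$) — a case your proposal does not address separately. Your derivation of $F_i(T,S)=T_i+S_i+O(T,S)^2$ from $F_i(T,0)=T_i$ and $F_i(0,S)=S_i$, and of associativity and commutativity by pulling back the corresponding identities on $J$, is fine \emph{once} $F_i\in R[[T,S]]$ is known; it is the Gauss's-lemma step (or some substitute for it) that is missing.
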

Since $T_1$ and $T_2$ are odd functions on the Jacobian, the formal group inverse is 
\begin{equation*}
i(T) = -T;
\end{equation*}
therefore, for every $i\in \{1,2\}$, we have that $F_i(-T,-S) = - F_i(T,S)$, and hence the expansion of $F_i$ only contains terms of total odd degree. 
\begin{rmk}
Technically speaking the proof in \cite[Theorem 4.6]{Grant1990} assumes that $R$ is a ring (of characteristic different from $2$), complete under a non-archimedean valuation. This assumption ensures that the Jacobian over $K = \Frac(R)$ is an analytic group, and hence, locally, a formal group over $K$. Then a version of Gauss's lemma is used to prove that the formal group law is actually defined over $R$. The proof still gives us the desired result when $R$ is the ring of integers of a number field, since if the quotient of two power series in $R[[T,S]]$ is in $K_v[[T,S]]$ for \emph{a} non-archimedean place $v$, then it is in $K[[T,S]]$, and if it is also in $R_v[[T,S]]$ for \emph{every} non-archimedean place $v$ of $K$, then it is in $R[[T,S]]$.

It should also be possible to show that the group law is in fact defined over $\Z[b_1,\dots, b_5]$. However, it is not immediately clear how to do this. For example, we could try to re-write $F_i(T,S)$ as $\frac{G}{H}$ where $G$ and $H$ are power series in $(T,S)$ with coefficients in $\Z[b_1,\dots, b_5]$, and $H = 1+ O(T,S)$.

It is worth mentioning, in this respect, the analogous situation for genus $2$ curves described by an equation $y^2 = f_6 x^6 + f_5 x^5 + f_4 x^4+ f_3 x^3 + f_2 x^2 + f_1 x + f_0$, considered by Flynn in \cite{Flynn2, Flynn5}. In this case, in \cite{Flynn2} Flynn describes an explicit embedding of the Jacobian in $\PP^{15}$ (in place of $\PP^8$), as well as a formal group law over a ring that is complete with respect to a non-archimedean valuation. Like Grant, Flynn appeals to the analytic group structure of the Jacobian over the fraction field for the formal group law and to Gauss's lemma. On the other hand, in the subsequent article \cite{Flynn5}, he gives a more direct proof that the group law is in fact defined over $\Z[f_1,\dots, f_6]$ as a corollary of an explicit description of the group law on the Jacobian by biquadratic forms. 
\end{rmk}

Let $F$ be the formal group law over $R$ of Theorem \ref{thm:formal_gp_law}. By the general theory of \S \ref{subsec:formalgps_gen}, its base-change to $K$ is isomorphic to the additive formal group of dimension $2$. In particular, let $\mathcal{L} = (\mathcal{L}_1,\mathcal{L}_2)$ be the strict logarithm with respect to $T_1,T_2$, let $\mathcal{E} = (\mathcal{E}_1,\mathcal{E}_2)$ be its inverse, and let $(D_1,D_2)$ be the basis of invariant derivations dual to $(d\mathcal{L}_1,d\mathcal{L}_2)$ (cf.\ Theorem \ref{thm:log_iso_Ga}, Definition \ref{def:strict_log} and \eqref{eq:dual_inv_diff_der}).

The expansions $\Omega_1(T)$ and $\Omega_2(T)$ in $T_1,T_2$ of the invariant differentials $\Omega_1$ and $\Omega_2$ of Definition \ref{def:basis_inv_diff_inv_der} are also invariant differentials for $F$. 

\begin{lemma}\label{lemma:dLi_eq_Omegai}
For each $i\in \{1,2\}$, we have
\begin{equation*}
d\mathcal{L}_i(T) = \Omega_i(T).
\end{equation*}
\end{lemma}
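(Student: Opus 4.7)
The strategy is to use the fact that $(d\mathcal{L}_1, d\mathcal{L}_2)$ is characterised uniquely among $R$-bases of the invariant differentials of $F$ by the normalisation $d\mathcal{L}_i \equiv dT_i \pmod{(T_1,T_2)}$, which follows from Definition \ref{def:strict_log} together with Lemma \ref{lemma:inv_diff}. Hence the proof reduces to showing that, for each $i\in\{1,2\}$, the formal expansion $\Omega_i(T)$ is an invariant differential of $F$ and has leading term $dT_i$.

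The first of these is conceptual. Since $J\hookrightarrow \PP^8$ is a group embedding sending the identity to $O$, and $T_1,T_2$ are local parameters at $O$ defining $F$, the formal group $F$ is the formal completion of $J$ at the identity and translation on $F$ is the restriction of translation on $J$. The invariance of $\Omega_i$ on $J$ asserted in Definition \ref{def:basis_inv_diff_inv_der} then propagates to invariance of $\Omega_i(T)$ for $F$. The second is an explicit computation. Dividing numerator and denominator in Lemma \ref{lemma:inv_difder} by suitable powers of $X_{111}$, one writes $\Omega_i$ as a rational expression in $u_{jk}\colonequals X_{jk}/X_{111}$, $v_{jk\ell}\colonequals X_{jk\ell}/X_{111}$ and $d\log X_{111}$. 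The low-order expansions $u_{11}=-T_1$, $u_{12}=T_1T_2^2+O(T)^5$, $v_{112}=-T_2^2+O(T)^4$, $v_{122}=T_1T_2+O(T)^4$, and $1/X_{111}=-T_1^3+O(T)^5$, which follow from \eqref{eq:Xij_exp}--\eqref{eq:Xijk_exp} together with the parity statement of Remark \ref{rmk:odd_and_even_expansions}, then suffice to verify that the numerator and denominator in each $\Omega_i$ produce the same leading factor $2T_1T_2$, paired with $dT_i$ in the numerator.

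The main obstacle arises for $\Omega_2$: the two would-be leading contributions to $u_{12}-v_{112}u_{11}$ (each equal to $T_1T_2^2$ a priori) cancel, so one must show that $(u_{12}-v_{112}u_{11})\,d\log X_{111}$ contributes only at order $\geq 2$. This can be done either by tracking the next-order corrections to $u_{12}$ and $v_{112}u_{11}$, or by exploiting the fact that $d\log X_{111}$ differs from $-3\,dT_1/T_1$ by an analytic form while $u_{12}-v_{112}u_{11}=O(T)^5$ vanishes to high enough order to absorb the simple pole. Once this is clear, $\Omega_2(T)=dT_2+O(T)^2=d\mathcal{L}_2(T)$ by uniqueness, and an analogous (simpler, since no cancellation occurs) computation yields the identity for $\Omega_1$.
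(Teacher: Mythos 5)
Your proposal is correct, and it is the mirror image of the paper's argument: both proofs identify $\Omega_i(T)$ among the rank-$2$ module of invariant objects by a leading-term computation based on the expansions \eqref{eq:Xij_exp}--\eqref{eq:Xijk_exp}, but you work on the side of differentials while the paper works on the side of derivations. Concretely, the paper reduces the claim to showing that the derivation dual to $\Omega_i(T)$ is $D_i$, and then uses Lemma \ref{lemma:inv_difder}\thinspace{}\eqref{lemma_part:inv_difder_2} (namely $\partial_i(X_{12}) = 2X_{1i2}$) together with the leading terms of $X_{12}$, $X_{112}$, $X_{122}$ and $D_i = \partial/\partial T_i + O(T)^2$ to pin down the coefficients; you instead use the explicit formula of Lemma \ref{lemma:inv_difder}\thinspace{}\eqref{lemma_part:inv_difder_1} and the normalisation $d\mathcal{L}_i = dT_i + O(T)$ built into Definition \ref{def:strict_log}. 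The trade-off is that the paper's choice of test function $X_{12}$ sidesteps exactly the bookkeeping you have to do for $\Omega_2$: the cancellation of the would-be leading $dT_1$-terms and the simple pole of $d\log X_{111}$. Your handling of that point is fine, but note that the ``absorb the simple pole'' variant is only legitimate because you know a priori that $\Omega_2(T)$ lies in $K[[T_1,T_2]]\,dT_1 + K[[T_1,T_2]]\,dT_2$ (it is the expansion of a regular differential in the local parameters $T_1,T_2$ at $O$), so that a remainder of total degree at least $2$ --- even one produced via non-analytic intermediate expressions such as $T_2^4/T_1^2$ --- cannot affect the constant coefficients of $dT_1$ and $dT_2$; it would be worth saying this explicitly, or simply carrying out the fully rigorous alternative of tracking the next-order corrections, which by the parity statement of Remark \ref{rmk:odd_and_even_expansions} enter two degrees above the leading terms.
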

\begin{proof}
It suffices to show that the invariant derivation dual to $\Omega_i(T)$ is $D_i$. Upon extending $D_i$ to $\Frac{K[[T_1,T_2]]}$ in such a way that it keeps satisfying the Leibniz rule, it is then sufficient, in view of Lemma \ref{lemma:inv_difder}\thinspace{}\eqref{lemma_part:inv_difder_2}, to show that \begin{equation*}
\alpha D_1(X_{12}) + \beta D_2(X_{12})
\end{equation*}
is different from $2X_{112}$ for all $(\alpha,\beta)\neq (1,0)$ and that it is different from $2X_{122}$ for all $(\alpha,\beta)\neq (0,1)$. 
By \eqref{eq:Xij_exp} and \eqref{eq:Xijk_exp}, we have
\begin{align*}
X_{12} = -T_1^{-2}T_2^2 + O(T_1,T_2),\qquad 
X_{112} = T_1^{-3}T_2^{2} + O(1),\qquad
X_{122} =-T_1^{-2}T_2 + O(1);
\end{align*}
for an integer $i$, $O(T_1,T_2)^i$ means terms of total degree $\geq i$, where we are allowing negative valuation: for example, $T_1^{-1}T_2^{3}$ is $O(T_1,T_2)^{2}$.
Moreover, by \eqref{eq:Di},
\begin{equation*}
\left(\begin{matrix}D_1\\
D_2\end{matrix}\right) = \left(\begin{matrix}
1 + O(T_1,T_2)^2 & O(T_1,T_2)^2\\
O(T_1,T_2)^2 &  1 + O(T_1,T_2)^2 
\end{matrix}\right)\left(\begin{matrix}
\frac{\partial}{\partial T_1}\\
\frac{\partial}{\partial T_2}
\end{matrix}\right).
\end{equation*}
Therefore,
\begin{equation*}
D_1 (X_{12}) = 2T_1^{-3}T_2^2 + O(1), \qquad D_2(X_{12}) = -2T_1^{-2}T_2 + O(1),
\end{equation*}
which proves the lemma. 
\end{proof}

So far we have only been concerned with formal group laws. If $R$ is now a characteristic $0$ ring, complete under a non-archimedean absolute value $|\cdot |_v$, with maximal ideal $\mathfrak{m}$, we can consider a group $F(\mathfrak{m})$ associated to the formal group law of Theorem \ref{thm:formal_gp_law}:
\begin{equation*}
F(\mathfrak{m}) = \mathfrak{m}\times \mathfrak{m},\qquad t + s \colonequals F(t,s),\text{ for all } t = (t_1,t_2),s = (s_1,s_2)\in F(\mathfrak{m}).
\end{equation*}
Let $\tilde{J}$ be the variety in $\PP^{8}$ obtained by reducing modulo $\mathfrak{m}$ the coefficients of the equations defining $J$ in $\PP^8$; this may or may not be an abelian variety over the residue field $R/\mathfrak{m}$. Nevertheless, we have a reduction map $\tilde{\ }\colon J(K)\to \tilde{J}(R/\mathfrak{m})$. It follows from Theorem \ref{thm:exp_Xij_Xijk} and the definition of the group law on $F(\mathfrak{m})$ that we have the following isomorphism of groups
\begin{equation}\label{eq:kernel_red}
F(\mathfrak{m}) \cong J_1(K)\colonequals \{u\in J(K)\,:\, \tilde{u} = \tilde{O}\}.
\end{equation}
(see \cite[Corollary 4.5, Theorem 4.6]{Grant1990}). Note that this definition is model-dependent. 

\section{$v$-Adic sigma functions}\label{sec:sigma_functions}
\subsection{The naive sigma function}\label{subsec:naive_sigma}
In this subsection, we assume that our genus $2$ curve $C$ is defined over a number field $K$, with ring of integers $\mathcal{O}$, and that all the coefficients $b_1,\dots,b_5$ of the defining equation \eqref{eq:Grant} for $C$ lie in $\mathcal{O}$.

Let $\rho$ be an embedding of $K$ in $\C$ 
 and let $\sigma(z)$ be the hyperelliptic sigma function on $\C^2$ (in the sense of \S \ref{subsec:jac}, \eqref{eq:sigma}) associated to the curve over $\C$ obtained from $C$ via $\rho$.
In particular, $\sigma(z)$ is an odd function and it satisfies the differential equations \eqref{eq:diff_equation_cx_sigma}. Moreover, it has a Taylor expansion in $z=(z_1,z_2)$ around $0$ with coefficients in $\Q[\rho(b_i)]$.

The algebraicity of the coefficients of the Taylor expansion around $0$ of the \emph{complex} sigma function is what we would like to use to define a \emph{$v$-adic} sigma function for a non-archimedean place $v$, thereby extending to genus $2$ Bernardi's definition of a $v$-adic sigma function on elliptic curves \cite{bernardi}. We will call such a $v$-adic sigma function \emph{naive}, as it is constructed ad hoc from the complex one, and to distinguish it from other $v$-adic sigma functions that we will consider in \S \ref{subsec:infty_sigma}.

By \S \ref{subsec:formal}, we have an explicit description of formal group parameters $T_1,T_2$ and a resulting formal group law $F$ over $\OO$. Consider the base-change to $K$, the strict logarithm $\mathcal{L} = (\mathcal{L}_1,\mathcal{L}_2)$ with respect to $T_1,T_2$ and its inverse $\mathcal{E} = (\mathcal{E}_1,\mathcal{E}_2)$, and let $(D_1,D_2)$ be the basis of invariant derivations dual to $(d\mathcal{L}_1,d\mathcal{L}_2)$ (cf.\ Theorem \ref{thm:log_iso_Ga}, Definition \ref{def:strict_log} and \eqref{eq:dual_inv_diff_der}). We extend $D_i$ to $\Frac{K[[T_1,T_2]]}$ in such a way that it keeps satisfying the Leibniz rule, and for $f(T)\in K[[T]]$, we define $D_i(\log(f(T)))\in\Frac{K[[T_1,T_2]]} $ so that Lemma \ref{lemma:log_and_inv_derivations} holds for $\log(f(T))$. 

 We also consider the base-change of $F$ to $\C$ via $\rho$.  By \cite[\S 10.9.3]{Bost}\footnote{Technically speaking this reference is for elliptic curves, but as stated by the author the results ``actually admit straightforward generalizations concerning commutative algebraic groups over (local) fields of characteristic zero''.}, there is a formal group logarithm $\tilde{\mathcal{L}} \colon F_{\C} \to \G_a^2$ whose inverse $\tilde{\mathcal{E}} \colon \G_a^2\to F_{\C}$ extends to the surjective homomorphism $\C^2\to J$ inducing the inverse of $\Phi\colon J\to \C^2/\Lambda$ (see \eqref{eq:Phi}).  
 
\begin{lemma} \label{lemma:pijF}
The logarithm $\tilde{\mathcal{L}}$ and exponential $\tilde{\mathcal{E}}$ are the base-change to $\C$ of $\mathcal{L}$ and $\mathcal{E}$. Therefore,
\begin{equation*}
X_{ij}(\mathcal{E}(z)) = \wp_{ij}(z), \qquad X_{ijk}(\mathcal{E}(z)) = \frac{\wp_{ijk}}{2}(z),
\end{equation*}
where $\wp_{ij}(z)$ (resp.\ $\wp_{ijk}(z)$) is the quotient of series in $z_1,z_2$ obtained from the Taylor expansion of $\sigma$ and \eqref{eq:diff_equation_cx_sigma}.
\end{lemma}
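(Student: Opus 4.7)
My plan is to apply the uniqueness part of Theorem \ref{thm:log_iso_Ga}. Both $\tilde{\mathcal{L}}$ and the base-change of $\mathcal{L}$ to $\C$ via $\rho$ are formal group isomorphisms $F_{\C}\to \G_a^2$; by Theorem \ref{thm:log_iso_Ga} each is determined by the basis of $\Omega^{\inv}$ consisting of the differentials $d\tilde{\mathcal{L}}_i$ and $d\mathcal{L}_i$, respectively. So the task reduces to showing that these two bases coincide, after which the statement on $\wp_{ij}$ and $\wp_{ijk}$ will follow from \eqref{eq:X_and_p}.

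First I would record the analytic side. The exponential $\tilde{\mathcal{E}}$ extends to the uniformisation map $\C^2\to J(\C)$ that inverts $\Phi$. Since $\Phi$ is constructed by integrating the basis $\omega_1,\omega_2$ of $H^0(C/\C,\Omega^1)$ (see \eqref{eq:Phi}), the pullback of $dz_i$ along the uniformisation is precisely the unique translation-invariant differential on $J$ whose pullback to $C$ under $\iota$ is $\omega_i$. By Definition \ref{def:basis_inv_diff_inv_der} this is $\Omega_i$; restricting to the formal neighbourhood of the origin and passing to the $T$-expansion, this says
\begin{equation*}
d\tilde{\mathcal{L}}_i(T) = \tilde{\mathcal{E}}^{*}(dz_i) = \Omega_i(T).
\end{equation*}
On the algebraic side, Lemma \ref{lemma:dLi_eq_Omegai} gives $d\mathcal{L}_i(T) = \Omega_i(T)$, so the two bases $\{d\tilde{\mathcal{L}}_i\}$ and $\{d\mathcal{L}_i\}_{\C}$ of $\Omega^{\inv}$ agree. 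The uniqueness of the logarithm attached to a basis of invariant differentials (Theorem \ref{thm:log_iso_Ga}, together with the fact that both $\tilde{\mathcal{L}}$ and $\mathcal{L}_{\C}$ vanish at the origin) then forces $\tilde{\mathcal{L}} = \mathcal{L}_{\C}$, and hence also $\tilde{\mathcal{E}} = \mathcal{E}_{\C}$.

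For the second assertion, I would feed this identification into the complex analytic description. Since $\tilde{\mathcal{E}}$ extends to the map $\C^2\to J(\C)$ inverting $\Phi$, a point $\tilde{\mathcal{E}}(z)$ (for $z$ in the domain of convergence of $\tilde{\mathcal{E}}$) corresponds under $\Phi$ to the class $z\bmod \Lambda$. The formulae \eqref{eq:X_and_p} then evaluate the projective coordinates of that point in terms of the meromorphic functions $\wp_{ij},\wp_{ijk}$ from \eqref{eq:diff_equation_cx_sigma} at $z$, giving $X_{ij}(\mathcal{E}(z)) = \wp_{ij}(z)$ and $X_{ijk}(\mathcal{E}(z)) = \wp_{ijk}(z)/2$.

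The main obstacle I anticipate is merely the verification that $\tilde{\mathcal{E}}^{*}\Omega_i = dz_i$; this is essentially the content of the Abel--Jacobi construction, but one has to be careful that the symmetrisation occurring in the definition of $\Phi$ on $C^{(2)}$ matches the invariant-differential picture on $J$. Once this bookkeeping is done, both halves of the lemma follow with no further computation.
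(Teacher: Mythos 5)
Your argument is correct and follows essentially the same route as the paper, whose proof is the one-line remark that the lemma ``follows from the definition of $\Phi$ \eqref{eq:Phi} and Lemma \ref{lemma:dLi_eq_Omegai}''. You have simply expanded those two ingredients — the identification $\tilde{\mathcal{E}}^{*}(dz_i)=\Omega_i(T)$ coming from the Abel--Jacobi construction, and $d\mathcal{L}_i(T)=\Omega_i(T)$ from Lemma \ref{lemma:dLi_eq_Omegai} — and invoked the uniqueness in Theorem \ref{thm:log_iso_Ga}, which is exactly what the paper leaves implicit.
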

\begin{proof}
This follows from the definition of $\Phi$ \eqref{eq:Phi} and Lemma \ref{lemma:dLi_eq_Omegai}.
\end{proof}

Let now $v$ be a prime of $K$ above a fixed rational prime $p$, let $K_v$ be the completion of $K$ with respect to $v$, $\OO_v$ its ring of integers with unique maximal ideal $\mathfrak{m}_v$. Let $\ordnop_v$ be the valuation on $K_v$ corresponding to $v$. Recall from the end of \S \ref{subsec:formal} that there is a group isomorphism between $F(\mathfrak{m}_v)$ (which, as a set, is $\mathfrak{m}_v\times \mathfrak{m}_v)$ and the group $J_1(K_v)$ of points reducing to $O$ modulo $\mathfrak{m}_v$. We would like to use this to obtain a $v$-adic sigma function on $J_1(K_v)$, by substituting $\mathcal{L}(T)$ in the Taylor series expansion of $\sigma(z)$.

Similarly to the elliptic curve case \cite{bernardi}, it will not always be possible to obtain a function on the whole of $J_1(K_v)$: 
we need to understand how $\sigma(\mathcal{L}(T))$ converges $v$-adically. We start with an auxiliary lemma.

\begin{lemma}\label{lemma:subs_exponential}
Let $f(T_1,T_2)\in \OO_v[[T_1,T_2]]$. Then $f(\mathcal{E}(z))$ is of the form
\begin{equation*}
f(\mathcal{E}(z)) = \sum_{i_1, i_2\in \N} \frac{a_{i_1,i_2}}{i_1! i_2!} z_1^{i_1}z_2^{i_2}, \qquad \text{where } a_{i_1,i_2}\in \OO_v.
\end{equation*}
In particular, it converges for all $z$ with $\min_i\{\ordnop_v(z_i)\}>\frac{\ordnop_v(p)}{p-1}$. Moreover,  for such $z$ we have
\begin{equation*}
\min_i\{\ordnop_v(\mathcal{E}_i(z))\} = \min_i\{\ordnop_v(z_i)\}.
\end{equation*}
Similarly, if $\min_i\{\ordnop_v(T_i)\}> \frac{\ordnop_v(p)}{p-1}$, then
\begin{equation*}
\min_i\{\ordnop_v(\mathcal{L}_i(T))\} = \min_i\{\ordnop_v(T_i)\}.
\end{equation*}
\end{lemma}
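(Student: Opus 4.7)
The plan is to extract from Proposition \ref{prop:exp_exp_log} the precise shape of the coefficients of $\mathcal{E}$ and $\mathcal{L}$, and then run a $v$-adic estimate on the resulting denominators. First introduce the subring
\[
R_v = \Biggl\{\sum_{i_1,i_2\in\N}\frac{a_{i_1,i_2}}{i_1!\,i_2!}z_1^{i_1}z_2^{i_2}:a_{i_1,i_2}\in\OO_v\Biggr\}\subset K_v[[z_1,z_2]].
\]
The identity $\tfrac{n_1!\,n_2!}{j_1!\,k_1!\,j_2!\,k_2!}=\binom{n_1}{j_1}\binom{n_2}{j_2}\in\Z$ (for $j_\ell+k_\ell=n_\ell$) shows that $R_v$ is closed under multiplication, and Proposition \ref{prop:exp_exp_log} gives $\mathcal{E}_i(z)\in R_v$. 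Since $\mathcal{E}_i(z)$ has zero constant term, the substitution $f(\mathcal{E}(z))$ makes sense as a formal power series whose coefficients are finite $\OO_v$-linear combinations of coefficients of monomials $\mathcal{E}_1(z)^{i_1}\mathcal{E}_2(z)^{i_2}\in R_v$. Thus $f(\mathcal{E}(z))\in R_v$, which is the first claim.

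For convergence, I would use the standard bound $\ordnop_v(n!)\leq\tfrac{n-1}{p-1}\ordnop_v(p)$ for $n\geq 1$ (a consequence of Legendre's formula $\ordnop_v(n!)=\tfrac{n-s_p(n)}{p-1}\ordnop_v(p)$). This yields
\[
\ordnop_v(j_1!\,j_2!)\leq\frac{j_1+j_2-1}{p-1}\ordnop_v(p)\qquad\text{whenever }(j_1,j_2)\neq(0,0),
\]
so writing $m=\min_i\ordnop_v(z_i)$, each summand $\tfrac{a_{i_1,i_2}}{i_1!\,i_2!}z_1^{i_1}z_2^{i_2}$ of $f(\mathcal{E}(z))$ has valuation at least
\[
(i_1+i_2)m-\frac{i_1+i_2-1}{p-1}\ordnop_v(p)=m+(i_1+i_2-1)\Bigl(m-\frac{\ordnop_v(p)}{p-1}\Bigr).
\]
Under the hypothesis $m>\ordnop_v(p)/(p-1)$, this tends to $+\infty$ with $i_1+i_2$, giving convergence.

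Applying the same estimate to $\mathcal{E}_i(z)=z_i+\sum_{j_1+j_2\geq 2}\tfrac{b_{j_1,j_2}}{j_1!\,j_2!}z_1^{j_1}z_2^{j_2}$, every higher-order term strictly exceeds $m$ in valuation, so $\ordnop_v(\mathcal{E}_i(z))\geq m$ for every $i$, with equality as soon as $\ordnop_v(z_i)=m$, giving $\min_i\ordnop_v(\mathcal{E}_i(z))=m$. For the statement about $\mathcal{L}$, I would use the other half of Proposition \ref{prop:exp_exp_log}: $\mathcal{L}_i=T_i+\sum_{j_1+j_2\geq 2}\tfrac{a_{j_1,j_2}}{\gcd(j_1,j_2)}T_1^{j_1}T_2^{j_2}$. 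Since $\ordnop_v(\gcd(j_1,j_2))=\min(\ordnop_v(j_1),\ordnop_v(j_2))$ (with $\ordnop_v(0)=+\infty$), the bound $\ordnop_v(\gcd(j_1,j_2))\leq\tfrac{j_1+j_2-1}{p-1}\ordnop_v(p)$ holds for $j_1+j_2\geq 2$: if both indices are positive one applies the one-variable bound to whichever is smaller; if one index is $0$ one applies it directly to the other. The identical calculation yields the conclusion.

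The main obstacle, such as it is, is verifying the closure of $R_v$ under multiplication via the binomial identity and extracting the sharp denominator bound $\ordnop_v(j_1!\,j_2!)\leq\tfrac{j_1+j_2-1}{p-1}\ordnop_v(p)$; everything else is bookkeeping. The only delicate point for $\mathcal{L}$ is the edge cases $j_1=0$ or $j_2=0$, which reduce to the one-variable estimate $\ordnop_v(n)\leq\tfrac{n-1}{p-1}\ordnop_v(p)$.
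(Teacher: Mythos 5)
Your proposal is correct and follows essentially the same route as the paper's proof: Proposition \ref{prop:exp_exp_log} for the shape of the denominators, integrality of binomial/multinomial coefficients for closure of the divided-power ring under multiplication, and Legendre's formula in the form $\ordnop_v(n!)\leq\frac{n-1}{p-1}\ordnop_v(p)$ for the convergence and valuation claims. The only (immaterial) divergence is at the end: the paper disposes of $\mathcal{L}_i$ with "the proof is identical" (implicitly via $\gcd(j_1,j_2)\mid j_1+j_2$, so the factorial bound applies), whereas you bound $\ordnop_v(\gcd(j_1,j_2))$ directly by the one-variable estimate -- both work.
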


\begin{proof}
The first part follows from Proposition \ref{prop:exp_exp_log} and the fact that if $j_1,j_2\in \N$ are such that $j_1+j_2 = n$ then $\ordnop_v(j_1!j_2!) \leq \ordnop_v(n!)$ by Legendre's formula. For the same reasons, for the second part it suffices to show that if $\min_i\{\ordnop_v(z_i)\}>\frac{\ordnop_v(p)}{p-1}$ and $ n\geq 2$, then 
\begin{equation}\label{eq:vals}
n\min_i\{\ordnop_v(z_i)\}-\ordnop_v(n!) > \min_i\{\ordnop_v(z_i)\}.
\end{equation}
Let $s_p(n)$ be the sum of the base $p$ digits of $n$. Then 
\begin{equation*}
\ordnop_v(n!) = \frac{n-s_p(n)}{p-1}\ordnop_v(p) \leq  \frac{n-1}{p-1}\ordnop_v(p),
\end{equation*}
so \eqref{eq:vals} follows. The proof for $\mathcal{L}_i$ is identical.
\end{proof}
Let $[n]T $ be defined inductively by $[1]T = T$ and for $n\geq 2$, $[n]T= F([n-1]T, T)$ and let $\phi_n(T)$ be the expansion in $T$ of the $n$-th division polynomial (see \eqref{eq:def_div_poly} and surrounding paragraph).
\begin{thm}\label{thm:sigma_naive}
Let $\sigma(z) = z_1 + O(z_1,z_2)^3\in K[[z_1,z_2]]$ be the Taylor expansion around $0$ of the complex sigma function, and let
\begin{equation*}
\sigma_v^{(0)}(T)\colonequals \sigma(\mathcal{L}(T))\in K[[T]].
\end{equation*}
\begin{enumerate}[label=(\roman*)]
\item\label{thm_sigma:part1} $\sigma_v^{(0)}(T)$ satisfies the system of differential equations
\begin{equation*}
D_iD_j \log(\sigma_v^{(0)}(T)) =-X_{ij}(T), \qquad \text{for all } i,j\in \{1,2\} \qquad (\text{where } X_{21}\colonequals X_{12}).
\end{equation*}
\item\label{thm_sigma:part2} $\sigma_v^{(0)}(T)$ converges for all $T=(T_1,T_2)\in K_v^2$ satisfying $\min_{i}\{\ordnop_v(T_i)\} >\frac{\ordnop_v(p)}{p-1}$.
\item\label{thm_sigma:part3} When all terms are defined, we have
\begin{align*}
\frac{\sigma_v^{(0)}(F(T,S))\sigma_v^{(0)}(F(T,-S))}{\sigma_v^{(0)}(T)^2\sigma_v^{(0)}(S)^2}& = -X_{11}(T) + X_{11}(S) - X_{12}(T)X_{22}(S) + X_{22}(T) X_{12}(S);\\
\frac{\sigma_v^{(0)}([n]T)}{\sigma_v^{(0)}(T)^{n^2}} &= \phi_n(T).
\end{align*}
\end{enumerate}
\end{thm}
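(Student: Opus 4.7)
The plan is to prove the three parts in order. Part \ref{thm_sigma:part1} is essentially formal and follows by pulling back the complex differential equations \eqref{eq:diff_equation_cx_sigma} along $\mathcal{L}$. The paper has already set up $D_i\log f := D_i(f)/f$ so that Lemma \ref{lemma:log_and_inv_derivations} extends to $\log$ of an element of $K[[T]]$; applying it twice, and using that $\sigma_v^{(0)}(T) = (\sigma\circ\mathcal{L})(T) = (\sigma\circ\mathcal{E}^{-1}\circ\mathcal{L})(T)$, gives
\begin{equation*}
D_iD_j\log\sigma_v^{(0)}(T) \;=\; \frac{\partial^2\log\sigma(z)}{\partial z_i\,\partial z_j}\bigg|_{z=\mathcal{L}(T)} \;=\; -\wp_{ij}(\mathcal{L}(T)).
\end{equation*}
Lemma \ref{lemma:pijF} then identifies $\wp_{ij}(\mathcal{L}(T))$ with $X_{ij}(\mathcal{E}(\mathcal{L}(T))) = X_{ij}(T)$, finishing part \ref{thm_sigma:part1}.

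For part \ref{thm_sigma:part2}, the Taylor expansion of $\sigma(z)$ lies in $\rho(\mathcal{O})[[z_1,z_2]]$ at best after clearing denominators, so the content of the statement is a $v$-adic bound on these denominators. I would extract such bounds recursively from the differential equation of part \ref{thm_sigma:part1}, rewritten as $\sigma\cdot\partial_i\partial_j\sigma - (\partial_i\sigma)(\partial_j\sigma) = -\wp_{ij}\sigma^2$: since by Lemma \ref{lemma:pijF} the rational functions $\wp_{ij}(z)$ equal $X_{ij}(\mathcal{E}(z))$, the expansions \eqref{eq:Xij_exp} together with Proposition \ref{prop:exp_exp_log} applied to $\mathcal{E}$ force the Taylor coefficients of $\wp_{ij}$ (away from their controlled poles at $0$) to have denominators bounded factorially in the total degree. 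An induction on total degree, combined with the normalisation $\sigma(z) = z_1 + O(z)^3$, then shows the same factorial-type bound for the Taylor coefficients of $\sigma$, which secures convergence of $\sigma(z)$ on $\{\min_i\ordnop_v(z_i) > \ordnop_v(p)/(p-1)\}$. Finally, Lemma \ref{lemma:subs_exponential} says that $\mathcal{L}$ sends $\{\min_i\ordnop_v(T_i) > \ordnop_v(p)/(p-1)\}$ into exactly this region while preserving the minimum valuation, giving the stated convergence of $\sigma_v^{(0)}(T) = \sigma(\mathcal{L}(T))$. Establishing the factorial denominator bounds on the Taylor coefficients of $\sigma$ is the main obstacle here, and will require carefully tracking the arithmetic of the recursion in \S \ref{subsec:formal}.

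For part \ref{thm_sigma:part3}, both identities are obtained by specialising the complex-analytic identities to the formal group. The map $\mathcal{L}\colon F\to\mathbb{G}_a^2$ is a formal group homomorphism, so $\mathcal{L}(F(T,\pm S)) = \mathcal{L}(T)\pm\mathcal{L}(S)$ and $\mathcal{L}([n]T) = n\mathcal{L}(T)$; this converts the complex identity \eqref{eq:div_par} with $m=1$, after substituting $u=\mathcal{L}(T)$, $v=\mathcal{L}(S)$ and applying Lemma \ref{lemma:pijF} to replace each $\wp_{ij}$ by the corresponding $X_{ij}$, into the claimed addition formula. For the division polynomial identity, substituting $z=\mathcal{L}(T)$ into \eqref{eq:def_div_poly} gives $\sigma_v^{(0)}([n]T)/\sigma_v^{(0)}(T)^{n^2} = \phi_n(\mathcal{L}(T))$; by Uchida's theorem that $\phi_n$ is a polynomial in the $\wp_{ij}, \wp_{ijk}$, the rational function $\phi_n(\mathcal{L}(T))$ is exactly the formal expansion in $T_1, T_2$ of $\phi_n$ viewed as a function on $J$, i.e.\ $\phi_n(T)$ in the notation of the theorem. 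Everything in this part is formal manipulation once parts \ref{thm_sigma:part1} and \ref{thm_sigma:part2} are in hand, so no further difficulty is anticipated beyond verifying that all terms lie outside the divisor of poles so the identities make sense as formal meromorphic series.
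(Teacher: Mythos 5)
Your treatments of parts \ref{thm_sigma:part1} and \ref{thm_sigma:part3} coincide with the paper's: both reduce to Lemmas \ref{lemma:log_and_inv_derivations} and \ref{lemma:pijF} together with the complex identities \eqref{eq:def_div_poly} and \eqref{eq:div_par}, and nothing more is needed there.

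The gap is in part \ref{thm_sigma:part2}, and it sits exactly where you flag ``the main obstacle'': the claim that an induction on total degree through the differential equation yields factorial denominators for the Taylor coefficients of $\sigma(z)$ itself. Two concrete problems. First, $\wp_{ij}(z)$ is not a power series near $0$: it has a double pole along the pullback of the theta divisor, which is a divisor through the origin (locally the zero locus of $\mathcal{E}_1(z)$), not an isolated ``controlled pole at $0$''; so ``the Taylor coefficients of $\wp_{ij}$'' are not defined, and only the product $\wp_{ij}\sigma^2$ is a power series. Second, and more seriously, solving $\sigma\,\partial_i\partial_j\sigma-(\partial_i\sigma)(\partial_j\sigma)=-\wp_{ij}\sigma^2$ for the degree-$N$ coefficients of $\sigma$ forces you to divide by the integers $i_1(i_1-1)$, $i_1i_2$, etc.\ produced by differentiating the top-degree part against the leading term $z_1$, and these divisions act on quantities whose denominators already carry the factorials of all lower-degree coefficients through the quadratic terms. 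A bound of the shape $a/(i_1!\,i_2!)$ with $a\in\OO_v$ is not visibly stable under this recursion, and your sketch offers no mechanism to control the compounding.

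The paper avoids bounding the coefficients of $\sigma$ altogether. It factors $\sigma(z)=\mathcal{E}_1(z)u(z)$ with $u=1+O(z_1,z_2)$, using that $\div(\sigma)$ is the pullback of $\Theta$ and that $T_1$ cuts out $\Theta$ near the origin; it then notes that $D_iD_j\log(u_v(T))=-X_{ij}(T)-D_iD_j\log(T_1)$ is a power series with coefficients in $\OO$, because the double pole of $X_{ij}$ along $T_1=0$ cancels against the $\log(T_1)$ term. Pulling back by $\mathcal{E}$ via Lemma \ref{lemma:subs_exponential} gives factorial denominators for $\partial_{z_i}\partial_{z_j}\log(u(z))$; two formal integrations preserve that shape; and finally $u=\exp(\log u)$ is handled as a \emph{convergence} statement (the $p$-adic exponential converges where $\ordnop_v(\log u(z))>\ordnop_v(p)/(p-1)$, by the substitution theorem), with no integrality claim about the coefficients of $u$ or $\sigma$. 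The only division-by-integers step is formal integration of an already integral series. To repair your argument you would either have to prove the factorial bound for $\sigma$ directly --- a strictly stronger statement than what is needed and not established anywhere in the paper --- or adopt this logarithmic-derivative bookkeeping.
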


\begin{proof}
By Lemma \ref{lemma:log_and_inv_derivations}, we have
\begin{equation*}
(D_iD_j \log(\sigma_v^{(0)}(T)))(\mathcal{E}(z)) = \frac{\partial^2}{\partial z_i\partial z_j}\log(\sigma_v^{(0)}(\mathcal{E}(z))) = \frac{\partial^2 }{\partial z_i \partial z_j}\log(\sigma(z)) =- \wp_{ij}(z),
\end{equation*}
which gives \ref{thm_sigma:part1} by Lemma \ref{lemma:pijF}.

Since the divisor of $\sigma$ is the preimage of $\Theta$ under $\C^2\to J$, since $T_1$ represents $\Theta$ in a neighbourhood of the origin \cite[Lemma 32]{blakestadsthesis} and since $\sigma(z) = z_1 + O(z_1,z_2)^3$, we have 
\begin{equation*}\label{eq:sigma_div_E1}
\sigma(z) = \mathcal{E}_1(z)u(z), \qquad\text{for some}\quad u(z) =1 + O(z_1,z_2)\in K[[z_1,z_2]].
\end{equation*}
Combining with \ref{thm_sigma:part1} and the fact that $\sigma$ is an odd function, we get that
\begin{equation*}
\sigma_v^{(0)}(T) = T_1 u_v(T),
\end{equation*}
where $u_v(T) = 1 + O(T_1,T_2)^2\in K[[T_1,T_2]]$ satisfies
\begin{equation*}
D_i D_j \log(u_v(T)) =-X_{ij}(T) - D_iD_j \log(T_1). 
\end{equation*}
Since the left hand side of this equation is a power series, so is the right hand side. Moreover, the latter is easily seen to have coefficients in $\OO$. Evaluating at $\mathcal{E}(z)$ and using Lemma \ref{lemma:subs_exponential}, we see that 
\begin{equation*}
\frac{\partial^2}{\partial z_i \partial z_j}\log(u(z)) = \sum_{i_1 + i_2\geq 0} \frac{a_{i_1,i_2}}{i_1!i_2!} z_1^{i_1}z_2^{i_2}, \qquad \text{for some } a_{i_1,i_2}\in  \OO_v.
\end{equation*}

By properties of integration, the series $\log(u(z))$ has the same form and since the leading term of $u_v(T)$ is $1$, it has vanishing constant term. Thus, if $z_i =\mathcal{L}_i(T)$ with $\min_i\{\ordnop_v(T_i)\}>\frac{\ordnop_v(p)}{p-1}$, then $\ordnop_v(\log(u(z))) \geq \min_i\{\ordnop_v(z_i)\}>\frac{\ordnop_v(p)}{p-1}$, so $\log(u(z))$ is in the domain of the $p$-adic exponential (cf.\ Lemma \ref{lemma:subs_exponential}). Applying \cite[\S 2 Substitution Theorem]{mattuck}, we find that $u(z)$ converges whenever $\min_i\{\ordnop_v(z_i)\}>\frac{\ordnop_v(p)}{p-1}$. Hence $\sigma(z)$ converges there too, which is statement \ref{thm_sigma:part2}. 

The identities stated in part \ref{thm_sigma:part3} follow from the identities satisfied by the complex sigma function (\eqref{eq:def_div_poly} and \cite[Example 2.16]{Uchida}).
\end{proof}

\subsection{Infinitely many sigma functions and Blakestad's canonical one}\label{subsec:infty_sigma}
Retain the notation of \S \ref{subsec:naive_sigma}. For $1\leq i,j\leq 2$, let $c_{ij}\in K_v$ with $c_{12} = c_{21}$ and let $c = (c_{ij})_{i,j}$. Then the system 
\begin{equation*}
D_iD_j(\log(\sigma_v^{(c)}(T))) =-X_{ij}(T) + c_{ij}, \qquad \text{for all } 1\leq i,j\leq 2, 
\end{equation*}
has a unique odd solution $\sigma_v^{(c)}(T)\in K_v[[T]]$ of the form $T_1 (1+ O(T_1,T_2))$: if $c = 0$, the zero matrix, then $\sigma_v^{(0)}  = \sigma_v^{(c)}$ is the naive sigma function that we studied in \S \ref{subsec:naive_sigma}; in general, we have
\begin{equation}\label{eq:formulasigmac}
\sigma_v^{(c)}(T) = \sigma_v^{(0)}(T) \exp\biggl(\frac{1}{2}\sum_{1\leq i,j\leq 2} c_{ij}\mathcal{L}_i(T)\mathcal{L}_j(T)\biggr).
\end{equation}
See Appendix \ref{app:sigma_expansion} for the terms up to total degree at most $8$ of the formal power series $ \sigma_v^{(c)}(T)$.

\begin{prop} \label{prop:finite_index_subgp}\leavevmode
\begin{enumerate}[label=(\roman*)]
\item\label{prop:finite_index_subgp:1} The formal power series $\sigma_v^{(c)}(T)\in K_v[[T_1,T_2]]$ induces a function on a finite index subgroup $H_v$ of $J(K_v)$.
\item\label{prop:finite_index_subgp:2} For $P\in H_v$, $\sigma_v^{(c)}(T(P))$ vanishes if and only if $T_1(P) = 0$, i.e.\ it vanishes only on the theta divisor and there it vanishes to order $1$.
\item\label{prop:finite_index_subgp:3} When all terms are defined, we have 
\begin{align*}
\frac{\sigma_v^{(c)}(F(T,S))\sigma_v^{(c)}(F(T,-S))}{\sigma_v^{(c)}(T)^2\sigma_v^{(c)}(S)^2}& = -X_{11}(T) + X_{11}(S) - X_{12}(T)X_{22}(S) + X_{22}(T) X_{12}(S);\\
\frac{\sigma_v^{(c)}([n]T)}{\sigma_v^{(c)}(T)^{n^2}} &= \phi_n(T).
\end{align*}
\end{enumerate}
\end{prop}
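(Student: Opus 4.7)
The plan is to reduce all three claims to Theorem \ref{thm:sigma_naive} via the formula \eqref{eq:formulasigmac}, exploiting that the exponential factor is nonvanishing on its domain of convergence and that $\mathcal{L}\colon F\to \G_a^2$ is a formal group homomorphism.

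For part (i), I would first pin down the domain of convergence. By Theorem \ref{thm:sigma_naive}\ref{thm_sigma:part2}, the factor $\sigma_v^{(0)}(T)$ converges on $\{T: \min_i \ordnop_v(T_i) > \tfrac{\ordnop_v(p)}{p-1}\}$. For the exponential factor, the $p$-adic $\exp$ converges at arguments of valuation $>\tfrac{\ordnop_v(p)}{p-1}$; using Lemma \ref{lemma:subs_exponential}, if $\min_i \ordnop_v(T_i) \geq M$ with $M > \tfrac{\ordnop_v(p)}{p-1}$, then $\ordnop_v(\mathcal{L}_i(T)) \geq M$, so the exponent $\tfrac{1}{2}\sum c_{ij}\mathcal{L}_i(T)\mathcal{L}_j(T)$ has valuation at least $2M + \min_{i,j}\ordnop_v(c_{ij})$. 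Choosing $M$ large enough that this exceeds $\tfrac{\ordnop_v(p)}{p-1}$ ensures convergence of both factors, and hence of $\sigma_v^{(c)}(T)$. The set $\{T\in F(\mathfrak{m}_v): \min_i\ordnop_v(T_i) \geq M\}$ is a subgroup of $F(\mathfrak{m}_v)$ since $F(T,S) \equiv T+S \pmod{O(T,S)^2}$ and the formal inverse is $-T$; via the isomorphism $F(\mathfrak{m}_v) \cong J_1(K_v)$ of \eqref{eq:kernel_red} this yields a subgroup $H_v \subset J_1(K_v)\subset J(K_v)$. Finally, $[J(K_v):J_1(K_v)]$ is finite because $J(K_v)$ is compact (being the $K_v$-points of a projective variety) and $J_1(K_v)$ is open, while $[J_1(K_v):H_v]$ is finite because successive quotients of the form $F(\mathfrak{m}_v^k)/F(\mathfrak{m}_v^{k+1})$ are isomorphic to $(\OO_v/\mathfrak{m}_v)^2$ via the first-order term of $F$.

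For part (ii), the exponential factor in \eqref{eq:formulasigmac} is nonvanishing on its domain of convergence, so $\sigma_v^{(c)}(T(P)) = 0$ iff $\sigma_v^{(0)}(T(P)) = 0$. From the proof of Theorem \ref{thm:sigma_naive}\ref{thm_sigma:part2}, one has $\sigma_v^{(0)}(T) = T_1 u_v(T)$ with $u_v(T) = \exp(\log u_v(T))$ a nonvanishing exponential on the convergence domain; hence $\sigma_v^{(0)}(T(P))$ vanishes precisely when $T_1(P) = 0$, with multiplicity one, consistent with the fact that $T_1$ defines $\Theta$ locally at the origin \cite[Lemma 32]{blakestadsthesis}.

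For part (iii), since $\mathcal{L}\colon F\to \G_a^2$ is a formal group homomorphism, one has $\mathcal{L}_i(F(T,\pm S)) = \mathcal{L}_i(T) \pm \mathcal{L}_i(S)$ and $\mathcal{L}_i([n]T) = n\mathcal{L}_i(T)$. Setting $L_i = \mathcal{L}_i(T)$ and $M_i = \mathcal{L}_i(S)$, the elementary identity
\begin{equation*}
\tfrac{1}{2}\sum_{i,j} c_{ij}(L_i+M_i)(L_j+M_j) + \tfrac{1}{2}\sum_{i,j} c_{ij}(L_i-M_i)(L_j-M_j) = \sum_{i,j} c_{ij}(L_iL_j + M_iM_j)
\end{equation*}
shows that the combined exponential factors in the numerator and denominator of $\sigma_v^{(c)}(F(T,S))\sigma_v^{(c)}(F(T,-S))/[\sigma_v^{(c)}(T)^2\sigma_v^{(c)}(S)^2]$ cancel, reducing the first identity to its $\sigma_v^{(0)}$-analogue from Theorem \ref{thm:sigma_naive}\ref{thm_sigma:part3}. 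Likewise, $\sum c_{ij}\mathcal{L}_i([n]T)\mathcal{L}_j([n]T) = n^2\sum c_{ij}L_iL_j$ matches $n^2$ times the exponent appearing in $\sigma_v^{(c)}(T)^{n^2}$, so the exponentials cancel in $\sigma_v^{(c)}([n]T)/\sigma_v^{(c)}(T)^{n^2}$, reducing this identity to the $\sigma_v^{(0)}$-version $\phi_n(T)$.

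The main obstacle is the careful bookkeeping for part (i): choosing a uniform valuation cutoff $M$ that handles both convergence requirements and verifying that the resulting ball is a bona fide subgroup of finite index in $J(K_v)$. Once this is in place, parts (ii) and (iii) follow essentially mechanically from the nonvanishing of the exponential factor and the homomorphism property of $\mathcal{L}$.
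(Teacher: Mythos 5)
Your proof is correct and follows essentially the same route as the paper's: all three parts are reduced to Theorem \ref{thm:sigma_naive} via the formula \eqref{eq:formulasigmac}, with part (i) handled by separately controlling the convergence of $\sigma_v^{(0)}$ and of the exponential factor on a sufficiently small ball $F(\mathfrak{m}_v^n)$, part (ii) by the nonvanishing of the exponential factor, and part (iii) by the homomorphism property of $\mathcal{L}_i$. The only difference is that you spell out the valuation estimates and the cancellation identity for the quadratic exponent explicitly, where the paper invokes Lemma \ref{lemma:subs_exponential} and the references to Cassels--Flynn and Mattuck more briefly.
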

\begin{proof}\hfill
\begin{enumerate}[label=(\roman*)]
\item The same argument as in \cite[Chapter 7, \S 5]{cassels_flynn} (see also \cite[III, \S 6]{mattuck}) shows that $J_1(K_v)$ has finite index in $J(K_v)$ (without any assumption on the reduction), and that for every positive integer $n$, $F(\mathfrak{m}_v^n)$ has finite index in $F(\mathfrak{m}_v)\cong J_1(K_v)$.
  Therefore, it suffices to show that there exists $n\in\Z_{>0}$ such that $\sigma_v^{(c)}(T)$ converges for all $T\in \mathfrak{m}_v^{n}\times \mathfrak{m}_v^{n}$. By Theorem \ref{thm:sigma_naive}, there exists $n_1\in\Z_{>0}$ such that $\sigma_v^{(0)}$ converges on $F(\mathfrak{m}_v^{n_1})$. By Lemma \ref{lemma:subs_exponential}, there exists $n_2\in \Z_{>0}$ such that $ \exp\biggl(\frac{1}{2}\sum_{1\leq i,j\leq 2} c_{ij}\mathcal{L}_i(T)\mathcal{L}_j(T)\biggr)$ converges on $F(\mathfrak{m}_v^{n_2})$. The conclusion follows for $n=\max\{n_1,n_2\}$ by \cite[\S 2 Multiplication Theorem]{mattuck}.
    \item We have $\sigma_v^{(c)}(T) = T_1\cdot \exp(f^{(c)}(T))$ for some formal power series $f^{(c)}(T)$. Moreover, we defined $H_v$ in such a way that $f^{(c)}(T)$ and $\exp(f^{(c)}(T))$ converge on $H_v$. Therefore, the only zeros are at $T_1 = 0$. 
\item This follows from Theorem \ref{thm:sigma_naive}\thinspace{}\ref{thm_sigma:part3}, Equation \eqref{eq:formulasigmac}, and the fact that $\mathcal{L}_i\colon F\to \G_a$ is a homomorphism.

\end{enumerate}
\end{proof}

In other words, for any $2\times 2$ symmetric matrix $c$ over $K_v$ we obtain a $v$-adically valued function on some finite index subgroup of $J(K_v)$ satisfying properties analogous to the complex sigma function. Given such a $c$, we also define the subspace $W^{(c)}$ of $H^1_{\dR}(C/K_v)$ to be the space generated by the classes of the following differentials (compare with \eqref{eq:compl_sub_complex}):
\begin{equation}\label{eq:eta_12_c}
\begin{aligned}
\eta_1^{(c)} = (-3x^3 - 2b_1x^2 -b_2x + c_{12} x + c_{11})\frac{dx}{2y},\\
 \eta_2^{(c)}= (-x^2 + c_{22} x + c_{12})\frac{dx}{2y}.
 \end{aligned}
\end{equation}

\begin{prop}\label{prop:bijection_sigma_is}
The map
\begin{equation*}
c = (c_{ij}) \mapsto W^{(c)}
\end{equation*}
gives a bijection between the set of $2\times 2$ symmetric matrices over $K_v$ and the set $\Is(C/K_v)$ of subspaces of $H^{1}_{\dR}(C/K_v)$ that are complementary to $H^{0}(C/K_v,\Omega^1)$ and isotropic with respect to the cup product. Moreover, $[\omega_1],[\omega_2],[-\eta_1^{(c)}],[-\eta_2^{(c)}]$ is a symplectic basis for $H^1_{\dR}(C/K_v)$.
\end{prop}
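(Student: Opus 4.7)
The plan is to parametrize the subspaces of $H^1_{\dR}(C/K_v)$ complementary to $H^0(C/K_v,\Omega^1)$ by their unique representatives projecting to $[\eta_1],[\eta_2]$ modulo holomorphic forms, and then to translate the isotropy condition into a symmetry constraint on the coefficient matrix.

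First I would note that the cup product relations recalled over $\C$ in \S\ref{subsec:jac}, namely $[\omega_i]\cup[\omega_j]=0$, $[\eta_i]\cup[\omega_j]=\delta_{ij}$ and $[\eta_i]\cup[\eta_j]=0$, are algebraic identities verifiable via the residue pairing on the hyperelliptic curve applied to the differentials \eqref{eq:omega_12}, \eqref{eq:compl_sub_complex} and the equation \eqref{eq:Grant}; in particular they hold over $K_v$. Consequently $\{[\omega_1],[\omega_2],[-\eta_1],[-\eta_2]\}$ is already a symplectic basis of $H^1_{\dR}(C/K_v)$.

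Next, any $W\subset H^1_{\dR}(C/K_v)$ complementary to $H^0(C/K_v,\Omega^1)$ contains unique elements
\begin{equation*}
\tilde\eta_i^W=[\eta_i]+a_{i1}[\omega_1]+a_{i2}[\omega_2],\qquad a_{ij}\in K_v,
\end{equation*}
and the assignment $W\mapsto (a_{ij})$ is a bijection between such subspaces and $2\times 2$ matrices over $K_v$. A direct calculation using the pairings above collapses $\tilde\eta_1^W\cup\tilde\eta_2^W$ to $a_{21}-a_{12}$, so $W$ is isotropic if and only if $(a_{ij})$ is symmetric. Comparing with \eqref{eq:eta_12_c}, the $\eta_i^{(c)}$ realize precisely the case $(a_{ij})=c$, establishing that $c\mapsto W^{(c)}$ is the claimed bijection. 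The symplectic-basis assertion for $\{[\omega_1],[\omega_2],[-\eta_1^{(c)}],[-\eta_2^{(c)}]\}$ then follows immediately from the same pairings: the $[\omega_i]\cup[\omega_j]$ vanish, the $[\eta_i^{(c)}]\cup[\eta_j^{(c)}]$ vanish by isotropy of $W^{(c)}$, and $[\omega_i]\cup[-\eta_j^{(c)}]=-[\omega_i]\cup[\eta_j]=\delta_{ij}$ because the extra $\omega$-terms pair trivially with $[\omega_i]$.

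The only step requiring genuine computation rather than linear algebra is the verification of the cup product relations over $K_v$, but this is a routine residue calculation with the explicit differentials \eqref{eq:omega_12} and \eqref{eq:compl_sub_complex}; the remainder of the proof is a short linear-algebra argument and should pose no obstacle.
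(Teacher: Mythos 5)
Your proposal is correct and follows essentially the same route as the paper: both rest on the standard basis $\{[\omega_1],[\omega_2],[x^2dx/2y],[x^3dx/2y]\}$, the residue computation of the cup products of $\omega_i$ and $\eta_j$, and the linear-algebra observation that a complement of $H^0(C/K_v,\Omega^1)$ is determined by a matrix $(a_{ij})$ whose symmetry is equivalent to isotropy. The paper's proof is merely a terser version of the same argument.
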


\begin{proof}
It is well-known that $\left\{[\omega_1], [\omega_2], \left[\frac{x^i dx}{2y}\right]\colon i =2, 3\right\}$ is a basis for $H_{\dR}^1(C/K_v)$ and that $H^{0}(C/K_v,\Omega^1)\subset H_{\dR}^1(C/K_v)$ is generated  by $[\omega_1]$ and $[\omega_2]$. Therefore, the space generated by $[\eta_1^{(c)}]$ and $[\eta_2^{(c)}]$ is complementary to the space of holomorphic forms and a simple computation of residues at infinity shows that it is isotropic with respect to the cup product, and that $[\omega_i]\cup[-\eta_j^{(c)}] = \delta_{ij}$. On the other hand, if $W\in \Is(C/K_v)$ and $[\xi_{1}],[\xi_{2}]$ is a basis for $W$ such that $[\omega_i]\cup[-\xi_j] = \delta_{ij}$, then we must have
\begin{equation*}
[\xi_{i}] = [\eta_i^{(0)}]  + \sum_{j=1}^2 b_{ij}[\omega_j]
\end{equation*}
for some symmetric matrix  $(b_{ij})$.
\end{proof}

Blakestad \cite{blakestadsthesis} does not consider the infinitely many $v$-adic sigma functions defined by \eqref{eq:formulasigmac}; he considers just one, which is, however, arguably more interesting than the others. Translated to our setting, his results may be phrased as follows. 
\begin{thm}[{\hspace{1sp}\cite[Propositions 34, 27, Corollary 37\protect\footnotemark]{blakestadsthesis}}]\footnotetext{Corollary 37 of \emph{loc.\ cit.}\ contains some typos. The correct formulae for $c_{11}$ and $c_{12}$ are: $c_{11} = 2b_1b_2 - b_1\alpha + b_1^2\beta + 3\delta - 3b_1\gamma + 3b_3$,  $c_{12} = b_2 + \alpha - b_1\beta$. }\label{thm:Blakestad_main}
Let $p\geq 5$ and suppose that $C$ has good reduction at $v\mid p$ and that $J$ has good ordinary reduction at $v$. Then
\begin{enumerate}[label=(\roman*)]
\item \label{thm:Blake_1} There exists a unique $v$-adic sigma function $\sigma_v^{(c)}(T)$ with $v$-adically integral coefficients. Let $b$ be the corresponding symmetric matrix. 
\item\label{thm:Blake_2} The differentials $\eta_i^{(b)}$ are uniquely determined by the property that the expansion $\int \eta_i^{(b)}$ in the local parameter $t = -\frac{x^2}{y}$ at $\infty$ has coefficients in $\OO_v$.
\item \label{thm:Blake_3} The matrix $b$ has coefficients in $\OO_v$ and can be computed modulo $p^n$ from explicit expansions in $t = -\frac{x^2}{y}$ of suitable functions in the Riemann--Roch spaces of $p^n\infty$ and $3p^n\infty$.
\end{enumerate}
\end{thm}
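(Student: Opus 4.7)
The plan is to adapt the approach of Mazur--Tate (as generalised by Blakestad) to the genus $2$ ordinary case, organising the argument around the unit root subspace of Frobenius. Under our hypotheses, absolute Frobenius $\phi$ acts on $H^1_{\dR}(C/K_v) \cong H^1_{\cris}(C/K_v)$, and the ordinary hypothesis on $J$ means the Newton slopes are $\{0,0,1,1\}$. Thus there is a canonical Frobenius-stable two-dimensional \emph{unit root subspace} $U \subset H^1_{\dR}(C/K_v)$, complementary to $H^0(C/K_v,\Omega^1)$. Since the cup product pairing satisfies $\phi^{*}(x \cup y) = p \cdot (x \cup y)$, and $\phi$ acts by a unit on $U$, the restriction of the cup product to $U$ is both Frobenius-equivariant and $p$-divisible, hence zero; so $U$ is isotropic. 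By Proposition \ref{prop:bijection_sigma_is}, $U$ corresponds to a unique symmetric matrix $b \in M_2(K_v)$, and I define $\sigma_v^{(b)}$ to be the sigma function attached to this $b$ via \eqref{eq:formulasigmac}. Integrality of the coefficients of $\sigma_v^{(b)}$ should be deduced from a Frobenius functional equation of the form $\sigma_v^{(b)}(\phi(T)) = \sigma_v^{(b)}(T)^p \cdot U_\phi(T)$ for a unit $U_\phi(T) \in \OO_v[[T]]^\times$, mirroring \cite[Proposition 3]{padicsigma}; the existence of this equation is forced by the Frobenius-stability of $U$ combined with Proposition \ref{prop:finite_index_subgp}\thinspace(iii), and standard Weierstrass-type arguments then propagate integrality to all coefficients.

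For uniqueness in (i), suppose $\sigma_v^{(b)}$ and $\sigma_v^{(b')}$ both have $v$-adically integral coefficients. Formula \eqref{eq:formulasigmac} shows that their ratio equals $\exp\bigl(\tfrac{1}{2} \sum (b_{ij}-b'_{ij}) \mathcal{L}_i(T) \mathcal{L}_j(T)\bigr)$. By Proposition \ref{prop:exp_exp_log}, $\mathcal{L}_i(T)$ has denominators bounded by $\gcd$'s of the multi-indices, while the exponential series introduces factorial denominators $k!$. A direct coefficient analysis, using $p \geq 5$, shows that unless $b = b'$ the leading non-zero term of the quadratic form produces a factorial denominator that cannot be cancelled, contradicting integrality of the ratio. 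This establishes uniqueness of $b$. Part (ii) then follows from the translation between $b$ and $W^{(b)}$ of Proposition \ref{prop:bijection_sigma_is}: the second logarithmic derivatives of $\sigma_v^{(c)}$ with respect to the invariant derivations yield the $X_{ij}$ shifted by $c_{ij}$, and pulling back along $\iota$ and integrating gives, up to signs, the antiderivatives of $\eta_i^{(c)}$ in the local parameter $t = -x^2/y$ at $\infty$. Integrality of the sigma expansion thus matches exactly the integrality of $\int \eta_i^{(b)}$ at $\infty$, so the unique $b$ produced in (i) is also the unique matrix making the antiderivatives integral.

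For (iii), the idea is to approximate $U$ modulo $p^n$ by iterating a Frobenius lift. Starting from any explicit complement $W^{(0)}$ with basis represented by differentials supported at $\infty$, one computes the matrix of $\phi$ on $H^1_{\dR}(C/K_v)$ with respect to $\{\omega_1,\omega_2,\eta_1^{(0)},\eta_2^{(0)}\}$ by expanding $\phi^{*} \eta_i^{(0)}$ in a Frobenius-stable rigid analytic neighbourhood of $\infty$ and reducing modulo exact forms; the required reductions take place in the Riemann--Roch spaces $L(p^n \infty)$ and $L(3p^n \infty)$, whose dimensions are large enough to contain all relevant poles after pulling back under $\phi$. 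A standard $n$-fold iteration then produces the projection onto $U$ modulo $p^n$, from which the matrix $b$ modulo $p^n$ can be read off via Proposition \ref{prop:bijection_sigma_is} and formula \eqref{eq:eta_12_c}. The main obstacle I anticipate is the rigorous derivation of the Frobenius functional equation for $\sigma_v^{(b)}$ in step (i): identifying the correct ``error'' unit $U_\phi$ requires comparing the Grant formal group over $\OO_v$ with its Frobenius pullback, and ensuring that the resulting functional equation genuinely pins down integrality (rather than just compatibility on some finite-index subgroup) is the delicate technical point. Once this is in place, parts (ii) and (iii) reduce essentially to bookkeeping in de Rham cohomology.
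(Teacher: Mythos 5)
You should first be aware that the paper does not prove Theorem \ref{thm:Blakestad_main} at all: it is an explicit restatement of \cite[Propositions 34, 27, Corollary 37]{blakestadsthesis}, and the ``proof'' consists of the citation together with Remark \ref{rmk:inver_H1} explaining how the phrasing is translated. So your sketch would need to substitute for Blakestad's argument, and as it stands it does not. Your strategy inverts the paper's logic: you \emph{define} $b$ via the unit root subspace and try to deduce integrality of $\sigma_v^{(b)}$, whereas in the paper the identification of $W^{(b)}$ with the unit root subspace is Proposition \ref{prop:Blakestad_space_unit_root}, deduced \emph{from} part (ii) of the theorem via Katz's formal-expansion criterion. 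Your route is legitimate in principle (it is the Mazur--Tate route in genus $1$), but its crux --- the functional equation $\sigma_v^{(b)}(\phi(T)) = \sigma_v^{(b)}(T)^p\,U_\phi(T)$ with $U_\phi\in\OO_v[[T]]^\times$ --- is precisely the hard content of \cite{blakestadsthesis} (and of \cite[Proposition 3]{padicsigma} in genus $1$). It is not ``forced'' by Frobenius-stability of $U$ together with Proposition \ref{prop:finite_index_subgp}\thinspace(iii): one must actually construct a Frobenius lift compatible with Grant's formal group, show it preserves the divisor data of $\sigma$, and control the unit $U_\phi$. You name this as the delicate point but supply no argument, so part (i) is not established.

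Two further steps fail as written. For uniqueness, integrality of $\exp\bigl(\tfrac{1}{2}\sum(b_{ij}-b'_{ij})\mathcal{L}_i(T)\mathcal{L}_j(T)\bigr)$ does not force $b=b'$ by ``a factorial denominator that cannot be cancelled'': the degree-$2$ coefficient only yields $b_{ij}-b'_{ij}\in\OO_v$, and if $a\colonequals b_{ij}-b'_{ij}$ has $\ordnop_v(a)\geq 1$ then $\ordnop_v(a^k/k!)\geq k-\tfrac{k-1}{p-1}>0$, so the naive estimate never produces a contradiction; a genuinely different characterisation (via the differential equation, as in Blakestad and Mazur--Tate) is needed. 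For part (iii), what you describe is Kedlaya-style iteration of Frobenius on $H^1_{\dR}$, which is the \emph{alternative} algorithm the paper develops in \S\ref{subsec:implementation_canonical_sigma} and which is only justified once Proposition \ref{prop:Blakestad_space_unit_root} is available; the statement of (iii) refers to Blakestad's own construction via explicit functions in the Riemann--Roch spaces of $p^n\infty$ and $3p^n\infty$, which is a different (and logically prior) procedure.
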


\begin{rmk}\label{rmk:inver_H1}
Some remarks on our phrasing of Blakestad's results are in order. First, Theorem \ref{thm:Blakestad_main}\thinspace{}\ref{thm:Blake_2} and \ref{thm:Blake_3} assume the invertibility over the residue field of a $2\times 2$ matrix $H_1$, defined on \cite[p.\,54]{blakestadsthesis}. Blakestad mentions at the beginning of \S 4.2 of \emph{loc.\ cit.}\ that this condition is related to the ordinarity of $J$. More precisely, we prove in \cite{BKM22} that if the equation for $C$ has semistable reduction, then $J$ is semistable ordinary if and only if $H_1$ is invertible. 

Secondly, Blakestad allows for $p=3$. However, in this case, the construction in \S 3.2.1 needs to be slightly modified, for example by defining $\phi_n$ and $\psi_n$ on p.\,52 for $n\geq 2$, instead of $n\geq 1$. Then the condition on invertibility of $H_1$ could be replaced by invertibility of $H_2$. For simplicity, we stated Theorem \ref{thm:Blakestad_main} for $p\geq 5$.
\end{rmk}

We next want to  show that the subspace $W^{(b)}\subset H^1_{\dR}(C/K_v)$ corresponding to Blakestad's $v$-adic sigma function is the unit root subspace of Frobenius (defined below). We achieve this by comparing the property of Theorem \ref{thm:Blakestad_main}\thinspace{}\ref{thm:Blake_2} with the characterisation of the unit root subspace provided by \cite[Corollary 5.9.6\thinspace{}(1)]{Katz_crystalline}. We have found the article \cite{Bogaart} a very useful reference for both theoretical and explicit results on various cohomological theories and we refer the reader to it for details. 

Retain the assumptions of Theorem \ref{thm:Blakestad_main} and assume that $K_v$ is an unramified extension of $\Q_p$. 
The equation defining $C$ also defines the affine part of a hyperelliptic curve over $\OO_v$, which we will denote by $\mathcal{C}/\OO_v$; denote by $H^1_{\dR}(\mathcal{C}/\OO_v)$ the first cohomology of $\mathcal{C}/\OO_v$. By  \cite{Berthelot74}, there is a canonical isomorphism $H^{1}_{\dR}(\mathcal{C}/\OO_v)\xrightarrow{\sim} H^1_{\cris}(\tilde{\mathcal{C}}/\OO_v)$, where $\tilde{\mathcal{C}}$ denotes the special fibre of $\mathcal{C}/\OO_v$ and $H^1_{\cris}(\tilde{\mathcal{C}}/\OO_v)$ is its first crystalline cohomology. This isomorphism equips $H^1_{\dR}(\mathcal{C}/\OO_v)$ with an $\OO_v$-linear endomorphism of Frobenius. From the isomorphism \cite[Proposition 2.2]{Bogaart}
\begin{equation*}
H^1_{\dR}(\mathcal{C}/\OO_v)\otimes_{\OO_v} K_v\xrightarrow{\sim} H^1_{\dR}(C/K_v),
\end{equation*}
 we also obtain a $K_v$-linear Frobenius endomorphism on the $K_v$-vector space $H^1_{\dR}(C/K_v)$. The \emph{unit root eigenspace of Frobenius} is the slope $0$ subspace of $H^1_{\dR}(C/K_v)$ for the Frobenius action. Since we are assuming that $J$ has ordinary reduction at $v$, this space has dimension equal to the dimension of $J$, that is to $2$ \cite[Theorem 3.1]{yui1978jacobian}.
 
Let $P$ be an arbitrary point in $\mathcal{C}(\OO_v)$. Let $\hat{\mathcal{C}}_P$ be the formal completion of $\mathcal{C}/\OO_v$ along $P$. Then $(\hat{\mathcal{C}}_{P}, P)$ is a pointed Lie variety of dimension $1$ over $\OO_v$ \cite[\S\S  5.1, 5.9]{Katz_crystalline}. Let $t$ be a coordinate for it. The first de Rham cohomology $H^1_{\dR}(\hat{\mathcal{C}}_{P}/\OO_v)$ is the $\OO_v$-module of (closed) differential one-forms in $\OO_v[[t]]dt$ modulo the exact one-forms \cite[\S 5.1]{Katz_crystalline}. Since any closed differential form becomes exact upon base-changing to $K_v$, applying $d$ gives an isomorphism of $\OO_v$-modules:
\begin{equation}\label{eq:iso_HdR}
M_{P}\colonequals \frac{\{f\in  K_v[[t]]: f(0) = 0 \text{ and } df\in \OO_v[[t]]dt\}}{\{f\in \OO_v[[t]]: f(0) = 0\}}\xrightarrow{\sim} H^1_{\dR}(\hat{\mathcal{C}}_{P}/\OO_v).
\end{equation}
(this is a special case of \cite[Lemma 5.1.2]{Katz_crystalline}).  

 By \cite[Corollary 5.9.6]{Katz_crystalline}, the unit root eigenspace of Frobenius is the subspace of $H^1_{\dR}(C/K_v)$ obtained by tensoring with $K_v$ the kernel of the formal-expansion-at-$P$ map
     \begin{equation}\label{eq:formal_expansion}
\beta_P\colon H^1_{\dR}(\mathcal{C}/\OO_v)\to H^1_{\dR}(\hat{\mathcal{C}}_{P}/\OO_v).
   \end{equation}   
   We use the superscript $^{-}$ to indicate $(-1)$-eigenspaces for the action of the hyperelliptic involution. Recall that we can regard $\eta_1^{(b)}$ and $\eta_2^{(b)}$ as elements of $H^1_{\dR}(C/K_v)$ by using the following isomorphism of $K_v$-vector spaces
\begin{equation}\label{eq:H1dR_sioH0}
H^1_{\dR}(C/K_v)\cong H^0(C/K_v,\Omega^1_{C/K_v}(4\infty))^{-};
\end{equation}
see, for instance, \cite[\S 5]{Bogaart}.
\begin{prop}\label{prop:Blakestad_space_unit_root}
Suppose that the assumptions of Theorem \ref{thm:Blakestad_main} are satisfied, and that $K_v$ is an unramified extension of $\Q_p$. Then the differentials $\eta_1^{(b)}$ and $\eta_2^{(b)}$ span the unit root eigenspace of Frobenius in $H^1_{\dR}(C/K_v)$.
\end{prop}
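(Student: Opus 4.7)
The plan is to match Theorem \ref{thm:Blakestad_main}\thinspace{}\ref{thm:Blake_2} with the characterisation of the unit root eigenspace via the formal-expansion-at-$\infty$ map $\beta_\infty$ of \eqref{eq:formal_expansion}. Concretely, I would show that $[\eta_1^{(b)}]$ and $[\eta_2^{(b)}]$ lift to integral classes in $H^1_{\dR}(\mathcal{C}/\OO_v)$ that lie in $\ker(\beta_\infty)$; then, since $K_v \otimes_{\OO_v} \ker(\beta_\infty)$ is exactly the unit root subspace by \cite[Corollary 5.9.6]{Katz_crystalline}, and since this subspace has dimension $2$ by ordinarity of $J$, linear independence of $[\eta_1^{(b)}], [\eta_2^{(b)}]$ will force them to span it.

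For the lifting step, I would use Theorem \ref{thm:Blakestad_main}\thinspace{}\ref{thm:Blake_3}: since $b$ has coefficients in $\OO_v$, the explicit formulae \eqref{eq:eta_12_c} show that $\eta_1^{(b)}, \eta_2^{(b)} \in H^0(\mathcal{C}/\OO_v, \Omega^1_{\mathcal{C}/\OO_v}(4\infty))^{-}$. Invoking the integral analogue of \eqref{eq:H1dR_sioH0} from \cite{Bogaart}, they give well-defined classes in $H^1_{\dR}(\mathcal{C}/\OO_v)$ whose images in $H^1_{\dR}(C/K_v)$ are the classes we started with.

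Next I would compute the images of these integral classes under $\beta_\infty$. Via the identification \eqref{eq:iso_HdR}, $\beta_\infty([\eta_i^{(b)}])$ is represented in $M_\infty$ by any antiderivative of the expansion of $\eta_i^{(b)}$ in the local parameter $t = -x^2/y$ vanishing at $t=0$. By Theorem \ref{thm:Blakestad_main}\thinspace{}\ref{thm:Blake_2}, such an antiderivative lies in $\OO_v[[t]]$, so it represents $0$ in $M_\infty$; hence $[\eta_i^{(b)}] \in \ker(\beta_\infty)$ for $i=1,2$. Because $[\omega_1], [\omega_2], [\eta_1^{(b)}], [\eta_2^{(b)}]$ form a basis of $H^1_{\dR}(C/K_v)$ by Proposition \ref{prop:bijection_sigma_is}, the two classes $[\eta_i^{(b)}]$ remain linearly independent over $K_v$, and a dimension count finishes the proof.

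The step I expect to require the most care is the integrality of the lift in the first paragraph: one must check that Bogaart's isomorphism \eqref{eq:H1dR_sioH0} respects $\OO_v$-structures so that differentials of the form \eqref{eq:eta_12_c} with $\OO_v$-coefficients and prescribed pole order at $\infty$ do define classes in the integral crystalline/de Rham cohomology of $\mathcal{C}/\OO_v$, and that the map $\beta_\infty$ is indeed induced by literal $t$-expansion under this identification. Everything else is essentially bookkeeping between Theorem \ref{thm:Blakestad_main}\thinspace{}\ref{thm:Blake_2}, the description \eqref{eq:iso_HdR} of $H^1_{\dR}(\hat{\mathcal{C}}_\infty/\OO_v)$, and Katz's slope-zero characterisation.
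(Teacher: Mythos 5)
Your proposal is correct and follows essentially the same route as the paper's proof: identifying $H^1_{\dR}(\mathcal{C}/\OO_v)$ with $H^0(\mathcal{C},\Omega^1_{\mathcal{C}/\OO_v}(4\infty))^{-}$ via Bogaart, showing $\eta_1^{(b)},\eta_2^{(b)}$ lie in $\ker(\beta_\infty)$ by combining Theorem \ref{thm:Blakestad_main}\thinspace{}(ii) with the description \eqref{eq:iso_HdR} of $H^1_{\dR}(\hat{\mathcal{C}}_\infty/\OO_v)$, invoking Katz's slope-zero characterisation, and finishing with a dimension count. The care point you flag (that $\beta_\infty$ is literal $t$-expansion under these identifications, and that negative powers of $t$ must be truncated away before concluding the antiderivative is integral) is exactly what the paper handles via the intermediate module $M_{\infty,(4\infty)}$ and the restriction to the affine chart $\mathcal{C}'$.
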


\begin{proof}
By abuse of notation, we write $\infty$ also for the section at infinity of $\mathcal{C}/\OO_v$. 
By \cite[Proposition 3.2]{Bogaart}, the $\OO_v$-module $H^1_{\dR}(\mathcal{C}/\OO_v)$ is a lattice in $H^1_{\dR}(C/K_v)$, which, in view of the assumption that $p\geq 5$ and the genus is $2$, we can identify by \cite[(3.9), Theorem 4.2\thinspace{}(ii)]{Bogaart}, with $H^0(\mathcal{C},\Omega^1_{\mathcal{C}/\OO_v}(4\infty))^{-}\xhookrightarrow{}H^0(C/K_v,\Omega^1_{C/K_v}(4\infty))^{-}$, under \eqref{eq:H1dR_sioH0}. By \cite[Proposition 5.2]{Bogaart}, the $\OO_v$-module $H^0(\mathcal{C},\Omega^1_{\mathcal{C}/\OO_v}(4\infty))^{-}$ is free and spanned by $x^{i}\frac{dx}{2y}$ for $0\leq i\leq 3$. 

The parameter $t=-\frac{x^2}{y}$ is a coordinate for $(\hat{\mathcal{C}}_{\infty},\infty)$. Let $M_{\infty}$ be as in \eqref{eq:iso_HdR} with respect to $t$.
The assumption that $p\geq 5$ also implies that
there is an isomorphism of $\OO_v$-modules
  \begin{equation}
   \begin{aligned}
   M_{\infty,(4\infty)}\colonequals  \frac{\{f = \sum_{n=-3}^{\infty} a_n t^n \in  t^{-3}K_v[[t]]:a_0 = 0 \text{ and } df\in \OO_v((t))dt\}}{\{f = \sum_{n=-3}^{\infty} a_n t^n \in  t^{-3}O_v[[t]]:a_0 = 0\}} \xrightarrow{\sim} M_{\infty}\\
   \sum_{n=-3}^{\infty} a_n t^n\mapsto \sum_{n=0}^{\infty} a_n t^n.
   \end{aligned}
   \end{equation} 
We obtain a map
   \begin{equation}\label{eq:comp_alpha}
  H^1_{\dR}(\mathcal{C}/\OO_v)\xrightarrow{\sim}H^0(\mathcal{C},\Omega^1_{\mathcal{C}/\OO_v}(4\infty))^{-}\xrightarrow{\alpha_{\infty}} M_{\infty,(4\infty)}\xrightarrow{\sim} M_{\infty}\xrightarrow{\sim} H^1_{\dR}(\hat{\mathcal{C}}_{\infty}/\OO_v),
   \end{equation}
  where $\alpha_{\infty}$ is the formal-expansion-in-$t$ map followed by formal integration. The differentials $\eta_1^{(b)}$ and $\eta_2^{(b)}$ belong to the kernel of $\alpha_{\infty}$. Let $\mathcal{C}^{\prime}$ be the affine curve over $\OO_v$ obtained from $\mathcal{C}$ by dehomogenising with respect to $x$. Then the composition  \eqref{eq:comp_alpha} agrees with composing the restriction map $H^1_{\dR}(\mathcal{C}/\OO_v)\to H^1_{\dR}(\mathcal{C}^{\prime}/\OO_v)$ with the expansion in $t$-map $H^1_{\dR}(\mathcal{C}^{\prime}/\OO_v)\to H^1_{\dR}(\hat{\mathcal{C}}_{\infty}/\OO_v)$, and hence with $\beta_{\infty}$. Comparing dimensions, we obtain the proposition. 
\end{proof}

\begin{cor}
Under the assumptions of Proposition \ref{prop:Blakestad_space_unit_root}, let  $P\in C(\OO_v)$ and $t$ be a coordinate for $(\hat{C}_{P},P)$. Then the expansions of the formal integrals $\int \eta_1^{(b)}(t)$  and $\int \eta_2^{(b)}(t)$ have coefficients in $\OO_v$.
\end{cor}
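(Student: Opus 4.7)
The plan is to mimic the argument of Proposition \ref{prop:Blakestad_space_unit_root}, with the arbitrary integral point $P$ in place of $\infty$. By Theorem \ref{thm:Blakestad_main}\thinspace{}\ref{thm:Blake_3}, the matrix $b$ has coefficients in $\OO_v$, so $\eta_1^{(b)}$ and $\eta_2^{(b)}$ belong to $H^0(\mathcal{C},\Omega^1_{\mathcal{C}/\OO_v}(4\infty))^{-}$ and hence define integral classes in the lattice $H^1_{\dR}(\mathcal{C}/\OO_v) \subset H^1_{\dR}(C/K_v)$. By Proposition \ref{prop:Blakestad_space_unit_root}, these classes span the unit root eigenspace $U$ of Frobenius over $K_v$.

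The key step is to invoke \cite[Corollary 5.9.6]{Katz_crystalline} at the section $P$ rather than at $\infty$: this gives $U = \ker(\beta_P) \otimes_{\OO_v} K_v$, where $\beta_P\colon H^1_{\dR}(\mathcal{C}/\OO_v) \to H^1_{\dR}(\hat{\mathcal{C}}_P/\OO_v)$ is formal expansion at $P$. The main delicate point will be promoting this to an integral statement. Since $J$ is ordinary at $v$, the unit root submodule of $H^1_{\dR}(\mathcal{C}/\OO_v)$ splits off as a direct summand, so $U \cap H^1_{\dR}(\mathcal{C}/\OO_v)$ is saturated in the integral lattice and coincides with $\ker(\beta_P)$ itself, not just after tensoring with $K_v$. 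Because $[\eta_1^{(b)}]$ and $[\eta_2^{(b)}]$ are integral classes whose $K_v$-span is $U$, they must already lie in the integral submodule $\ker(\beta_P)$.

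Using the coordinate $t$ at $P$ and the $\OO_v$-module isomorphism \eqref{eq:iso_HdR}, the vanishing $\beta_P([\eta_i^{(b)}]) = 0$ translates precisely to the existence of $h_i(t) \in \OO_v[[t]]$ with $h_i(0) = 0$ and $dh_i(t) = \eta_i^{(b)}(t)$. Thus $\int \eta_i^{(b)}(t) = h_i(t) \in \OO_v[[t]]$, which is the desired integrality.
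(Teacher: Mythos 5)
Your overall strategy---running the proof of Proposition \ref{prop:Blakestad_space_unit_root} in reverse at an arbitrary integral point $P$, using Katz's characterisation of the unit root subspace via the formal-expansion map $\beta_P$, and translating $\beta_P([\eta_i^{(b)}])=0$ into integrality of the formal primitive via \eqref{eq:iso_HdR}---is the right one, and the observation that $[\eta_1^{(b)}],[\eta_2^{(b)}]$ are integral classes is correct. The gap is in the passage from the rational statement $\ker(\beta_P)\otimes_{\OO_v}K_v=U$ to the integral one $[\eta_i^{(b)}]\in\ker(\beta_P)$. You argue that $U\cap H^1_{\dR}(\mathcal{C}/\OO_v)$ is saturated and therefore ``coincides with $\ker(\beta_P)$''. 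But saturation of $U\cap H^1_{\dR}(\mathcal{C}/\OO_v)$ is automatic (the intersection of a $K_v$-subspace with a lattice is always saturated) and only yields the inclusion $\ker(\beta_P)\subseteq U\cap H^1_{\dR}(\mathcal{C}/\OO_v)$, a priori of finite index---which is the wrong direction. To place an integral class of $U$ inside $\ker(\beta_P)$ you need $\ker(\beta_P)$ itself to be saturated, equivalently that $\mathrm{im}(\beta_P)$ is torsion-free. This is not formal: the target $H^1_{\dR}(\hat{\mathcal{C}}_P/\OO_v)\cong M_P$ has plenty of torsion (e.g.\ the class of $t^p/p$, whose differential $t^{p-1}\,dt$ is integral), so a kernel of a map into it can fail to be saturated.

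What fills the gap is the integral form of \cite[Corollary 5.9.6]{Katz_crystalline}: for every $P\in\mathcal{C}(\OO_v)$, $\ker(\beta_P)$ equals the Frobenius-theoretic unit root submodule $\bigcap_n \phi^{*n}H^1_{\dR}(\mathcal{C}/\OO_v)$, which in the ordinary case is a direct summand (this is what underlies Proposition \ref{cor:basis_using_Kedlaya} and \eqref{eq:holo_pH1}); in particular $\ker(\beta_P)$ is saturated and independent of $P$. Once this is in hand, the cleanest route bypasses the span/saturation discussion entirely: the proof of Proposition \ref{prop:Blakestad_space_unit_root} (via Theorem \ref{thm:Blakestad_main}\thinspace{}\ref{thm:Blake_2}) already shows $[\eta_i^{(b)}]\in\ker(\alpha_\infty)=\ker(\beta_\infty)$, and $\ker(\beta_\infty)=\ker(\beta_P)$, so $\beta_P([\eta_i^{(b)}])=0$ directly. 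As written, your argument either silently assumes this integral identification (in which case the saturation step is redundant) or leaves the crucial torsion-freeness of $\mathrm{im}(\beta_P)$ unjustified.
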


\begin{rmk}
We will refer to the $v$-adic sigma function of Theorem \ref{thm:Blakestad_main} as the \emph{canonical $v$-adic sigma function}. There are two reasons why we care about Proposition \ref{prop:Blakestad_space_unit_root}. The first one is that it makes the computation of the canonical $v$-adic sigma function a lot more efficient (at least when $K_v \cong \Q_p$), since we can compute the unit root subspace using Kedlaya's algorithm: see \S \ref{subsec:implementation_canonical_sigma} for details.  Secondly, in the following section, we will use $v$-adic sigma functions to define a $p$-adic height on $J(\overline{\Q})$. Now, there are other $p$-adic height constructions in the literature that depend on a choice of space $W_v\in \Is(C/K_v)$, at every $v\mid p$. For example, the $p$-adic height of Coleman--Gross \cite{ColemanGross}. In this case, the unit root eigenspace of Frobenius is the ``canonical'' choice of subspace and the corresponding height appears in a $p$-adic analogue of the Birch and Swinnerton-Dyer conjecture \cite{BaMuSt12}. In Section \ref{sec:Colmez}, we compare our $p$-adic height defined in terms of sigma functions with the $p$-adic height of Coleman--Gross. In light of Proposition \ref{prop:Blakestad_space_unit_root}, the comparison restricts to a comparison between the canonical heights. See also Remark \ref{rmk:comparison_canonical}.

On the other hand, there are situations where it is preferable to consider the naive $v$-adic sigma function. Indeed, unlike the canonical $v$-adic sigma function, the naive one does not require any assumption on the reduction of $C$ and $J$ at $v$. Furthermore, it is easier to compute. Finally, its tight link with the complex genus 2 sigma function makes it useful in proofs, as a lot of its properties follow formally from the properties of the complex sigma function.
\end{rmk}

\begin{rmk}\label{rmk:CM_case}
In analogy with the elliptic curve situation \cite{bernardi}, we could ask if the matrix $b$ has algebraic entries, independent of $p$ and $v$, in the special case where $J$ has complex multiplication. See \cite[Propositions 2.1, 2.14]{BannaiKobayashi} for results in this direction. 
Indeed, under the assumptions of \emph{loc.\ cit.}, let $q(z)$ be the unique quadratic form such that $\theta(z) = \sigma(z)e^{2\pi i q(z)}$ is a normalised theta function in the sense of \cite[p.\,87]{Lan82}. 
Then, by \cite[Proposition 2.1]{BannaiKobayashi}, the coefficients of the Taylor expansion of $\theta(z)$ around $0$ are algebraic. But since $\sigma(z)$ also has algebraic coefficients, it follows that $2\pi i q(z) = \frac{1}{2}\sum_{1\leq i,j\leq 2} c_{ij} z_iz_j$ with $c_{12}= c_{21}$ and algebraic $c_{ij}$. 
 Finally by \cite[Proposition 2.14]{BannaiKobayashi}, the coefficients of $\theta(\mathcal{L}(T))$ are integral. See \S \ref{subsec:eg_large_p} for an explicit example. 
\end{rmk}

\section{$p$-adic heights}\label{sec:padic_hts}
We keep the notation of Section \ref{sec:sigma_functions}: $K$ is a number field and the coefficients of the defining equation for $C$ lie in its ring of integers. Fix a prime number $p$ and a continuous idele class character
\begin{equation*}
\chi = \sum_v\chi_v\colon \A_K^{\times}/K^{\times} \to \Q_p.
\end{equation*}
By continuity, $\chi_v$ is identically zero if $v$ is an archimedean place, and $\chi_v$ is identically zero on $\OO_v^{\times}$ if $v$ is a non-archimedean place not dividing $p$. See \cite[\S 2.1]{QCnfs} for more properties and for a discussion on how to construct the finite-dimensional $\Q_p$-vector space of all such $\chi$, for a given $K$. If $L$ is a finite extension  of $K$, we denote by $\chi_L$ the continuous idele class character for $L$ obtained by composing $\chi$ with the idele norm $N_{L/K}\colon \A_L^{\times}\to \A_K^{\times}$. Note that, if $w$ is a place of $L$ above the place $v$ of $K$ and $x\in K_v^{\times}$, then $\chi_{L,w}(x) = [L_w:K_v]\chi_v(x)$. 

Let $J_{\Theta}\colonequals J \setminus \Supp(\Theta)$. The goal of this section is to define a quadratic form 
\begin{equation*}
h_p\colon J(\overline{\Q}) \to \Q_p,
\end{equation*}
dependent on $\chi$. We will do so in the following way. Suppose that $P$ is a point in $J_{\Theta}(L)$, for some finite extension $L$ of $K$. Given a non-archimedean place $w$ of $L$, denote by $n_w$ the degree of the field extension $L_w/\Q_{\ell}$, where $\ell$ is the rational prime below $w$. Then 
\begin{equation*}
h_p(P) = \frac{1}{[L:\Q]}\sum_{w}n_w\lambda_w(P),
\end{equation*}
where the sum runs over all the non-archimedean places $w$ of $L$ and $\lambda_w\colon J_{\Theta}(L_w) \to \Q_p$ is a ``quasi--quadratic'' function, dependent on $\chi_{L,w}$, which we call a $p$-adic N\'eron function at $w$ with respect to $2\Theta$. The reason for this terminology will be explained in \cite{BKM22}; see also Remark \ref{rmk:neron_away_p} below.

Before embarking on defining $p$-adic N\'eron function, we state a useful lemma. See also Appendix \ref{app:stevan} for more general, related, results. 
\begin{lemma}\label{lemma:exists_m}
Let $P\in J(L_w)$ be a non-torsion point and let $H$ be a finite index subgroup of $J(L_w)$. Then there exists a positive integer $m$ such that $mP\in H\setminus \Supp(\Theta)$.
\end{lemma}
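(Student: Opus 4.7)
The plan is to assume the conclusion fails and derive a contradiction via a $p$-adic analyticity argument in the formal group of $J$.

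Set $k := [J(L_w) : H]$, so that $mP \in H$ for every positive multiple $m$ of $k$. I would assume, for contradiction, that $mP \in \Supp(\Theta)$ for every such $m$. After replacing $k$ by a larger multiple, I may assume that $kP$ lies in the domain of convergence of the strict formal logarithm $\mathcal{L}$ of \S\ref{subsec:formal} (a finite-index subgroup of $J_1(L_w)$, which itself has finite index in $J(L_w)$). Set $v := \mathcal{L}(kP) \in L_w^2$; since $\mathcal{L}$ is injective and $P$ is non-torsion, $v \neq 0$.

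The key step is a $p$-adic identity argument. Observe that $p^j kP = \mathcal{E}(p^j v) \to 0$ in the $v$-adic topology, while each $p^j kP \in \Supp(\Theta)$ by assumption. Because $T_1$ cuts out $\Theta$ in a formal neighbourhood of the origin (\cite[Lemma 32]{blakestadsthesis}), the $L_w$-analytic function $f(t) := T_1(\mathcal{E}(tv))$ vanishes at $t = p^j$ for all sufficiently large $j$; since the $p^j$ accumulate at $0$ inside the disc of convergence of $f$, the $p$-adic identity theorem forces $f \equiv 0$. Hence the $1$-parameter formal subgroup $\{\mathcal{E}(tv)\}$ of the formal group of $J$ at $0$ is contained in the formal germ of $\Supp(\Theta) = \iota(C)$ at $0$.

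The main obstacle will then be converting this containment into a contradiction. A $1$-dimensional smooth formal subscheme contained in another of the same dimension must coincide with it formally, so the formal germ of $\iota(C)$ at $0$ would itself inherit a formal subgroup structure from that of $J$; in particular, $2\iota(P_1) \in \iota(C)$ for every $P_1 \in C(L_w)$ sufficiently close to $\infty$. To refute this, I would invoke Riemann--Roch on $C$, using $K_C \sim 2\infty$: for any $P_1 \neq \infty$ that is not a Weierstrass point, $h^0(2P_1 - \infty) = h^0(3\infty - 2P_1) = 0$, so $2P_1 - \infty$ is not linearly equivalent to any effective divisor and $2\iota(P_1) = [2P_1 - 2\infty] \notin \iota(C)$. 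Choosing such a $P_1$ arbitrarily close to $\infty$, which is possible since there are only finitely many Weierstrass points, yields the desired contradiction.
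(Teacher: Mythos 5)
Your proof is correct, but it is a long detour around a two-line argument, and the decisive step is hidden at the very end of your own write-up. The Riemann--Roch computation you use to refute the formal-subgroup conclusion --- for $P_1\neq\infty$ non-Weierstrass one has $h^0(2P_1-\infty)=h^0(3\infty-2P_1)=0$, hence $[2P_1-2\infty]\notin\Supp(\Theta)$ --- already proves the lemma directly: take $m'$ with $Q:=m'P\in H\setminus\{0\}$; if $Q\in\Supp(\Theta)$ write $Q=[Q_1-\infty]$, note $Q_1\neq\infty$ (else $Q=0$) and $Q_1$ is non-Weierstrass (else $Q$ would be $2$-torsion, contradicting that $P$ is non-torsion), and conclude $2Q=[2Q_1-2\infty]\in H\setminus\Supp(\Theta)$. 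This is exactly the paper's proof, which never needs the formal group. Your route --- assume all multiples $kP,2kP,\dots$ lie on $\Theta$, apply Strassmann's theorem to $T_1(\mathcal{E}(tv))$ along the sequence $t=p^j$, deduce that the one-parameter subgroup in the direction $v$ fills out the formal germ of $\Theta$ at the origin, and then contradict this --- is valid (the rigidity step is fine: a $1$-dimensional closed formal subscheme of $\mathrm{Spf}\,L_w[[T_2]]$ is everything, and $v\neq 0$ forces $v_1=0$, $v_2\neq 0$, so the parametrisation is an isomorphism onto $\{T_1=0\}$), but it buys nothing here: the contradiction you extract is a statement about points $P_1$ near $\infty$, whereas the same statement applied to the single point $Q_1$ coming from $Q=m'P$ closes the argument immediately. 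Two minor points to tighten if you keep your version: you should also arrange (by shrinking the subgroup) that $\mathcal{E}\circ\mathcal{L}=\mathrm{id}$ there, so that $\mathcal{L}(kP)\neq 0$ genuinely follows from $kP\neq 0$; and the passage from ``the formal germ of $\iota(C)$ is a formal subgroup'' to ``$2\iota(P_1)\in\Supp(\Theta)$ for $L_w$-points $P_1$ near $\infty$'' uses that the formal group law computes the actual group law on $J_1(L_w)$, i.e.\ \eqref{eq:kernel_red}, which is worth citing.
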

\begin{proof}
Since $H$ has finite index in $J(L_w)$ and $P$ is non-torsion, there exists $m^{\prime}$ such that $Q\colonequals m^{\prime}P\in H\setminus\{0\}$. If $Q \in\Supp(\Theta)$, then $Q = [Q_1-\infty]$ for a non-Weierstrass point $Q_1\in C(L_{w})$. Thus, $2Q = [2Q_1-2\infty]\in H\setminus\Supp(\Theta)$.
\end{proof}
If $v$ is a place of $K$ lying above $p$, and $w\mid v$, the $p$-adic N\'eron function $\lambda_w$ will be defined in terms of a $v$-adic sigma function from \S \ref{subsec:infty_sigma}. Recall from Proposition \ref{prop:bijection_sigma_is} that the set of $v$-adic sigma functions is in bijection with the set $\Is(C/K_v)$. We thus fix, once and for all, a choice of subspace $W_v\in \Is(C/K_v)$, at every $v\mid p$. 
\subsection{$p$-adic N\'eron functions}
\label{subsec:local_Neron}
Let $L$ be a finite extension of $K$, let $v$ be a place of $K$ and let $w$ be a place of $L$ above $v$.

Assume first that $v\mid p$. 
Our choice of subspace $W_v$ induces a choice of $v$-adic sigma function $\sigma_{v}$. By Proposition \ref{prop:finite_index_subgp}, $\sigma_v$ converges on a subgroup $H_v$, dependent on $W_v$, of finite index in $J(K_v)$. Clearly it also converges on a finite index subgroup $H_{w}$ of $J(L_w)$.

\begin{mydef}\label{def:Neron_fct_above_p}
The \emph{(local) $p$-adic N\'eron function at $w\mid v\mid p$}, with respect to the divisor $2\Theta$, the character $\chi$ and the subspace $W_v$, is the function
\begin{equation*}
\lambda_w\colon J_{\Theta}(L_w) \to \Q_p,
\end{equation*}
defined as follows. If $P\in J_{\Theta}(L_w)$ is a non-torsion point and $m$ is a positive integer such that $mP\in H_w\setminus \Supp(\Theta)$, then 
\begin{equation*}
\lambda_w(P) = -\frac{2}{n_w m^2}\cdot \chi_{L,w}\left(\frac{\sigma_v(T(mP))}{\phi_m(P)}\right);
\end{equation*}
we may extend the definition of $\lambda_w$ to torsion-points in $J_{\Theta}(L_w)$ by continuity. 
\end{mydef}

By Lemma \ref{lemma:exists_m}, an integer $m$ satisfying the conditions of Definition \ref{def:Neron_fct_above_p} always exists, and by Proposition \ref{prop:finite_index_subgp}\thinspace{}\ref{prop:finite_index_subgp:3} and Equation \eqref{eq:div_quad}, the definition of $\lambda_w$ is independent of the choice of $m$. Moreover, by Proposition \ref{prop:finite_index_subgp}\thinspace{}\ref{prop:finite_index_subgp:2} and by Equation \eqref{eq:divisor_div_poly}, $\sigma_v\circ T\circ m$ and $\phi_m$ have the same zeros on $J_{\Theta}$. This is what allows to extend to torsion-points.  

Assume now that $v\nmid p$. As the name suggests, we are looking for a $\Q_p$-valued analogue of a classical real-valued N\'eron function of divisor $2\Theta$ on $J_{\Theta}(L_w)$ (in the sense of \cite[Chapter 11]{Lang_diophantine_geometry},\cite[\S B.9]{Hindry_silverman}, \cite[\S 7]{Uchida}, and in particular \cite{Uchidacanloc} for the genus $2$ case). In this case (i.e.\ when $w\nmid p$), it is possible to construct such a $p$-adic function starting from a suitable real-valued one, just by replacing a real-valued continuous homomorphism $L_w^{\times}\to\R$ with $\chi_{L,w}$. We refer to \cite{BKM22} for details. 

We take a slightly different approach here. Denote by $J_{1,w}(L_w)$ the model-dependent kernel of reduction for $J$ over $L_w$ (as in \eqref{eq:kernel_red}).

\begin{lemma}\label{lemma:equals_naive}
For $P\in J_{1,w}(L_w)\setminus \Supp(\Theta)$, let $\lambda_w(P) = -\frac{2}{n_w}\chi_{L,w}(T_1(P))$. Then 
\begin{equation}\label{eq:equals_naive}
\lambda_{w}(P) = -\frac{1}{n_w}\chi_{L,w}(\max_{i,j}\{|X_{ij}(P)|_w, 1\}).
\end{equation}
\end{lemma}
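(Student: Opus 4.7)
The plan is to carry out a direct valuation computation using Grant's formal group expansions \eqref{eq:Xij_exp}, reducing the statement to the identity $\max_{i,j}\{|X_{ij}(P)|_w, 1\} = |T_1(P)|_w^{-2}$; applying $\chi_{L,w}$ then yields \eqref{eq:equals_naive}, using the standard extension to positive real arguments satisfying $\chi_{L,w}(|x|_w) = -\chi_{L,w}(x)$ for $x\in L_w^\times$. To set up, I would let $t_i \colonequals T_i(P)$: since $P\in J_{1,w}(L_w)$ we have $t_1, t_2 \in \mathfrak{m}_w$, and since $P\notin \Supp(\Theta)$, Proposition~\ref{prop:finite_index_subgp}\thinspace{}\ref{prop:finite_index_subgp:2} gives $t_1 \neq 0$, so $n\colonequals \ordnop_w(t_1)$ is a positive integer.

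The first step is to pin down $\ordnop_w(X_{111}(P))$ tightly from the expansion $\tfrac{1}{X_{111}} = -T_1^3 + \sum_{\substack{i\geq 3\\ i+j>3}} \alpha_{ij}\,T_1^i T_2^j$ with $\alpha_{ij}\in\OO\subseteq\OO_w$. By Remark~\ref{rmk:odd_and_even_expansions}, each non-leading monomial has $i+j\geq 5$: when $j=0$ this forces $i\geq 5$, giving valuation $\geq 5n > 3n$; when $j\geq 1$ the extra $\ordnop_w(t_2)\geq 1$ makes the total strictly exceed $3n$. Hence the leading term dominates and $\ordnop_w(X_{111}(P)) = -3n$, and combined with $X_{11} = -T_1 X_{111}$ this gives $\ordnop_w(X_{11}(P)) = -2n$ exactly.

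The second step is the easier lower bound for the remaining $X_{ij}$. Inspecting \eqref{eq:Xij_exp}, every monomial appearing in the expansions of $X_{12}/X_{111}$ and $X_{22}/X_{111}$ carries a positive power of $T_1$ (both expansions are displayed with an explicit outer factor of $T_1$); together with $\OO$-integrality of the coefficients this forces $\ordnop_w(X_{ij}(P)/X_{111}(P))\geq n$, and hence $\ordnop_w(X_{ij}(P))\geq -2n$ for $(i,j)\in\{(1,2),(2,2)\}$. Combining with the first step, $\min_{i,j}\ordnop_w(X_{ij}(P)) = -2n$ is attained at $(1,1)$; since $n\geq 1$, this minimum is strictly negative, so the $1$ in the maximum is inactive and $\max_{i,j}\{|X_{ij}(P)|_w, 1\} = |X_{11}(P)|_w = |T_1(P)|_w^{-2}$. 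Applying $\chi_{L,w}$ to this identity will yield \eqref{eq:equals_naive}.

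The main obstacle will be verifying that the $-T_1^3$ term in the expansion of $1/X_{111}$ genuinely dominates: without the oddness observation from Remark~\ref{rmk:odd_and_even_expansions}, degree-$4$ monomials with a large $T_2$-component could a priori tie or beat it. Beyond this, the argument is insensitive to the residue characteristic of $w$ and in particular handles $w\mid 2$ uniformly, since the lower bounds for $X_{12}$ and $X_{22}$ use only $\OO_w$-integrality of coefficients (the possibly non-unit $-2$ in the leading term of $X_{22}/X_{111}$ is irrelevant to a lower bound), while the tight computation for $X_{111}$ depends only on its unit leading coefficient $-1$.
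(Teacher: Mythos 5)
Your proof is correct and is essentially the paper's argument in expanded form: the paper's proof is the one-line chain $\chi_{L,w}(T_1(P)) = \chi_{L,w}(|T_1(P)|_w^{-1}) = \frac{1}{2}\chi_{L,w}(\max_{i,j}\{|X_{ij}(P)|_w,1\})$ justified only by a pointer to the expansions \eqref{eq:Xij_exp}, and your valuation computation is exactly the content being elided. One minor remark: the oddness from Remark \ref{rmk:odd_and_even_expansions} is not actually needed, since the constraint $i\geq 3$ in the sum for $1/X_{111}$ together with $\ordnop_w(T_2(P))\geq 1$ already forces every non-leading monomial to have valuation strictly greater than $3n$.
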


\begin{proof}
For such $P$,
\begin{equation*}
\chi_{L,w}(T_1(P)) = \chi_{L,w}(|T_1(P)|_w^{-1}) = \frac{1}{2}\chi_{L,w}(\max_{i,j}\{|X_{ij}(P)|_w, 1\}),
\end{equation*}
 (cf.\ the expansions \eqref{eq:Xij_exp}). 
\end{proof}

\begin{cor} \label{cor:quasi_quad_in_formal}
Let $\lambda_w\colon J_{1,w}(L_w)\setminus \Supp(\Theta)\to \Q_p$ as in Lemma \ref{lemma:equals_naive}. Then
\begin{enumerate}[label=(\roman*)]
\item for all integers $m$ such that $mP\not\in \Supp(\Theta)$,
\begin{equation*}
\lambda_w(mP) = m^2\lambda_w(P) - \frac{2}{n_w}\chi_{L,w}(\phi_{|m|}(P)).
\end{equation*}
\item for all $P,Q\in J_{1,w}(L_w)\setminus \Supp(\Theta)$ such that $P+Q,P-Q\not\in \Supp(\Theta)$, 
\begin{align*}
\lambda_w(P+Q) + \lambda_w(P-Q) = 2\lambda_w(P) + 2\lambda_w(Q)\\
 - \frac{2}{n_w}\chi_{L,w}(-X_{11}(P)+X_{11}(Q)-X_{12}(P)X_{22}(Q) + X_{22}(P)X_{12}(Q)).
\end{align*}
\end{enumerate}
\end{cor}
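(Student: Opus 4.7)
My plan is to reduce both identities to the formal power-series identities of Theorem~\ref{thm:sigma_naive}\thinspace{}\ref{thm_sigma:part3} satisfied by the naive $v$-adic sigma function $\sigma_v^{(0)}$, then exploit that $\chi_{L,w}$ is trivial on $\OO_{L,w}^{\times}$ because $w\nmid p$. I will write $\sigma_v^{(0)}(T) = T_1\cdot u_v(T)$ with $u_v(T)\in K[[T_1,T_2]]$ and $u_v(0)=1$, as in the proof of Theorem~\ref{thm:sigma_naive}\thinspace{}\ref{thm_sigma:part2}. Substituting this factorisation into the two identities of Theorem~\ref{thm:sigma_naive}\thinspace{}\ref{thm_sigma:part3} and cancelling the leading powers of $T_1(T)$ and $T_1(S)$ produces, as formal power-series identities,
\begin{equation*}
\frac{T_1([m]T)}{T_1(T)^{m^2}\phi_m(T)} \;=\; \frac{u_v(T)^{m^2}}{u_v([m]T)},
\end{equation*}
\begin{equation*}
\frac{T_1(F(T,S))\,T_1(F(T,-S))}{T_1(T)^{2}T_1(S)^{2}\bigl(-X_{11}(T)+X_{11}(S)-X_{12}(T)X_{22}(S)+X_{22}(T)X_{12}(S)\bigr)} \;=\; \frac{u_v(T)^{2}u_v(S)^{2}}{u_v(F(T,S))\,u_v(F(T,-S))}.
\end{equation*}
Both right-hand sides are formal power series with constant term $1$.

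Next, since $w\nmid p$ and $P,Q\in J_{1,w}(L_w)\setminus\Supp(\Theta)$ satisfy $T(P),T(Q)\in\mathfrak{m}_w\times\mathfrak{m}_w$, Theorem~\ref{thm:sigma_naive}\thinspace{}\ref{thm_sigma:part2} guarantees convergence of $\sigma_v^{(0)}$, and hence of $u_v$, at each of $T(P)$, $T(Q)$, $T(mP)$, $T(P\pm Q)$. Granting for the moment that the resulting specialisations of the right-hand sides above lie in $\OO_{L,w}^{\times}$, applying $\chi_{L,w}$ to both sides---using that it is a homomorphism on $L_w^{\times}$, together with the definition $\lambda_w(P)=-\tfrac{2}{n_w}\chi_{L,w}(T_1(P))$---will yield (i) for $m>0$ and (ii). The case $m<0$ of (i) will reduce to $m>0$ via $T_1(-P)=-T_1(P)$, $\phi_{-n}=-\phi_n$, and the triviality of $\chi_{L,w}$ on signs (with $m=0$ excluded by the hypothesis $mP\notin\Supp(\Theta)$).

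The main obstacle will be the integrality check just assumed: showing that $u_v(T)^{m^2}/u_v([m]T)$ and $u_v(T)^{2}u_v(S)^{2}/\bigl(u_v(F(T,S))u_v(F(T,-S))\bigr)$ specialise to $w$-adic units. One direct route is to verify that the coefficients of $u_v$, which lie in $\Q[b_1,\dots,b_5]\subseteq K$, are $w$-adically integral for every $w\nmid p$: then $u_v(T(P))\in 1+\mathfrak{m}_w\subseteq \OO_{L,w}^{\times}$, and likewise for the other arguments. A more conceptual alternative is to observe that the left-hand sides of the two displayed identities are \emph{rational} functions on $J$ (resp.\ on $J\times J$) whose formal expansions at the origin have constant term $1$; they are therefore units in the local ring at the origin (resp.\ at $(O,O)$), and their values on the kernel of reduction $J_{1,w}(L_w)$ (resp.\ on $J_{1,w}(L_w)\times J_{1,w}(L_w)$) necessarily lie in $\OO_{L,w}^{\times}$.
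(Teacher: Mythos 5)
Your reduction to the two formal identities of Theorem \ref{thm:sigma_naive}\thinspace{}\ref{thm_sigma:part3} is a genuinely different route from the paper's, which instead identifies $\lambda_w$ with the naive height $-\frac{1}{n_w}\chi_{L,w}(\max_{i,j}\{|X_{ij}|_w,1\})$ via Lemma \ref{lemma:equals_naive} and then quotes the known transformation laws of that function under duplication and addition (Stoll, Uchida, de Jong--M\"uller). The formal manipulations themselves, and the treatment of $m<0$ via oddness of $T_1$ and $\phi_{-n}=-\phi_n$, are fine. The problem is that all of the arithmetic content has been pushed into the ``integrality check'', and neither of your two proposed ways of closing it works.

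Route 1 fails: the coefficients of $u_v=\sigma_v^{(0)}/T_1$ are not $w$-adically integral away from $p$ --- the expansion in Appendix \ref{app:sigma_expansion} already shows denominators $16$, $360$, $720$, and in higher degree the factorial denominators inherited from $\sigma(\mathcal{L}(T))$ involve arbitrarily large primes. The convergence argument in the proof of Theorem \ref{thm:sigma_naive}\thinspace{}\ref{thm_sigma:part2} only yields $u_v(T(P))\in 1+\mathfrak{m}_w$ when $\min_i\ordnop_w(T_i(P))>\ordnop_w(\ell)/(\ell-1)$, with $\ell$ the residue characteristic of $w$; this covers all of $J_{1,w}(L_w)$ only for $w$ unramified of odd residue characteristic, whereas the corollary is needed for every $w\nmid p$, in particular $w\mid 2$ and ramified $w$. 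Route 2 is a non sequitur: a rational function on $J$ that is a unit in the local ring at $O$ of the \emph{generic} fibre with value $1$ need not take unit values on the kernel of reduction (consider $1+\ell^{-1}T_1$); what you need is that it is a unit in the local ring of an \emph{integral model} at $\tilde{O}$, i.e.\ that its $T$-expansion has coefficients in $\OO_w$ and unit constant term. That statement --- the $\OO_w$-integrality of $T_1([m]T)/(T_1(T)^{m^2}\phi_m(T))$ and of its two-variable analogue --- is precisely the content of the results the paper cites to prove the corollary, and your argument does not supply it.
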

\begin{proof}
The equality of Lemma \ref{lemma:equals_naive} implies this, in view of \cite[Theorem 4.1]{Stoll} (see also \cite[Corollary 7.2]{Uchidacanloc}), \cite[Proposition 6.2]{muller_de_jong} and \cite[Theorem 7.5]{Uchida}.
\end{proof}

\begin{mydef}\label{def:Neron_fct_away_p}
The \emph{(local) $p$-adic N\'eron function at $w\mid v\nmid p$}, with respect to the divisor $2\Theta$ and the character $\chi$, is the function
\begin{equation*}
\lambda_w\colon J_{\Theta}(L_w) \to \Q_p,
\end{equation*}
defined as follows. If $P\in J_{\Theta}(L_w)$ is a non-torsion point and $m$ is a positive integer such that $mP\in J_{1,w}(L_w)\setminus \Supp(\Theta)$, then 
\begin{equation}\label{eq:lambdaw_away_p}
\lambda_w(P) = -\frac{2}{n_w m^2}\cdot \chi_{L,w}\left(\frac{T_1(mP)}{\phi_m(P)}\right);
\end{equation}
we may extend the definition of $\lambda_w$ to torsion-points in $J_{\Theta}(L_w)$ by continuity. 
\end{mydef}
Similarly to Definition \ref{def:Neron_fct_above_p}, also in this case Lemma \ref{lemma:exists_m} guarantees the existence of $m$ and the extension to torsion points is meaningful. By Corollary \ref{cor:quasi_quad_in_formal} and Equation \eqref{eq:div_quad}, the definition is independent of $m$.  

\begin{rmk}\label{rmk:neron_away_p}
\leavevmode
\begin{enumerate}[label=(\roman*)]
\item  For all $P\in J_{\Theta}(L_w) $, we have $\frac{n_w\lambda_w(P)}{\chi_{L,w}(\ell)}=: \lambda_w^{\prime}(P)\in \Q$, where $\ell$ is the rational prime below $w$. In fact, the real-valued N\'eron function of divisor $2\Theta$ of \cite{Uchidacanloc} at $P$ is $\lambda_w^{\prime}(P)\log|\ell|_w$, where $\log$ is the real logarithm.
\item\label{rmk:neron_away_p_good_reduction} If $C$ has good reduction at $w$, the equality of Lemma \ref{lemma:equals_naive} holds for all $P\in J_{\Theta}(L_{w})$, since in this case the right hand side of \eqref{eq:equals_naive} defines a function satisfying Corollary \ref{cor:quasi_quad_in_formal} on the whole of $J_{\Theta}(L_w)$ (cf.\ \cite[Lemma 1]{FlynnSmart} for the case $K=L=\Q$ and \cite[Proposition 5.2]{Stoll} for the general case; see also Section \ref{sec:application_bihyper} and, in particular, Theorem \ref{thm:properties_mu}). 
\item\label{rmk:neron_unramified} Equivalently, we could have used Definition \ref{def:Neron_fct_above_p} to define the local $p$-adic N\'eron function also in the case $v\nmid p$, since $\chi_{L,w}$ vanishes on $\OO_{w}^{\times}$ and $\sigma_v(T) = T_1 (1+ O(T_1,T_2))$.  We preferred stating the simplified formula, which does not involve the choice of a subspace $W_v$. Conversely, if $v\mid p$ and $\chi$ is unramified at $v$, Definition \ref{def:Neron_fct_above_p} simplifies to Definition \ref{def:Neron_fct_away_p}.
\end{enumerate}
\end{rmk}

We now state the properties satisfied by our local $p$-adic N\'eron function at $w$, regardless of whether $w$ divides or does not divide $p$. These are completely analogous to the properties of the real-valued N\'eron function: see \cite[Theorem 5.3, Theorem 5.6]{Uchidacanloc} for the genus $2$ case, and \cite[Theorem 7.5]{Uchida} for arbitrary genus (in the latter, the N\'eron function is with respect to $\Theta$, rather than $2\Theta$).
\begin{prop}\label{prop:properties_neron_fcts}
For any place $w$ of $L$, the local $p$-adic N\'eron function at $w$ has the following properties:
\begin{enumerate}[label=(\roman*)]
\item \label{it:extension} If $F$ is a finite extension of $L$ and $w^{\prime}$ a place of $F$ above $w$, then $\lambda_{w^{\prime}}$ restricts to $\lambda_w$ on $J_{\Theta}(L_w)$.
\item\label{it:quasi_quadraticity} For all $P\in J_{\Theta}(L_w)$ and for all integers $m$ such that $mP\in J_{\Theta}(L_w)$,
\begin{equation*}
\lambda_w(mP) = m^2\lambda_w(P) - \frac{2}{n_w}\chi_{L,w}(\phi_{|m|}(P)).
\end{equation*}
\item\label{it:quasi_parallelogram}  For all $P,Q\in J_{\Theta}(L_w)$ such that $P+Q,P-Q\in J_{\Theta}(L_w)$,
\begin{align*}
\lambda_w(P+Q) + \lambda_w(P-Q) = 2\lambda_w(P) + 2\lambda_w(Q)\\
 - \frac{2}{n_w}\chi_{L,w}(-X_{11}(P)+X_{11}(Q)-X_{12}(P)X_{22}(Q) + X_{22}(P)X_{12}(Q)).
\end{align*}
\end{enumerate}
\end{prop}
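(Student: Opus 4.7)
The strategy is to reduce each of the three properties to an identity that already holds at the level of the formal group / sigma function, using Lemma~\ref{lemma:exists_m} to clear denominators until the relevant multiples of $P$ and $Q$ land in the convergence domain $H_w$ of $\sigma_v$ (when $w\mid p$) or in the kernel of reduction $J_{1,w}(L_w)$ (when $w\nmid p$). The torsion-point case then follows from the continuity extension built into Definitions \ref{def:Neron_fct_above_p}--\ref{def:Neron_fct_away_p}.

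For part \ref{it:extension} I would simply unwind the definitions. The $v$-adic sigma function $\sigma_v$ and the division polynomials $\phi_m$ are attached to the defining equation of $C$, so they make sense on $J(F_{w'})$; moreover $H_{w}\subseteq H_{w'}$ and $J_{1,w}(L_w)\subseteq J_{1,w'}(F_{w'})$, so any $m$ that works for $P$ at $w$ also works at $w'$. The formula $\chi_{F,w'}(x) = [F_{w'}:L_w]\,\chi_{L,w}(x)$ for $x\in L_w^{\times}$, combined with $n_{w'}=[F_{w'}:L_w]\cdot n_w$, cancels to give $\lambda_{w'}(P)=\lambda_w(P)$.

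For part \ref{it:quasi_quadraticity} I would take $P$ non-torsion and $m>0$ (the $m<0$ case reduces to the positive one since $\lambda_w$ is even, which in turn follows from the oddness of $\sigma_v$ and of $T=(T_1,T_2)$ together with $\chi_{L,w}$ vanishing on roots of unity). Pick $k\in\Z_{>0}$ large enough, via Lemma~\ref{lemma:exists_m}, so that both $kP$ and $kmP$ lie in $H_w\setminus\Supp(\Theta)$. Evaluating Definition~\ref{def:Neron_fct_above_p} with the integer $km$ for $\lambda_w(P)$ and with $k$ for $\lambda_w(mP)$, one obtains
\[
\lambda_w(mP)-m^2\lambda_w(P) \;=\; -\frac{2}{n_w k^2}\,\chi_{L,w}\!\left(\frac{\phi_{km}(P)}{\phi_k(mP)}\right).
\]
The recursion \eqref{eq:div_quad} gives $\phi_{km}(P)/\phi_k(mP) = \phi_m(P)^{k^2}$, and pulling the exponent $k^2$ out of $\chi_{L,w}$ yields $-\tfrac{2}{n_w}\chi_{L,w}(\phi_m(P))$, as required. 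The case $w\nmid p$ is formally identical, using Definition~\ref{def:Neron_fct_away_p} and the identity $T_1(kmP)/T_1(kP)^{??}$ already encoded in Corollary~\ref{cor:quasi_quad_in_formal}\,(i), applied to $kP\in J_{1,w}(L_w)$.

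For part \ref{it:quasi_parallelogram}, by Lemma~\ref{lemma:exists_m} choose $k\in\Z_{>0}$ such that $kP$, $kQ$, $k(P+Q)$, $k(P-Q)$ all lie in $H_w\setminus\Supp(\Theta)$ (or in $J_{1,w}(L_w)\setminus\Supp(\Theta)$ when $w\nmid p$). Writing the Definition-\ref{def:Neron_fct_above_p} formula with this $k$ for each of the four Néron values and using that $T\bigl(k(P\pm Q)\bigr)=F\bigl(T(kP),\pm T(kQ)\bigr)$, the combination $\lambda_w(P+Q)+\lambda_w(P-Q)-2\lambda_w(P)-2\lambda_w(Q)$ becomes $-\tfrac{2}{n_w k^2}\chi_{L,w}$ of the product of two quotients: the sigma-function quotient of Proposition~\ref{prop:finite_index_subgp}\ref{prop:finite_index_subgp:3} (evaluated at $T(kP),T(kQ)$) and the division-polynomial quotient $\phi_k(P)^2\phi_k(Q)^2/(\phi_k(P+Q)\phi_k(P-Q))$ given by \eqref{eq:div_par}. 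These two quotients telescope: the former equals $-X_{11}(kP)+X_{11}(kQ)-X_{12}(kP)X_{22}(kQ)+X_{22}(kP)X_{12}(kQ)$, and by \eqref{eq:div_par} the latter equals the reciprocal of this same quantity times $\bigl(-X_{11}(P)+X_{11}(Q)-X_{12}(P)X_{22}(Q)+X_{22}(P)X_{12}(Q)\bigr)^{k^2}$. The factor $k^2$ cancels with the $k^2$ in the denominator after pulling it out of $\chi_{L,w}$, producing the claimed formula. In the $w\nmid p$ case one argues identically, using Corollary~\ref{cor:quasi_quad_in_formal}\,(ii) in place of Proposition~\ref{prop:finite_index_subgp}\ref{prop:finite_index_subgp:3}.

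The main obstacle I anticipate is purely bookkeeping: verifying that signs and roots-of-unity ambiguities coming from the oddness of $\sigma_v$, the parity of $\phi_m$ under $z\mapsto -z$, and the sign convention for negative $m$ are all killed by $\chi_{L,w}$; and that the (corrected) identity \eqref{eq:div_par} combines cleanly with Proposition~\ref{prop:finite_index_subgp}\ref{prop:finite_index_subgp:3} without stray powers of $X_0$ once one dehomogenises with $X_0=1$. Continuity of each side in the projective coordinates then carries the identities from non-torsion points to the whole of $J_\Theta(L_w)$.
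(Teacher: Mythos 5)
Your parts \ref{it:extension} and \ref{it:quasi_quadraticity} are correct and follow essentially the paper's route: \ref{it:extension} is indeed just unwinding the definitions, and for \ref{it:quasi_quadraticity} only the single point $mP$ needs to be placed in $H_w\setminus\Supp(\Theta)$, so Lemma \ref{lemma:exists_m} supplies the required $k$, after which your telescoping via \eqref{eq:div_quad} is exactly the intended computation (the independence of Definition \ref{def:Neron_fct_above_p} from the choice of multiplier lets you use $km$ for $P$ and $k$ for $mP$).

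The genuine gap is the opening move of part \ref{it:quasi_parallelogram}: ``by Lemma \ref{lemma:exists_m} choose $k$ such that $kP$, $kQ$, $k(P+Q)$, $k(P-Q)$ all lie in $H_w\setminus\Supp(\Theta)$.'' Lemma \ref{lemma:exists_m} only produces such an integer for \emph{one} point at a time, and there is no obvious reason the sets of admissible multipliers for the four points intersect. It is easy to find $m$ with $mR\in H_w$ for all four $R$ simultaneously (take the lcm of the orders in $J(L_w)/H_w$), but some of the $mR$ may then lie on $\Theta$, and the doubling trick in the proof of Lemma \ref{lemma:exists_m} that moves one point off $\Theta$ can push another one onto it; iterating does not terminate in any evident way. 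Producing a single multiplier that works simultaneously for four points is precisely the content of Lemma \ref{lemma:stevan} (Gajovi\'c), which rests on Lemmas \ref{lemma:aux_app_stevan} and \ref{lemma:aux_app_stevan_gaps} together with a computer-assisted combinatorial check that some $1\leq k\leq 41$ always works; the paper's proof of \ref{it:quasi_parallelogram} invokes exactly this lemma at exactly this point. Once that ingredient is supplied, your telescoping of Proposition \ref{prop:finite_index_subgp}\thinspace\ref{prop:finite_index_subgp:3} against \eqref{eq:div_par} is correct (the $A_k$ factors cancel and $\chi_{L,w}(A^{k^2})=k^2\chi_{L,w}(A)$ absorbs the $1/k^2$), and the continuity extension to torsion points is as in the paper.
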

\begin{proof}
\ref{it:extension} is clear. 
By Proposition \ref{prop:finite_index_subgp}\thinspace{}\ref{prop:finite_index_subgp:3} and Corollary \ref{cor:quasi_quad_in_formal}, properties \ref{it:quasi_quadraticity} and \ref{it:quasi_parallelogram} hold in a subgroup of finite index $H_w$. Since we used \ref{it:quasi_quadraticity} to extend $\lambda_w$ outside of this subgroup, part \ref{it:quasi_quadraticity} holds everywhere in view of \eqref{eq:div_quad}. As regards \ref{it:quasi_parallelogram}, assume $P,Q,P+Q,P-Q$ are non-torsion points. Since $H_w$ has finite index in $J(L_w)$, we can find a positive integer $m$ such that $mR\in H_w\setminus\{0\}$ for each  $R\in \{P,Q,P+Q,P-Q\}$. By Lemma \ref{lemma:stevan}, we can also ensure, possibly after replacing $m$ with a multiple, that each $mR$ does not lie on $\Theta$. Then \ref{it:quasi_parallelogram} holds when $P$ and $Q$ are replaced by $mP$ and $mQ$. 
Finally, by \eqref{eq:div_par}, the equality \ref{it:quasi_parallelogram} holds for $P,Q$. The torsion case follows by continuity.
\end{proof}

\subsection{Global height}
\begin{mydef}\label{def:global_ht}
The \emph{global $p$-adic height} for $J$ with respect to the character $\chi$ and the subspaces $W_v\in \Is(C/K_v)$ (at $v\mid p$) is the function
\begin{equation*}
h_p\colon J(\overline{\Q}) \to \Q_p,
\end{equation*}
defined as follows. If $P$ belongs to $J_{\Theta}(L)$, for some finite extension $L$ of $K$, then
\begin{equation}\label{eq:hp_sum}
h_p(P) = \frac{1}{[L:\Q]}\sum_{w}n_w\lambda_w(P),
\end{equation}
where the sum runs over all the non-archimedean places $w$ of $L$ and $\lambda_w$ is the local $p$-adic N\'eron function of Definition \ref{def:Neron_fct_above_p} if $w\mid p$ and of Definition \ref{def:Neron_fct_away_p} if $w\nmid p$. Since by Proposition \ref{prop:properties_neron_fcts}\thinspace{}\ref{it:quasi_quadraticity}, $h_p(mP) = m^2h_p(P)$ for $P,mP\in J_{\Theta}(L)$, we extend $h_p$ to $J(L)$ using Lemma \ref{lemma:exists_m} and by requiring that
\begin{equation*}
h_p(mP) = m^2h_p(P)\qquad  \text{continues to hold and}\qquad h_p(0) = 0.
\end{equation*}
\end{mydef}
By Remark \ref{rmk:neron_away_p}\thinspace{}\ref{rmk:neron_away_p_good_reduction}, the sum in \eqref{eq:hp_sum} is finite and by Proposition \ref{prop:properties_neron_fcts}\thinspace{}\ref{it:extension}, it is independent of the choice of $L$. 
Our extension of the $p$-adic N\'eron functions to torsion points together with the vanishing of $\chi_L$ on $L^{\times}$ also guarantees that $h_p(P) = 0$ for a torsion point $P\in J_{\Theta}(L)$. Therefore the extension to $J(L)$ is well-defined. 

 By Proposition \ref{prop:properties_neron_fcts}\thinspace{}\ref{it:quasi_parallelogram}, the global $p$-adic height also satisfies the parallelogram law:
 \begin{equation*}
 h_p(P+Q) + h_p(P-Q) = 2h_p(P) + 2h_p(Q).
 \end{equation*}
 Therefore it induces a symmetric bilinear pairing $J(\overline{\Q})\times J(\overline{\Q})\to\Q_p$.

\begin{rmk}
By \cite{BKM22}, if $p$ and $C$ satisfy the assumptions of Theorem \ref{thm:Blakestad_main} and if we pick, for every $v$ at which $\chi$ is ramified, the canonical $v$-adic sigma function of Blakestad, the $p$-adic height $h_p$ coincides with the canonical global Mazur--Tate height \cite{mazur-tate}.
\end{rmk}

\begin{rmk} Assume $K=\Q$. Extending work of Perrin--Riou for elliptic curves \cite{Perrin-Riou-comptes, Perrin-Riou}, Trip has recently defined a quadratic form $h_p^{\naive}\colon J(\Q)\to \Q_p$ as a limit of a naive height: see \cite[Theorem 3.3.1, 
Definition 3.3.17, Theorem 3.3.19]{Trip} (in her thesis, $h_p^{\naive}$ is denoted $h_p$). Let $\chi$ be the cyclotomic character for $\Q$  (cf.\ \S \ref{subsec:implementation_Neron_fcts}), let $W_p\in \Is(C/\Q_p)$ be the subspace corresponding to the naive sigma function, and let $h_p$ be the global $p$-adic height from above with respect to these choices.  Then Trip shows that $h_p(u) = h_p^{\naive}(u)$ for all $u\in J(\Q)$ \cite[Theorem 3.3.25]{Trip}.
\end{rmk}

\section{Local $p$-adic N\'eron functions vs Colmez's Green functions}\label{sec:Colmez}
In the first part of this section (\S\S \ref{subsec:Colm_int}--\ref{subsec:Colm_heights}) we review some of the results of Colmez\cite{Colm96}. To be more precise, in \S \ref{subsec:Colm_int} we sketch Colmez's construction of a $p$-adic integration theory on smooth algebraic varieties over a $p$-adic field, which satisfies ``natural'' properties and does not require any assumption on the reduction. Similar techniques can be used to attach to divisors on abelian varieties certain $p$-adic Green functions, which are, essentially, double $p$-adic integrals. In \S \ref{subsec:Colm_curves}, single $p$-adic integrals of differentials of the third kind on curves are related to $p$-adic Green functions of divisor $\Theta$ on their Jacobians. \S \ref{subsec:Colm_int} and \S \ref{subsec:Colm_curves} are entirely expository. The results of \S \ref{subsec:Colm_curves} are applied in \S \ref{subsec:Colm_heights} to define an adelic height on Jacobians of curves over a number field $K$, whose component at $p$ coincides with the $p$-adic height of Coleman--Gross and Besser \cite{ColemanGross, BesserPairing}. Such adelic heights are defined in \cite{Colm96}, but we phrase Colmez's results in a slightly more general form: in particular, our adelic heights depend on the choice, at every place $p$ of $\Q$, of a continuous $\Q_p$-valued idele class character for $K$.

In \S \ref{subsec:comparison}, we then restrict to the situation of Section \ref{sec:padic_hts} and compare our local N\'eron functions to Colmez's Green functions of divisor $\Theta$: see Theorem \ref{thm:comparison} below. In view of \S \ref{subsec:Colm_heights}, we also obtain, in our odd degree genus $2$ setting, a formula relating the Coleman--Gross local $p$-adic height at $v$ with our $p$-adic N\'eron function at $v$ (Corollary \ref{cor:lambda_eq_CG}).

We use this to show that the global $p$-adic height $h_p$ of Definition \ref{def:global_ht} is equal to the Coleman--Gross global $p$-adic height (Corollary \ref{cor:global_CG_same_as_this}).
   
\begin{rmk}
We have chosen to devote considerable space to summarising Colmez's results for a few reasons. First, we crucially use a lot of them (and their proofs). Secondly, in order to state our comparison results to a larger degree of generality, we need Colmez's height construction to be extended to  arbitrary idele characters. Finally, we believe it would be difficult for the reader to contextualise the results of \S \ref{subsec:comparison} without having some familiarity with \cite{Colm96}.
   \end{rmk}

\subsection{Colmez's $p$-adic integration theory and Green functions}\label{subsec:Colm_int}
Let $K$ be a closed subfield of $\C_p$ (for instance, a finite extension of $\Q_p$). There is a unique locally analytic function $\Log\colon \OO_K^{\times}\to K$ satisfying:
\begin{equation}\label{eq:Log_colmez}
d\Log{t} = \frac{dt}{t}, \qquad \text{and}\qquad \Log(xy) = \Log(x) + \Log(y), \quad \text{for all } x,y\in \OO_K^{\times}.
\end{equation}
By viewing $\Log{p}$ as a variable, we can extend $\Log$ to a homomorphism $\Log\colon K^{\times}\to K_{\st} \colonequals K[\Log{p}]$. 
One of the main results of \cite{Colm96} is the development of a $K_{\st}$-valued theory of $p$-adic integration of closed differential $1$-forms on smooth geometrically connected algebraic varieties over $K$, without any assumption on the reduction.  Given such a variety $X$, recall that a rational closed differential 1-form is 
\begin{enumerate}[label = (\roman*)]
\item of the \emph{first kind} if it is holomorphic;
\item of the \emph{second kind} if its residue divisor is zero;
\item of the \emph{third kind} if it has at most simple poles and the residue at each pole is an integer. 
\end{enumerate}
See \cite[Lemme I.1.12, Remarque
I.1.18\thinspace{}(ii)]{Colm96} for a precise definition of residue divisor. Any closed differential $1$-form is a linear combination of differentials of the second and third kind  \cite[Corollaire I.1.16]{Colm96}.

Let $K(X)$ be the field of rational functions on $X$ and $K(X)_{\st}$ the ring generated by $K(X)$ and $\Log(f)$, for $f\in K(X)^{\times}$. 
A \emph{locally log-meromorphic} function is a function which locally (with respect to the $p$-adic topology) can be written as a polynomial in logarithms of analytic functions with meromorphic coefficients. 

\begin{thm}[{\hspace{1sp}\cite[Th\'eor\`eme 1]{Colm96}}]\label{thm:uniquetheory}
There exists a unique theory of integration on smooth algebraic varieties over $K$ satisfying the following properties, for all such varieties $X$, closed differential $1$-forms $\omega$, $\omega_1$ and $\omega_2$ on $X$, points $a,b,c\in X(K)$ and scalars $\mu_1,\mu_2\in K$:
\begin{enumerate}[label = (\roman*)]
\item \label{it:loc_an} $F(x) = \int_{a}^x \omega$ is a locally log-meromorphic function of $x\in X(K)$ with values in $K\oplus K \Log{p}$ satisfying $dF = \omega$; 
\item $\int_a^b (\mu_1\omega_1 + \mu_2\omega_2) = \mu_1 \int_a^b\omega_1 +\mu_2 \int_a^b\omega_2$;
\item $\int_a^c\omega = \int_a^b\omega + \int_b^c \omega$;
\item\label{it:exact} If $\omega = dg$ (with $g\in K(X)_{\st}$) is exact, then $\int_a^b \omega = g(b) - g(a)$;
\item\label{change_of_vars} If $f\colon X\to Y$ is a morphism of smooth varieties over $K$, $\eta$ is a closed differential $1$-form on $Y$, then $\int_a^b f^{*}\eta = \int_{f(a)}^{f(b)}\eta$.
\end{enumerate}
\end{thm}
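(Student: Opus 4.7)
The plan is to split the proof into uniqueness and existence, with uniqueness being largely formal and existence constituting the substantive construction.

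For uniqueness, I would fix a closed $1$-form $\omega$ and a base point $a\in X(K)$, and suppose $\int$ and $\int'$ are two theories satisfying \ref{it:loc_an}--\ref{change_of_vars}. The difference $\Delta(x)\colonequals \int_a^x\omega - \int_a^{\prime\, x}\omega$ is then a locally log-meromorphic function on $X(K)$ valued in $K\oplus K\Log p$, satisfying $d\Delta=0$ and $\Delta(a)=0$. A local analysis of locally log-meromorphic functions---writing them as polynomials in logarithms of analytic units with meromorphic coefficients---should show that $d\Delta=0$ forces $\Delta$ to be locally constant; since $X$ is connected and $\Delta(a)=0$, one concludes $\Delta\equiv 0$, and the two theories agree on every closed $1$-form.

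For existence, my plan is to build the integral by first decomposing any closed $1$-form as a combination of differentials of the second and third kind \cite[Corollaire I.1.16]{Colm96}, and to handle each type separately. Differentials of the second kind are locally exact, so they admit local meromorphic primitives that one can patch into a global $K$-valued primitive; here the essential input would be Coleman's theory on wide opens of good reduction, extended to the general case via an analytic continuation argument \`a la Vologodsky. For differentials of the third kind, I would write them locally around each pole as $n_i\,df_i/f_i$ plus a differential of the second kind (with $n_i\in\Z$ and $f_i$ rational), and define their integral by $\int df_i/f_i = \Log f_i(b) - \Log f_i(a)$, invoking the extension of $\Log$ to $K^\times\to K_{\st}$ obtained by promoting $\Log p$ to a formal variable. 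Axioms \ref{it:exact} and \ref{change_of_vars} for pullbacks of $df/f$ would then follow formally from the homomorphism property of $\Log$.

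The main obstacle will be global well-definedness, that is, verifying that these local recipes glue and do not depend on the chosen decomposition into second and third kind, nor on the local logarithmic primitives. The ambiguity in a primitive of $dt/t$ around a puncture is precisely a branch-of-logarithm ambiguity, and the key device (due to Colmez) is to retain $\Log p$ as a formal indeterminate in $K_{\st}$: each branch choice contributes only a $K$-multiple of $\Log p$, and the rigidity built into \ref{it:loc_an}---that the primitive take values in $K\oplus K\Log p$ and be locally log-meromorphic---pins down the integral uniquely. Compatibility with change of variables \ref{change_of_vars} would then follow from the naturality of the decomposition into second and third kind under morphisms of smooth varieties, combined with the functoriality of $\Log$. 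Once well-definedness is in hand, linearity, additivity in the base point, and the remaining axioms are immediate from the construction.
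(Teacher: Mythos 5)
Your proposal has two genuine gaps, both stemming from the same issue: $X(K)$ with its $p$-adic topology is totally disconnected. In the uniqueness step you argue that the difference $\Delta$ of two theories satisfies $d\Delta=0$, hence is locally constant, and conclude $\Delta\equiv 0$ ``since $X$ is connected.'' But Zariski-connectedness of $X$ is irrelevant here: a locally constant function on the $p$-adic manifold $X(K)$ can take a different value on each residue disc, so $d\Delta=0$ and $\Delta(a)=0$ do not force $\Delta\equiv 0$. This is precisely the central difficulty of $p$-adic integration, and it resurfaces in your existence step, where you propose to ``patch'' local meromorphic primitives of second-kind forms into a global primitive, deferring the gluing problem to ``an analytic continuation argument \`a la Vologodsky.'' That deferral is where the entire content of the theorem lives; as written, the argument is circular. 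Likewise, the claim that local log-meromorphy with values in $K\oplus K\Log{p}$ ``pins down the integral uniquely'' is false for the same reason.

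Colmez's actual route, which the paper summarises after the theorem statement, resolves the disconnectedness by exploiting algebraic group structure rather than analytic continuation. One first reduces to abelian varieties via an Albanese morphism and functoriality (axiom (v)). On an abelian variety, a holomorphic form is translation-invariant, so by (v) its primitive must be a group homomorphism; the formal primitive converges on an open subgroup $V$, and since $X(K)/V$ is torsion the homomorphism extends uniquely to all of $X(K)$ --- this is what replaces your (invalid) connectedness argument and is the true source of both existence and uniqueness. For forms of the second and third kind, the theorem of the square makes $\Delta^{[2]}\omega$ exact on $X^3$, and the descent criterion of \cite[Th\'eor\`eme II.1.16]{Colm96} produces a locally log-meromorphic $f$ with $\Delta^{[2]}f = F_{\Delta^{[2]}\omega}$, unique up to a polynomial of degree $\leq 1$ in the one-logarithms; no Frobenius or Coleman/Vologodsky input is needed. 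If you want to salvage your outline, you must replace both the ``locally constant implies constant'' step and the ``patching'' step with an argument of this kind that propagates information between residue discs via the group law (or some equivalent rigidity, such as Frobenius equivariance).
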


The strategy is to first consider the case where $X$ is an abelian variety, and to then use an Albanese morphism combined with functoriality \ref{change_of_vars} to extend to general varieties. In view of \cite[Remarque
2\thinspace{}(iv)]{Colm96}, it is essentially enough to consider the case $K=\C_p$.

Assume now that $X$ is an abelian variety over $K$. We give a rough summary of Colmez's construction in this case. 

A holomorphic differential $\omega$ on $X$ is translation-invariant. Therefore, by \ref{change_of_vars}, the primitive of $\omega$ vanishing at $0$ must be a group homomorphism $X(K)\to K$.  We can construct such a primitive as follows. There is an open subgroup $V$ of $X(K)$ such that the unique formal primitive $\mathcal{L}_{\omega}$ of $\omega$ that satisfies $\mathcal{L}_{\omega}(0) = 0$ converges on $V$ and is a group homomorphism $V\to K$. 
 Since $X(K)/V$ is torsion \cite[{Th\'eor\`eme II.1.9}]{Colm96}, we can extend $\mathcal{L}_{\omega}$ to a locally analytic group homomorphism $\mathcal{L}_{\omega}\colon X(K)\to K$ by insisting that $\mathcal{L}_{\omega}(na) = n\mathcal{L}_{\omega}(a)$ for all integers $n$ and points $a$. 
Conversely, given a locally analytic group homomorphism $\mathcal{G}\colon X(K)\to K$, its differential is a translation-invariant, and hence holomorphic, differential. That is, the map $\mathcal{G}\mapsto d\mathcal{G}$ induces an isomorphism between the $K$-vector space of locally analytic group homomorphisms $X(K)\to K$ (which we call \emph{one-logarithms} of $X$) and $H^0(X,\Omega^1)$ \cite[Lemme II.1.13]{Colm96}.

\begin{example}
When $X$ is the Jacobian of the curve $C$ of \S \ref{subsec:formal}, and $\omega = \Omega_i$ for $i\in\{1,2\}$, the primitive $\mathcal{L}_{\Omega_i}$ is an extension to $J(K)$ of the $i$-th component $\mathcal{L}_i$ of the strict logarithm $\mathcal{L}$. See Proposition \ref{prop:exp_exp_log} (and Lemma \ref{lemma:subs_exponential}) for convergence properties of $\mathcal{L}_i$. 
The $K$-vector space of one-logarithms of $X$ is $2$-dimensional and spanned by $\mathcal{L}_{\Omega_1}$ and $\mathcal{L}_{\Omega_2}$.
\end{example}

The case of differentials of the second and third kind is more involved. Given a positive integer $n$ and a subset $I$ of $\{1,\dots, n\}$, let 
\begin{equation*}
m_I\colon X^{n+1}\to X, \qquad (x,h_1,\dots,h_n)\mapsto x + \sum_{i\in I} h_i
\end{equation*}
and given $\alpha$, a differential form, a function, or a divisor on $X$, set
\begin{equation*}
\Delta^{[n]}\alpha = \sum_{I\subseteq \{1,\dots, n\}} (-1)^{n-|I|} m_I^{*} \alpha.
\end{equation*}
By the theorem of the square \cite[Propositions I.3.14, I.3.15]{Colm96}, if $\omega$ is a differential form of the second or third kind on $X$, then $\Delta^{[2]}\omega$ is exact on $X^{3}$. 
Let $F_{\Delta^{[2]}\omega}$ be the element of $K(X^3)_{\st}$ satisfying $dF_{\Delta^{[2]}\omega} = \Delta^{[2]}\omega$ and $F_{\Delta^{[2]}\omega}(x,0,0) = 0$. By \ref{change_of_vars} and the choice of $F_{\Delta^{[2]}\omega}$, we must have
\begin{equation*}
\Delta^{[2]}\biggl(\int \omega\biggr) = F_{\Delta^{[2]}\omega}.
\end{equation*}
Colmez proves a sufficient criterion for a locally log-meromorphic function $g$ on $X(\C_p)^{n+1}$ to be of the form $\Delta^{[n]}f$, for a locally log-meromorphic function $f$ on $X(\C_p)$; when this applies, $f$ is unique up to addition by a polynomial of degree at most $n-1$ in the one-logarithms of $X$: see \cite[Th\'eor\`eme II.1.16]{Colm96} for a precise statement. Applying this technical result to $g=F_{\Delta^{[2]}\omega}$, one solves for a primitive $f = \int \omega$ (see \cite[Proposition II.1.17]{Colm96}). See \S \ref{subsec:diff_first_2nd_third} for explicit formulae when $X$ is the Jacobian of the curve $C$. 

Theorem \ref{thm:uniquetheory} concerns single integration only. However, applying a similar strategy to the one we have just outlined for integrals of differentials of the second and third kind, Colmez also constructs certain $p$-adic Green functions on the abelian variety $X$, which are double integrals in disguise (see Proposition \ref{prop:dpartial_second_kind} below).

The theorem of the cube states the following:
\begin{prop}[{\hspace{1sp}\cite[Proposition I.3.18]{Colm96}}]\label{prop:Delta3_principal}
If $X$ is an abelian variety over an algebraically closed field of characteristic $0$ and $D$ is a divisor of $X$, then $\Delta^{[3]}D$ is principal.
\end{prop}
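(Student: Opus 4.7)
The plan is to derive this from Mumford's theorem of the cube applied to the line bundle $\mathcal{L} := \mathcal{O}_{X^4}(\Delta^{[3]} D)$. The key move is the factorisation
\begin{equation*}
X^4 = Y_1 \times Y_2 \times Y_3, \qquad Y_1 = X \times X \text{ (holding } (x, h_1)\text{)}, \quad Y_2 = X \text{ (holding } h_2\text{)}, \quad Y_3 = X \text{ (holding } h_3\text{)}.
\end{equation*}
The theorem of the cube (for line bundles on a product of three complete varieties) then reduces the triviality of $\mathcal{L}$ to the triviality of its restriction along each of the three slices $\{p_1\} \times Y_2 \times Y_3$, $Y_1 \times \{p_2\} \times Y_3$, $Y_1 \times Y_2 \times \{p_3\}$, for some choice of base points $p_i$.

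First I would take all base points to be the identities: $p_1 = (0,0)$, $p_2 = 0$, $p_3 = 0$. I claim each restriction is the zero divisor, so \emph{a fortiori} its associated line bundle is trivial. Consider the restriction of $\Delta^{[3]} D$ to $\{h_2 = 0\}$. For any $I \subseteq \{1, 2, 3\}$, the map $m_I(x, h_1, 0, h_3) = x + \sum_{i \in I \cap \{1, 3\}} h_i$ depends only on $I \cap \{1, 3\}$, so $m_I^{*} D|_{h_2 = 0}$ does too. Now pair each $I$ with $2 \notin I$ against $I \cup \{2\}$: the two pullbacks coincide, while the signs $(-1)^{3-|I|}$ and $(-1)^{3-|I|-1}$ are opposite, so the pair contributes $0$. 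Summing over all such pairs gives the zero divisor. The same argument works verbatim for the restriction along $\{h_3 = 0\}$. For the restriction to $\{(0,0)\} \times X \times X$, the map becomes $(h_2, h_3) \mapsto \sum_{i \in I \cap \{2, 3\}} h_i$, depending only on $I \cap \{2, 3\}$; pairing $I$ with $I \triangle \{1\}$ produces the analogous cancellation.

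Having verified that each of the three slice restrictions of $\mathcal{L}$ is trivial, Mumford's theorem of the cube forces $\mathcal{L}$ itself to be trivial on $X^4$, which is precisely the assertion that $\Delta^{[3]} D$ is principal. The only mildly creative point is the factorisation: because each direction variable $h_i$ must appear as its own factor in the triple product (so that setting it to zero induces the pairwise cancellation among subsets differing by $i$), the base variable $x$ has to be grouped with one of the $h_i$'s in the first factor $Y_1 = X \times X$. The rest is straightforward bookkeeping, and I do not expect any serious obstacle.
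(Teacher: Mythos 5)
Your argument is correct: the cancellation in pairs $I\leftrightarrow I\cup\{2\}$, $I\leftrightarrow I\cup\{3\}$, and $I\leftrightarrow I\bigtriangleup\{1\}$ on the three slices does trivialise $\mathcal{O}_{X^4}(\Delta^{[3]}D)$ there (work at the level of line bundles rather than "the zero divisor", since a slice may meet the support of $\Delta^{[3]}D$; the maps $m_I$ literally coincide on each slice, so the pullback bundles cancel exactly), and Mumford's theorem of the cube then gives triviality on $X^4$. The paper itself gives no proof — it cites Colmez, Proposition I.3.18, and indeed labels the statement as the theorem of the cube — and your derivation from the line-bundle version of the cube is the standard argument behind that citation.
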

Let $D$ be a divisor of $X$ and let $f_D^{(4)}$ be the rational function on $X^4$ whose divisor is $\Delta^{[3]}D$ and whose restriction to $(X^3\times\{0\})\cup (X^2\times \{0\}\times X) \cup (X\times\{0\}\times X^2)$ is equal to the constant function $1$.  
The aforementioned criterion of \cite[Th\'eor\`eme II.1.16]{Colm96} applied to $\Log f_D^{(4)}$ allows one to go from $X^4$ to $X$:

\begin{prop}[{\hspace{1sp}\cite[Proposition 4, Proposition II.1.19]{Colm96}}]\label{prop:padicGreen}
There exists a locally log-meromorphic function $G_D$ on $X$, with values in $K\oplus \Q\Log{p}$, such that $\Delta^{[3]}G_D = \Log{f_D^{(4)}}$. Moreover, $G_D$ is unique up to addition by a polynomial of degree at most $2$ in the one-logarithms of $X$, it has a logarithmic singularity along $D$ and it is locally analytic away from the support of $D$.
\end{prop}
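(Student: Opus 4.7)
The plan is to apply the criterion of \cite[Th\'eor\`eme II.1.16]{Colm96}, specialised to $n=3$, to the locally log-meromorphic function $\Log f_D^{(4)}$ on $X^4$. First I would verify the hypotheses of that criterion. By construction $f_D^{(4)}$ has divisor $\Delta^{[3]}D$, which is principal by Proposition~\ref{prop:Delta3_principal}, and is normalised to be identically $1$ on each of the three distinguished subvarieties $X^3\times\{0\}$, $X^2\times\{0\}\times X$, $X\times\{0\}\times X^2$. The divisor $\Delta^{[3]}D$ itself is symmetric in the variables $h_1,h_2,h_3$, and combined with the normalisation this forces $f_D^{(4)}$---and hence $\Log f_D^{(4)}$---to satisfy the symmetry and boundary-vanishing hypotheses of the criterion, together with a $\Delta^{[3]}$-cocycle relation inherited from the theorem of the cube (Proposition~\ref{prop:Delta3_principal} applied to pullbacks of $D$ under various sum maps).

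Applying \cite[Th\'eor\`eme II.1.16]{Colm96} then produces a locally log-meromorphic function $G_D$ on $X$ with $\Delta^{[3]}G_D = \Log f_D^{(4)}$, unique up to an element of the kernel of $\Delta^{[3]}$ on locally log-meromorphic functions on $X$. The description of that kernel given by the criterion is what delivers the stated uniqueness: one-logarithms of $X$ are precisely the locally analytic group homomorphisms $X(K)\to K$ and serve as primitives of the translation-invariant differentials, so any polynomial of degree at most $2$ in them is annihilated by $\Delta^{[3]}$, and these exhaust the kernel. To see that $G_D$ takes values in $K\oplus \Q\Log p$ rather than in the larger $K_{\st}$, I would observe that $\Delta^{[3]}D$ has integer coefficients, whence $\Log f_D^{(4)}$ lies in $K(X^4)_{\st}$ with $\Log p$-coefficient in $\Q$, and the iterative integration procedure of the criterion preserves this rationality of the $\Log p$-component.

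For the statement about the singular locus, I would argue locally. Given $x_0\notin \Supp(D)$, shrink to a small analytic neighbourhood $U$ of $x_0$ and neighbourhoods $V_i$ of $0$ in $X$ such that no expression $x + \sum_{i\in I} h_i$ (for $x\in U$, $h_i\in V_i$, $I\subseteq\{1,2,3\}$) meets $\Supp(D)$. On $U\times V_1\times V_2\times V_3$ the right-hand side $\Log f_D^{(4)}$ is then analytic, and the local analyticity of $G_D$ on $U$ follows from the constructive step (iterated primitives in the variables $h_i$) in the proof of the criterion. At a smooth point of $D$ with local equation $\pi = 0$ near $x_0\in \Supp(D)$, the function $\Delta^{[3]}\Log\pi$ differs from $\Log f_D^{(4)}$ by a function which is analytic along the stratum $\{x\in \Supp(D),\ h_i\in V_i\}$; the uniqueness part of the criterion then forces $G_D - \Log \pi$ to extend locally analytically across $D$, yielding the desired logarithmic singularity.

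The hardest step will be the singularity analysis: the locally log-meromorphic structure must be propagated through Colmez's iterative construction, and one has to verify that the singular contributions of the various $m_I^{*}D$ (away from the component $I=\emptyset$) cancel, so that no spurious logarithmic singularities appear away from $D$. Checking the more technical cocycle-type hypotheses of \cite[Th\'eor\`eme II.1.16]{Colm96} for $\Log f_D^{(4)}$---beyond symmetry and the three boundary conditions---is the other delicate point that must be dispatched before the criterion can be invoked.
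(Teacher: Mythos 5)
Your proposal follows exactly the route the paper indicates: the paper states this result as a citation of Colmez (\cite[Proposition 4, Proposition II.1.19]{Colm96}) and notes, immediately before the statement, that it is obtained by applying the criterion of \cite[Th\'eor\`eme II.1.16]{Colm96} to $\Log f_D^{(4)}$, which is precisely your strategy. The additional details you sketch (verification of the hypotheses, the description of the kernel of $\Delta^{[3]}$, the rationality of the $\Log p$-component, and the local singularity analysis) are consistent with Colmez's argument, so the proposal is correct and essentially identical in approach.
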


\begin{mydef}
A function $G_D$ satisfying Proposition \ref{prop:padicGreen} is called a \emph{Green function of divisor $D$}.
\end{mydef}

The following proposition tells us that the Green function $G_D$ is a sort of double $p$-adic integral:

\begin{prop}[{\hspace{1sp}\cite[Proposition II.1.20]{Colm96}}]\label{prop:dpartial_second_kind}
Let $\partial$ be an invariant derivation on $X$. If $G_D$ is a Green function of divisor $D$, then $d(\partial G_D)$ is a differential form of the second kind.
\end{prop}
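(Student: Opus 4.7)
The plan is to realise $d(\partial G_D)$ as a Lie derivative $\mathcal{L}_\partial(dG_D)$, where $dG_D$ is, up to explicit non-rational contributions, a rational closed 1-form of the third kind on $X$; applying $\mathcal{L}_\partial$ then tames every potential non-rational piece and produces a rational 1-form with no residues.

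First, I would analyse $dG_D$. Applying the exterior derivative $d_x$ (with respect to the first $X$-factor of $X^4$) to the defining identity $\Delta^{[3]} G_D = \Log f_D^{(4)}$ gives
\begin{equation*}
\Delta^{[3]}(dG_D) = \frac{d f_D^{(4)}}{f_D^{(4)}},
\end{equation*}
a rational closed 1-form on $X^4$. A 1-form version of the uniqueness criterion in \cite[Th\'eor\`eme II.1.16]{Colm96} then yields a decomposition
\begin{equation*}
dG_D = \omega_D + \sum_i a_i\, d\mathcal{L}_i + \sum_{i,j} b_{ij}\, \mathcal{L}_i\, d\mathcal{L}_j,
\end{equation*}
where $\omega_D$ is a rational 1-form on $X$ of the third kind (with integer residues supported along $\mathrm{Supp}(D)$), the constants $a_i,b_{ij}$ lie in $K$, and the extra terms encode the ambiguity of $G_D$ up to polynomials of degree at most $2$ in the one-logarithms $\mathcal{L}_i$.

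Second, invariance of $\partial$ makes Cartan's formula particularly effective. Since $dG_D$ is closed, $\mathcal{L}_\partial(dG_D) = d \iota_\partial(dG_D) = d(\partial G_D)$. I distribute term-by-term: $\mathcal{L}_\partial(\omega_D) = d(\iota_\partial\omega_D)$ is the differential of a rational function on $X$, hence a rational 1-form of the second kind; $\mathcal{L}_\partial(d\mathcal{L}_i) = d(\partial\mathcal{L}_i) = 0$ because $\partial\mathcal{L}_i$ is a constant in $K$ (the derivative of the homomorphism $\mathcal{L}_i\colon X(K)\to K$ along an invariant vector field); finally $\mathcal{L}_\partial(\mathcal{L}_i\, d\mathcal{L}_j) = (\partial\mathcal{L}_i)\, d\mathcal{L}_j + \mathcal{L}_i\,\mathcal{L}_\partial(d\mathcal{L}_j) = c_i\, d\mathcal{L}_j$, where $c_i = \partial\mathcal{L}_i \in K$ and $d\mathcal{L}_j$ is a translation-invariant (holomorphic, hence rational) 1-form. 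Summing,
\begin{equation*}
d(\partial G_D) = d(\iota_\partial\omega_D) + \sum_{i,j} b_{ij} c_i\, d\mathcal{L}_j,
\end{equation*}
which exhibits $d(\partial G_D)$ as a rational 1-form equal to the sum of an exact form (second kind) and a holomorphic form (first, hence second, kind).

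The main obstacle is justifying the decomposition of $dG_D$ in the first step, i.e.\ establishing the 1-form analogue of \cite[Th\'eor\`eme II.1.16]{Colm96}. I would handle this by working componentwise in a basis of translation-invariant 1-forms on $X$, reducing the statement for 1-forms to the known statement for functions, or alternatively by constructing $\omega_D$ directly from $f_D^{(4)}$ and verifying that $dG_D - \omega_D$ satisfies $\Delta^{[3]} = 0$, which by the function version of the criterion forces it to be the differential of a polynomial of degree at most $2$ in one-logarithms. Once this is in place, the Cartan-formula calculation above is immediate.
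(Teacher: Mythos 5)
This proposition is one that the paper itself does not prove: it is quoted verbatim from \cite[Proposition II.1.20]{Colm96} in a section the author explicitly flags as expository, so there is no in-paper proof to compare against. Judged on its own terms, your proposal has a genuine gap, and it sits in the very first step. The decomposition $dG_D = \omega_D + \sum_i a_i\, d\mathcal{L}_i + \sum_{i,j} b_{ij}\mathcal{L}_i\, d\mathcal{L}_j$ with $\omega_D$ rational is false. Writing $dG_D = \sum_i (\partial_i G_D)\,\Omega_i$ in a basis of invariant $1$-forms, the coefficients $\partial_i G_D$ are primitives of second-kind differentials (a Green function is a \emph{double} integral; Proposition \ref{prop:dpartial_second_kind} is precisely the assertion that its second invariant derivatives are rational, not its first). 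Such primitives do not lie in $K(X)_{\st}$ plus polynomials in the one-logarithms: in the complex picture $\partial_i \log\sigma$ is a quasi-periodic zeta function, not a rational function corrected by logarithms, and the same transcendence persists $p$-adically. The uniqueness clause of \cite[Th\'eor\`eme II.1.16]{Colm96} only says that a function killed by $\Delta^{[n]}$ is a polynomial of degree at most $n-1$ in one-logarithms; it does \emph{not} say that a function whose $\Delta^{[n]}$ is rational is itself rational up to such polynomials (the primitive $\int\eta$ of any non-exact second-kind form already has $\Delta^{[2]}\bigl(\int\eta\bigr)$ rational and is not of that shape). So neither of your two suggested routes to the decomposition can work.

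The error propagates into a conclusion that is visibly too strong: your final formula exhibits $d(\partial G_D)$ as an exact rational form plus a holomorphic form, i.e.\ its class in $H^1_{\dR}(X)$ would lie in $H^0(X,\Omega^1)$. But Proposition \ref{prop:bijection} (also from \cite{Colm96}, and used crucially later in the paper) states that the classes $\iota^{*}[d(\partial_i G)]$ span a subspace \emph{complementary} to $H^0(X,\Omega^1)$; in the genus $2$ case Lemma \ref{lemma:explicit_subspace_H1dRJ} computes $d(\partial_i G_\Theta) = (-X_{1i}+c_{1i})\Omega_1 + (-X_{i2}+c_{i2})\Omega_2$, whose pullback to $C$ is cohomologous to $\eta_i^{(c)}$, a non-exact, non-holomorphic class. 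A correct argument must instead establish two things: (a) $\partial'\partial G_D$ is a \emph{rational function} for every invariant derivation $\partial'$ (this is where the identity $\Delta^{[3]}(\partial'\partial G_D) = \partial'_x\partial_x \Log f_D^{(4)}$ and the structure of locally log-meromorphic solutions enter), so that $d(\partial G_D) = \sum_j (\partial_j\partial G_D)\,\Omega_j$ is rational and closed; and (b) its residue divisor vanishes, which follows from the local description $G_D = \Log g + (\text{locally analytic})$ with $g$ a local equation for $D$, since $d(\partial g/g)$ has at worst double poles with zero residue. Your Cartan-formula bookkeeping in the second half is fine, but it is applied to a decomposition that does not exist.
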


\subsection{Colmez's theory on curves and Jacobians}\label{subsec:Colm_curves}
As in \S \ref{subsec:Colm_int}, let $K$ be a closed subfield of $\C_p$. Let $X$ be a smooth, projective, geometrically irreducible curve of positive genus $g$ over $K$ such that $X(K)$ is non-empty, let $J$ be its Jacobian, and given a fixed $P_0\in X(K)$, consider the embedding $\iota\colon X\xhookrightarrow{} J$ with base point $P_0$. This induces an isomorphism $\iota^{*}\colon H_{\dr}^1(J)\to H_{\dr}^1(X)$. 

\begin{rmk}\label{rmk:from_X_to_C}
When $X$ is the genus $2$ curve given explicitly by an equation of the form \eqref{eq:Grant}, a lot of the notation that we are going to introduce now has already been introduced or fixed (e.g.\ $\iota$ is the embedding with respect to $P_0 = \infty$). Since there is no clear advantage in specialising to that setting here, we keep the exposition of Colmez's results general. The rule to keep in mind when we return to the case $X = C$ in \S \ref{subsec:comparison} is that all the notation of this section should be specialised to that previously fixed for $C$.
\end{rmk}

Fix a basis $(\Omega_1,\dots, \Omega_g)$ for $H^0(J,\Omega^1)$ and denote by $(\partial_1,\dots,\partial_g)$ its dual basis of invariant derivations. Consider the function $\varpi\colon X^g\to J$, defined by $\varpi(P_1,\dots,P_g) = \iota(P_1)+ \cdots + \iota(P_g)$. Then $\varpi$ is surjective and induces a birational equivalence of the $g$-th symmetric power $X^{(g)}$ of $X$ with $J$. We call a point $u\in J$ \emph{generic} if there is a unique element of $X^{(g)}$ mapping to $u$ under the map induced by $\varpi$ and the latter has distinct $P_i$. 
The \emph{theta divisor}, denoted $\Theta$, is the divisor of $J$ which is the image of
\begin{equation*}
X^{g-1}\to J, \qquad  (P_1,\dots,P_{g-1} )\mapsto - \iota(P_1)\cdots - \iota(P_{g-1}).
\end{equation*}
For generic $u = \iota(P_{1,u})+ \cdots +\iota(P_{g,u})$, we consider the following divisor on $X$ (cf.\ \cite[Lemma 6.7]{milneAV}):
\begin{align*}
X_u &=\sum_{P\in \iota^{-1}(u+ \Theta)} P
 = P_{1,u}+\dots+ P_{g,u}.
\end{align*}
The theta divisor is \emph{symmetric}, i.e.\ there exists a unique $w\in J$ such that $x$ belongs to $\Theta$ if and only if $w- x$ belongs to $\Theta$ (by \cite[Corollaire I.4.7]{Colm96} and Lefschetz principle). 
If $G$ is a Green function of divisor $\Theta$, so is $G_w(x) \colonequals G(w-x)$ and we say that $G$ is \emph{symmetric} if $G(x) = G_w(x)$. 

Recall that, by Proposition \ref{prop:dpartial_second_kind}, $d(\partial_i G)$ is a differential of the second kind on $J$.

\begin{prop}[{\hspace{1sp}\cite[Proposition II.2.4]{Colm96}}]\label{prop:bijection}
The function that maps a Green function $G$ to the subspace of $H^1_{\dr}(X)$ generated by $\iota^*d(\partial_1 G), \dots, \iota^* d(\partial_g G)$ induces a bijection between the set of symmetric Green functions of divisor $\Theta$ (up to addition by a constant) and the set $\Is(X)$ of subspaces of $H^1_{\dr}(X)$ which are complementary to $H^0(X,\Omega^1)$ and isotropic for the cup product. 
\end{prop}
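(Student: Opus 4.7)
The plan is to show that the map is a well-defined affine bijection between two affine spaces of dimension $g(g+1)/2$. First, the map descends to symmetric Green functions modulo constants because constants are killed by each $\partial_i$; and by Proposition~\ref{prop:dpartial_second_kind}, each $d(\partial_i G)$ is a differential of the second kind on $J$, so $\iota^*d(\partial_i G)$ defines a class in $H^1_{\dR}(X)$.

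Next, I would verify that the subspace $W_G\subseteq H^1_{\dR}(X)$ generated by $\iota^*d(\partial_1 G),\ldots,\iota^*d(\partial_g G)$ lies in $\Is(X)$. Complementarity to $H^0(X,\Omega^1)$ (and hence $\dim W_G=g$) proceeds by contradiction: a relation $\sum_i c_i\iota^*d(\partial_i G)\in H^0(X,\Omega^1)$ would force $\sum_i c_i\iota^*\partial_i G$ to be a meromorphic function on $X$ (with at most simple poles along $Z\colonequals \iota^{-1}(\Supp\Theta)$) whose differential is holomorphic; a residue computation along $Z$, using that the invariant derivations $\partial_1,\ldots,\partial_g$ are linearly independent at every tangent space of $J$, forces $c_i=0$ for all $i$. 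For isotropy, $\iota^*$ intertwines the polarisation pairing on $H^1_{\dR}(J)$ induced by the class of $\Theta$ with the cup product on $H^1_{\dR}(X)$, so it suffices to check that $\langle [d(\partial_i G)],[d(\partial_j G)]\rangle_J=0$ in $K$. The symmetry $G(w-x)=G(x)$ makes $\partial_i G$ odd under $x\mapsto w-x$ and, combined with the commutativity of the $\partial_i$, produces a symmetry in the cohomological data attached to $(\partial_i\partial_j G)_{i,j}$ which, matched against the antisymmetry of the polarisation pairing, yields isotropy.

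Given $W_G\in\Is(X)$, bijectivity reduces to comparing affine structures. By Proposition~\ref{prop:padicGreen} together with the symmetry constraint, the set of symmetric Green functions of $\Theta$ modulo constants is an affine space over the $g(g+1)/2$-dimensional vector space $V$ of symmetric quadratic forms $Q=\sum_{i,j}a_{ij}\mathcal{L}_{\Omega_i}\mathcal{L}_{\Omega_j}$ in the one-logarithms attached to the basis $(\Omega_1,\ldots,\Omega_g)$. Similarly, $\Is(X)$ is an affine space over $\operatorname{Sym}^2 H^0(X,\Omega^1)^*$, also of dimension $g(g+1)/2$; via $\iota^*\colon H^0(J,\Omega^1)\xrightarrow{\sim}H^0(X,\Omega^1)$ the two vector spaces are canonically identified. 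Using $\partial_k\mathcal{L}_{\Omega_i}=\delta_{ki}$ gives
\begin{equation*}
\iota^*d(\partial_k(G+Q))=\iota^*d(\partial_k G)+2\sum_i a_{ik}\omega_i,
\end{equation*}
so the map intertwines the affine structures with linearisation equal to multiplication by $2$, and it is therefore an affine bijection.

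The main obstacle will be the isotropy portion. Complementarity is essentially formal via residues, and the affine-structure comparison is a direct calculation. Isotropy, however, requires translating the symmetry of $G$ into a global cohomological vanishing; the cleanest route likely goes through either an explicit residue formula along $Z$ or an identification of the ``holomorphic part'' of $[d(\partial_i G)]\in H^1_{\dR}(J)$ with a symmetric matrix arising from the Hessian-type quantities $(\partial_i\partial_j G)$, in either case crucially using the symmetry of $G$.
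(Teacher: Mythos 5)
The paper does not actually prove this statement: it is quoted verbatim from Colmez \cite{Colm96}, so there is no internal proof to compare against, and your proposal must stand on its own. Its architecture is the right one: both sides are torsors --- the source, by Proposition \ref{prop:padicGreen}, under symmetric quadratic forms in the one-logarithms; the target under $\operatorname{Sym}^2 H^0(X,\Omega^1)^{*}$ (the standard description of $\Is(X)$, cf.\ Proposition \ref{prop:bijection_sigma_is}) --- and your linearisation $\iota^{*}d(\partial_k(G+Q))=\iota^{*}d(\partial_k G)+2\sum_i a_{ik}\omega_i$ is correct and consistent with the paper's genus-$2$ instance (Lemmas \ref{lemma:explicit_subspace_H1dRJ} and \ref{lemma:explicit_subspace_H1dRX}). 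Two glosses: when $w\neq 0$, adding a symmetric quadratic form $Q$ forces a uniquely determined linear correction in the one-logarithms to preserve $G(x)=G(w-x)$ (harmless, since it does not change $d\partial_k$), and you should record that a symmetric Green function exists at all (average $G(x)$ and $G(w-x)$).

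The genuine gap is that you never establish that $W_G$ lies in $\Is(X)$, and that is the entire analytic content of the proposition; without it the torsor comparison proves nothing. For complementarity your framing is off: the hypothesis $\sum_i c_i[\iota^{*}d(\partial_i G)]\in H^0(X,\Omega^1)$ means $\sum_i c_i\iota^{*}d(\partial_i G)=\omega+dh$ with $\omega$ holomorphic and $h$ rational; it does not make $\sum_i c_i\iota^{*}\partial_i G$ a meromorphic function with holomorphic differential, because its local primitive also contains $\int\omega$, which is locally analytic but not rational --- in the $p$-adic setting there is no Liouville-type rigidity to exploit. Moreover the residues of $\iota^{*}\partial_i G$ at points of $Z=\iota^{-1}(\Supp\Theta)$ are governed by how $\iota(X)$ meets $\Theta$ (the values of $\partial_i f_{\Theta}$ at the intersection points), not merely by pointwise linear independence of the $\partial_i$ on $J$, so the asserted vanishing $c_i=0$ does not follow as stated. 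The efficient route is to compute the cup products $[\omega_j]\cup[\iota^{*}d(\partial_i G)]=\pm\delta_{ij}$ by a residue calculation against the double poles along $Z$; this gives linear independence of all $2g$ classes and hence complementarity in one stroke, and is exactly how the paper treats Proposition \ref{prop:bijection_sigma_is}. For isotropy you offer only a heuristic and then explicitly defer the argument; this is the $p$-adic avatar of the generalised Legendre relations and genuinely requires work (in Colmez it comes out of the construction of $G_{\Theta}$ from $\Log f_{\Theta}^{(4)}$ together with the symmetry of the Hessian $\partial_i\partial_j G$). As written, your text is a correct reduction of the proposition to its two hard steps, not a proof of it.
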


Using this proposition and properties of Green functions, Colmez expresses integrals of differentials of the third kind on $X$ (as in Theorem \ref{thm:uniquetheory}) in terms of pullbacks of symmetric Green functions for the theta divisor of $J$. We now explain how.

Given a differential $\omega$ of the third kind on $X$, let $\Res_P(\omega)$ be the residue of $\omega$ at $P$ and let
\begin{equation*}
\Div(\omega) = \sum_{P\in X}\Res_P(\omega) P \in \Div^0(X).
\end{equation*}
In particular, 
\begin{equation*}
\Div\left(\frac{df}{f}\right) = \div(f),
\end{equation*}
where the latter is the divisor of a rational function, in the usual sense. 
 
Fix an element $W\in \Is(X)$ and let $G_{\Theta,W}$ be a corresponding symmetric Green function (by Proposition \ref{prop:bijection}, this is unique up to addition by a constant); set $F_{i,W} = \partial_i G_{\Theta,W}$.  
Given generic $u,v\in J$, consider the differential
\begin{equation}
\omega_u - \omega_v = \iota^*\left(\sum_{i=1}^g (F_{i,W}(x- u) - F_{i,W}(x-v)) \Omega_i\right).
\end{equation}
\begin{lemma}
$\omega_u - \omega_v$ is of the third kind and
\begin{equation*}
\Div(\omega_u - \omega_v) = X_u - X_v.
\end{equation*}
\end{lemma}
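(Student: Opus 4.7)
The starting observation is that $\omega_u$ arises, locally on $J$, as the exterior derivative of the translated Green function $G_{\Theta,W}(\cdot - u)$. Since $(\Omega_i)$ and $(\partial_i)$ are dual bases of invariant differentials and invariant derivations, for any locally defined, locally log-meromorphic function $F$ on $J$ one has $dF = \sum_i(\partial_i F)\,\Omega_i$. Applying this to $F(x)=G_{\Theta,W}(x-u)$ and using that invariant derivations commute with translations (so $\partial_i F(x)=F_{i,W}(x-u)$) gives, locally on $J$,
\begin{equation*}
\sum_i F_{i,W}(x-u)\,\Omega_i(x)\;=\;d\bigl(G_{\Theta,W}(\cdot-u)\bigr)(x),
\end{equation*}
and hence $\omega_u-\omega_v=\iota^{*}d\bigl(G_{\Theta,W}(\cdot-u)-G_{\Theta,W}(\cdot-v)\bigr)$.

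The residue divisor is then a local computation. By Proposition \ref{prop:padicGreen}, near a smooth point $Q\in\Theta$ we may write $G_{\Theta,W}=\log f+h$ with $f$ a local equation of $\Theta$ and $h$ locally analytic. Translating, near $u+Q\in u+\Theta$ the function $G_{\Theta,W}(\cdot-u)$ is of the form $\log f(\cdot-u)+h(\cdot-u)$ with $f(\cdot-u)$ a local equation of $u+\Theta$, so its exterior derivative has a simple pole of residue $+1$ along the smooth locus of $u+\Theta$ and is locally analytic off $u+\Theta$; the same holds with $v$ in place of $u$. Pulling back by $\iota$ and using that for generic $u,v$ the map $\iota$ meets $u+\Theta$ (resp.\ $v+\Theta$) transversely at the $g$ distinct points $P_{i,u}$ (resp.\ $P_{i,v}$), each with multiplicity one, we conclude that $\omega_u-\omega_v$ has simple poles at the points of $X_u$ with residues $+1$ and at the points of $X_v$ with residues $-1$, and no other singularities. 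Its residue divisor is thus $X_u-X_v$.

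The remaining, most delicate point is to verify that $\omega_u-\omega_v$ is actually a rational differential on $X$, and not merely locally meromorphic, so that it qualifies as of the third kind in the sense defined at the start of \S \ref{subsec:Colm_int}. The plan is to choose a rational differential $\omega'$ of the third kind on $X$ with $\Div(\omega')=X_u-X_v$ (such an $\omega'$ exists by a standard Riemann--Roch argument) and to show that $\eta\colonequals\omega_u-\omega_v-\omega'$ is algebraic. A priori $\eta$ is a closed, globally locally analytic $1$-form on $X$ without singularities; moreover, its $\mathbb{Q}\Log p$-component vanishes because the $\Log p$-contributions to the local expressions of $G_{\Theta,W}(\cdot-u)-G_{\Theta,W}(\cdot-v)$, coming from Proposition \ref{prop:padicGreen}, are logarithmic derivatives of rational functions and can be absorbed into $\omega'$. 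The main obstacle is then to argue that such a globally analytic, holomorphic $1$-form on the projective curve $X$ is automatically algebraic, which one obtains by standard rigid-analytic arguments. Granted this, $\omega_u-\omega_v=\omega'+\eta$ is a rational differential of the third kind with residue divisor $X_u-X_v$, as claimed.
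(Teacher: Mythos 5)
Your first two steps are fine: the identity $\sum_i F_{i,W}(x-u)\,\Omega_i = d\bigl(G_{\Theta,W}(\cdot-u)\bigr)$ follows from duality of $(\Omega_i)$ and $(\partial_i)$ and translation-invariance of the $\partial_i$, and the residue computation via the logarithmic singularity of $G_{\Theta,W}$ along $\Theta$ (Proposition \ref{prop:padicGreen}) together with the genericity of $u,v$ correctly yields $X_u - X_v$. (The paper itself only cites \cite[p.\,92]{Colm96} for this lemma, so there is no in-text proof to match against.)

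The gap is in your final step. After subtracting a rational third-kind differential $\omega'$ with residue divisor $X_u-X_v$, you are left with a closed, singularity-free, \emph{locally analytic} $1$-form $\eta$ on $X(\C_p)$, and you assert that such a form is automatically algebraic ``by standard rigid-analytic arguments''. This is false: $X(\C_p)$ is totally disconnected, and local analyticity in the $p$-adic topology carries no global rigidity — for instance, $df$ for an arbitrary locally analytic function $f$ on $X(\C_p)$ (even a locally constant one times a coordinate) is a closed, singularity-free, locally analytic $1$-form that is almost never the restriction of a rational differential. Rigid GAGA applies to globally defined rigid-analytic objects, not to locally analytic ones, so no standard argument closes this step. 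The rationality of $\omega_u-\omega_v$ must instead come from a global algebraic input, which in Colmez's framework is Proposition \ref{prop:dpartial_second_kind}: $dF_{i,W}=d(\partial_i G_{\Theta,W})$ is a \emph{rational} differential of the second kind on $J$. Since the de Rham class of a second-kind differential on an abelian variety is translation-invariant, $t_{-u}^*\,dF_{i,W}-t_{-v}^*\,dF_{i,W}$ is exact, say $=dh_i$ with $h_i$ rational on $J$; by uniqueness of primitives (Theorem \ref{thm:uniquetheory}\thinspace{}\ref{it:exact}) one gets $F_{i,W}(x-u)-F_{i,W}(x-v)=h_i+c_i$ with $c_i$ constant, so $\omega_u-\omega_v=\iota^*\bigl(\sum_i (h_i+c_i)\Omega_i\bigr)$ is rational on $X$. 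Combined with your residue computation this completes the proof; without some such argument your proposal does not establish that $\omega_u-\omega_v$ is of the third kind.
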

\begin{proof}
See \cite[p.\,92]{Colm96}.
\end{proof}

We consider the $\Z$-module generated by the differentials of the form $\omega_u -\omega_v$ for $u,v\in J$ generic points; we call a differential in this module a \emph{differential of type $(1,1)$}. This definition depends on the choice of $W\in \Is(X)$.

\begin{prop}[{\hspace{1sp}\cite[Proposition II.2.9]{Colm96}}]\label{prop:dec_third_kind}
Let $\omega$ be a differential of the third kind on $X$. Then there exist a unique holomorphic differential $\omega^{1,0}$ and a unique differential $\omega^{1,1}$ of type $(1,1)$ such that $\omega = \omega^{1,0} + \omega^{1,1}$. Moreover,
\begin{enumerate}[label = (\roman*)]
\item If $\omega = \frac{df}{f}$, then $\omega^{1,0} = 0$.
\item\label{it:dec_4} If $\omega^{1,1} = \sum_i n_i \omega_{u_i}$, then the image of $\Div(\omega)$ in $J$ is $\sum_i n_iu_i$ and 
\begin{equation*}
\int^x \omega = \int^x \omega^{1,0} + \sum_i n_{i} G_{\Theta,W}(\iota(x)-u_i).
\end{equation*}
\end{enumerate}
\end{prop}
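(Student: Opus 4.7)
My plan is to prove existence by matching residue divisors, then deduce (ii) and (i) by integration, with uniqueness reducing to the vanishing of any holomorphic type-$(1,1)$ differential.

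For existence, given $\omega$ with residue divisor $D = \sum_P n_P P \in \Div^0(X)$, I would write $D = \sum_j (P_j - Q_j)$ and for each $j$ choose generic auxiliary points $R_2^{(j)}, \dots, R_g^{(j)} \in X(K)$; setting $u_j := \iota(P_j) + \sum_{k\geq 2} \iota(R_k^{(j)})$ and $v_j := \iota(Q_j) + \sum_{k\geq 2}\iota(R_k^{(j)})$, genericity gives $X_{u_j} - X_{v_j} = P_j - Q_j$, so $\omega^{1,1} := \sum_j (\omega_{u_j} - \omega_{v_j})$ is of type $(1,1)$ with the same residue divisor as $\omega$, and $\omega^{1,0} := \omega - \omega^{1,1}$ is automatically holomorphic.

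Part (ii) will follow by integration. Since $dG_{\Theta,W} = \sum_j F_{j,W}\Omega_j$ away from $\Theta$, the pullback $x \mapsto G_{\Theta,W}(\iota(x) - u)$ has differential equal to $\omega_u$ on $X \setminus X_u$, so Theorem~\ref{thm:uniquetheory}\ref{it:exact} gives $\int^x \omega_u = G_{\Theta,W}(\iota(x) - u)$ modulo an additive constant; summing yields the integral formula. The image-of-divisor claim in (ii) is then immediate: $\Div(\omega^{1,1}) = \sum_i n_i X_{u_i}$ maps to $\sum_i n_i u_i$ under $\varpi$, while $\omega^{1,0}$ contributes zero as it is holomorphic.

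For uniqueness, the task is to show that a differential $\eta$ which is simultaneously holomorphic and of type $(1,1)$ must vanish. My intended approach is the following: $\int^x \eta$ is both a one-logarithm of $J$ pulled back via $\iota$ (since, being the primitive of a holomorphic form, it is a locally analytic group homomorphism in $\iota(x)$) and a $\Z$-linear combination of shifted Green-function values. Extending this identity from $\iota(X)$ to all of $J$ (using that $\iota(X)$ generates $J$) and then applying the operator $\Delta^{[2]}$ — which kills one-logarithms and, by Proposition~\ref{prop:padicGreen}, converts sums of Green-function translates into $\Log$ of rational data — should force the combination to be trivial, whence $\eta = 0$. Part (i) follows in a similar spirit: if $\omega = df/f$, Theorem~\ref{thm:uniquetheory}\ref{it:exact} gives $\int^x \omega = \Log f(x)$, while Abel's theorem implies that the image of $\Div(\omega)$ in $J$ vanishes; substituting into the formula of (ii) and noting that the logarithmic singularities on the two sides match by construction will force the remaining one-logarithm contribution $\int^x \omega^{1,0}$ to vanish.

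The main obstacle I expect is the uniqueness step, i.e.\ cleanly separating the strictly linear behavior of one-logarithms from the genuinely non-linear (logarithmic plus quadratic) behavior of sums of shifted Green functions. Making this rigorous will likely require careful use of the $\Delta^{[2]}$ and $\Delta^{[3]}$ operators from \S\ref{subsec:Colm_int} together with the ambiguity of Green functions (up to polynomials of degree $\leq 2$ in one-logarithms) provided by Proposition~\ref{prop:padicGreen}.
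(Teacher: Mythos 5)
The paper offers no proof of this statement: \S\ref{subsec:Colm_curves} is explicitly expository and the proposition is quoted verbatim from \cite[Proposition II.2.9]{Colm96}. So your sketch has to be judged on its own. Your existence argument (match the residue divisor by a sum of $\omega_{u_j}-\omega_{v_j}$ built from generic auxiliary points; the difference has simple poles with zero residues, hence is holomorphic) is correct and is Colmez's route. The integral formula in (ii) is also essentially right, but you cannot get it from Theorem~\ref{thm:uniquetheory}\ref{it:exact}: that axiom applies to $dg$ with $g\in K(X)_{\st}$, and $G_{\Theta,W}(\iota(x)-u)$ is only locally log-meromorphic, not in $K(X)_{\st}$. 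A locally log-meromorphic primitive of a given form is \emph{not} unique up to a constant in the $p$-adic topology. The correct mechanism is the rigidity theorem \cite[Th\'eor\`eme II.1.16]{Colm96}: both $\int^x\omega_u$ and $G_{\Theta,W}(\iota(x)-u)$ satisfy the same $\Delta^{[2]}$-equation, so they differ by a one-logarithm plus a constant, and applying $d$ kills the one-logarithm.

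The genuine gap is in your uniqueness step. The move ``extend the identity from $\iota(X)$ to all of $J$ using that $\iota(X)$ generates $J$'' does not work: two locally analytic functions on $J(\C_p)$ that agree on the curve $\iota(X)$ need not agree on $J$, and generation of the group only propagates identities between \emph{homomorphisms}, which $\sum_i n_i G_{\Theta,W}(\cdot-u_i)$ is not. The same problem undermines your argument for (i): matching the logarithmic singularities of $\Log f$ and $\sum_i n_iG_{\Theta,W}(\iota(x)-u_i)$ only shows their difference is locally analytic everywhere, which a nonzero one-logarithm already is, so nothing forces $\int^x\omega^{1,0}$ to vanish. The way to close both gaps is cohomological rather than function-theoretic: show that a type-$(1,1)$ differential of the second kind has de Rham class in $W$. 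This follows because $\omega_u=\iota^*d\bigl(G_{\Theta,W}(\cdot-u)\bigr)=\iota^*\tau_{-u}^*(dG_{\Theta,W})$, second-kind forms on an abelian variety are semi-invariant under translation (so $\tau_{-u}^*d(\partial_jG)-d(\partial_jG)$ is exact), and $\sum_i n_i=0$; hence the class of $\sum_in_i\omega_{u_i}$ lands in the span of the $\iota^*[d(\partial_jG)]$, i.e.\ in $W$ (this is the content behind Lemma~\ref{lemma:prop_LogJtilde}). Since $W\cap H^0(X,\Omega^1)=0$ by Proposition~\ref{prop:bijection}, a differential that is simultaneously holomorphic and of type $(1,1)$ is exact, and an exact holomorphic differential on a projective curve is zero. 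Part (i) then follows by applying this to $\frac{df}{f}-\omega^{1,1}$ after checking, via the theorem of the square and Abel's theorem, that the type-$(1,1)$ part of $\frac{df}{f}$ already accounts for the whole rational differential up to something whose class lies in $W$.
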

\begin{example}
If $\Div(\omega) = P_1 - P_2$, pick $Q_1, \dots, Q_{g-1}$ such that $u = \iota(P_1) + \iota(Q_1) +  \cdots + \iota(Q_{g-1})$ and $v = \iota(P_2) + \iota(Q_1) + \cdots + \iota(Q_{g-1})$ are both generic. Then
\begin{equation*}
\Div(\omega_u -\omega_v) = X_u - X_v = P_1 - P_2;
\end{equation*}
hence 
\begin{equation*}
\omega - (\omega_u - \omega_v) = \omega^{1,0}\in H^0(X, \Omega^1)
\end{equation*}
and
\begin{equation*}
\int^x \omega = \int^x \omega^{1,0} +G_{\Theta,W}(\iota(x)-u)- G_{\Theta,W}(\iota(x)-v).
\end{equation*}
This example should clarify that, although the differential $\omega^{1,1}$ is unique, its decomposition as a sum $\sum_i n_i\omega_{u_i}$ is not. The choices of the generic points $u_i\in J$ affect the logarithmic singularities of the individual terms $G_{\Theta,W}(\iota(x)-u_i)$; however, overall, $\sum_i n_{i} G_{\Theta,W}(\iota(x)-u_i)$ is locally analytic outside $\Div(\omega)$.
\end{example}
For every $i\in\{1,\dots,g\}$, denote by $\mathcal{L}_i$ the one-logarithm of $J$ whose differential is the basis element $\Omega_i$ for $H^0(J,\Omega^1)$. 

\begin{mydef}\label{def:LogJtilde}
For a differential $\omega$ of the third kind on $X$, we define $\Log_{\tilde{J}}(\omega)\in H^1_{\dr}(X)$ as the class of the differential
\begin{equation*}
 \omega^{1,0} + \sum_{i=1}^g \mathcal{L}_i(\Div(\omega))\eta_i,
\end{equation*}
where $\omega^{1,0}$ is as in Proposition \ref{prop:dec_third_kind} and, for every $i\in \{1,\dots,g\}$, $\eta_i = \iota^*d(\partial_i G_{\Theta,W})$. 
\end{mydef}
As a corollary to Proposition \ref{prop:dec_third_kind} and since $\eta_i\in W$ by construction, we obtain 
\begin{lemma}\label{lemma:prop_LogJtilde}\leavevmode
\begin{enumerate}[label = (\roman*)]
\item $\Log_{\tilde{J}}(\omega) = [\omega]$ if $\omega$ is holomorphic.
\item If $\omega$ is of type $(1,1)$ with respect to $W$, then $\Log_{\tilde{J}}(\omega)$ belongs to $W$.
\item \label{it:prop_LogJtilde3}$\Log_{\tilde{J}}\left(\frac{df}{f}\right) = 0$.
\end{enumerate}
\end{lemma}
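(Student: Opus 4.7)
The plan is to deduce all three statements directly from Proposition \ref{prop:dec_third_kind}, together with elementary properties of the maps involved; no step should require genuine work.

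For part (i), I would first observe that a holomorphic differential is trivially of the third kind (with empty pole divisor), so $\Div(\omega) = 0$. The decomposition $\omega = \omega + 0$ is then clearly a valid decomposition into a holomorphic part and a part of type $(1,1)$, and by the uniqueness clause of Proposition \ref{prop:dec_third_kind} we must have $\omega^{1,0} = \omega$ and $\omega^{1,1} = 0$. Since each $\mathcal{L}_i$ is a group homomorphism, $\mathcal{L}_i(0) = 0$, so the sum $\sum_i \mathcal{L}_i(\Div(\omega))\eta_i$ vanishes and $\Log_{\tilde{J}}(\omega) = [\omega]$.

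For part (ii), if $\omega$ is already of type $(1,1)$ with respect to $W$, then $\omega = 0 + \omega$ is the unique decomposition of Proposition \ref{prop:dec_third_kind}, so $\omega^{1,0} = 0$. Hence $\Log_{\tilde{J}}(\omega) = \sum_{i=1}^g \mathcal{L}_i(\Div(\omega))\eta_i$, a $K$-linear combination of $\eta_1,\dots,\eta_g$. By definition $\eta_i = \iota^*d(\partial_i G_{\Theta,W})$, and by Proposition \ref{prop:bijection} the classes of these differentials span precisely $W$, so $\Log_{\tilde{J}}(\omega) \in W$.

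For part (iii), Proposition \ref{prop:dec_third_kind}(i) gives $\omega^{1,0} = 0$ when $\omega = df/f$, so it suffices to show $\sum_i \mathcal{L}_i(\Div(\omega))\eta_i = 0$. Here $\Div(df/f) = \div(f)$ is a principal divisor on $X$; its image under the linear extension of the Abel--Jacobi embedding $\iota\colon X\hookrightarrow J$ is $0 \in J(K)$, since principal divisors represent the identity class. Because $\mathcal{L}_i(\Div(\omega))$ is by convention $\mathcal{L}_i$ evaluated at the image in $J$, and $\mathcal{L}_i$ is a group homomorphism vanishing at $0$, each term is zero. The only point that is not entirely mechanical is the convention by which $\mathcal{L}_i$ is evaluated on divisors, but this is forced by the requirement that the formula in Definition \ref{def:LogJtilde} make sense.
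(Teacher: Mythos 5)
Your proof is correct and takes essentially the same route as the paper, which simply states the lemma "as a corollary to Proposition \ref{prop:dec_third_kind} and since $\eta_i\in W$ by construction"; you have merely filled in the uniqueness arguments and the evaluation convention for $\mathcal{L}_i$ on degree-zero divisors that the paper leaves implicit.
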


\begin{rmk}\label{rmk:logJtilde}
The function $\Log_{\tilde{J}}$ is defined more conceptually in \cite[Th\'eor\`eme II.2.11]{Colm96} and it is then shown to satisfy Definition \ref{def:LogJtilde} and Lemma \ref{lemma:prop_LogJtilde}. Denoting by $T(K)$ the group of differentials of the third kind on $X$, and by $T_{\ell}(K)$ the subgroup of logarithmic differentials (i.e.\ those of the form $\frac{df}{f}$ for some $f\in K(X)^{\times}$), we see that $\Log_{\tilde{J}}$ induces a group homomorphism
\begin{equation*}
\Log_{\tilde{J}}\colon T(K)/T_{\ell}(K)\to H^1_{\dR}(X).
\end{equation*}
In fact, this agrees with the canonical homomorphism $\Psi$ of \cite[Proposition 2.5]{ColemanGross} (see \cite[Remarque II.2.12]{Colm96}).
\end{rmk}

\subsection{Colmez's adelic heights}\label{subsec:Colm_heights}
\subsubsection{Notation and preliminary choices}
\label{subsubsec:notation_colm_heights}
Let $K$ be a number field. For every place $v$ of $K$, we define a $\Q$-vector space $\mathscr{L}(K_v^{\times})$ and a logarithm $\Log_v\colon K_v^{\times}\to \mathscr{L}(K_v^{\times})$, as follows.

If $v$ is a non-archimedean place, let $p$ be the rational prime below $v$ and let $\Log_v$ be the logarithm $\Log$ for the field $K_v$, as defined at the beginning of \S \ref{subsec:Colm_int}. Then $\Log_v$ takes values in $\mathscr{L}(K^{\times}_v) \colonequals K_v\oplus \Q \Log_v{p}$. An element $x\in \mathscr{L}(K_v^{\times})$ can be written uniquely as $x = x^{(0)} + x^{(1)}\Log_v{p}$ with $x^{(0)}\in K_v$ and $x^{(1)}\in \Q$. We will use the superscripts $(0)$ and $(1)$ with this meaning. 

If $v$ is an archimedean place, we let $\mathscr{L}(K_v^{\times}) = \R$ and $\Log_v\colon K_v^{\times}\to \mathscr{L}(K_v^{\times})$ the map $\Log_v{x} = \log{|x|_{\infty}}$, where $|\cdot |_{\infty}$ is the standard absolute value on $\R$ or $\C$. 

We then define the $\Q$-vector space
\begin{equation*}
\mathscr{L}(\A_{K}^{\times}) = {\prod}^{\prime} \mathscr{L}(K_v^{\times})= \{(x_v)_v\in \prod \mathscr{L}(K_v^{\times}) : x_v\in K_v \ \text{for almost all } v\}.
\end{equation*}
Since $\Log_v(\OO_v^{\times})\subset K_v$, applying $\Log_v$ component-wise gives a well-defined map
$\Log\colon \A_K^{\times}\to \mathscr{L}(\A_{K}^{\times})$.
Denoting by $\mathscr{L}(K^{\times})$ the $\Q$-vector subspace of $\mathscr{L}(\A_{K}^{\times})$ generated by $\Log(K^{\times})$, we set
\begin{equation*}
\mathscr{L}(\A_{K}^{\times}/K^{\times})\colonequals \mathscr{L}(\A_{K}^{\times})/\mathscr{L}(K^{\times}).
\end{equation*}

Next, we define a $\Q$-linear map $T\colon \mathscr{L}(\A_{K}^{\times}/K^{\times})\to \prod_{p\leq \infty} \Q_p$ (where $\Q_{\infty}\colonequals \R$). The map $T$ will depend on the choice, at every $p$, of a continuous idele class character $\chi^{(p)}\colon \A_{K}^{\times}/K^{\times} \to \Q_p$ (cf.\ Section \ref{sec:padic_hts} for the case $p<\infty$). We denote by $\mathscr{I}(K)$ the set of all possible choices of $\chi = (\chi^{(p)})_{p\leq \infty}$.

\begin{rmk}
In Colmez's set-up there is no dependence on an element of $\mathscr{I}(K)$. On the other hand, implicit in the construction is the choice of (a scalar multiple of) the cyclotomic character at $p<\infty$ (see \cite[Example 2.7]{QCnfs}) and a choice of a continuous real-valued idele class character. The latter choice amounts to the choice of a multiplicative constant.
\end{rmk}
\begin{lemma}\label{lemma:Tvp}
Let $\chi^{(p)}\colon \A_{K}^{\times}/K^{\times} \to \Q_p$ be a continuous idele class character. For every place $v$ of $K$, there exists a unique $\Q$-linear map $T_v^{(p)}\colon \mathscr{L}(K_v^{\times})\to \Q_p$ making the following diagram commutative
\begin{equation*}
   \xymatrix{
      {K_{v}^\times}  \ar[rr]^{\chi_{v}^{(p)}} \ar[dr]_{\Log_v} & &   \Q_p.\\
      & \mathscr{L}(K_v^{\times})\ar[ur]_{T_v^{(p)}}
  }
  \end{equation*}
  Moreover, for all but finitely many $v$, $T_v^{(p)}$ vanishes identically on $K_v$. 
\end{lemma}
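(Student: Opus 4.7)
The plan is to separate the three cases $v$ archimedean, $v$ non-archimedean with residue characteristic $\ell\neq p$, and $v\mid p$, and in each case first to pin down $T_v^{(p)}$ by a spanning argument (uniqueness) and then to construct it explicitly (existence). The guiding observation is that the $\Q$-subspace of $\mathscr{L}(K_v^{\times})$ generated by the image of $\Log_v$ is the whole of $\mathscr{L}(K_v^{\times})$, which forces any $\Q$-linear $T_v^{(p)}$ compatible with the diagram to be uniquely determined.

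First I would prove this spanning claim. For $v$ archimedean, $\Log_v=\log|\cdot|_{\infty}$ is already surjective onto $\R=\mathscr{L}(K_v^{\times})$. For $v$ non-archimedean with residue characteristic $\ell$, fix a uniformiser $\pi_v$ and $u_0\in\OO_v^{\times}$ satisfying $\pi_v^{e_v}=u_0\ell$, where $e_v$ is the ramification index of $v$ over $\ell$. The $\ell$-adic logarithm restricts, for $N$ large enough, to an isomorphism of topological groups $1+\mathfrak{m}_v^{N}\xrightarrow{\sim}\mathfrak{m}_v^{N}$, and $\mathfrak{m}_v^{N}$ contains $\ell^{M}\OO_v$ for some $M$. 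Since $\OO_v$ contains a $\Q_{\ell}$-basis of $K_v$ and $\Q\cdot\Z_{\ell}=\Q_{\ell}$, one has $\Q\cdot\OO_v=K_v$, and hence $\Q\cdot\Log_v(\OO_v^{\times})=K_v$. Combining this with the identity $\Log_v\ell=e_v\Log_v\pi_v-\Log_v u_0$, which shows that the formal symbol $\Log_v\ell$ already lies in $\Q\cdot\Log_v(K_v^{\times})+K_v$, one concludes $\Q\cdot\Log_v(K_v^{\times})=\mathscr{L}(K_v^{\times})$. Uniqueness of the $\Q$-linear $T_v^{(p)}$ follows at once.

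Next, for existence I would construct $T_v^{(p)}$ in the three cases. For $v$ archimedean, continuity of $\chi_v^{(p)}$, the connectedness of the identity component of $K_v^{\times}$, and the total disconnectedness of $\Q_p$ together force $\chi_v^{(p)}=0$, so $T_v^{(p)}=0$ works. For $v$ non-archimedean with $\ell\neq p$, the group $\OO_v^{\times}$ is, up to finite prime-to-$p$ torsion, pro-$\ell$, and so every continuous homomorphism $\OO_v^{\times}\to\Q_p$ is trivial; one sets $T_v^{(p)}|_{K_v}=0$ and $T_v^{(p)}(\Log_v\ell)=\chi_v^{(p)}(\ell)$. For $v\mid p$, the restriction $\chi_v^{(p)}|_{1+\mathfrak{m}_v}$ is a continuous additive character that factors uniquely through $\log_v$ as $T'\circ\log_v$ for a $\Q_p$-linear map $T'\colon K_v\to\Q_p$ (by first identifying a sufficiently small $1+\mathfrak{m}_v^{N}$ with $\mathfrak{m}_v^{N}$ via $\log_v$, noting the resulting continuous additive character is automatically $\Z_p$-linear, and extending $\Q_p$-linearly to $K_v$); one then sets $T_v^{(p)}|_{K_v}=T'$ and $T_v^{(p)}(\Log_v p)=\chi_v^{(p)}(p)$. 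In each case, a direct check using the decomposition $x=\pi_v^{n}u$ with $u\in\OO_v^{\times}$ together with the relation $\pi_v^{e_v}=u_0\ell$ verifies that $T_v^{(p)}\circ\Log_v=\chi_v^{(p)}$ on $K_v^{\times}$.

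The final vanishing statement is then immediate: the only non-archimedean $v$ for which the construction allows $T_v^{(p)}|_{K_v}$ to be non-zero are those dividing $p$, of which there are only finitely many. The main obstacle in the argument is the uniqueness half, which ultimately rests on the slightly subtle identity $\Q\cdot\OO_v=K_v$: without it, one could modify any candidate $T_v^{(p)}$ on a $\Q$-linear complement of $\Q\cdot\Log_v(\OO_v^{\times})$ inside $K_v$ without affecting the commutativity of the diagram, and uniqueness would break down.
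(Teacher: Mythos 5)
Your argument is correct for finite $p$ and follows the same route as the paper: uniqueness from $\Q$-linearity, existence by a case-by-case construction, with the same formulae in each case. You in fact make explicit the one point the paper elides, namely that $\Q\cdot\Log_v(K_v^{\times})=\mathscr{L}(K_v^{\times})$ (via $\Q\cdot\OO_v=K_v$ and the relation $\pi_v^{e_v}=u_0\ell$), without which ``$\Q$-linearity gives uniqueness'' would be incomplete.

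One case is missing, however: in \S\ref{subsubsec:notation_colm_heights} the index $p$ runs over $p\leq\infty$ with $\Q_{\infty}\colonequals\R$, and Definition \ref{def:Tmap} invokes the lemma for $p=\infty$ as well. Both of your triviality arguments (total disconnectedness of $\Q_p$ at archimedean $v$; pro-$\ell$ versus pro-$p$ at non-archimedean $v$) presuppose $p$ finite. For $p=\infty$ one notes instead that at archimedean $v$ every continuous character $K_v^{\times}\to\R$ equals $\alpha\Log_v$ for some $\alpha\in\R$, so $T_v^{(\infty)}(x)=\alpha x$ works, and that at non-archimedean $v$ a continuous homomorphism from the compact group $\OO_v^{\times}$ to $\R$ has bounded, hence trivial, image. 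A very minor further point: the torsion subgroup of $\OO_v^{\times}$ need not be prime to $p$; what your argument really uses there is only that $\Q_p$ is torsion-free.
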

\begin{proof}
$\Q$-linearity gives uniqueness. 
For existence,
\begin{itemize}
\item if $v$ and $p$ are infinite, then $\chi_v^{(\infty)} = \alpha\Log_v$ for some $\alpha\in \R$, so we let $T_v^{(\infty)}(x) = \alpha x$ for all $x\in \mathscr{L}(K_v^{\times}) = \R$;
\item if $p$ is finite and $v\mid p$, there exists a unique $\Q_p$-linear map $t_v^{(p)}\colon K_v\to \Q_p$ such that $\chi_v^{(p)} = t_v^{(p)}\circ \Log_v$ on $\OO_v^{\times}$ (see, for example, \cite[\S 2.1]{QCnfs}). We then let
\begin{equation*} 
T_v^{(p)}(x) = t_v^{(p)}(x^{(0)})  + x^{(1)}\chi_v^{(p)}(p),\qquad  \text{for all } x\in \mathscr{L}(K_v^{\times});
\end{equation*}
\item if $p$ is finite and $v$ is infinite, $\chi_v^{(p)}$ is identically zero, so $T_v^{(p)}(x) = 0$;
\item in all the remaining cases, $\chi_v^{(p)}$ is identically zero on $\OO_v^{\times}$, so
\begin{equation*}
T_v^{(p)}(x) = x^{(1)} \chi_v^{(p)}(\ell),
\end{equation*}
where $\ell$ is the rational prime below $v$.\qedhere
\end{itemize}
\end{proof}
\begin{mydef}\label{def:Tmap}
Fix $\chi = (\chi^{(p)})_p\in \mathscr{I}(K)$ and, for every pair $(v,p)$ of a place of $K$ and one of $\Q$, denote by $T_v^{(p)}$ the map of Lemma \ref{lemma:Tvp} for $\chi_v^{(p)}$. By the lemma, the following map (whose definition depends on $\chi$) is well-defined and $\Q$-linear:
\begin{equation}\label{eq:maps_prod_Qp}
T \colon \mathscr{L}(\A_{K}^{\times}/K^{\times})\to \prod_{p\leq \infty} \Q_p, \qquad T((x_v))_p = \sum_v T_v^{(p)}(x_v).
\end{equation}
\end{mydef}
Suppose $L$ is a finite extension of $K$. Then for every place $v$ of $K$ and place $w\mid v$ of $L$, there is a unique $\Q$-linear map $\Tr_{L_{w}/K_v}\colon \mathscr{L}(L_w^{\times})\to \mathscr{L}(K_v^{\times})$ satisfying
\begin{equation*}
\Tr_{L_w/K_v}(\Log_w(x)) = \Log_v(N_{L_w/K_v}(x)) \qquad \text{for all } x\in L_w^{\times}.
\end{equation*}
These local trace maps induce a $\Q$-linear map
\begin{equation*}
\Tr_{L/K}\colon \mathscr{L}(\A_L^{\times})\to \mathscr{L}(\A_K^{\times}), \qquad \Tr_{L/K}(\dots,x_w,\dots) = \biggl(\dots, \sum_{w\mid v} \Tr_{L_w/K_v} (x_w),\dots\biggr)
\end{equation*}
that factors through
\begin{equation*}
\Tr_{L/K}\colon \mathscr{L}(\A_L^{\times}/L^{\times})\to \mathscr{L}(\A_K^{\times}/K^{\times}).
\end{equation*}

Therefore, given $\chi\in \mathscr{I}(K)$ as above, the composition of $T$ with $\Tr_{L/K}$ gives a $\Q$-linear $\mathscr{L}(\A_L^{\times}/L^{\times})\to \prod_{p\leq \infty}\Q_p$. 
\begin{rmk}\label{rmk:trace_ext}
In fact,  $T\circ \Tr_{L/K}$ is equal to the $\Q$-linear map $\mathscr{L}(\A_{L}^{\times}/L^{\times})\to \prod_{p\leq \infty} \Q_p$ that Definition \ref{def:Tmap} associates to the character $\chi_{L}\in \mathscr{I}(L)$ given by $\chi_L^{(p)} = \chi^{(p)}\circ N_{L/K}$ where $N_{L/K}$ is the norm on the idele class group.
\end{rmk}

\subsubsection{Adelic heights with respect to $\chi$}
Let now $X$ be a smooth, projective, geometrically irreducible curve over $K$, of genus greater than $0$, with Jacobian $J$. We assume that we have fixed $\chi\in \mathscr{I}(K)$, which by Definition \ref{def:Tmap} determines a choice of $\Q$-linear map $T \colon \mathscr{L}(\A_{K}^{\times}/K^{\times})\to \prod_{p\leq \infty} \Q_p$. We warn the reader that this choice will not be reflected in our notation.

The goal is to construct a symmetric bilinear pairing $J(\overline{\Q})\times J(\overline{\Q})\to  \prod_{p\leq \infty} \Q_p$, using $T$. Projecting to $\Q_p$, for a chosen prime $p$, (or to $\R$) we will also obtain a $\Q_p$-valued (or real-valued) height function in the usual sense.

Besides the choice of $\chi\in \mathscr{I}(K)$, the adelic height depends on a choice, for each non-archimedean place $v$, of subspace $W_v$ of $H_{\dr}^1(X/K_v)$, complementary to the space of holomorphic forms and isotropic with respect to the cup product; in other words, on a subspace $W_v$ in the set $\Is(X/K_v)$. We denote by $\Is_K(X)$ the set of such choices; an element $W\in \Is_K(X)$ has a component $W_v$ at each finite place $v$ of $K$. Fix $W\in \Is_K(X)$; we can and will view $W$ as an element in $\Is_L(X)$ for any finite extension $L$ of $K$. 

Given a divisor $D$ of degree $0$ on $X/K$ and a non-archimedean place $v$, there is a unique differential $\omega_{D,W_v}$ of the third kind of type $(1,1)$ with respect to $W_v$ whose residue divisor is $D$ (Proposition \ref{prop:dec_third_kind}). Equivalently, by Lemma \ref{lemma:prop_LogJtilde}, $\omega_{D,W_v}$ is the unique differential of the third kind whose residue divisor is $D$ and whose image under $\Log_{\tilde{J}}$ belongs to $W_v$. We denote by $G_{D,W,v}$ a primitive of $\omega_{D,W_v}$, which, by Proposition \ref{prop:dec_third_kind}\thinspace{}\ref{it:dec_4}, is, up to a constant, of the form
\begin{equation*}
\sum_{i} n_{i} G_{\Theta,W_v}(\iota(x)-u_i),
\end{equation*}
for some generic $u_i\in J$ and $n_i\in \Z$.

If $v$ is archimedean, there exists a unique differential $\omega_{D,W_v}$ of the third kind whose residue divisor is $D$ and all of whose periods are purely imaginary (the notation is a bit misleading, because $W_{\infty}$ has technically not been defined). We write $G_{D,W,v}$ for the real part of a primitive of $\omega_{D,W_v}$. 

The \emph{adelic Green function} associated to $D$ with respect to $W$ is the function 
\begin{equation*}
G_{D,W}\colon \prod_v X(K_v) \to \prod_v \mathscr{L}(K_v^{\times}), \qquad G_{D,W}((\dots, x_v,\dots)) = (\dots, G_{D,W,v}(x_v),\dots).
\end{equation*}
The Green function $G_{D,W}$ is only well-defined on $(x_v)\in \prod_v X(K_v) $ such that $x_v$ does not belong to the support of $D$, for any $v$. Moreover, each $G_{D,W,v}$ is unique only up to addition of a constant function. This lack of uniqueness disappears if we view $G_{D,W}$ as a function on degree $0$ divisors, after extending by linearity.

\begin{prop}[{\hspace{1sp}\cite[Lemme II.2.16]{Colm96}}] \label{prop:prop_global_Green}
If $D_1$ and $D_2$ are disjoint degree $0$ divisors over $K$, then
\begin{enumerate}[label = (\roman*)]
\item\label{it:adG_1} $G_{D_1,W}(D_2) = G_{D_2,W}(D_1)$.
\item\label{it:adG_2} $G_{D_1,W}(D_2)\in \mathscr{L}(\A_K^{\times})$.
\item\label{it:adG_3} If $D_1 = \div(f)$, then $G_{D_1,W,v}(D_2) = \Log_v(f(D_2))$. In particular, if $D_1$ or $D_2$ is principal, then $G_{D_1,W}(D_2)\in \mathscr{L}(K^{\times})$. 
\item\label{it:adG_4} If $L/K$ is a finite extension, then $\Tr_{L/K}(G_{D_1\otimes L, W}(D_2\otimes L)) = [L:K]G_{D_1,W}(D_2)$. 
\end{enumerate}
\end{prop}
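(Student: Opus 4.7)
The plan is to establish (iii) for principal $D_1$, then (iv), then the reciprocity (i), conclude with the remaining case of (iii), and finally (ii); I work separately at non-archimedean and archimedean places, the former being the substantive case. For (iii) when $D_1 = \div(f)$, observe that $df/f$ is of the third kind with residue divisor $D_1$, and Lemma~\ref{lemma:prop_LogJtilde}\ref{it:prop_LogJtilde3} yields $\Log_{\tilde J}(df/f) = 0 \in W_v$. Since the defining properties pin $\omega_{D_1, W_v}$ down uniquely given the residue divisor, we obtain $\omega_{D_1, W_v} = df/f$; integrating gives $G_{D_1, W, v} = \Log_v f + c_v$, and evaluating at the degree-$0$ divisor $D_2$ kills the constant, yielding $\Log_v(f(D_2))$. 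The archimedean case is analogous, using that $df/f$ has purely imaginary periods. Since $f(D_2) \in K^\times$, we obtain $\Log(f(D_2)) \in \mathscr{L}(K^\times)$.

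For (iv), the characterising properties of $\omega_{D_1, W_v}$ base-change canonically: at each $w \mid v$, $\omega_{D_1 \otimes L, W_w} = \omega_{D_1, W_v}$ as differentials over $L_w$, so $G_{D_1 \otimes L, W, w}(D_2 \otimes L) = G_{D_1, W, v}(D_2)$ lies inside $\mathscr{L}(K_v^\times) \subset \mathscr{L}(L_w^\times)$. On this subspace, $\Tr_{L_w/K_v}$ acts as multiplication by $[L_w : K_v]$: by the ordinary field trace on $K_v$ (checked on the dense image of $\Log_w$ on $1$-units via the identity $\Log_v N_{L_w/K_v}(x) = \Tr^{\textup{fld}}_{L_w/K_v}(\Log_w x)$), and via $N_{L_w/K_v}(p) = p^{[L_w:K_v]}$ on the $\Log_w p$-factor. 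Summing over $w \mid v$ gives the factor $\sum_{w \mid v}[L_w : K_v] = [L : K]$.

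For (i), by bi-linearity I may take $D_1 = P_1 - P_1'$, $D_2 = P_2 - P_2'$ with disjoint supports. At non-archimedean $v$, Proposition~\ref{prop:dec_third_kind}\ref{it:dec_4} gives $G_{D_1, W, v}(x) = G_{\Theta, W_v}(\iota(x) - u_1) - G_{\Theta, W_v}(\iota(x) - u_1') + \textup{const}$ for any generic $u_1, u_1' \in J$ with $u_1 - u_1' = \iota(P_1) - \iota(P_1')$. Choosing a common auxiliary translate $s \in J$ and setting $u_i = \iota(P_i) + s$, $u_i' = \iota(P_i') + s$ for $i = 1, 2$ reduces both $G_{D_1, W, v}(D_2)$ and $G_{D_2, W, v}(D_1)$ to the same four-term sum of $G_{\Theta, W_v}$-values, upon invoking the symmetry $G_{\Theta, W_v}(x) = G_{\Theta, W_v}(w - x)$ of Proposition~\ref{prop:bijection}. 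The archimedean case is Arakelov's classical reciprocity via Stokes on a Riemann surface. With (i) in hand, the case $D_2 = \div(g)$ of (iii) follows from $G_{D_1, W}(D_2) = G_{D_2, W}(D_1) = \Log(g(D_1)) \in \mathscr{L}(K^\times)$.

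For (ii), the observation is that for all but finitely many non-archimedean $v$, $X$ has good reduction at $v$ and the supports of $D_1, D_2$ remain disjoint on the special fibre. At such $v$, $\omega_{D_1, W_v}$ extends to a form of the third kind on the smooth model of $X$ over $\OO_v$ with poles disjoint from $\supp(\tilde D_2)$; its primitive is then represented by an $\OO_v$-integrally convergent power series in each residue disc of $D_2$, so evaluation produces an element of $K_v$ rather than $K_v \oplus \Q \Log_v p$. The main obstacle is (i): controlling the constants implicit in the choice of generic translates $u_i$ and matching the two expressions essentially requires the symmetric normalisation of $G_{\Theta, W_v}$, and is the step that genuinely uses the analytic content of Colmez's theory.
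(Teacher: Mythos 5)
The paper offers no proof of this proposition: it is imported directly from Colmez (\cite[Lemme II.2.16]{Colm96}), so there is nothing internal to compare your argument against. On its own terms your architecture is sound — (iii) for principal $D_1$ via the uniqueness characterisation of $\omega_{D_1,W_v}$ and Lemma \ref{lemma:prop_LogJtilde}\thinspace{}\ref{it:prop_LogJtilde3}, (iv) via base-change invariance of that characterisation plus the computation of $\Tr_{L_w/K_v}$ on $\mathscr{L}(K_v^{\times})$, and (i) via the four-term $G_{\Theta,W_v}$-expansion of Proposition \ref{prop:dec_third_kind}\thinspace{}\ref{it:dec_4} — and parts (iii) and (iv) are essentially complete.

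Two steps need repair. In (i), taking literally the \emph{same} translate $s$ for both divisors does not make the two four-term sums coincide: the symmetry gives $G_{\Theta,W_v}(\iota(P_1^{(i)})-\iota(P_2^{(j)})-s)=G_{\Theta,W_v}(w+s+\iota(P_2^{(j)})-\iota(P_1^{(i)}))$, whose argument differs from that of the matching term $G_{\Theta,W_v}(\iota(P_2^{(j)})-\iota(P_1^{(i)})-s)$ by the shift $w+2s$. You must either choose $s$ with $2s=-w$, or use that $G_{D_2,W,v}(D_1)$ is independent of the choice of generic translates (the primitive is canonical up to a constant that dies on degree-zero divisors) and take $s'=-s-w$ for $D_2$. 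Minor, but it is exactly the step you single out as the crux.

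The gap in (ii) is more substantive. Disc-local integrality of the differential only shows that the primitive is a $K_v$-convergent power series on each residue disc of $\supp(D_2)$ \emph{up to the constant of integration on that disc}; since $D_2$ generally meets several residue discs, the relative constants between discs — which are fixed by the global (Frobenius-equivariant, resp.\ abelian-variety) construction — are precisely where a $\Q\Log_v{p}$ contribution could enter, and your local argument does not see them. What is actually needed is that the coefficient $G_{D_1,W,v}(D_2)^{(1)}$ of $\Log_v{p}$ vanishes for almost all $v$; in Colmez's treatment this coefficient is an intersection multiplicity (cf.\ \eqref{eq:G_instead_of_int} and Theorem \ref{thm:NT_CG_eq_Colmez}), which is zero once the reductions of $\supp(D_1)$ and $\supp(D_2)$ are disjoint. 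Note also that for an arbitrary adelic choice $W$ the form $\omega_{D_1,W_v}$ need not extend integrally; one should first pass to a fixed integral third-kind form with residue divisor $D_1$, the discrepancy being holomorphic and therefore having a $K_v$-valued primitive (a one-logarithm), so it contributes nothing to the $\Log_v{p}$ part.
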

\begin{mydef}\label{def:pairingW}
The \emph{adelic height with respect to $T$} is the symmetric bilinear pairing
\begin{equation*}
\langle \cdot, \cdot\rangle_{W}\colon J(\overline{\Q})\times J(\overline{\Q})\to \prod_{p\leq \infty}\Q_p
\end{equation*}
defined as follows. Given $x,y\in J(\overline{\Q})$, pick $D_1,D_2\in \Div^0(X)$ such that the class of $D_1$ and $D_2$ equals $x$ and $y$, respectively. Assume that $D_1$ and $D_2$ are defined over $L$, for some $L/K$ of finite degree, and that the supports of $D_1$ and $D_2$ are disjoint. Then 
\begin{equation*}
\langle x,y \rangle_{W}\colonequals \frac{1}{[L:\Q]} \cdot T\circ \Tr_{L/K}(G_{D_1,W}(D_2)).
\end{equation*}
\end{mydef}

\begin{rmk}
Proposition \ref{prop:prop_global_Green} guarantees that $\langle x, y\rangle_W$ is symmetric and well-defined, in the sense that it is independent of the choices of $D_1$ and $D_2$ and of $L$. By symmetry, by the definition of $G_{D_1,W}$ and by $\Q$-linearity of $T\circ \Tr_{L/K}$, we get bilinearity of $\langle \cdot, \cdot \rangle_W$.
\end{rmk}
We also obtain, for each place $p$, a pairing
\begin{equation*}
\langle\cdot, \cdot \rangle_{W,p} \colon J(\overline{\Q})\times J(\overline{\Q})\to \Q_p
\end{equation*}
by taking the $p$-th component of $\langle\cdot, \cdot \rangle_W$. 
The $\Q_p$-valued pairing $\langle\cdot, \cdot \rangle_{W,p}$ decomposes as a sum of local height pairings. In particular, retaining the notation of Definition \ref{def:pairingW}, we have
\begin{equation*}
\langle x,y \rangle_{W,p} = \frac{1}{[L:\Q]}\sum_w \langle D_1,D_2\rangle_{p,w,W},
\end{equation*}
where $\langle \cdot ,\cdot \rangle_{p,w,W}$ is the $\Q_p$-valued pairing on relatively prime degree zero divisors with support in $X(L_w)$ defined as follows:
\begin{equation}\label{def:loc_ht_Colm}
\langle D_1,D_2\rangle_{p,w,W} = T_{v}^{(p)}\circ \Tr_{L_{w}/K_v}(G_{D_1,W,w}(D_2));
\end{equation}
here $v$ is the place of $K$ below $w$.

The real (resp.\ $p$-adic) height pairing of N\'eron--Tate (resp.\ Coleman--Gross) is also defined in terms of local height pairings on relatively prime degree zero divisors. 

\begin{prop}[{\hspace{1sp}\cite[Proposition 2.3]{Gross86}, \cite[Proposition 1.2]{ColemanGross}}]\label{prop:NT_pairing}
Let $p$ be a place of $\Q$ and let $\chi^{(p)}\colon \A_K^{\times}/K^{\times}\to \Q_p$ be a continuous idele class character. Let $L$ be a finite extension of $K$ and let $\chi_L^{(p)} = \chi^{(p)}\circ N_{L/K}\colon \A_L^{\times}/L^{\times}\to \Q_p$. Let $w$ be a place of $L$, such that, if $p \neq \infty$, then $w\nmid p$. There exists a unique $\Q_p$-valued pairing 
$\langle\cdot, \cdot \rangle_{p,w}^{N}$ on relatively prime degree zero divisors with support in $X(L_w)$ such that:
\begin{enumerate}[label=(\roman*)]
\item $\langle D_1,D_2\rangle_{p,w}^N = \langle D_2,D_1\rangle_{p,w}^N$;
\item $\langle D_1, D_2 + D_3\rangle_{p,w}^N = \langle D_1,D_2\rangle_{p,w}^N + \langle D_1,D_3\rangle_{p,w}^N$;
\item $\langle\div(f), D \rangle_{p,w}^N = \chi_{L,w}^{(p)}(f(D))$;
\item $\langle(x)-(x_0),D \rangle_{p,w}^N$ is a continuous function of $x$.
\end{enumerate}
\end{prop}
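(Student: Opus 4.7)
The plan is to follow the classical N\'eron--Gross construction of local height pairings \cite{Gross86, ColemanGross}, replacing the real-valued logarithm by the $\Q_p$-valued idele class character $\chi_{L,w}^{(p)}$.

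For uniqueness, let $\delta$ be the difference of two pairings satisfying (i)--(iv). Then $\delta$ is symmetric, bi-additive, vanishes whenever one argument is principal (by applying (iii) in each slot via the symmetry (i)), and is continuous in the first variable (by (iv)). Since $\delta$ kills principal divisors, it factors through $J(L_w) \times J(L_w)$ as a symmetric $\Z$-bilinear form that is continuous in one variable (and hence, by symmetry, in both). To conclude $\delta \equiv 0$, I would proceed as in the proof of \cite[Proposition 2.3]{Gross86}: fix $D_1 = (x) - (x_0)$ and use the principal-divisor-vanishing to rewrite $\delta(D_1, (y) - (y_0))$ as a correction term depending continuously on $y$ near $\supp(D_1)$, where a direct estimate forces this term to vanish. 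The argument transposes verbatim from the real to the $p$-adic setting, since the only input is continuity of a homomorphism.

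For existence, I would distinguish cases on $w$. If $w$ is non-archimedean (so $w \nmid p$ by hypothesis when $p$ is finite), the character $\chi_{L,w}^{(p)}$ vanishes on $\OO_{w}^\times$ and therefore factors as $\chi_{L,w}^{(p)}(\pi_w) \cdot \ordnop_w(\cdot)$ for a uniformizer $\pi_w$. One then sets
\[
\langle D_1, D_2 \rangle^N_{p,w} \;=\; \chi_{L,w}^{(p)}(\pi_w) \cdot (D_1 \cdot D_2)_w,
\]
where $(D_1 \cdot D_2)_w \in \Z$ is the arithmetic intersection number of extensions of $D_1, D_2$ to a regular model of $X_{L_w}$ over $\OO_w$, in the sense of \cite{Gross86}. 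Properties (i)--(iv) are inherited from the corresponding properties of intersection multiplicities, property (iii) reducing to the identity $\ordnop_w(f(D)) = \sum_P n_P \ordnop_w(f(P))$ when $D = \sum n_P P$ is disjoint from $\div(f)$, and (iv) being immediate from the fact that the intersection multiplicity is locally constant on the regular locus. At an archimedean $w$ with $p = \infty$, use the classical Green function construction. At an archimedean $w$ with $p$ finite, $\chi_{L,w}^{(p)} \equiv 0$ so the zero pairing vacuously satisfies (i)--(iv).

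The main obstacle is the uniqueness step, because density arguments behave very differently over $\Q_p$ than over $\R$; however, since $\delta$ is a continuous group homomorphism in each variable on the topological group $J(L_w)$, the classical proof transfers with only cosmetic changes. Existence is then a direct adaptation of the real-valued theory.
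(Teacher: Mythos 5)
The paper does not prove this proposition: it is quoted verbatim (with $\Q_p$-valued characters replacing the real logarithm) from the cited sources, and your proposal reconstructs exactly the argument of those sources --- uniqueness by descending the difference of two pairings to a continuous biadditive form on $J(L_w)\times J(L_w)$, existence by arithmetic intersection theory on a regular model at non-archimedean $w$ and by Green functions at archimedean $w$ when $p=\infty$. So the route is the intended one, and the existence half is fine (one small correction: after adding the vertical $\Q$-divisor needed to make the intersection pairing well defined on classes and to get property (iii), the number $(D_1\cdot D_2)_w$ lies in $\Q$, not $\Z$; this changes nothing).

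The step you should not wave through is uniqueness, and it is not ``cosmetic''. In the real-valued case one finishes because a continuous homomorphism from the compact group $J(L_w)$ to $\R$ has bounded, hence trivial, image; bounded subgroups of $\Q_p$ are far from trivial, so that argument does not transfer. Indeed uniqueness genuinely fails when $p$ is finite and $w\mid p$ --- the paper says so immediately after the statement --- so any correct proof must visibly use the hypothesis $w\nmid p$, and your sketch never does. The correct replacement is: if $w$ is non-archimedean of residue characteristic $\ell\neq p$, then $J(L_w)$ has the finite-index open subgroup $J_1(L_w)$, which is pro-$\ell$; for $x$ in it one has $\ell^n x\to 0$ while $|\ell^n\phi(x)|_p=|\phi(x)|_p$, so any continuous homomorphism $\phi\colon J(L_w)\to\Q_p$ kills $J_1(L_w)$, hence factors through a finite group, hence vanishes because $\Q_p$ is torsion-free. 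If $w$ is archimedean and $p$ is finite, use instead that the identity component of $J(L_w)$ is connected while $\Q_p$ is totally disconnected. With this inserted, $\delta(a,\cdot)=0$ for every $a$ and uniqueness follows; without it, the phrase ``the classical proof transfers with only cosmetic changes'' is precisely the assertion that needs proof.
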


\begin{mydef}
The \emph{N\'eron--Tate real height pairing} with respect to the idele character $\chi^{(\infty)}$ is  the pairing
\begin{equation*}
\langle\cdot, \cdot \rangle^{NT}\colon J(\overline{\Q})\times J(\overline{\Q})\to \R
\end{equation*}
defined as follows. If $x,y, D_1,D_2, L$ are as in Definition \ref{def:pairingW}, then
\begin{equation*}
\langle x,y\rangle^{NT} = \frac{1}{[L:\Q]}\sum_w \langle D_1,D_2\rangle_{\infty,w}^{N}.
\end{equation*}
\end{mydef}
\begin{rmk}
Up to a multiplicative constant, there is only one choice of continuous idele class character $\chi^{(\infty)}$.
\end{rmk}
In the statement of Proposition \ref{prop:NT_pairing} we excluded pairs $(p,w)$ where $p$ is finite and $w\mid p$, because uniqueness fails in this case. Nevertheless, Coleman--Gross \cite{ColemanGross} defined a local pairing dependent on some choice of auxiliary data.  Let $v$ be the place of $K$ below $w$, let $W_v$ be a subspace of $H_{\dr}^1(X/K_v)$ isotropic with respect to the cup product and complementary to $H^0(X,\Omega^1)$. Unravelling definitions, we see that the local height of Coleman--Gross at $w$ satisfies  
\begin{equation}\label{eq:equality_colm_ht_CG_above_p}
\langle D_1,D_2\rangle_{p,w,W_v}^{N} = \langle D_1,D_2\rangle_{p,w,W},
\end{equation}
provided that the $v$-adic component of $W$ is $W_v$ and the $p$-adic component of $\chi$ is $\chi^{(p)}$. 
Technically speaking, Coleman--Gross assumed good reduction at each $v\mid p$ as they defined the left hand side of \eqref{eq:equality_colm_ht_CG_above_p} using Coleman integration \cite{Col82, ColemandeShalit}, but Besser \cite[Section 7]{Besser_pArakelov} \cite{BesserPairing} extended the definition to arbitrary reduction type, using the $p$-adic integration theory of Vologodsky \cite{Vologodsky}. Single Vologodsky integration agrees with Colmez's (summarised in \S \ref{subsec:Colm_int}). The only difference between (single) Coleman and Colmez integration in good reduction is that the former depends on the branch of the logarithm, while the latter treats the value of the logarithm at $p$ as a variable. This difference is lost in the local height parings of \eqref{eq:equality_colm_ht_CG_above_p} since we are applying compatible trace maps to $\Q_p$. The equality \eqref{eq:equality_colm_ht_CG_above_p} therefore indeed  holds.

In view of Besser's extension, we may state Colmez's results without restricting to good reduction. 

\begin{mydef}
For every $v\mid p$, let $W_v$ be a subspace of $H_{\dr}^1(X/ K_v)$, complementary to $H^0(X, \Omega^1)$ and isotropic with respect to the cup product. Then the \emph{(extended) Coleman--Gross $p$-adic height pairing} with respect to $(W_v)_{v\mid p}$ and $\chi^{(p)}$ is the pairing
\begin{equation*}
\langle\cdot, \cdot \rangle^{CG}_{(W_v)} \colon J(\overline{\Q})\times J(\overline{\Q})\to \Q_p
\end{equation*}
defined as follows. If $x,y,D_1,D_2,L$ are as in Definition \ref{def:pairingW}, then
\begin{equation*}
\langle x,y \rangle^{CG}_{(W_v)} =  \frac{1}{[L:\Q]}\biggl(\sum_{w\mid v\mid p}\langle D_1,D_2\rangle_{p,w,W_v}^N + \sum_{w<\infty,w\nmid p}\langle D_1,D_2 \rangle_{p,w}^{N} \biggr).
\end{equation*}
\end{mydef}

\begin{thm}[{\hspace{1sp}\cite[Théorème II.2.18]{Colm96} $+$ $\epsilon$}]\label{thm:NT_CG_eq_Colmez}
The pairing $\langle \cdot, \cdot \rangle_{W,p}$ is equal to the N\'eron--Tate real height pairing  with respect to $\chi^{(\infty)}$ if $p=\infty$ and to the extended $p$-adic height pairing of Coleman--Gross with respect to $(W_v)_{v\mid p}$ and $\chi^{(p)}$ if $p$ is finite. Moreover, if $p$ is infinite, or $p$ is finite and $w\nmid p$, then
\begin{equation*}
\langle \cdot, \cdot \rangle_{p,w,W} = \langle \cdot, \cdot\rangle_{p,w}^N,
\end{equation*}
and so $\langle \cdot, \cdot \rangle_{p,w,W}$ is independent of $W$; otherwise,
\begin{equation*}
\langle \cdot, \cdot \rangle_{p,w,W} = \langle \cdot, \cdot\rangle_{p,w,W_v}^N.
\end{equation*}
\end{thm}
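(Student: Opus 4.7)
The plan is to reduce Theorem \ref{thm:NT_CG_eq_Colmez} to a pair of local comparisons, one valid at archimedean places or at finite places not above $p$, and one valid at places above $p$, and then assemble them into the global identity by summation. For the local comparison in the first range, I would invoke the uniqueness clause of Proposition \ref{prop:NT_pairing}: Colmez's local pairing $\langle\cdot,\cdot\rangle_{p,w,W}$ has to be shown to satisfy the four defining axioms of the Néron--Tate local pairing $\langle\cdot,\cdot\rangle_{p,w}^N$, which forces equality and incidentally establishes the stated $W$-independence.

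The four axioms can be checked in turn. Symmetry is immediate from Proposition \ref{prop:prop_global_Green}\ref{it:adG_1} combined with the $\Q$-linearity of $T_v^{(p)}\circ \Tr_{L_w/K_v}$, and bilinearity follows from the additivity of the local Green function in its defining divisor together with the same $\Q$-linearity. The key identity $\langle \div(f),D\rangle_{p,w,W} = \chi_{L,w}^{(p)}(f(D))$ is the heart of the local comparison: Proposition \ref{prop:prop_global_Green}\ref{it:adG_3} rewrites the left-hand side as $T_v^{(p)}(\Tr_{L_w/K_v}(\Log_w(f(D))))$, and the defining properties of $\Tr_{L_w/K_v}$ (compatibility with the norm) and of $T_v^{(p)}$ (Lemma \ref{lemma:Tvp}) collapse this to $\chi_v^{(p)}(N_{L_w/K_v}(f(D))) = \chi_{L,w}^{(p)}(f(D))$. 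Finally, continuity in $x$ of $\langle (x)-(x_0),D\rangle_{p,w,W}$ is inherited from the local analyticity of $G_{D,W,w}$ on the complement of $\Supp(D)$ together with the continuity of $T_v^{(p)}\circ\Tr_{L_w/K_v}$.

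For the remaining local case, namely $p$ finite and $w\mid p$, the identification with $\langle\cdot,\cdot\rangle_{p,w,W_v}^N$ is precisely the equality \eqref{eq:equality_colm_ht_CG_above_p} noted in the text, which Besser's extension of the Coleman--Gross local pairing via Vologodsky integration makes available in arbitrary reduction (using that single Vologodsky integration agrees with Colmez's). With the local statements in hand, the global identity follows by summing over places $w$ of $L$ and dividing by $[L:\Q]$: the sum matches $\langle x,y\rangle^{NT}$ when $p=\infty$, and when $p$ is finite it splits into the part above $p$ (matching the Coleman--Gross local contributions by \eqref{eq:equality_colm_ht_CG_above_p}) and the part away from $p$ (matching the Néron local pairing by the previous paragraph), producing $\langle x,y\rangle^{CG}_{(W_v)}$.

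The only point that goes beyond the original formulation of \cite[Théorème II.2.18]{Colm96} is the insertion of an arbitrary $\chi\in\mathscr{I}(K)$ in place of the cyclotomic character; this ``$+\epsilon$'' is absorbed into Definition \ref{def:Tmap} and into the base-change compatibility of Remark \ref{rmk:trace_ext}, so no essentially new argument is required. Accordingly, I do not anticipate a serious obstacle: the proof is fundamentally a bookkeeping exercise that packages Colmez's local results into the Néron--Tate uniqueness characterization, with the main delicacy being the consistent handling of the variable $\Log_v p$ through the maps $\Log_v$, $\Tr_{L_w/K_v}$ and $T_v^{(p)}$, which is already arranged by the construction of $\mathscr{L}(K_v^\times)$ and the factorization diagram of Lemma \ref{lemma:Tvp}.
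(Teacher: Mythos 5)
Your proposal is correct and takes essentially the same route as the paper's proof: both reduce the theorem, via the uniqueness clause of Proposition \ref{prop:NT_pairing}, to checking that Colmez's local pairing satisfies the four N\'eron axioms at the places covered by that proposition (symmetry and the identity $\langle\div(f),D\rangle=\chi_{L,w}^{(p)}(f(D))$ coming from Proposition \ref{prop:prop_global_Green} together with Lemma \ref{lemma:Tvp} and the norm-compatibility of the trace), invoke the already-established equality \eqref{eq:equality_colm_ht_CG_above_p} at $w\mid p$, and observe that passing from Colmez's fixed character to an arbitrary $\chi\in\mathscr{I}(K)$ is absorbed by Definition \ref{def:Tmap} and Remark \ref{rmk:trace_ext}. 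The paper's write-up is merely terser, carrying out the axiom check explicitly only in the sample case $w\mid\ell\neq p$ via \eqref{eq:G_instead_of_int}.
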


\begin{proof}
Colmez proves the result for a specific choice of $\chi\in \mathscr{I}(K)$; given the definitions and results above, the proof carries over to an arbitrary $\chi$. In particular, it suffices to show equality of the local height pairings for all the pairs $(p,w)$ satisfying the assumptions of Proposition \ref{prop:NT_pairing}. For example, if $p, w$ are non-archimedean and $w\mid \ell\neq p$, by \eqref{def:loc_ht_Colm} and the proof of Lemma \ref{lemma:Tvp}, we have
\begin{equation}\label{eq:G_instead_of_int}
\langle D_1, D_2\rangle_{p,w,W} =G_{D_1,W,w}(D_2)^{(1)}\chi_{L,w}^{(p)}(\ell).
\end{equation}
By uniqueness, it suffices to show that \eqref{eq:G_instead_of_int} satisfies all the properties of Proposition \ref{prop:NT_pairing}, and this is independent of the choice of $\chi^{(p)}$.
\end{proof}

\begin{rmk}
As Colmez observes, by the comparison result of Theorem \ref{thm:NT_CG_eq_Colmez} and Equation \eqref{eq:G_instead_of_int}, we obtain a formula for the N\'eron--Tate local height which does not use intersection theory. This is analogous to the difference between Definition \ref{def:Neron_fct_away_p} and the definition of $p$-adic N\'eron functions away from $p$ of \cite{BKM22}. 

Recently, Besser, M\"uller and Srinivasan \cite{BMP_adelic_metric} gave a new construction of $p$-adic heights on varieties over number fields using $p$-adic Arakelov theory \cite{Besser_pArakelov}, and, in particular, $p$-adic log functions. Their height satisfies properties that generalise what we have just observed concerning Colmez's heights: namely, the local contributions at all places are defined analytically. In the special case of Jacobians of curves, they compare their construction to Colmez's. For this, they use a result of Besser \cite[Appendix B]{Besser_pArakelov} that relates the theory of log functions on line bundles on abelian varieties to Colmez's Green functions. 
\end{rmk}

\begin{rmk}
If $p$ is finite, $w\mid v\mid p$, and $\chi^{(p)}$ is unramified at $v$ (i.e.\ $\chi_v^{(p)}(\OO_v^{\times}) = 0$), then $\langle D_1, D_2\rangle_{p,w,W}$ only depends on $G_{D_1,W,w}(D_2)^{(1)}$ and not on $G_{D_1,W,w}(D_2)^{(0)}$, since the local trace map $t_v^{(p)}$ of the proof of Lemma \ref{lemma:Tvp} is trivial. In fact, also in this case the local height is independent of the choice of $W_v$, and is given by formula \eqref{eq:G_instead_of_int}.
Compare this with Remark \ref{rmk:neron_away_p}\thinspace{}\ref{rmk:neron_unramified}.
\end{rmk}

\subsection{$p$-Adic N\'eron functions, Green functions and Coleman--Gross heights}\label{subsec:comparison}
In this subsection, we go back to the situation of Section \ref{sec:padic_hts}, where $p$ is a finite prime of $\Q$ and $C$ is a smooth genus $2$ curve over a number field $K$, given by an equation of the form \eqref{eq:Grant}, with coefficients in the ring of integers of $K$. Let $v$ be a place of $K$, and consider the base-change of $C$ to $K_v$.

We apply Colmez's results from \S\S \ref{subsec:Colm_curves}--\ref{subsec:Colm_heights} according to Remark \ref{rmk:from_X_to_C}: in particular,
\begin{itemize}
\item $\iota$ is the embedding with respect to the base-point $P_0 = \infty$.
\item $(\Omega_1,\Omega_2)$ is the basis for $H^0(J,\Omega^1)$ of Definition \ref{def:basis_inv_diff_inv_der}, described explicitly in terms of the $\PP^8$ embedding of $J$ in Lemma \ref{lemma:inv_difder}\thinspace{}\eqref{lemma_part:inv_difder_1}. The dual basis $(\partial_1,\partial_2)$ of invariant derivations is also described in Lemma \ref{lemma:inv_difder}\thinspace{}\eqref{lemma_part:inv_difder_2}.
\item $x\in\Theta$ if and only if $-x\in \Theta$, so the element $w\in J$ defined just before Proposition \ref{prop:bijection} is just $0$. In particular, a $v$-adic Green function of division $\Theta$ is symmetric if and only if it is even. 
\end{itemize}
According to Proposition \ref{prop:bijection}, the set of symmetric $v$-adic Green functions of divisor $\Theta$ (up to addition by a constant function) is in bijection with the subset $\Is(C/K_v)$ of the set of subspaces of $H^1_{\dR}(C/ K_v)$. By Proposition \ref{prop:bijection_sigma_is}, there is also a bijection between $\Is(C/K_v)$ and the set of $v$-adic sigma functions. 

Suppose $\chi\colon \A_K^{\times}/K^{\times}\to \Q_p$ is a continuous idele class character. Let $T_v\colon \mathscr{L}(K_v^{\times}) \to \Q_p$ be the trace map at $v$ induced by $\chi$ as in Lemma \ref{lemma:Tvp}. Fix a subspace $W_v\in \Is(C/K_v)$. 
In this subsection, we prove:
\begin{thm}\label{thm:comparison} 
There exists a symmetric $v$-adic Green function $G\colonequals G_{\Theta,W_v}$ of divisor $\Theta$ simultaneously satisfying the following properties: 
\begin{enumerate}[label=(\roman*)]
 \item $\langle\iota^{*}[d(\partial_1 G)],\iota^{*}[d(\partial_2 G)] \rangle = W_v$; 
 \item Let $L$ be a finite extension of $K$, let $w\mid v$ be a place of $L$, let $\lambda_w\colonequals \lambda_{w,W_v}$ be the local $p$-adic N\'eron function at $w$ with respect to $W_v\in \Is(C/K_v)$ and $\chi$, and let $T_w\colonequals T_{v}\circ \Tr_{L_{w}/K_v}$ .  Then
 \begin{equation}\label{eq:eq_lambda_Gtheta}
 n_w\lambda_{w} = -2T_w(G).
 \end{equation}
\end{enumerate}
\end{thm}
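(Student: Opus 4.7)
The strategy is to construct $G$ explicitly from the $v$-adic sigma function matching $W_v$ and then verify that this function is Colmez's Green function. By Proposition~\ref{prop:bijection_sigma_is} there is a unique symmetric matrix $c=(c_{ij})$ with $W_v = \langle [\eta_1^{(c)}], [\eta_2^{(c)}]\rangle$; let $\sigma_v := \sigma_v^{(c)}$ be the associated $v$-adic sigma function. First define $G(P) = \Log_v\sigma_v(T(P))$ for $P \in H_v\setminus\Supp(\Theta)$, and extend $G$ to $J(K_v)\setminus\Supp(\Theta)$ by picking, for each $P$, an integer $m$ with $mP \in H_v$ (using Lemma~\ref{lemma:exists_m}) and setting
\begin{equation*}
G(P) = \frac{1}{m^2}\bigl(\Log_v\sigma_v(T(mP)) - \Log_v\phi_m(P)\bigr).
\end{equation*}
Independence from $m$ is a formal consequence of \eqref{eq:div_quad} together with Proposition~\ref{prop:finite_index_subgp}\thinspace{}\ref{prop:finite_index_subgp:3}.

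Next, I would check that $G$ is a symmetric Colmez Green function of $\Theta$. Symmetry (i.e.\ evenness) follows from $T$ and $\sigma_v$ being odd, combined with $\Log_v(-1)=0$; the logarithmic singularity along $\Theta$ is visible from Proposition~\ref{prop:finite_index_subgp}\thinspace{}\ref{prop:finite_index_subgp:2} and \eqref{eq:divisor_div_poly}. To verify the defining property $\Delta^{[3]}G = \Log(f_\Theta^{(4)})$ of Proposition~\ref{prop:padicGreen}, apply $\Delta^{[3]}$ to the formula above and use the identity in Proposition~\ref{prop:finite_index_subgp}\thinspace{}\ref{prop:finite_index_subgp:3} to express $\Delta^{[2]}G$ on $J^3$ (and hence $\Delta^{[3]}G$ on $J^4$) as $\Log_v$ of a rational function whose divisor is the principal divisor $\Delta^{[3]}\Theta$ (Proposition~\ref{prop:Delta3_principal}). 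For property~(i), Lemmas~\ref{lemma:dLi_eq_Omegai} and~\ref{lemma:log_and_inv_derivations} identify $\partial_i$ with $D_i$ under the formal-group uniformisation by $T$, so the differential equation defining $\sigma_v^{(c)}$ yields
\begin{equation*}
\partial_i\partial_j G = D_iD_j\log\sigma_v^{(c)}(T) = -X_{ij} + c_{ij}\quad\text{on } H_v.
\end{equation*}
Hence $d(\partial_j G) = \sum_i(-X_{ij}+c_{ij})\Omega_i$, and pulling back along $\iota$ using $\iota^*\Omega_i = \omega_i$ together with the explicit description of $X_{ij}$ on Grant's model should identify $\iota^*[d(\partial_j G)]$ with $[\eta_j^{(c)}]$, giving~(i).

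For~(ii), take $P\in J_\Theta(L_w)$ and $m$ with $mP$ in the kernel of reduction of $J$ over $L_w$ where $\sigma_v$ converges. Since the coefficients of $\sigma_v$ lie in $K_v\subseteq L_w$, both $\sigma_v(T(mP))$ and $\phi_m(P)$ lie in $L_w^{\times}$. By Lemma~\ref{lemma:Tvp} and Remark~\ref{rmk:trace_ext}, the composition $T_w\circ\Log_w$ agrees with $\chi_{L,w}$ on $L_w^{\times}$, so
\begin{equation*}
T_w(G(P)) = \frac{1}{m^2}\chi_{L,w}\!\left(\frac{\sigma_v(T(mP))}{\phi_m(P)}\right) = -\frac{n_w}{2}\lambda_w(P),
\end{equation*}
where the second equality is Definition~\ref{def:Neron_fct_above_p}; the case $v\nmid p$ is compatible because then $\chi_{L,w}|_{\OO_w^{\times}}=0$ forces the contributions from $T_1$ and from $\sigma_v(T) = T_1(1+O(T_1,T_2))$ to coincide (cf.\ Remark~\ref{rmk:neron_away_p}\thinspace{}\ref{rmk:neron_unramified}).

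The main technical hurdle is the verification in~(i) that $\iota^*[d(\partial_j G)]$ equals $[\eta_j^{(c)}]$: this requires controlling Grant's $X_{ij}$ along $\iota(C)$ through the $\PP^8$-embedding, in particular their behaviour near $\iota(\infty)$ where they have poles, and then comparing the resulting rational differentials in $x,y$ with the explicit expressions \eqref{eq:eta_12_c} for $\eta_j^{(c)}$. The computation of $\Delta^{[3]}G$ also demands care, since one must unwind the addition-and-subtraction formula of Proposition~\ref{prop:finite_index_subgp}\thinspace{}\ref{prop:finite_index_subgp:3} so as to produce the specific rational function $f_\Theta^{(4)}$ with the normalisation on $(J^3\times\{0\})\cup(J^2\times\{0\}\times J)\cup(J\times\{0\}\times J^2)$ required by Proposition~\ref{prop:padicGreen}. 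Once these two checks are in place, symmetry and evenness of $\sigma_v\circ T$ pin down $G$ uniquely up to an additive constant, which is absorbed into the residual freedom in the choice of Green function.
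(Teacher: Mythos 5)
Your architecture is the paper's: set $G=\Log(\sigma_v\circ T)$ near the origin, extend to $J$ through the division polynomials, verify the Green-function axioms, compute $d(\partial_iG)$ from the differential equations defining $\sigma_v^{(c)}$, and deduce (ii) from $T_w\circ\Log_w=\chi_{L,w}$ (this last part of your argument is complete and agrees with the paper's). But the two steps you defer are precisely where the content lies, and as written they are gaps. For (i): you place the difficulty ``near $\iota(\infty)$'', but $X_{ij}$ has polar divisor $2\Theta$ and $\iota(C)=\Supp(\Theta)$, so $d(\partial_jG)=\sum_i(-X_{ij}+c_{ij})\Omega_i$ has poles along the \emph{whole} of $\iota(C)$ and cannot be restricted to the curve at all; one must first replace it by a cohomologous representative. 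The paper's Lemma \ref{lemma:explicit_subspace_H1dRX} does this with \cite[Lemme II.2.1]{Colm96}, pulling back along $\varpi\colon C^2\to J$ and hunting for a representative of the form $\pi_1^*\omega+\pi_2^*\omega$; for $\eta_{1,J}$ this requires subtracting the exact form $d(X_{222})$ before the identification with $\eta_1^{(c)}$ drops out. Without some such device, (i) is unproved.

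For the Green-function verification: Proposition \ref{prop:finite_index_subgp}\thinspace{}\ref{prop:finite_index_subgp:3} controls $G(u+v)+G(u-v)-2G(u)-2G(v)$, which is neither $\Delta^{[2]}$ nor $\Delta^{[3]}$, and it does not by itself produce the specific function $f_\Theta^{(4)}$ with the normalisation demanded by Proposition \ref{prop:padicGreen}. The paper (Lemma \ref{lemma:log_sigma_loc_Green}) instead reduces to the naive sigma function --- any two sigma functions differ by a quadratic in the one-logarithms, which $\Delta^{[3]}$ kills --- and then establishes the local relation by transporting the quasi-periodicity and divisor computations of the \emph{complex} sigma function. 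It then extends by \cite[Proposition II.1.23]{Colm96} (unique extension of a locally defined Green function) and only afterwards \emph{derives} the quasi-quadraticity (Proposition \ref{prop:sigma_extends}). Your reversal --- define the extension by quasi-quadraticity, then check $\Delta^{[3]}G=\Log f_\Theta^{(4)}$ globally --- is not wrong in principle, but it obliges you to prove the global $\Delta^{[3]}$ identity for your ad hoc extension (or to prove that it coincides with Colmez's canonical one), and your sketch supplies neither.
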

\begin{rmk}
The theorem also holds true if $v$ is a prime at which $\chi$ is unramified; in this case, both the left and right hand side of \eqref{eq:eq_lambda_Gtheta} are independent of $W_v$ (cf.\ Definition \ref{def:Neron_fct_away_p}, Remark \ref{rmk:neron_away_p}\thinspace{}\ref{rmk:neron_unramified} and Theorem \ref{thm:NT_CG_eq_Colmez}).
\end{rmk}
The comparison of the $p$-adic component of the adelic-Green-function height and the (extended) Coleman--Gross $p$-adic height then gives, as a corollary, a comparison result between the local N\'eron function $\lambda_{w,W_v}$ and the local height pairing $\langle\cdot, \cdot \rangle_{p,w,W_v}^N$ with respect to $\chi$. 

\begin{cor}\label{cor:lambda_eq_CG} Let $L$ be a finite extension of $K$, and $w$ a place of $L$ above the place $v$ of $K$. Let $D_1 = \sum_{i=1}^{r} k_i P_i$ and $D_2 = \sum_{j=1}^{s} m_j Q_j$ be disjoint degree $0$ divisors supported on $C(L_w)$. Choose points $u_1,\dots,u_r\in J(L_w)$ such that $u_1,\dots,u_r$ are generic, $\sum_{i} k_i C_{u_i} = D_1$ and $\iota(Q_j)-u_i\not \in \Supp(\Theta)$ for all $i,j$. Then
\begin{align*}
\langle D_1, D_2 \rangle_{p,w,W_v}^N
= -\frac{n_w}{2}\sum_{i=1}^r \sum_{j=1}^{s}k_i m_j\lambda_{w,W_v}(\iota(Q_j) -u_i).
\end{align*}
\end{cor}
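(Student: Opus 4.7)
The plan is to chain together three inputs already established in the paper: the identification of the extended Coleman--Gross local pairing with Colmez's local contribution, the formula for a primitive of $\omega_{D_1,W_v}$ supplied by Proposition \ref{prop:dec_third_kind}, and the new comparison result Theorem \ref{thm:comparison}.

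First, by \eqref{eq:equality_colm_ht_CG_above_p} and (the proof of) Theorem \ref{thm:NT_CG_eq_Colmez} specialised to the single place $w$, we have
\[
\langle D_1,D_2\rangle_{p,w,W_v}^{N}=\langle D_1,D_2\rangle_{p,w,W}=T_w\bigl(G_{D_1,W,w}(D_2)\bigr),
\]
where $T_w=T_v\circ\Tr_{L_w/K_v}$ and $G_{D_1,W,w}$ is a primitive of the unique third-kind differential $\omega_{D_1,W_v}$ on $C_{L_w}$ whose class under $\Log_{\tilde J}$ lies in $W_v$ and whose residue divisor is $D_1$. This reduces the corollary to an explicit computation of $G_{D_1,W,w}(D_2)$.

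Next I would use the hypothesis that the $u_i$ are generic and satisfy $\sum_i k_i C_{u_i}=D_1$ to write $\omega_{D_1,W_v}$ as the type-$(1,1)$ combination $\sum_i k_i\omega_{u_i}$ in the sense of \S\ref{subsec:Colm_curves}: indeed $\sum_i k_i X_{u_i}=D_1$ and a $\mathbb{Z}$-linear combination of the formal symbols $\omega_u$ is an honest type-$(1,1)$ differential of residue divisor equal to $\sum k_i X_{u_i}$. Since $\omega_{D_1,W_v}$ is already of type $(1,1)$, its decomposition in Proposition \ref{prop:dec_third_kind} has $\omega^{1,0}=0$, so part (ii) of that proposition gives
\[
G_{D_1,W,w}(x)=\sum_{i=1}^{r}k_i\,G_{\Theta,W_v,w}\bigl(\iota(x)-u_i\bigr)+c
\]
for some constant $c\in\mathscr{L}(L_w^\times)$, well-defined on $C(L_w)\setminus \Supp(D_1)$. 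The condition $\iota(Q_j)-u_i\notin\Supp(\Theta)$ ensures that each term is finite at every point of $\Supp(D_2)$. Evaluating at $D_2=\sum_j m_j Q_j$ and using $\deg D_2=0$ makes the constant $c$ drop out:
\[
G_{D_1,W,w}(D_2)=\sum_{i=1}^{r}\sum_{j=1}^{s}k_i m_j\,G_{\Theta,W_v,w}\bigl(\iota(Q_j)-u_i\bigr).
\]

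Finally I would apply $T_w$ and invoke Theorem \ref{thm:comparison}, which produces a symmetric Green function $G=G_{\Theta,W_v,w}$ of divisor $\Theta$ associated to the chosen isotropic subspace $W_v$ and satisfying $n_w\lambda_{w,W_v}=-2T_w(G)$ on $J(L_w)\setminus\Supp(\Theta)$. By $\mathbb{Q}$-linearity of $T_w$,
\[
\langle D_1,D_2\rangle_{p,w,W_v}^{N}=\sum_{i,j}k_i m_j\,T_w\bigl(G(\iota(Q_j)-u_i)\bigr)=-\frac{n_w}{2}\sum_{i=1}^{r}\sum_{j=1}^{s}k_i m_j\,\lambda_{w,W_v}\bigl(\iota(Q_j)-u_i\bigr),
\]
which is exactly the claim. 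The only non-routine step is Theorem \ref{thm:comparison} itself; granted that, the corollary is a formal unwinding, with the residue/genericity hypotheses only used to guarantee that the Green-function evaluations are defined and that the correct $(1,1)$ decomposition is available.
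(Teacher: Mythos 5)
Your proposal is correct and follows exactly the route the paper intends: the paper's own proof is the one-line citation of Theorem \ref{thm:comparison} and Theorem \ref{thm:NT_CG_eq_Colmez}, and your argument is precisely the unwinding of those two results via the definition \eqref{def:loc_ht_Colm} and Proposition \ref{prop:dec_third_kind}. The bookkeeping is sound — in particular, $\sum_i k_i=0$ and $\deg D_2=0$ correctly dispose of the additive-constant ambiguities in $G_{D_1,W,w}$ and in the symmetric Green function itself.
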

\begin{proof}
This follows from Theorem \ref{thm:comparison} and Theorem \ref{thm:NT_CG_eq_Colmez}.
\end{proof}
\begin{example}\label{eg:finding_u1_u2}
With the notation of Corollary \ref{cor:lambda_eq_CG}, 
let $R\in C(L_w)$ such that $R\neq P_i,P_i^{-}, Q_j$ for all $i\in\{1,\dots,r\}$ and all $j\in\{1,\dots,s\}$. Then we may choose  $u_i  = \iota(P_i) + \iota(R)$. 
\end{example}
\begin{rmk}
The comparison also holds at the primes at which $\chi$ is unramified. For these, there is no dependence on $W_v$.
\end{rmk}

\begin{cor}\label{cor:global_CG_same_as_this}
The Coleman--Gross global height pairing on $J(\overline{\Q})\times J(\overline{\Q})$ is equal to the symmetric bilinear pairing induced by $h_p$, provided that we make the same choices of $W_v\in \Is(C/K_v)$ at all places $v$ of ramification for $\chi$. 
\end{cor}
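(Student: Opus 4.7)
The plan is to reduce the global identity to a sum of local identities given by Corollary~\ref{cor:lambda_eq_CG}, and then collapse the resulting expression into $B_p(P,Q)\colonequals \tfrac{1}{2}(h_p(P+Q)-h_p(P)-h_p(Q))$, the symmetric bilinear pairing associated with $h_p$. Since both $B_p$ and $\langle\cdot,\cdot\rangle^{CG}_{(W_v)}$ are symmetric bilinear forms on $J(\overline{\Q})\times J(\overline{\Q})$, it suffices to verify $\langle P,Q\rangle^{CG}_{(W_v)}=B_p(P,Q)$ for each pair $P,Q\in J(\overline{\Q})$.

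Fix such $P,Q$, and let $L/K$ be a finite extension over which both points are rational, together with degree-zero representatives $D_1=\sum_{i=1}^r k_iP_i$ of $P$ and $D_2=\sum_{j=1}^s m_jQ_j$ of $Q$ with disjoint supports. Following Example~\ref{eg:finding_u1_u2}, after possibly enlarging $L$ I would pick a single point $R\in C(L)$ avoiding a finite set of exceptional points, and set $u_i\colonequals\iota(P_i)+\iota(R)\in J(L)$ for $i=1,\ldots,r$; the Zariski-open conditions that each $u_i$ be generic and that $\iota(Q_j)-u_i\notin\Supp(\Theta)$ should hold \emph{at every non-archimedean place $w$ of $L$ simultaneously}. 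With this setup, $\sum_i k_iX_{u_i}=\sum_i k_i(P_i+R)=D_1$ because $\sum_i k_i=0$, so Corollary~\ref{cor:lambda_eq_CG} applies at every $w$.

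Next, I would sum the local identities of Corollary~\ref{cor:lambda_eq_CG} over all non-archimedean places $w$ of $L$, divide by $[L:\Q]$, and recognise $\frac{1}{[L:\Q]}\sum_w n_w\lambda_w$ as $h_p$ on each of the $L$-rational points $[\iota(Q_j)-u_i]$, yielding
\begin{equation*}
\langle P,Q\rangle^{CG}_{(W_v)}=-\tfrac{1}{2}\sum_{i,j}k_i m_j\, h_p\bigl([\iota(Q_j)-u_i]\bigr).
\end{equation*}
Expanding $h_p(x-y)=h_p(x)+h_p(y)-2B_p(x,y)$ and using $\sum_i k_i=\sum_j m_j=0$ to kill the diagonal contributions reduces the right hand side to $-2B_p\bigl(\sum_j m_j\iota(Q_j),\,\sum_i k_iu_i\bigr)/(-2) \cdot (-2)$; more cleanly, it leaves exactly $B_p\bigl(\sum_j m_j\iota(Q_j),\,\sum_i k_iu_i\bigr)$. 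Finally, $\sum_j m_j\iota(Q_j)=[D_2]=Q$ and $\sum_i k_iu_i=\sum_i k_i\iota(P_i)+(\sum_i k_i)\iota(R)=[D_1]=P$, which gives $\langle P,Q\rangle^{CG}_{(W_v)}=B_p(P,Q)$.

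The main point requiring care is the simultaneous choice of $R$ across all places: because genericness of the $u_i$ and the conditions $\iota(Q_j)-u_i\notin\Supp(\Theta)$ cut out a proper Zariski-closed locus $Z\subset C$ defined over a fixed finite extension of $K$ (and independent of $w$), all that is needed is that $L$ be large enough for $C(L)\setminus Z(L)$ to be non-empty, which is automatic after a suitable finite extension. Once $R$ has been chosen, the remainder of the argument is a bookkeeping exercise that assembles Corollary~\ref{cor:lambda_eq_CG} into the quadratic form $h_p$ via polarisation and the degree-zero constraints on $D_1,D_2$.
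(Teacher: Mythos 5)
Your proposal is correct, but it is organised genuinely differently from the paper's proof. The paper first reduces, via quadraticity of $h_p$ and Faltings's theorem, to the diagonal identity $h_p(v)=\langle v,v\rangle^{CG}$ for $v=[P_1-P_2]$ with $P_1,P_2$ non-Weierstrass and $P_1\neq P_2^{-}$; it then takes the specific disjoint representatives $D_1=P_1-P_2$ and $D_2=P_2^{-}-P_1^{-}$ of the \emph{same} class, applies Corollary \ref{cor:lambda_eq_CG} to obtain a combination of three values of $\lambda_w$, collapses it place by place using the quasi-parallelogram law of Proposition \ref{prop:properties_neron_fcts}\thinspace{}\ref{it:quasi_parallelogram}, and kills the leftover $\chi_{L,w}$-terms by summing over $w$ and invoking the vanishing of $\chi_L$ on $L^{\times}$. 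You instead prove the full off-diagonal identity $\langle P,Q\rangle^{CG}=B_p(P,Q)$ directly for arbitrary classes: sum the local identities of Corollary \ref{cor:lambda_eq_CG}, recognise the resulting global sums as $h_p(\iota(Q_j)-u_i)$, and polarise using $\sum_i k_i=\sum_j m_j=0$. Your route avoids the reduction to special diagonal representatives and never displays the correction terms, since they are already absorbed into the globally defined quadratic form $h_p$; the cost is the bookkeeping with the auxiliary point $R$, which is harmless because the genericity and non-vanishing conditions on the $u_i$ are conditions on points of $J(L)$, hence independent of $w$, and both pairings are insensitive to enlarging $L$. Both arguments ultimately rest on the same two inputs — the local comparison of Corollary \ref{cor:lambda_eq_CG} and the (quasi-)parallelogram law — so the difference is one of packaging: local-then-sum with explicit cancellation in the paper, versus sum-then-polarise in your version.
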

\begin{rmk}\label{rmk:comparison_canonical}
Suppose that $p\geq 5$, that $C$ has good ordinary reduction at every place $v\mid p$ of ramification for $\chi$, and that, for each such $v$, $K_v$ is an unramified extension of $\Q_p$. Then, at every such $v$, we may take $W_v$ to be the unit root eigenspace of Frobenius. By Proposition \ref{prop:Blakestad_space_unit_root}, this corresponds to choosing the canonical $v$-adic sigma function of Blakestad. Under these assumptions, Corollary \ref{cor:global_CG_same_as_this} thus relates the Coleman--Gross $p$-adic height with respect to the unit root eigenspaces of Frobenius to the $p$-adic height with respect to the canonical $v$-adic sigma functions. 
\end{rmk}
\begin{proof}[Proof of Corollary \ref{cor:global_CG_same_as_this}]
We omit the choices of subspace $W_v$ from the notation. Let $L$ be an arbitrary finite extension of $K$. Since $C(L)$ is finite by Faltings's theorem \cite{faltings}, there are finitely many points in $J(L)$ which are either of the form $[2P-2\infty]$ or $[P+W-2\infty]$ for a Weierstrass point $W$.  Therefore, any point in $J(L)\setminus J_{\tors}(L)$ admits a multiple $v$ of the form
\begin{equation*}
v = [P_1 - P_2],
\end{equation*}
for some non-Weierstrass points $P_1,P_2$ such that $P_1\neq P_2^{-}$. Thus, by quadraticity, it suffices to show that $h_p(v) = \langle v, v\rangle^{CG}$ for all $v$ in this form.

 In this case, the divisors $D_1 = P_1 - P_2$ and $D_2 = P_2^{-} - P_1^{-}$ are disjoint and satisfy $[D_1] = [D_2] = v\not\in \Supp(\Theta)$. By possibly replacing $L$ with a field extension, we may assume that the supports of $D_1$ and $D_2$ are defined over $L$. Let $w$ be a non-archimedean place of $L$, and let $v_1 =[P_2^{-} -P_1] $, $v_2 = [P_2^{-}-P_2]$, $v_3 = [P_1^{-}-P_1]$. By Corollary \ref{cor:lambda_eq_CG} and Example \ref{eg:finding_u1_u2}, we have
\begin{equation*}
\langle D_1, D_2\rangle_{p,w}^N = - \frac{n_w}{2} \left(2\lambda_w(v_1) - \lambda_w(v_2) - \lambda_w(v_3) \right).
\end{equation*}
Thus, by Proposition \ref{prop:properties_neron_fcts}\thinspace{}\ref{it:quasi_parallelogram}, 
\begin{equation*}
\langle D_1, D_2\rangle_{p,w}^N =  n_w\lambda_w(v) - \chi_{L,w}(-X_{11}(v_1) + X_{11}(v) - X_{12}(v_1)X_{22}(v) + X_{22}(v_1)X_{12}(v)).
\end{equation*}
Summing over all places $w$ gives the result.
\end{proof}

It remains to prove Theorem \ref{thm:comparison}.
 This will be a corollary of some intermediate results. In order to keep the notation more legible, for now, let $\sigma_v$ be just some $v$-adic sigma function for $C/K_v$.
\begin{lemma}\label{lemma:log_sigma_loc_Green}
Let $V$ be a finite index subgroup of $J(K_v)$ on which $\sigma_v$ converges. Then there exists a finite index subgroup $V^{\prime}$ of $J(K_v)$ such that $V^{\prime}\subseteq V$ and the restriction of $\Log(\sigma_v)$ to $V^{\prime}$ satisfies Proposition \ref{prop:padicGreen} for $D = \Theta$; that is, 
\begin{equation*}
\Delta^{[3]}(\Log(\sigma_v)) = \Log g_{\Theta}^{(4)},
\end{equation*}
where $g_{\Theta}^{(4)}$ is the restriction to $V^{\prime, 4}$ of $f_{\Theta}^{(4)}$, $\Log(\sigma_v)$ is locally analytic outside $\Theta$ and has a logarithmic singularity along $\Theta$. 
\end{lemma}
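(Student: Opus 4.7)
The proof naturally splits into two parts: the local-analytic properties of $\Log(\sigma_v)$ and the $\Delta^{[3]}$-identity. The former is immediate: by Proposition \ref{prop:finite_index_subgp}\thinspace{}\ref{prop:finite_index_subgp:2}, on $V$ the function $\sigma_v$ is nowhere zero off $\Theta$ and has a simple zero along $\Theta$; composing with the locally analytic map $\Log\colon K_v^{\times}\to \mathscr{L}(K_v^\times)$ then gives that $\Log(\sigma_v)$ is locally analytic on $V\setminus \Theta$ with a logarithmic singularity along $\Theta$, and its values lie in $K_v\oplus \Q\Log p$ by construction of $\Log$.

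For the $\Delta^{[3]}$-identity, the first step is to reduce to the naive sigma $\sigma_v^{(0)}$. By \eqref{eq:formulasigmac},
\begin{equation*}
\Log\sigma_v - \Log\sigma_v^{(0)} = \tfrac{1}{2}\sum_{1\le i,j\le 2}c_{ij}\mathcal{L}_i\mathcal{L}_j
\end{equation*}
is a polynomial of degree at most $2$ in the additive one-logarithms $\mathcal{L}_1,\mathcal{L}_2$, and a short combinatorial check shows that $\Delta^{[n+1]}$ annihilates any polynomial of degree at most $n$ in additive characters. Hence $\Delta^{[3]}(\Log\sigma_v) = \Delta^{[3]}(\Log\sigma_v^{(0)})$, and it suffices to prove the identity for $\sigma_v^{(0)}$. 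Unwinding $\Delta^{[3]}$ and using the homomorphism property of $\Log$, the identity to prove becomes
\begin{equation*}
\tilde g(x,h_1,h_2,h_3) \colonequals \frac{\sigma_v^{(0)}(x+h_1)\sigma_v^{(0)}(x+h_2)\sigma_v^{(0)}(x+h_3)\sigma_v^{(0)}(x+h_1+h_2+h_3)}{\sigma_v^{(0)}(x)\sigma_v^{(0)}(x+h_1+h_2)\sigma_v^{(0)}(x+h_1+h_3)\sigma_v^{(0)}(x+h_2+h_3)} = g_\Theta^{(4)}(x,h_1,h_2,h_3).
\end{equation*}
Proposition \ref{prop:finite_index_subgp}\thinspace{}\ref{prop:finite_index_subgp:2} gives $\operatorname{div}(\tilde g) = \Delta^{[3]}\Theta$ on $V^4\subset J^4$, and direct inspection shows $\tilde g \equiv 1$ on the three slices where some $h_i$ is zero.

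The main step is to identify $\tilde g$ with the restriction of $f_\Theta^{(4)}$. I would do this by transporting the corresponding identity from the complex-analytic setting: the analogous ratio $\tilde g^{\mathbb C}$ of complex sigma functions on $(\mathbb C^2)^4$ is $\Lambda^4$-invariant by a direct computation using the quasi-periodicities \eqref{al:quasi_period1}-\eqref{al:quasi_period2} (the contributions of the exponential prefactors to $\log\tilde g^{\mathbb C}$ under a single-variable shift cancel in the alternating product), so $\tilde g^{\mathbb C}$ descends to a meromorphic, hence rational, function on $J(\mathbb C)^4$ of divisor $\Delta^{[3]}\Theta$, normalised to $1$ on the slices; by uniqueness of $f_\Theta^{(4)}$, it equals $f_\Theta^{(4)}$. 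Taking Taylor expansions around the origin yields an identity of formal power series, which upon substituting the formal group logarithm $\mathcal{L}$ at each factor (as in the construction of $\sigma_v^{(0)}$, Theorem \ref{thm:sigma_naive}) gives the desired identity on $V^4$. Since $V$ is already a subgroup of $J(K_v)$, every translate $x+\sum_{i\in I}h_i$ with $(x,h_1,h_2,h_3)\in V^4$ lies in $V$, so $V' = V$ works. I expect the trickiest point to be the careful bookkeeping across the complex-analytic, formal, and $p$-adic worlds -- in particular ensuring that the equality $\tilde g^{\mathbb C} = f_\Theta^{(4)}$ descends to an equality of formal power series with coefficients in $\mathbb Q[b_1,\ldots,b_5]$ that can then be substituted with $\mathcal{L}$.
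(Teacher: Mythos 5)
Your proposal is correct and follows essentially the same route as the paper's proof: reduce to the naive sigma function (the difference $\Log\sigma_v-\Log\sigma_v^{(0)}$ being a degree-$2$ polynomial in the one-logarithms, which $\Delta^{[3]}$ annihilates), then transport the identity from the complex sigma function, whose alternating product is $\Lambda^4$-periodic, has divisor $\Delta^{[3]}\Theta$, and equals $1$ on the slices, hence coincides with $f_\Theta^{(4)}$. The only quibble is your closing claim that $V'=V$ works: after passing to $\sigma_v^{(0)}$ and substituting $\mathcal{L}$ you may need to shrink to a smaller finite-index subgroup on which the naive sigma function and the logarithms converge, which is precisely why the lemma is stated with a possibly smaller $V'$.
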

\begin{proof}
If $\sigma_v$ and $\sigma_v^{\prime}$ are two different sigma functions, then $\Log(\sigma_v) - \Log(\sigma_v^{\prime})$ is a polynomial of degree two in the formal one-logarithms of $J$, so, since the statement of the lemma allows us to move to a smaller $V^{\prime}$, it suffices to consider the naive $v$-adic sigma function of Theorem \ref{thm:sigma_naive}. Upon switching variables to the additive group, the latter is the Taylor expansion of the complex sigma function $\sigma$, so it is enough to show that 
\begin{enumerate}[label = (\roman*)]
\item \label{it:periodic_sigma} the function $\sigma$ is such that
\begin{equation*}
F_{\Theta}^{(4)} (z_0,z_1,z_2,z_3) = \frac{\sigma(z_0 +z_1 + z_2+z_3) \sigma(z_0 + z_1) \sigma(z_0 + z_2)\sigma(z_0 + z_3)}{\sigma(z_0 + z_1 + z_2)\sigma(z_0 + z_1 + z_3)\sigma(z_0 + z_2+z_3)\sigma(z_0)} 
\end{equation*}
is periodic with period $\Lambda^4$;
\item \label{it:divisor_sigma} the divisor of $F_{\Theta}^{(4)}$ is $\Delta^{[3]}\Theta$. 
\end{enumerate}
Part \ref{it:periodic_sigma} follows from the translation properties of $\sigma$ \cite[Proposition 2.2]{Uchida}, while part \ref{it:divisor_sigma} by \cite[Proposition 2.7]{Uchida}.
Note that on $J^3\times\{0\}\cup (J^2\times\{0\}\times J)\cup (J\times \{0\}\times J^2)$ we have $F_{\Theta}^{(4)} = 1$. 
\end{proof}

Lemma \ref{lemma:log_sigma_loc_Green} tells us that, locally, $\Log(\sigma_v)$ is a $v$-adic Green function of divisor $\Theta$. Next, we want to show that we can extend $\Log(\sigma_v)$ to a $v$-adic Green function on $J$, and that the right way to do so is by using a quasi-quadraticity formula involving division polynomials, just like we did when defining local N\'eron functions. 

In order to achieve this, we observe that the existence of $G_{\Theta}$ (and uniqueness, up to addition by polynomials of degree at most $2$ in the one-logarithms of $J$), as stated in Proposition \ref{prop:padicGreen}, is proved by Colmez using \cite[Th\'eor\`eme II.1.16]{Colm96}. The proof of this technical result comprises two steps: first one solves the problem locally, and then extends the solution using \cite[Proposition II.1.23]{Colm96}.  

\begin{prop}\label{prop:sigma_extends}
The function $\Log(\sigma_v)$ extends to a unique Green function $G_{\Theta}$ for $\Theta$ on $J$. Moreover, $G_{\Theta}$ is an even function and 
\begin{equation}\label{eq:G_quasi_quad}
G_{\Theta}(nx) - n^2 G_{\Theta}(x) = \Log (\phi_n(x)),
\end{equation}
where $\phi_n$ is the $n$-th division polynomial of \eqref{eq:def_div_poly}.
\end{prop}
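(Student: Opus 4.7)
The plan is to combine Lemma \ref{lemma:log_sigma_loc_Green} with the uniqueness clause in Proposition \ref{prop:padicGreen}, namely that two Green functions of $\Theta$ differ by a polynomial of degree at most $2$ in the one-logarithms of $J$, together with the fact, implicit in the proof of \cite[Th\'eor\`eme II.1.16]{Colm96}, that such a polynomial is determined by its restriction to any open neighbourhood of $0$ in $J(K_v)$. To construct the extension, I would fix any Green function $G$ given by Proposition \ref{prop:padicGreen} and let $V'$ be as in Lemma \ref{lemma:log_sigma_loc_Green}. The functions $G$ and $\Log(\sigma_v)$ have the same logarithmic singularity along $\Theta$ on $V'$ and both satisfy $\Delta^{[3]}(\cdot)=\Log g_\Theta^{(4)}$ on $(V')^{4}$, so their difference is locally analytic on $V'$ and killed by $\Delta^{[3]}$; by the characterisation of polynomials in one-logarithms via $\Delta^{[3]}$ (cf.\ \cite[Th\'eor\`eme II.1.16]{Colm96}), this difference is the restriction of a polynomial $P(\mathcal{L}_{\Omega_1},\mathcal{L}_{\Omega_2})$ of degree at most $2$, and $G_\Theta\colonequals G-P(\mathcal{L}_{\Omega_1},\mathcal{L}_{\Omega_2})$ is the desired extension. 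Uniqueness follows at once: any other such extension differs from $G_\Theta$ by a polynomial of degree at most $2$ in $\mathcal{L}_{\Omega_1},\mathcal{L}_{\Omega_2}$ that vanishes on the open set $\mathcal{L}(V')\subset K_v^2$, hence identically.

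Evenness is then immediate: the function $x\mapsto G_\Theta(-x)$ is again a Green function of $\Theta$ (the element $w\in J$ of \S\ref{subsec:Colm_curves} is $0$ in our setting), and on $V'$ it equals $\Log(\sigma_v(-x))=\Log(\sigma_v(x))$ because $\sigma_v$ is odd and $\Log(-1)=0$. Uniqueness then forces $G_\Theta(-x)=G_\Theta(x)$.

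For the functional equation, I would set $R_n(x)\colonequals G_\Theta(nx)-n^2G_\Theta(x)-\Log(\phi_n(x))$. The singular divisors of the three summands are $[n]^*\Theta$, $n^2\Theta$ and $[n]^*\Theta-n^2\Theta$ (using \eqref{eq:divisor_div_poly}); hence they cancel and $R_n$ is locally analytic on $J$. A direct computation using that the maps $m_I$ commute with $[n]\colon J\to J$, together with $\Delta^{[3]}G_\Theta=\Log g_\Theta^{(4)}$, shows $\Delta^{[3]}(R_n)=0$, so $R_n$ is a polynomial of degree at most $2$ in $\mathcal{L}_{\Omega_1},\mathcal{L}_{\Omega_2}$. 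On a finite index subgroup of $V'$ with $nx\in V'$, Proposition \ref{prop:finite_index_subgp}\thinspace{}\ref{prop:finite_index_subgp:3} gives $R_n\equiv 0$; the polynomial vanishing argument used above then extends this to all of $J$.

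The main technical point will be the verification that $\Delta^{[3]}(R_n)=0$, equivalently an identity between $f_\Theta^{(4)}(nx_0,nh_1,nh_2,nh_3)$, $f_\Theta^{(4)}(x_0,h_1,h_2,h_3)^{n^2}$ and the multiplicative $\Delta^{[3]}\phi_n$ as rational functions on $J^4$. Since both sides are $\Log$s of rational functions that agree on the open subset of $J^4$ where $\sigma_v$ and $\sigma_v\circ[n]$ both converge---by Proposition \ref{prop:finite_index_subgp}\thinspace{}\ref{prop:finite_index_subgp:3}---the desired identity follows a posteriori from the sigma-function relation itself, circumventing a separate divisor-theoretic bookkeeping.
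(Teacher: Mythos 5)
Your proposal is correct, but for the central identity \eqref{eq:G_quasi_quad} it follows a genuinely different route from the paper's. The paper gets the extension and its uniqueness by a single citation of \cite[Proposition II.1.23]{Colm96}, obtains evenness by checking that $\Log f_{\Theta}^{(4)}$ is invariant under $z\mapsto -z$ so that Colmez's extension formula outputs an even function, and then derives \eqref{eq:G_quasi_quad} \emph{constructively} from the cocycle relation $\Delta^{[3]}G_{\Theta}=\Log f_{\Theta}^{(4)}$: it specialises variables (first $z_0\to 0$, then $z_1=z_2$ with $z_3\to -z_1$, then $z_1=(n-1)x$, $z_2=z_3=x$), runs an induction on $n$ to produce \emph{some} rational function $f_n$ with $G_{\Theta}(nx)-n^2G_{\Theta}(x)=\Log f_n(x)$, and only at the end identifies $f_n$ with $\phi_n$ by comparing logarithmic singularities and normalisations at $0$. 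You instead form the defect $R_n$ up front, note that its singular divisors cancel by \eqref{eq:divisor_div_poly}, observe that $\Delta^{[3]}R_n$ is the $\Log$ of a rational function on $J^4$ equal to $1$ on the open set where the sigma-function identity of Proposition \ref{prop:finite_index_subgp}\thinspace{}\ref{prop:finite_index_subgp:3} applies, and then kill the resulting degree-$\le 2$ polynomial in one-logarithms by its vanishing on a non-empty open set. This rigidity argument imports the division-polynomial relation from the sigma function rather than re-deriving it at the level of Green functions, at the cost of invoking the uniqueness clause of \cite[Th\'eor\`eme II.1.16]{Colm96} at two extra places (once to build $G_{\Theta}$ from an arbitrary Green function $G$, once to identify $R_n$). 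Both arguments are complete; the only point to make explicit in yours is that the final vanishing step should be performed on the open set $\bigl(H_v\cap[n]^{-1}H_v\bigr)\setminus\bigl(\Supp(\Theta)\cup[n]^{-1}\Supp(\Theta)\bigr)$, where all three terms of $R_n$ converge and equal $\Log$ of the corresponding sigma-function expressions simultaneously.
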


\begin{proof}
By \cite[Proposition II.1.23]{Colm96}, $\Log(\sigma)$ extends to a unique Green function $G_{\Theta}$ on $J$. 
Now, $\Log(\sigma)$ is an even function, and since $[-1]^{*}\Theta = \Theta$ we have that $\Log f_{\Theta}^{(4)}(-z_0,-z_1,-z_2,-z_3)$  and $\Log f_{\Theta}^{(4)}(z_0, z_1,z_2,z_3)$ differ by a constant. Evaluating at $z_3 = 0$, we find that the constant is $0$. Thus the formula of \cite[Proposition II.1.23]{Colm96} with $f = \Log(\sigma)$ gives an even function, satisfying
\begin{align}\label{eq:def_green}
G_{\Theta}(z_0 + z_1 + z_2 + z_3) - G_{\Theta}(z_0+ z_1+ z_2) - G_{\Theta}(z_0+ z_2+ z_3) - G_{\Theta}(z_0+ z_1+ z_3) \\
+ G_{\Theta}(z_0 + z_1) + G_{\Theta}(z_0+ z_2) + G_{\Theta}(z_0+ z_3) - G_{\Theta}(z_0) = \Delta^{[3]} G_{\Theta}(z_0,z_1,z_2,z_3) = \Log(f_{\Theta}^{(4)})\nonumber
\end{align}
We now use this to prove \eqref{eq:G_quasi_quad}. The proof will be similar to the one for elliptic curves given in the proof of \cite[Proposition II.2.20]{Colm96}.

Both the right and left hand side of \eqref{eq:def_green} have a logarithmic singularity at $\{0\}\times J^3$. We have
\begin{equation}\label{eq:Gz1z2z3}
\begin{aligned}
G_{\Theta}( z_1 + z_2 + z_3) - G_{\Theta}(z_1+ z_2) - G_{\Theta}(z_2 + z_3) - G_{\Theta}(z_1 + z_3) \\
+ G_{\Theta}(z_1) + G_{\Theta}( z_2) + G_{\Theta}( z_3)  = \lim_{z_0\to 0}(  \Log(f_\Theta^{(4)})(z_0,z_1,z_2,z_3)+ G_{\Theta}(z_0))= \Log(h(z_1,z_2,z_3)),
\end{aligned}
\end{equation}
for some rational function $h$. Once again, letting $z_1 = z_2$, taking limits $z_3\to  -z_1$ and using that $G_{\Theta}$ is even, we get
\begin{equation}\label{eq:G2z1}
4G_{\Theta}(z_1)-G_{\Theta}(2z_1) = \Log(f_2^{-1}(z_1))
\end{equation}
for some rational function $f_2$. Going back to \eqref{eq:Gz1z2z3} and letting $z_1 = (n-1)x, z_2 = z_3 = x$, we get
\begin{equation*}
G_{\Theta}((n+1)x) - 2G_{\Theta}(nx) - G_{\Theta}(2x) + G_{\Theta}((n-1)x) + 2G_{\Theta}(x) = \Log(h((n-1)x,x,x)).
\end{equation*}
By induction, we conclude that for all $n\geq 1$ there exists a rational function $f_n$ such that
\begin{equation*}
G_{\Theta}(nx) - n^2G_{\Theta}(x) = \Log(f_n(x)).
\end{equation*}
Comparing log-singularities, we see that $f_n$ equals $\phi_n$, up to multiplication by a constant, which, by comparing around $0$, we see we can take to be equal to $1$.
\end{proof}

It remains to show that if $\sigma_v$ corresponds to $W_v$ via Proposition \ref{prop:bijection_sigma_is}, then the Green function $G_{\Theta}$ of Proposition \ref{prop:sigma_extends} also corresponds to $W_v$ under the bijection of Proposition \ref{prop:bijection}.
\begin{lemma}\label{lemma:explicit_subspace_H1dRJ}
Let $\sigma_v$ be the sigma function satisfying the differential equations
\begin{equation*}
D_iD_j(\Log(\sigma_v))= - X_{ij} + c_{ij}, \qquad c_{12} = c_{21}.
\end{equation*}
Let $G_{\Theta}$ be the Green function associated to $\sigma_v$ by Proposition \ref{prop:sigma_extends}. 
Then $\eta_{i,J} =  d(\partial_i G_{\Theta})$ is given explicitly by 
\begin{equation*}
\eta_{i,J} = (-X_{1i} + c_{1i}) \Omega_1 + (-X_{i2} + c_{i2})\Omega_2.
\end{equation*}
\end{lemma}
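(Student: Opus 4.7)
The plan is to combine the defining differential equation for $\sigma_v$ with the fact that $G_\Theta$ is the Green function extending $\Log(\sigma_v)$ (Proposition \ref{prop:sigma_extends}), plus the duality between $(\partial_1,\partial_2)$ and $(\Omega_1,\Omega_2)$. The crucial bridge is Lemma \ref{lemma:dLi_eq_Omegai}: since $d\mathcal{L}_i(T) = \Omega_i(T)$ and the derivations $(D_1,D_2)$ on the formal group are by construction the basis of invariant derivations dual to $(d\mathcal{L}_1, d\mathcal{L}_2)$, under the formal group identification the $D_i$ agree with the pullbacks of the algebraic invariant derivations $\partial_i$ to power series in $T_1, T_2$. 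In particular, for any rational function $f$ on $J$, the expansion of $\partial_i f$ around the origin equals $D_i$ applied to the expansion of $f$.

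First I would work on the finite index subgroup $V'$ of Proposition \ref{prop:sigma_extends}, where $G_\Theta = \Log(\sigma_v)$. On $V'$ the previous paragraph gives
\begin{equation*}
\partial_i \partial_j G_\Theta \;=\; D_i D_j \log(\sigma_v) \;=\; -X_{ij} + c_{ij},
\end{equation*}
where the second equality is the defining differential equation for $\sigma_v$. By Proposition \ref{prop:dpartial_second_kind} the differential $d(\partial_j G_\Theta)$ is of the second kind on $J$, so $\partial_i\partial_j G_\Theta$ is a $v$-adic meromorphic function on $J$. The right-hand side $-X_{ij} + c_{ij}$ is a rational function on $J$, and the two agree on the non-empty open subset $V'\setminus\Supp(\Theta)$ of $J(K_v)$; by the identity principle for $v$-adic meromorphic functions on the (irreducible) analytic variety $J^{\mathrm{an}}$, the equality
\begin{equation*}
\partial_i \partial_j G_\Theta \;=\; -X_{ij} + c_{ij}
\end{equation*}
therefore holds globally on $J$.

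To conclude, I use that for the basis $(\partial_1,\partial_2)$ dual to $(\Omega_1,\Omega_2)$, any (meromorphic) function $f$ satisfies $df = (\partial_1 f)\Omega_1 + (\partial_2 f)\Omega_2$. Applied to $f = \partial_i G_\Theta$ and using the symmetries $X_{ij}=X_{ji}$, $c_{ij}=c_{ji}$, this yields
\begin{equation*}
\eta_{i,J} \;=\; d(\partial_i G_\Theta) \;=\; (\partial_1\partial_i G_\Theta)\,\Omega_1 + (\partial_2\partial_i G_\Theta)\,\Omega_2 \;=\; (-X_{1i} + c_{1i})\,\Omega_1 + (-X_{i2} + c_{i2})\,\Omega_2,
\end{equation*}
as claimed. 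The only nontrivial point is the formal-to-algebraic identification of $D_i$ with $\partial_i$ (already done in Lemma \ref{lemma:dLi_eq_Omegai}) together with the rigidity argument extending the identity from $V'$ to all of $J$; everything else is a mechanical unwinding of definitions.
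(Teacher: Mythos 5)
Your proof is correct, and the overall skeleton (identify $\partial_i$ with $D_i$ via Lemma \ref{lemma:dLi_eq_Omegai}, compute $\partial_i\partial_jG_\Theta$ near the origin from the defining ODE, then expand $d(\partial_iG_\Theta)$ in the dual basis) matches the paper. Where you genuinely diverge is in the globalization step. The paper propagates the identity $\partial_i\partial_jG_\Theta=-X_{ij}+c_{ij}$ from the formal neighbourhood to all of $J$ explicitly, using the quasi-quadraticity relation $G_\Theta(nx)-n^2G_\Theta(x)=\Log(\phi_n(x))$ from Proposition \ref{prop:sigma_extends} together with Uchida's formula $\partial_i\partial_j\Log(\phi_n(x))=-n^2X_{ij}(nx)+n^2X_{ij}(x)$; subtracting the two displayed identities gives the result at an arbitrary point. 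You instead invoke Proposition \ref{prop:dpartial_second_kind} to see that $d(\partial_jG_\Theta)$ is an algebraic (second-kind) differential, hence that $\partial_i\partial_jG_\Theta$ is a rational function on $J$, and then conclude by rigidity, since it agrees with the rational function $-X_{ij}+c_{ij}$ on a non-empty $v$-adic open set. Both are sound. Your route is shorter and avoids citing Uchida's division-polynomial formula, at the cost of leaning on Colmez's algebraicity result and on an identity principle; the paper's route is more self-contained within the machinery it has already set up (the same $\phi_n$-bookkeeping is used throughout Section \ref{sec:padic_hts}) and produces the extension constructively. One small presentational point: rather than appealing to an ``identity principle for $v$-adic meromorphic functions,'' it is cleaner to say that a non-empty $v$-adic open subset of $J(K_v)$ is Zariski dense (or, equivalently, that a rational function is determined by its formal expansion at the origin, which on both sides equals $D_iD_j\log\sigma_v(T)$); this sidesteps any delicacy about irreducibility of the rigid-analytic space.
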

\begin{proof}
By Lemma \ref{lemma:dLi_eq_Omegai}, $D_i$ is the restriction of $\partial_i$ to the formal group law.  So we have
\begin{align*}
\partial_i\partial_j(\Log(\sigma_v(nx))) = n^2(-X_{ij}(nx)  + c_{ij})\\
\partial_i \partial_j(\Log(\phi_n(x))) = -n^2X_{ij}(nx) + n^2X_{ij}(x)
\end{align*}
(cf.\ \cite[Proposition 4.10]{Uchida}), and thus $\partial_i\partial_j(G_{\Theta}) = -X_{ij} + c_{ij}$, as desired.
\end{proof}

\begin{lemma}\label{lemma:explicit_subspace_H1dRX}
With the notation of Lemma \ref{lemma:explicit_subspace_H1dRJ}, let $[\eta_1]  =\iota^*[\eta_{1,J}], [\eta_2] = \iota^{*}[\eta_{2,J}]$. Then we may choose
\begin{equation*}
\eta_1 = (-3x^3 - 2b_1x^2 - b_2x + c_{12} x + c_{11})\frac{dx}{2y},\qquad \eta_2= (-x^2 + c_{22} x + c_{12})\frac{dx}{2y},
\end{equation*}
i.e.\ $\eta_1 = \eta_1^{(c)}$ and $\eta_2 = \eta_2^{(c)}$ where $\eta_1^{(c)}$ and $\eta_2^{(c)}$ are the differentials of \eqref{eq:eta_12_c}.
\end{lemma}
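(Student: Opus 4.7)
The plan is to pull back Lemma \ref{lemma:explicit_subspace_H1dRJ} through $\iota^*$. By Definition \ref{def:basis_inv_diff_inv_der}, $\iota^*\Omega_j = \omega_j$, so we must interpret $\iota^*X_{1i}$ and $\iota^*X_{i2}$. The challenge is that these rational functions have pole divisor supported on $\Theta = \iota(C)$, so $\iota^*X_{ab}$ is undefined naively as a function on $C$; rather, $\iota^*\eta_{i,J}$ must be computed as a class in $H^1_{\dR}(C/K_v)$.

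First, I would reduce to the case $c = 0$. By Equation \eqref{eq:formulasigmac}, applying $D_iD_j\log$ and using Lemma \ref{lemma:dLi_eq_Omegai}, the $c$-dependence of $\eta_{i,J}$ in Lemma \ref{lemma:explicit_subspace_H1dRJ} is linear, via the term $c_{1i}\Omega_1 + c_{i2}\Omega_2$, which pulls back to $(c_{1i} + c_{i2}x)\frac{dx}{2y}$. This is precisely the $c$-linear part of $\eta_i^{(c)}$, so it remains to show $\iota^*\eta_{i,J}^{(0)} \equiv \eta_i^{(0)}$ modulo exact forms. For that case, I would pass through the map $\alpha\colon C\times C \to J$, $(P_1,P_2) \mapsto \iota(P_1) + \iota(P_2)$. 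Lemma \ref{lemma:pijF} together with Grant's explicit formulas \eqref{eq:p12_p22} and \cite[(1.4)]{Grant1990} give the functions $\alpha^*X_{ab}$ as explicit rational expressions in $x_1,y_1,x_2,y_2$. The class $\iota^*\eta_{i,J}^{(0)}$ is then recovered by expanding $\alpha^*\eta_{i,J}^{(0)}$ in a local parameter $s$ at $\infty$ on the second factor and extracting the coefficient of $s^0$ modulo exact differentials on $C$; after simplifying with $y^2 = f(x) = x^5 + b_1 x^4 + b_2 x^3 + b_3 x^2 + b_4 x + b_5$, this yields $(-3x^3 - 2b_1 x^2 - b_2x)\frac{dx}{2y}$ for $i=1$ and $-x^2\frac{dx}{2y}$ for $i=2$.

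The main obstacle will be controlling the divergent contributions from $\alpha^*X_{ab}$ along $C \times \{\infty\}$ and verifying that they contribute only exact differentials. A cleaner alternative, which I would try in parallel, exploits the fact that $\iota^*\colon H^1_{\dR}(J/K_v) \to H^1_{\dR}(C/K_v)$ is a symplectic isomorphism sending $[\Omega_j]$ to $[\omega_j]$: this forces $\iota^*[\eta_{i,J}]$ to coincide, modulo $H^0(C/K_v, \Omega^1)$, with the unique class in the isotropic complement whose cup products with $[\omega_j]$ match $-\delta_{ij}$, and by Proposition \ref{prop:bijection_sigma_is} that class is $[\eta_i^{(c)}]$. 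The residual holomorphic part would then be pinned down by a short residue computation at $\infty$ using the local parameter $t = -x^2/y$, thereby fixing $\eta_i$ to be exactly $\eta_i^{(c)}$ as rational differentials.
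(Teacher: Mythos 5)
Your reduction to $c=0$ is fine, and you have correctly identified both the right auxiliary map (the symmetrisation $\varpi\colon C^2\to J$, $(P_1,P_2)\mapsto\iota(P_1)+\iota(P_2)$) and the essential difficulty: $\eta_{i,J}$ has poles along $\Theta=\iota(C)$, so $\iota^*$ must be computed cohomologically. But neither of your two routes actually closes that gap. In your first route, ``expanding $\varpi^*\eta_{i,J}^{(0)}$ in a parameter $s$ at $\infty$ on the second factor and extracting the coefficient of $s^0$ modulo exact differentials'' is not a justified recipe for the cohomological pullback $\iota^*$ — you flag the divergent terms as ``the main obstacle'' and leave it there, and that obstacle is the whole proof. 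The paper avoids any limiting process: it invokes \cite[Lemme II.2.1]{Colm96}, which says that if a second-kind differential $\omega_J$ on $J$ satisfies $\varpi^*\omega_J=\pi_1^*\omega+\pi_2^*\omega$ \emph{exactly}, then $[\iota^*\omega_J]=[\omega]$; and it arranges this exact splitting by correcting $\eta_{1,J}$ by the explicit exact form $d(X_{222})$ (via Blakestad's formula for $dX_{222}$ in terms of the $X_{ij}$ and $\Omega_i$), after which $\varpi^*(\eta_{1,J}-dX_{222})=\pi_1^*\eta_1+\pi_2^*\eta_1$ and $\varpi^*\eta_{2,J}=\pi_1^*\eta_2+\pi_2^*\eta_2$ are identities of rational differentials, checkable from $\varpi^*X_{12}=-x_1x_2$, $\varpi^*X_{22}=x_1+x_2$ and $\varpi^*\Omega_i=\pi_1^*\omega_i+\pi_2^*\omega_i$. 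You would need to find this exact correction (or an equivalent one) to make your computation rigorous.

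Your ``cleaner alternative'' does not work even in principle. Cup products with $[\omega_1],[\omega_2]$ (equivalently, residue computations at $\infty$) determine $\iota^*[\eta_{i,J}]$ only modulo $H^0(C/K_v,\Omega^1)$, and \emph{every} choice $[\eta_i^{(0)}]+\sum_j b_{ij}[\omega_j]$ with $(b_{ij})$ symmetric passes those tests (that is exactly the content of Proposition \ref{prop:bijection_sigma_is}). The substance of the lemma is precisely the holomorphic part: the constants $c_{11},c_{12},c_{22}$ enter $\eta_i^{(c)}$ as coefficients of $\omega_1,\omega_2$. A residue computation at $\infty$ cannot recover them, since holomorphic differentials have vanishing residues; nor can any argument that only sees the class modulo $H^0(C/K_v,\Omega^1)$ distinguish $W^{(c)}$ from $W^{(c')}$. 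So the second route proves strictly less than the lemma claims, and the first route is the paper's route with its key step left unexecuted.
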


\begin{proof}
Let $\pi_i\colon C^2\to C$ be the projection map, and let $\varpi\colon C^2\to J$ be the map sending $(P_1,P_2)$ to $\iota(P_1)+ \iota(P_2)$. According to \cite[Lemme II.2.1]{Colm96}, given a differential $\omega$ of the second kind on $C$, there exists a unique differential $\omega_J$ of the second kind on $J$ such that $\varpi^{*}\omega_J = \pi_1^{*}\omega + \pi_2^{*}\omega$. Moreover the class of $\iota^{*}\omega_J$ equals that of $\omega$ in $H^1_{\dR}(C)$. 
We deduce that any differential $\xi$ of the second kind on $J$ is equivalent to one whose image under $\varpi^{*}$ is of the form
\begin{equation}\label{eq:pullback}
\pi_1^*\omega + \pi_2^*\omega,
\end{equation}
and, moreover, that $[\iota^* \xi] = [\omega]$. Therefore, in order to compute the classes of $[\eta_i]$, it will suffice to find a representative for $\varpi^*[\eta_{i,J}]$ of the form \eqref{eq:pullback}. By \cite[p.\,66]{blakestadsthesis}, we have
\begin{equation*}
d(X_{222}) = (3X_{12}X_{22} - X_{11} + 2b_1X_{12}) \Omega_1 + (3X_{22}^2 + 2X_{12} + 2b_1X_{22} + b_2) \Omega_2;
\end{equation*}
moreover, by definition, $\varpi^{*}\Omega_1= \frac{dx_1}{2y_1} + \frac{dx_2}{2y_2}$, $\varpi^{*}\Omega_2 = x_1\frac{dx_1}{2y_1} + x_2\frac{dx_2}{2y_2}$ and $\varpi^{*}X_{12} = -x_1x_2$, $\varpi^{*}X_{22} =x_1+x_2$ . A computation then shows that
\begin{equation}\label{eq:eta_iJ_eta_i}
\begin{aligned}
\varpi^{*}(\eta_{1,J}  - d(X_{222})) &= \pi_1^{*}\eta_1 + \pi_2^{*}\eta_1 \\
 \varpi^{*}(\eta_{2,J}) &=\pi_1^{*}\eta_2 + \pi_2^{*}\eta_2. 
 \end{aligned} \qedhere
\end{equation}
\end{proof}

\begin{proof}[Proof of Theorem \ref{thm:comparison}]
This follows from Lemma \ref{lemma:Tvp}, Remark \ref{rmk:trace_ext}, Proposition \ref{prop:sigma_extends} and Lemmas \ref{lemma:explicit_subspace_H1dRJ}, \ref{lemma:explicit_subspace_H1dRX}.
\end{proof}

\section{Implementation and examples}\label{sec:implementation}
\subsection{Formal group and sigma functions}\label{subsec:implementation_formal_gp_sigma}
Let $C$, $R$ and $K$ be as in \S \ref{subsec:formal}. Recall that the functions $\frac{1}{X_{111}}$, $\frac{X_{ij}}{X_{111}}$ and $\frac{X_{ijk}}{X_{111}}$ can be expanded as power series over $R$ in the formal group parameters $T_1$ and $T_2$. Grant proved this by explaining how to use the equations defining $J$ in $\PP^8$ to obtain recursive formulae for the coefficients in the expansion (cf.\ \eqref{eq:Xij_exp}, \eqref{eq:Xijk_exp} and the surrounding discussion).

We implemented this recursion (in conjunction with Remark \ref{rmk:odd_and_even_expansions}). As a result, we can compute the expansions of $\frac{1}{X_{111}}$, $\frac{X_{ij}}{X_{111}}$ and $\frac{X_{ijk}}{X_{111}}$ as power series in $T_1$ and $T_2$, up to any desired precision.  Combining this with the addition formulae on $J$ provided by \cite[Theorem 3.2]{Grant1990}, we can then compute the formal group law $F(T,S) = (F_1,F_2)(T,S)$ of Theorem \ref{thm:formal_gp_law} explicitly, up to any desired precision.

With the formal group law at hand, the computation of the strict logarithm $\mathcal{L} = (\mathcal{L}_1,\mathcal{L}_2)$ and of the naive sigma function are an easy exercise in solving systems of differential equations over a power series ring.  Note that \eqref{eq:Di} expresses the derivation $D_i$, for each $i\in\{1,2\}$, in terms of the derivatives with respect to $T_1$ and $T_2$ and the formal group law. 

Given the naive sigma function and the strict logarithm $\mathcal{L}$, we can compute the expansion of any other sigma function using the formula \eqref{eq:formulasigmac}. We included in Appendix \ref{app:sigma_expansion} the first terms in the expansion of $\sigma_v^{(c)}(T)$ for an arbitrary curve $C$ and for arbitrary constants $c_{11},c_{12},c_{22}$ (so, in particular, the coefficients belong to $\Q[b_1,\dots, b_5][c_{11},c_{12},c_{22}]$).

\subsection{The canonical sigma function}\label{subsec:implementation_canonical_sigma}
Assume now that $K$ is the completion of a number field at a non-archimedean place $v$, and that the assumptions of Theorem \ref{thm:Blakestad_main} are satisfied. We would like to compute the canonical $v$-adic sigma function of Blakestad. By Proposition \ref{prop:Blakestad_space_unit_root}, if $K$ is an unramified extension of $\Q_p$, this is the $v$-adic sigma function corresponding to the unit root eigenspace of Frobenius. While it is possible to compute this space using Theorem \ref{thm:Blakestad_main}\thinspace{}\ref{thm:Blake_3}, it is computationally very inefficient. 

If $K$ is isomorphic to $\Q_p$,  for a prime $p$ greater than $3$, we can alternatively compute the unit root eigenspace of Frobenius using Kedlaya's algorithm \cite{kedlaya} and \cite[Proposition 6.1]{Bes-Bal10}. We explain how to do this in practice and how to deduce the symmetric matrix $b$ corresponding to the unit root eigenspace.

So assume now that $C$ is defined over $\Q_p$, where $p$ is a prime greater than or equal to $5$, of good reduction for $C$ and ordinary reduction for $J$. Consider the characteristic polynomial $\chi(t)$ of Frobenius of the Jacobian of the base-change of $C$ to $\F_p$, which is of the form 
\begin{equation*}
\chi(t) = t^4 + a_1t^{3}+a_2t^2+pa_1t+p^2\in \Z[t].
\end{equation*}
The Jacobian over $\F_p$ is ordinary if and only if $a_2$ is coprime with $p$ (see for instance \cite[\S 3]{yui1978jacobian}). In this case, the factorisation  
\begin{equation*}
\chi(t) \equiv t^4 +a_1t^3+a_2t^2 \bmod{p} = t^2(t^2+a_1t+a_2)
\end{equation*}
 lifts to a factorisation over $\Z_p$ by Hensel's lemma. Let $\chi_{W^{(b)}}(t) = t^2+\hat{a}_1t + \hat{a}_2\in \Z_p[t]$ be the lift of $t^2+a_1t+a_2 \bmod{p}$ in such a factorisation of $\chi(t)$.
  
 Let $\mathcal{C}/\Z_p$ be the hyperelliptic curve over $\Z_p$ whose complement of the section at infinity is the closed subscheme of $\mathbb{A}_{\Z_p}^2$ described by our usual affine equation for $C$. Recall that we have
 \begin{equation*}
H^1_{\dR}(\mathcal{C}/\Z_p)\hookrightarrow{} H^1_{\dR}(\mathcal{C}/\Z_p)\otimes \Q_p \cong H^1_{\dR}(C/\Q_p),
 \end{equation*}
 and that this induces a $\Q_p$-linear Frobenius endomorphism on $H^1_{\dR}(C/\Q_p)$. The characteristic polynomial of this endomorphism is equal to $\chi(t)$ (see, for instance,  \cite[Theorem 5.3.2]{Edix_pt_counting} combined with Remark \ref{rmk:compatibility_frob} below). 
The unit root eigenspace of Frobenius is the $2$-dimensional Frobenius-invariant $\Q_p$-vector subspace $W^{(b)}$ of $H^1_{\dR}(C/\Q_p)$ on which Frobenius acts with characteristic polynomial $\chi_{W^{(b)}}(t)$.
 \begin{rmk}\label{rmk:compatibility_frob}
 Technically speaking, Kedlaya's algorithm computes the Frobenius action on the Monsky--Washnitzer cohomology $H_{\MW}^1(\tilde{C}^{\prime}/\F_p, \Q_p)$ of the curve $\tilde{C}^{\prime}$ obtained from the reduction of $C$ modulo $p$ by removing the Weierstrass points. Since the odd part of $H_{\MW}^1(\tilde{C}^{\prime}/\F_p,\Q_p)$ is isomorphic to $H^1_{\dR}(C/\Q_p)$ and the isomorphism is compatible with the Frobenius action on $H^1_{\dR}(C/\Q_p)$ introduced in \S \ref{subsec:infty_sigma}, we will ignore this subtlety. For details, see for instance \cite[Theorem 2.6]{Bogaart}.
 \end{rmk}
 In order to compute the differentials $\eta_1^{(b)}$ and $\eta_2^{(b)}$ of Theorem \ref{thm:Blakestad_main}, we work directly with $H^1_{\dR}(\mathcal{C}/\Z_p)$ and its $\Z_p$-linear Frobenius action. As we saw in the proof of Proposition \ref{prop:Blakestad_space_unit_root}, under our running assumptions, this $\Z_p$-module is isomorphic to $H^0(\mathcal{C},\Omega^1_{\mathcal{C}/\Z_p}(4\infty))^{-}$ and the latter is freely generated by 
 \begin{equation}\label{eq:mathcalB}
\mathcal{B} = \left\{\frac{x^i dx}{2y}\colon i =0,\dots, 3\right\}.
\end{equation}
From now on, we identify $H^1_{\dR}(\mathcal{C}/\Z_p)$ with the $\Z_p$-span of $\mathcal{B}$. The action of Frobenius, denoted by $\phi^{*}$, on $H^1_{\dR}(\mathcal{C}/\Z_p)$ with respect to $\mathcal{B}$ can be computed using Kedlaya's algorithm \cite{kedlaya}.

The $\Q_p$-vector space $W^{(b)}$ is obtained by tensoring with $\Q_p$ the $\Z_p$-submodule $W^{(b)}_0\subset H^1_{\dR}(\mathcal{C}/\Z_p)$ defined as the kernel of the expansion at $P$-map $\beta_P\colon H^1_{\dR}(\mathcal{C}/\Z_p)\to H^1_{\dR}(\hat{\mathcal{C}}_{P}/\Z_p)$, for any $P\in \mathcal{C}(\Z_p)$. The differentials $\eta_1^{(b)}$, $\eta_2^{(b)}$ belong to $W^{(b)}_0$ and, in fact, freely generate it as a $\Z_p$-module. Moreover, $H^1_{\dR}(\mathcal{C}/\Z_p)$ is isomorphic to $W^{(b)}_0 \oplus \langle\frac{dx}{2y}, \frac{xdx}{2y}\rangle_{\Zp}$.

Now, 
\begin{equation}\label{eq:holo_pH1}
\phi^{*}\left(\frac{dx}{2y}\right), \phi^{*}\left(\frac{xdx}{2y}\right)\in pH^1_{\dR}(\mathcal{C}/\Z_p);
\end{equation}
see, for instance, \cite[Proof of Lemma 3.4]{harrison}.
In addition, since the determinant of the matrix of Frobenius on any basis for $W^{(b)}_0$ is equal to $\tilde{a}_2\in \Z_p^{\times}$, we have $\phi^{*}(W^{(b)}_0) = W^{(b)}_0$ by \cite[XIII, Propositions 3.1 and 4.16]{Lang:algebra}. From these considerations, one deduces the following algorithm to compute a basis for $W^{(b)}_0$ modulo a prescribed precision. Note that essentially the same result is stated in higher generality in \cite{Bes-Bal10}; however, no distinction seems to be made there between the $\Q_p$-vector space $W^{(b)}$ and the $\Z_p$-module $W^{(b)}_0$.
\begin{prop}[{\hspace{1sp}\cite[Proposition 6.1]{Bes-Bal10}}]\label{cor:basis_using_Kedlaya}
Let $M\in M_4(\Z_p)$ be the matrix of Frobenius with respect to $\mathcal{B}$, and let $n$ be a positive integer. There is a basis for $W^{(b)}_0$ whose coefficients with respect to $\mathcal{B}$ reduce modulo $p^n$ to the third and fourth column of $M^n$. 
\end{prop}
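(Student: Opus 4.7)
The plan is to show that $(\phi^*)^n(e_3)$ and $(\phi^*)^n(e_4)$—whose coordinates in $\mathcal{B}$ are precisely the third and fourth columns of $M^n$—lie in $W^{(b)}_0 + p^n H^1_{\dR}(\mathcal{C}/\Zp)$ and have mod-$p^n$ projections to $W^{(b)}_0$ that form a basis of $W^{(b)}_0 / p^n W^{(b)}_0$. The first ingredient is the $\Zp$-module direct sum decomposition
\begin{equation*}
H^1_{\dR}(\mathcal{C}/\Zp) = U \oplus W^{(b)}_0, \qquad U\colonequals \langle \omega_1, \omega_2\rangle_{\Zp},
\end{equation*}
which follows from the explicit shapes of $\eta_1^{(b)}, \eta_2^{(b)}$ recalled in Theorem \ref{thm:Blakestad_main}: their leading coefficients on $\mathcal{B}$ are $-3$ and $-1$, both units for $p\geq 5$, so $\{\omega_1,\omega_2,\eta_2^{(b)},\eta_1^{(b)}\}$ reduces to a basis of $H^1_{\dR}(\mathcal{C}/\Zp)/p$ and Nakayama's lemma upgrades it to a basis over $\Zp$.

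The key step is to prove by induction on $n$ the inclusion
\begin{equation*}
(\phi^*)^n(U) \subset p\, W^{(b)}_0 + p^n U.
\end{equation*}
The base case $n = 1$ combines \eqref{eq:holo_pH1} with the above decomposition. For the inductive step, using $\phi^*(W^{(b)}_0)\subset W^{(b)}_0$ and the base case $\phi^*(U)\subset pU+pW^{(b)}_0$, one has
\begin{equation*}
(\phi^*)^{n+1}(U) \subset \phi^*\bigl(pW^{(b)}_0 + p^n U\bigr) \subset pW^{(b)}_0 + p^n(pU + pW^{(b)}_0) = pW^{(b)}_0 + p^{n+1} U.
\end{equation*}
Combining with $(\phi^*)^n(W^{(b)}_0)\subset W^{(b)}_0$ gives the weaker inclusion $(\phi^*)^n(H^1_{\dR}(\mathcal{C}/\Zp)) \subset W^{(b)}_0 + p^n H^1_{\dR}(\mathcal{C}/\Zp)$ that we need.

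Next, decompose $e_j = w_j + u_j$ with $w_j\in W^{(b)}_0$ and $u_j\in U$ for $j = 3,4$; a direct computation in $\mathcal{B}$ gives $w_3 = -\eta_2^{(b)}$ and $w_4 = -\tfrac{1}{3}\eta_1^{(b)} + \tfrac{2b_1}{3}\eta_2^{(b)}$, which form a $\Zp$-basis of $W^{(b)}_0$ because $3\in\Zp^{\times}$. Since the constant term $\hat a_2$ of $\chi_{W^{(b)}}(t)$ is a unit, $(\phi^*)^n|_{W^{(b)}_0}$ is invertible, so $\{(\phi^*)^n(w_3),(\phi^*)^n(w_4)\}$ is also a $\Zp$-basis of $W^{(b)}_0$. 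Let $\pi_W\colon H^1_{\dR}(\mathcal{C}/\Zp)\to W^{(b)}_0$ be the projection coming from the direct sum decomposition, and set
\begin{equation*}
v_j\colonequals (\phi^*)^n(w_j) + \pi_W\bigl((\phi^*)^n(u_j)\bigr)\in W^{(b)}_0.
\end{equation*}
The inductive lemma gives $\pi_W((\phi^*)^n(u_j))\in pW^{(b)}_0$ and $(\phi^*)^n(u_j) - \pi_W((\phi^*)^n(u_j))\in p^n U$, whence $(\phi^*)^n(e_j)\equiv v_j \pmod{p^n H^1_{\dR}(\mathcal{C}/\Zp)}$ and $v_j\equiv (\phi^*)^n(w_j) \pmod{pW^{(b)}_0}$. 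So the $v_j$ reduce mod $p$ to a basis of $W^{(b)}_0/pW^{(b)}_0$, and Nakayama's lemma ensures $\{v_3,v_4\}$ is an honest basis of $W^{(b)}_0$ whose $\mathcal{B}$-coordinates agree with the third and fourth columns of $M^n$ modulo $p^n$.

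The main obstacle is the sharpness of the inductive lemma: the naive estimate $(\phi^*)^n(U) \subset pH^1_{\dR}(\mathcal{C}/\Zp)$ is far too weak to pin down the $U$-component modulo $p^n$, so one has to carefully propagate the single gain of a factor of $p$ given by \eqref{eq:holo_pH1} through iteration, exploiting that $W^{(b)}_0$ is genuinely $\phi^*$-stable while $U$ is only stable modulo $W^{(b)}_0$. Everything else is bookkeeping together with Nakayama.
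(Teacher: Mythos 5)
Your proof is correct and follows exactly the route the paper intends: it combines the direct sum decomposition $H^1_{\dR}(\mathcal{C}/\Z_p) = W^{(b)}_0\oplus\langle\tfrac{dx}{2y},\tfrac{xdx}{2y}\rangle_{\Z_p}$, the containment \eqref{eq:holo_pH1}, and the stability $\phi^{*}(W^{(b)}_0)=W^{(b)}_0$, which are precisely the ``considerations'' the paper lists before deferring the deduction to \cite{Bes-Bal10}. Your inductive refinement $(\phi^{*})^n(U)\subset pW^{(b)}_0+p^nU$ is the right way to make that deduction precise, and the explicit identification of $w_3,w_4$ together with the unit-determinant argument for invertibility of $(\phi^{*})^n$ on $W^{(b)}_0$ is correct.
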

Using Corollary \ref{cor:basis_using_Kedlaya}, we can compute a basis for $W^{(b)}_0$ modulo $p^n$, from which we can deduce $\eta_1^{(b)}$, $\eta_2^{(b)}$ modulo $p^n$.

\begin{rmk}
If $p=3$, the set $\mathcal{B}$ is not a basis for $H^1_{\dR}(\mathcal{C}/\Z_p)$ (see \cite[\S 5]{Bogaart}). 
However, with some extra care, it should be possible to compute the unit root eigenspace in a similar way and to relate this to the canonical sigma function of Blakestad (see Remark \ref{rmk:inver_H1}).
\end{rmk}

\subsection{Division polynomials} \label{subsec:division_polynomials}
In the previous two subsections, we discussed how to compute the expansion of various sigma functions. Recall that we used division polynomials to extend a sigma function outside of its domain of convergence.

In fact, if we want to compute the value of a local N\'eron function using Definition \ref{def:Neron_fct_above_p} or \ref{def:Neron_fct_away_p}, we do not need to compute the division polynomial, but just its value at the point of interest. \texttt{Magma} \cite{magma} code to compute values of division polynomials is provided with\cite{muller_de_jong}. We translated this into \texttt{SageMath}, with some minor modifications (see Remark \ref{rmk:code_div_poly_below}).

For the purpose of this problem, we may more generally assume that $C$ is defined over any characteristic $0$ field. Let $P$ be a point on the Jacobian, away from the theta divisor. The strategy is as follows:
\begin{enumerate}
\item For $1\leq n\leq 5$, Uchida computed $\phi_n$ as a polynomial in the variables $\wp_{ij}$ and $\wp_{ijk}$ with coefficients in $\Z[b_1,\dots,b_5]$. Evaluating at $\wp_{ij}(P),\wp_{ijk}(P)$ gives $\phi_n(P)$ for $1\leq n\leq 5$. 
\item \label{it:div_poly_2} Compute $\phi_n(P)$ for $6\leq n\leq 8$ using the recurrence relation \cite[Theorem 9 (corrected)]{Kanayama_corrections}. 
 Since the computation of $\phi_6(P)$ and $\phi_8(P)$ requires division by $\phi_2(P)$, we must assume that $2P\not\in \Supp(\Theta)$.
\item \label{it:div_poly_3} Compute $\phi_n(P)$ recursively for $n\geq 8$ using \cite[Example 6.6]{Uchida}. This requires division by
\begin{equation*}
\phi_5(P) - \phi_4(P)\phi_2(P)^3+\phi_3(P)^3,
\end{equation*}
so the formulae can only be applied when this is non-zero. For $n$ even we further need that $\phi_2$ does not vanish at $P$. 
\end{enumerate}
For certain values of $n$, the code by de Jong--M\"uller is not directly applicable if either $\phi_2$ or $\phi_5-\phi_4\phi_2^3 + \phi_3^3$ vanishes at our point $P$ of interest. If we are only interested in the value at $P$ of the global height $h_p$, we may remedy the situation by computing $h_p(mP)$ for a non-zero integer $m$, and deducing the value $h_p(P)$ by quadraticity. Otherwise, we may compute $\phi_n(P)$ by taking limits over points approaching $P$ of the recurrence relations of Steps \eqref{it:div_poly_2} and \eqref{it:div_poly_3}.

\begin{rmk}\label{rmk:code_div_poly_below}
The recursion formulae of Step \eqref{it:div_poly_3} express $\phi_{2m}$ and $\phi_{2m+1}$ in terms of $\phi_k$ for $k\in\{2,3,4,5\}\cup \{m-4,\dots, m+4\}$. In particular, in order to compute $\phi_n$, it is not necessary to compute $\phi_k$ for all $k\leq n-1$; rather, it suffices to compute $\phi_k$ for $k$ in some intervals. We implemented this improvement. The same trick is used in Harvey's implementation \cite{harvey} on \texttt{SageMath} of $p$-adic heights on elliptic curves.
\end{rmk}

\subsection{$p$-adic N\'eron functions}\label{subsec:implementation_Neron_fcts}
In order to compute the $p$-adic N\'eron functions of Definitions \ref{def:Neron_fct_above_p} and \ref{def:Neron_fct_away_p}, it remains to discuss how to compute a suitable integer $m$ and how to work with an idele class character for a number field $K$ explicitly. 

The latter problem is discussed in detail in \cite[\S 2.1]{QCnfs}. For the purpose of this work we restrict the implementation to the case $K=\Q$, for which the $\Q_p$-vector space of continuous idele class characters is one-dimensional, and generated by the cyclotomic character $\chi^{\cyc}$ (cf.\ \cite[Example 2.7]{QCnfs}). Explicitly, we have
\begin{equation*}
\chi^{\cyc}_{p}(x) = \log_p(x) \qquad \text{for all } x\in \Q_p^{\times}, 
\end{equation*}
where $\log_p$ is the branch of the $p$-adic logarithm that vanishes at $p$, and, for $q\neq p$,
\begin{equation*}
\chi^{\cyc}_q(x) = \log_p|x|_q, \qquad \text{for all } x\in \Q_q^{\times}.
\end{equation*}
The $p$-adic sigma functions that we are mostly interested in are the naive one and the canonical one (when this exists). If $p$ is odd, they both converge on the whole of $J_1(\Q_p)$. Indeed, the canonical sigma function has integral coefficients, while the convergence of the naive sigma function is given by Theorem \ref{thm:sigma_naive}\thinspace{}\ref{thm_sigma:part2}.

Therefore, given $P\in J(\Q_p)$, we need to compute (a multiple of) its order in $J(\Q_p)/J_1(\Q_p)$. If $p$ is a prime of good reduction, this is the order of $\tilde{P}$ in the finite group $\tilde{J}(\F_p)$. 

If $p$ is an odd prime of bad reduction, denote by $J_0(\Q_p)$ the subgroup of $J(\Q_p)$ consisting of the points that reduce to the component of the origin in the smooth part of  $\tilde{J}$.
We can apply \cite[Remark 5.12, Figure 1]{Bruin-Stoll:MWSieve} to determine the index of $J_1(\Q_p)$ inside $J_0(\Q_p)$, and the discussion on p.\,294--295 of \emph{loc.\ cit.}\ to determine a multiple of $P$ that belongs to $J_0(\Q_p)$. Since it is easy to check whether a point belongs to $J_1(\Q_p)$, we can alternatively compute successive multiples of $ [J_0(\Q_p):J_1(\Q_p)]P$, until we land in $J_1(\Q_p)$. A trial and error strategy can also be applied when $p=2$.

\subsection{Example: A prime greater than $10^{6}$}\label{subsec:eg_large_p}
We now consider the genus $2$ curve over $\Q$ with LMFDB label \href{http://www.lmfdb.org/Genus2Curve/Q/160000/c/800000/1}{160000.c.800000.1} \cite{lmfdb}
\begin{equation*}
C\colon y^2 = x^5 - 1.
\end{equation*}
Its Jacobian $J$ is geometrically simple and has complex multiplication by $\Q(\zeta_5)$, where $\zeta_5$ is a primitive fifth root of unity. We shall see that every $p\equiv 1\bmod{10}$ is of good ordinary reduction and that the constants $c_{ij}$ corresponding to the unit root eigenspace for $p$ are all equal to zero. In particular, they are algebraic and independent of $p$: compare with Remark \ref{rmk:CM_case}. This enables us to compute the canonical $p$-adic N\'eron function at $p$ for large primes without using Proposition \ref{cor:basis_using_Kedlaya} explicitly. 

\begin{lemma}[{\hspace{1sp}\cite{yui1978jacobian}}]
Let $p$ be a prime congruent to $1$ modulo $10$. Then $C$ has good reduction at $p$, and $J$ has good ordinary reduction at $p$.
\end{lemma}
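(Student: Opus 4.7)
The plan is to verify good reduction by a direct discriminant check, and then to apply the Hasse--Witt matrix criterion of Yui \cite{yui1978jacobian} to establish ordinarity.

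First I would observe that $C\colon y^2 = x^5 - 1$ is the Weierstrass model of the form \eqref{eq:Grant}, and the discriminant of $x^5 - 1$ (up to sign and powers of $2$) is a power of $5$. More precisely, the defining equation has bad reduction only at primes dividing $2\cdot 5$. Since $p\equiv 1\bmod 10$ forces $p\neq 2, 5$, the curve $C$ has good reduction at $p$ (the model remains smooth over $\mathbb{F}_p$).

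For ordinarity, I would invoke Yui's explicit description \cite{yui1978jacobian} of the Hasse--Witt matrix of a hyperelliptic Jacobian $y^2=f(x)$ with $\deg f = 2g+1$: if $f(x)^{(p-1)/2} = \sum_k c_k x^k$, then the Hasse--Witt matrix is $H = (c_{ip-j})_{1\leq i,j\leq g}$, and $J$ is ordinary over $\mathbb{F}_p$ if and only if $\det H \not\equiv 0 \bmod p$. Specialising to $f(x) = x^5 - 1$,
\begin{equation*}
f(x)^{(p-1)/2} = \sum_{k=0}^{(p-1)/2} \binom{(p-1)/2}{k} (-1)^{(p-1)/2-k}\, x^{5k},
\end{equation*}
so $c_m \neq 0$ only when $5 \mid m$. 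Writing $p = 10\ell+1$, one checks that $p-1 = 10\ell$ and $2p-2 = 20\ell$ are divisible by $5$, whereas $p-2 = 10\ell - 1$ and $2p-1 = 20\ell + 1$ are not. Hence
\begin{equation*}
H = \begin{pmatrix} c_{p-1} & c_{p-2} \\ c_{2p-1} & c_{2p-2} \end{pmatrix} = \begin{pmatrix} c_{p-1} & 0 \\ 0 & c_{2p-2} \end{pmatrix},
\end{equation*}
and the two diagonal binomial coefficients $\binom{(p-1)/2}{(p-1)/5}$ and $\binom{(p-1)/2}{2(p-1)/5}$ are both nonzero modulo $p$ (they are positive integers at most $\binom{(p-1)/2}{(p-1)/2} < p$ in absolute value after reduction, since each is a product of factors $\leq (p-1)/2$ divided by factorials of size $\leq (p-1)/2$, all coprime to $p$). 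Therefore $\det H \not\equiv 0 \bmod p$, proving that $J$ has ordinary reduction at $p$.

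The only subtle point is the non-vanishing mod $p$ of the binomial coefficients appearing as $c_{p-1}$ and $c_{2p-2}$; this is not really an obstacle but deserves a brief justification, since it amounts to Kummer's theorem (no carries in base $p$ when adding $(p-1)/5$ and $3(p-1)/10$, respectively $2(p-1)/5$ and $(p-1)/10$, both of which are digits less than $p$). Everything else is a mechanical application of Yui's criterion and a visual inspection of divisibility by $5$.
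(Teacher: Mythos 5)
Your proof is correct and follows essentially the same route as the paper: good reduction away from $2$ and $5$ by inspecting the discriminant of $x^5-1$, and ordinarity via the non-vanishing of the determinant of the Cartier--Manin (Hasse--Witt) matrix, a computation the paper simply delegates to Yui's Example 3.3 and you carry out explicitly. The parenthetical bound ``at most $\binom{(p-1)/2}{(p-1)/2}$'' is garbled, but your closing Kummer-type justification (no carries, since $2\ell+3\ell=5\ell<p$ and $4\ell+\ell=5\ell<p$, or simply that $\binom{m}{k}$ with $m<p$ is prime to $p$) correctly shows the diagonal entries are units mod $p$.
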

\begin{proof}
The only primes at which the given equation for $C$ has bad reduction are $2$ and $5$. For ordinarity, it suffices to show that the Cartier--Manin matrix of $C$ modulo $p$ has non-zero determinant: see \cite[Example 3.3]{yui1978jacobian}.
\end{proof}

\begin{lemma}
Let $p$ be a prime congruent to $1$ modulo $10$. Then the classes of the differentials 
\begin{equation*}
\eta_1^{(0)} = -3x^3\frac{dx}{2y}, \qquad \eta_2^{(0)} = -x^2\frac{dx}{2y}
\end{equation*}
span the unit root eigenspace of Frobenius. In particular, the canonical sigma function is equal to the naive sigma function.
\end{lemma}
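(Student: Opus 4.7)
The key is to exploit the extra automorphism of $C$ coming from the action of a fifth root of unity. Let $\zeta = \zeta_5$. Since $p \equiv 1 \pmod{10}$ implies $p \equiv 1 \pmod 5$, the group $\F_p^\times$ contains an element of order $5$, which lifts by Hensel to $\zeta \in \Z_p^\times$. The map $\rho \colon (x,y) \mapsto (\zeta x, y)$ is then an automorphism of the integral model $\mathcal{C}/\Z_p$ (note $\rho^{*}y = y$ because $(\zeta x)^5 - 1 = x^5 - 1$). A direct calculation on the basis $\mathcal{B} = \{x^i\,dx/(2y) : 0 \leq i \leq 3\}$ of $H^1_{\dR}(\mathcal{C}/\Z_p)$ gives
\begin{equation*}
\rho^{*}\!\left(\frac{x^i\,dx}{2y}\right) = \zeta^{i+1}\cdot\frac{x^i\,dx}{2y}, \qquad i = 0,1,2,3,
\end{equation*}
so $\rho^{*}$ is diagonal in $\mathcal{B}$ with four distinct eigenvalues $\zeta,\zeta^2,\zeta^3,\zeta^4$.

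Next, I would observe that since $\rho$ is defined over $\Z_p$, functoriality of crystalline (equivalently, Monsky--Washnitzer) cohomology implies $\rho^{*}$ commutes with the Frobenius $\phi^{*}$ on $H^1_{\dR}(\mathcal{C}/\Z_p)$. Because the $\rho^{*}$-eigenspaces are one-dimensional, $\phi^{*}$ is therefore also diagonal in $\mathcal{B}$. The Hodge subspace $H^0(C,\Omega^1) = \langle dx/(2y),\, x\,dx/(2y) \rangle$ consists of the $\zeta$- and $\zeta^2$-eigenvectors of $\rho^{*}$, so its natural complement $W_0 \colonequals \langle x^2\,dx/(2y),\, x^3\,dx/(2y) \rangle$ is the sum of the remaining two eigenspaces and is, in particular, $\phi^{*}$-stable.

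To identify $W_0$ with the unit root eigenspace, I would argue via slopes. The Frobenius eigenvalues $\alpha_i$ on $x^i\,dx/(2y)$ satisfy $\alpha_0\alpha_3 = \alpha_1\alpha_2 = p$: the symplectic cup product pairs the $\zeta^{i+1}$-eigenspace with the $\zeta^{-(i+1)}$-eigenspace (and scales by $p$ under Frobenius), which matches $i \leftrightarrow 3-i$. The Frobenius action on $H^0(C,\Omega^1)$ is divisible by $p$ (see the analogue of \eqref{eq:holo_pH1}), so $v_p(\alpha_0),v_p(\alpha_1) \geq 1$, forcing $v_p(\alpha_2) = v_p(\alpha_3) = 0$. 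Hence $W_0 \otimes \Q_p$ is a $2$-dimensional Frobenius-stable subspace on which $\phi^{*}$ acts by units; this is precisely the unit root eigenspace $W^{(b)}$.

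Finally, for the curve $y^2 = x^5 - 1$ we have $b_1 = b_2 = 0$, so the formulae \eqref{eq:eta_12_c} with $c_{11} = c_{12} = c_{22} = 0$ yield $\eta_1^{(0)} = -3x^3\,dx/(2y)$ and $\eta_2^{(0)} = -x^2\,dx/(2y)$, which is exactly a basis of $W_0$. Thus the subspace associated to the naive sigma function coincides with the unit root eigenspace, and by Proposition \ref{prop:Blakestad_space_unit_root} the canonical sigma function is the naive one. The main technical point is the slope identification in the third paragraph; everything else is a straightforward exploitation of the $\rho^{*}$-eigenspace decomposition and the commutation $\rho^{*}\phi^{*} = \phi^{*}\rho^{*}$.
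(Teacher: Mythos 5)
Your argument is correct and is essentially the paper's own proof: both exploit the order-$5$ automorphism $(x,y)\mapsto(\zeta_5 x,y)$, its commutation with Frobenius, and the distinctness of its eigenvalues $\zeta_5^{i+1}$ on $\mathcal{B}$ to diagonalise $\phi^{*}$, then use \eqref{eq:holo_pH1} to locate the unit eigenvalues. The only (harmless) variation is in the last step, where you pin down the slopes via the cup-product relations $\alpha_0\alpha_3=\alpha_1\alpha_2=p$, whereas the paper simply combines \eqref{eq:holo_pH1} with the ordinarity established in the preceding lemma.
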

\begin{proof}
First note that $p$ splits completely in $\Q(\zeta_5)$. The $\Q(\zeta_5)$-automorphism $\psi\colon (x,y)\mapsto (\zeta_5 x, y)$ of the curve $C$ reduces to an automorphism $\tilde{\phi}$ over $\F_p$, and this commutes with the $p$-th power Frobenius endomorphism $\pi$. Therefore, the lifts of $\pi\circ \tilde{\psi}$ and $\tilde{\psi}\circ \pi$ induce homotopic maps on differentials \cite[Theorem (2.4.4) (iii)]{vanderPut} and hence the same map in cohomology. In other words, for every differential $\omega$ of the second kind,
\begin{equation}\label{eq:Frobpsi_psiFrob}
\psi^{*}(\phi^*([\omega])) = \phi^*(\psi^{*}([\omega])).
\end{equation}
For each $i\in\{0,1,2,3\}$, the differential $\omega_i = x^{i}\frac{dx}{2y}$ is an eigenvector for $\psi^{*}$ with eigenvalue $\zeta_5^{i+1}$. Therefore, by \eqref{eq:Frobpsi_psiFrob}, the matrix of Frobenius with respect to $\mathcal{B}$ is diagonal. The result then follows by \eqref{eq:holo_pH1}. 
\end{proof}
 The lemmas apply to the prime $p = 10^6 + 81$ and we shall use them to compute the canonical $p$-adic N\'eron function at $p$ for the point
 \begin{equation*}
 P = [(-1-i,-2-i ) - (-1+i,2-i )]\in J_{\Theta}(\Q).
 \end{equation*}
 The first terms of the canonical $p$-adic sigma function can be deduced from the expansion of Appendix \ref{app:sigma_expansion} by specialising to $b_1=b_2 = b_3=b_4 =c_{11} = c_{12} = c_{22}= 0$, $b_5 = -1$. For this reason, we omit here the expansion (although the precision of the computations below requires higher $T$-adic precision than the one given in the appendix). 
 
Let $m$ be a positive integer such that $mP\in J_1(\Q_p)\setminus \Supp(\Theta)$. Since $m$ is large, the computation of $mP$ is expensive. However, if we compute the canonical $p$-adic sigma function up to $O(T_1,T_2)^n$, we can only determine $\sigma_p(T(mP))$ modulo $p^n$, and hence there is no advantage in computing $T_i(mP)$ as a rational number, compared to a $p$-adic number to precision $O(p^n)$. In other words, we may perform multiplication-by-$m$ on the Jacobian over the $p$-adics, to a suitable $p$-adic precision.

The order of $J(\F_p)$ is $m^{\prime} = 1001600512000$ and the computation of $m^{\prime} P$ shows that we may take $m=m^{\prime}$.

Similarly, it suffices to compute the value of the $m$-th division polynomial at $P$ to finite $p$-adic precision; we do this as outlined in \S \ref{subsec:division_polynomials}. Combining the above intermediate computations, we conclude that the $p$-adic N\'eron function for the prime $p$, at the point $P$, with respect to the idele class character $\chi^{\cyc}$ and the unit root eigenspace of Frobenius is:
 \begin{align*}
\lambda_p(P) &=  790065 \cdot p + 875980 \cdot p^{2} + 899921 \cdot p^{3} + 943161 \cdot p^{4} + 701712 \cdot p^{5} \\
&+ 507099 \cdot p^{6} + 399164 \cdot p^{7} + 725683 \cdot p^{8} + 423209 \cdot p^{9} + 174881 \cdot p^{10} \\
&+ 96387 \cdot p^{11} + 973189 \cdot p^{12} + 88349 \cdot p^{13} + 970515 \cdot p^{14} + 117600 \cdot p^{15} \\
&+ 519019 \cdot p^{16} + 639751 \cdot p^{17} + 971144 \cdot p^{18} + 996211 \cdot p^{19} + O(p^{20}). 
\end{align*}
The computation was run on a single core of a $32$-core $2.3$GHz AMD Opteron 6276 processor with 256GB RAM. 
All the computations were run in \texttt{SageMath} \cite{sage} and took approximately 14.3 seconds in total. Of these, 12.6 seconds were needed for the computation of the expansion of the sigma function up to $O(T_1,T_2)^{21}$. We computed this as a power series with coefficients in $\Q$, rather than approximating the coefficients $p$-adically.

In order to compute the global $p$-adic height $h_p(P)$, we also need to compute the values at $P$ of the $p$-adic N\'eron functions at the primes $q\neq p$. First, since
\begin{equation*}
X_{11}(P) =5 ,\qquad X_{12}(P) = -2,\qquad X_{22}(P) =-2 ,
\end{equation*}
and $C$ has good reduction at all the primes different from $2$ and $5$, by Remark \ref{rmk:neron_away_p}\thinspace{}\ref{rmk:neron_away_p_good_reduction} the only non-trivial contributions can occur at $q\in\{2,5\}$.

The point $2P$ reduces to the identity modulo $2$, and does not lie on the theta divisor. 
Therefore, by Definition \ref{def:Neron_fct_away_p},
\begin{equation*}
\lambda_2(P) = -\frac{1}{2}\log_p\left\vert\frac{T_1(2P)}{\phi_2(P)}\right\vert_2 =  -\frac{1}{2}\log_p\left\vert\frac{2 + O(2^7)}{-30} \right\vert_2= 0.
\end{equation*}
As far as $q = 5$ is concerned, by \cite[Remark 5.12, Figure 1]{Bruin-Stoll:MWSieve}, the group $J_0(\Q_5)/J_1(\Q_5)$ has order $25$. An explicit computation shows that $25P$ does not belong to $J_1(\Q_5)$, but $50P$ does. Hence,
\begin{equation*}
\lambda_5(P) = - \frac{2}{50^2}\log_p\left\vert\frac{T_1(50P)}{\phi_{50}(P)}\right\vert_5 = -\frac{1}{2\cdot 625}\log_p\left\vert\frac{4\cdot 5^2 + O(5^3)}{4\cdot 5^{627} + O(5^{628})}\right\vert_5 = -\frac{1}{2}\log_p(5).
\end{equation*}

In conclusion, the global canonical $p$-adic height of $P$ is
\begin{align*}
h_p(P) &= \lambda_p(P) + \lambda_5(P)\\
&= 227482 \cdot p + 997009 \cdot p^{2} + 96340 \cdot p^{3} + 795588 \cdot p^{4} + 602398 \cdot p^{5} \\
&+ 562446 \cdot p^{6} + 378071 \cdot p^{7} + 977705 \cdot p^{8} + 744905 \cdot p^{9} + 778414 \cdot p^{10} \\
&+ 506461 \cdot p^{11} + 834642 \cdot p^{12} + 129041 \cdot p^{13} + 687989 \cdot p^{14} + 134678 \cdot p^{15} \\
&+ 452034 \cdot p^{16} + 429426 \cdot p^{17} + 552523 \cdot p^{18} + 572577 \cdot p^{19} + O(p^{20}).
\end{align*}

\subsection{Example: Coleman--Gross local heights via N\'eron functions}\label{subsec:example_hts_Neron_fcts}
Let
\begin{equation*}
C\colon y^2 = x^3(x-1)^2 + 1,
\end{equation*}
let $P_1 = (1,-1)$, $P_2 = (0,1)\in X(\Q)$, and let $D_1 = P_1-P_2$, $D_2 = P_2^{-} - P_1^{-} \in \Div^0(C)$. 
The prime $p=11$ is of good reduction for $C$ (and hence for its Jacobian $J$), and of ordinary reduction for $J$. Let $W\colonequals W^{(b)}\subset H^1_{\dR}(C/\Q_p) $ be the unit root eigenspace of Frobenius.

In this subsection, we verify the equalities of Corollaries \ref{cor:lambda_eq_CG} and \ref{cor:global_CG_same_as_this} numerically for the pair of divisors $D_1$, $D_2$, as follows. The Coleman--Gross local $p$-adic height pairings $\langle D_1,D_2 \rangle_{p,p,W}^N$ and $\langle D_1,D_2 \rangle_{p,q}^N$ ($q\neq p$) were computed directly from their definition in \cite[\S 7.2.1]{BBM0} (the notation in \emph{loc.\ cit.}\ differs from ours). We check that we obtain the same answer if we take a linear combination of values of $p$-adic N\'eron functions satisfying the assumptions of  Corollary \ref{cor:lambda_eq_CG}. 

Secondly, the divisors $D_1$ and $D_2$ are linearly equivalent, so, by Corollary \ref{cor:global_CG_same_as_this}, the sum of the Coleman--Gross local $p$-adic height pairings on $D_1,D_2$ is equal to the canonical global $p$-adic height $h_p([D_1])$. We verify this numerically by computing $h_p([D_1])$ (almost) directly from its definition. 

To ease notation, we will from now on omit the unit root subspace $W$ from the subscripts. 
Consider the following points in $J(\Q)$:
\begin{align*}
u_1 = \iota(P_1) = [P_1 - \infty], \qquad u_2 = \iota(P_2) = [P_2-\infty],\\
v_1 = [P_2^{-} - P_1],\qquad v_2 = [P_2^{-} - P_2], \qquad v_3 = [P_1^{-}-P_1].
\end{align*}
By Example \ref{eg:finding_u1_u2}, the points $u_1,u_2$ satisfy the assumptions of Corollary \ref{cor:lambda_eq_CG}, for any rational prime $w=q$. 
With this choice, the equality of the corollary translates to
\begin{equation}\label{eq:eq_CG_Neron_in_practice}
\langle D_1,D_2\rangle_{p,q}^N = -\frac{1}{2}(2\lambda_q(v_1) - \lambda_q(v_2) - \lambda_q(v_3)).
\end{equation}
Consider first the case $q = p$. As a preliminary step in the computation of the values of $\lambda_p$ appearing in \eqref{eq:eq_CG_Neron_in_practice}, we need to determine the symmetric matrix $b$ corresponding to the choice of unit root eigenspace of Frobenius. In this case, we use Kedlaya's algorithm as explained in \S \ref{subsec:implementation_canonical_sigma}. We get
\begin{align*}
b_{11} &= 6 + 6 \cdot 11 + 3 \cdot 11^{2} + 6 \cdot 11^{4} + 2 \cdot 11^{5} + 10 \cdot 11^{6} + 11^{7} + 6 \cdot 11^{8} + 9 \cdot 11^{9} + O(11^{10}),\\
 b_{12}  &= 3 + 10 \cdot 11 + 10 \cdot 11^{2} + 11^{4} + 11^{5} + 5 \cdot 11^{6} + 11^{7} + 3 \cdot 11^{8} + 4 \cdot 11^{9} + O(11^{10}) , \\
  b_{22} &=  4 + 3 \cdot 11 + 6 \cdot 11^{2} + 6 \cdot 11^{3} + 9 \cdot 11^{4} + 10 \cdot 11^{5} + 4 \cdot 11^{6} + 5 \cdot 11^{7} + 2 \cdot 11^{8} + 2 \cdot 11^{9} + O(11^{10}).
\end{align*}
The first terms of the canonical $p$-adic sigma function can be deduced from the expansion of Appendix \ref{app:sigma_expansion} by specialising to $b_1=-2,b_2 =1, b_3=b_4 =0, b_5 = 1, c_{ij} = b_{ij}$ for all $1\leq i\leq j\leq 2$.

Using Definition \ref{def:Neron_fct_above_p} with $m=116$, $29$ and $58$, respectively, and the explicit techniques of \S \ref{subsec:implementation_formal_gp_sigma}--\ref{subsec:implementation_Neron_fcts}, we compute:
\begin{align*}
\lambda_p(v_1) &= 9 \cdot 11 + 8 \cdot 11^{2} + 8 \cdot 11^{3} + 4 \cdot 11^{4} + 5 \cdot 11^{5} + 10 \cdot 11^{6} + 8 \cdot 11^{7} + 9 \cdot 11^{8} + O(11^{9}),\\
\lambda_p(v_2) &= 2 \cdot 11^{3} + 6 \cdot 11^{4} + 10 \cdot 11^{5} + 10 \cdot 11^{6} + 9 \cdot 11^{7} + 10 \cdot 11^{8} + O(11^{9}),\\
\lambda_p(v_3) &=2 \cdot 11 + 4 \cdot 11^{2} + 2 \cdot 11^{3} + 6 \cdot 11^{4} + 2 \cdot 11^{5} + 5 \cdot 11^{6} + 3 \cdot 11^{7} + 11^{8} + O(11^{9}).
\end{align*} 
We deduce that
\begin{equation*}
\langle D_1,D_2\rangle_{p,p}^N = 3 \cdot 11 + 4 \cdot 11^{2} + 4 \cdot 11^{3} + 11^{4} + 11^{5} + 3 \cdot 11^{6} + 3 \cdot 11^{7} + 7 \cdot 11^{8} + O(11^{9}),
\end{equation*}
which agrees with the computation of \cite{BBM0} up to $O(11^7)$, i.e.\ up to the precision of the computation in \emph{loc.\ cit.}

Next, we consider the prime $q = 2$. By Definition \ref{def:Neron_fct_away_p}, we have
\begin{align*}
\lambda_2(v_1) &= -\frac{2}{12^2}\log_p\left\vert\frac{T_1(12v_1)}{\phi_{12}(v_1)} \right\vert_{2} = - \frac{7}{6}\log_p(2)\\
\lambda_2(v_2) &=  -\frac{2}{3^2}\log_p\left\vert\frac{T_1(3v_2)}{\phi_3(v_2)} \right\vert_{2} = - \frac{2}{3}\log_p(2)\\
\lambda_2(v_3) &=  -\frac{2}{2^2}\log_p\left\vert\frac{T_1(2v_3)}{\phi_2(v_3)} \right\vert_{2} = 0,
\end{align*}
from which we conclude that
\begin{equation*}
\langle D_1,D_2\rangle_{p,q}^N = \frac{5}{6}\log_p(2). 
\end{equation*}
Finally, for all $q\not\in\{2,11\}$, we have $\lambda_q(v_1) = \lambda_q(v_2) = \lambda_q(v_3) = 0$, and hence $\langle D_1,D_2\rangle_{p,q}^N = 0$, which completes the numerical verification of Corollary \ref{cor:lambda_eq_CG}.

As far as the verification of Corollary \ref{cor:global_CG_same_as_this} is concerned, since $4[D_1] = -v_2$,
\begin{equation*}
h_p([D_1]) = \frac{h_p(v_2)}{16} = \frac{1}{16}(\lambda_p(v_2) + \lambda_2(v_2)) = 8 \cdot 11 + 2 \cdot 11^{4} + 6 \cdot 11^{5} + 6 \cdot 11^{6} + 5 \cdot 11^{7} + 9 \cdot 11^{8} + O(11^{9}),
\end{equation*}
which equals $\sum_{q} \langle D_1, D_2 \rangle_{p,q}^N$.

\subsection{Integrals of differentials of the first, second and third kind}\label{subsec:diff_first_2nd_third}
Let now $p$ be any rational prime ($p=2$ is allowed) and let $C$ be defined over $\Q_p$ and given, as usual, by a monic degree $5$ model with coefficients in $\Z_p$. We make no assumptions on the reduction. By \S \ref{subsec:implementation_formal_gp_sigma}, we can compute the expansion of the strict formal group logarithm $\mathcal{L} = (\mathcal{L}_1,\mathcal{L}_2)$. By Proposition \ref{prop:exp_exp_log}, this converges on $J_1(\Q_p)$. Let $u\in J(\Q_p)$. It was explained in \S \ref{subsec:implementation_Neron_fcts} how we can determine a positive integer $m$ such that $mu\in J_1(\Q_p)$. Then, for each $i\in \{1,2\}$, the Colmez integral of $\Omega_i$ satisfies
\begin{equation*}
\int_0^{u} \Omega_i =\frac{1}{m}\mathcal{L}_i(mu). 
\end{equation*}
Let $\omega_i = x^{i-1}\frac{dx}{2y}$ and $P_1,P_2\in C(\Q_p)$. By Definition \ref{def:basis_inv_diff_inv_der} and Theorem \ref{thm:uniquetheory}\thinspace{}\ref{change_of_vars}, we have
\begin{equation*}
\int_{P_2}^{P_1} \omega_i = \int_{0}^{[P_1 - P_2]} \Omega_i.
\end{equation*}
In this way, we can compute integrals of holomorphic differentials on $C$, without any assumption on the reduction. 
\begin{rmk}
For genus $2$ curves described by $y^2 = f(x)$ where $f(x)$ is a sextic (or quintic) polynomial, the more general formal group description of Flynn \cite{Flynn2, Flynn5} can be used to compute integrals of holomorphic differentials. See for example \cite{flynn:flexible} for an application to Chabauty's method.
\end{rmk}

A non-holomorphic closed differential of the second kind on $J$ is not translation-invariant, but it is semi-invariant \cite[Theorem 2.8]{barsotti}. This means that it is linearly equivalent to its pullback under translation by any $u\in J$, and hence, in principle, we can use the formal group machinery to compute the Colmez integral of such a differential. For example, for $i\in\{1,2\}$, consider the differential (Lemma \ref{lemma:explicit_subspace_H1dRJ})
\begin{equation*}
\eta_{i,J}^{(0)} = -X_{1i} \Omega_1 - X_{i2}\Omega_2.
\end{equation*}
Using Theorem \ref{thm:uniquetheory}\thinspace{}\ref{change_of_vars} and \cite[Proposition 4.10]{Uchida}, we see that the integral of $\eta_{i,J}^{(0)}$ must satisfy
\begin{equation*}
\int_{mv}^{mu}\eta_{i,J}^{(0)} = m\int_{v}^{u}\eta_{i,J}^{(0)} + \frac{1}{m}\frac{D_i(\phi_m)}{\phi_m}(u) - \frac{1}{m}\frac{D_i(\phi_m)}{\phi_m}(v) \qquad (u,v\in J(\Q_p)).
\end{equation*}
By choosing $m$ appropriately, the computation of the integral on the left hand side can be reduced to solving a system of differential equations in the formal group (i.e.\ over a power series ring); note that this is in fact an intermediate step in the computation of the $p$-adic sigma function. Pulling back to $C$ using \eqref{eq:eta_iJ_eta_i} yields a formula for the integral of $\eta_i^{(0)}$. The implementation of \S \ref{subsec:division_polynomials} does not immediately equip us with a way of computing $D_i(\phi_m)$ (since we do not compute the division polynomials, but only their values); however, one could derive from the recurrence relations of Kanayama \cite[Theorem 9 (corrected)]{Kanayama_corrections} and Uchida \cite[Example 6.6]{Uchida} (cf.\ \S \ref{subsec:division_polynomials}) an expression for $D_i(\phi_m)(u)$ in terms of $\phi_k(u)$ ($k\leq m$), $D_i(\phi_k)(u)$ ($k < m$) and higher order derivatives of $\phi_k$ for $k\leq 5$. 

Finally, Corollary \ref{cor:lambda_eq_CG} provides a formula for the computation of differentials of the third kind on $C$ by means of $p$-adic N\'eron functions, formula which we tested numerically in \S \ref{subsec:example_hts_Neron_fcts}.

In summary, our implementation, although primarily aimed at the computation of $p$-adic N\'eron functions and heights, can be adapted to compute integrals of differentials of the first, second and third kind on $C$, and thereby offers an alternative (for the specific setting of a genus $2$ odd degree hyperelliptic curve) to Coleman integration algorithms in good reduction due to Balakrishnan--Bradshaw--Kedlaya and Balakrishnan--Besser \cite{BBK09, Bes-Bal10} and Colmez--Vologodsky integration algorithms in  bad reduction due to Katz--Kaya and Kaya \cite{Katz_Kaya, kaya_vologodskyII}.

\section{An application: quadratic Chabauty for bihyperelliptic curves} \label{sec:application_bihyper}
The goal of this section is twofold. Balakrishnan--Besser--M\"uller \cite{BBM0, BalakrishnanBesserMullerIntegralPoints} described a Chabauty-like method for computing integral points on odd degree hyperelliptic curves over $\Q$ with genus equal to the Mordell--Weil rank of the Jacobian. This uses an extension of Coleman--Gross local height pairings to divisors with non-disjoint support, and the quadraticity of the global height. The technique is known as \emph{quadratic Chabauty} (for integral points). The intermediate goal of the section is to rephrase this method in the genus 2 case in terms of our setup. In fact, we do not just translate, but rather re-prove, the results of \cite{BBM0} in terms of N\'eron functions (see Remark \ref{rmk:comparison_not_applicable} below). For this we crucially use results of Stoll \cite{Stoll} and M\"uller--Stoll \cite{mueller-stoll}.

Our main goal is to describe a quadratic Chabauty-like method for determining the rational points on certain genus 4 bihyperelliptic curves $X$. In particular, we assume that $X$ has two genus 2 quotients, each satisfying the assumptions of quadratic Chabauty for integral points. We then describe a $p$-adic locally analytic function on $X(\Q_p)$, which,  when restricted to $X(\Q)$, takes values in a finite explicit subset of $\Q_p$. This is a genus $4$ analogue of the quadratic Chabauty method for genus $2$ bielliptic curves of Balakrishnan--Dogra \cite{BDQCI}; see also \cite{Bianchi20, BP22}.

Let $C$ and $K$ be as in Sections \ref{sec:sigma_functions} and \ref{sec:padic_hts}: that is, $K$ is a number field and $C$ is the curve over $K$ defined by an equation of the form $y^2 = f(x)$, where $f(x)$ is a monic polynomial of degree $5$ with coefficients in the ring of integers of $K$ and no repeated roots. Fix a prime $p$ and a non-trivial continuous $\Q_p$-valued idele class character $\chi$ for $K$. Given a non-archimedean place $v$ of $K$, let $\lambda_v$ be the local $p$-adic N\'eron function at $v$ (with respect to a choice of subspace of $H^1_{\dR}(C/K_v)$ if $\chi$ is ramified at $v$).

If $\chi$ is unramified at $v$, we saw (Lemma \ref{lemma:equals_naive}) that there exists a finite index subgroup $H_v$ of $J(K_v)$ such that, for all $u\in H_v\setminus \Supp(\Theta)$, the local N\'eron function $\lambda_v$ only depends on the $v$-adic valuation of $X_{ij}(u)$, for $i,j\in\{1,2\}$. More generally, we have the following. For $u\in J_{\Theta}(K_v)$, let
\begin{equation*}
\lambda_v^{\naive}(u)= -\frac{1}{n_v}\chi_v(\max_{i,j}\{|X_{ij}(u)|_v,1\})
\end{equation*}
and let $\mu_v\colon J(K_v)\to \Q$ be the function of \cite[Definition 3.1]{mueller-stoll}. 
Then the $p$-adic N\'eron function at $v$ is obtained from $\lambda_v^{\naive}$ by adding a correction term:
\begin{equation*}
\lambda_v(u) = \lambda_v^{\naive}(u) +\frac{1}{n_v}\mu_v(u)\chi_v(\pi_v),
\end{equation*}
where $\pi_v$ is a uniformiser in $K_v$.
Denote by $k_v$ the residue field of $K_v$, and let $\Phi$ be the component group of the N\'eron model of $J$ over $\Spec(\OO_v)$, where $\OO_v$ is the ring of integers of $K_v$.
\begin{thm}[{\hspace{1sp}\cite[Theorems 3.10, 11.3, 7.4, Proposition 12.3, Lemma 12.5]{mueller-stoll}}] \label{thm:properties_mu}\leavevmode
\begin{enumerate}[label=(\roman*)]
\item The set $H_v = \{u\in J(K_v):\mu_v(u) = 0\}$ is a subgroup of finite index of $J(K_v)$ containing $J_1(K_v)$.
\item The function $\mu_v$ factors through $J(K_v)/H_v$.
\item For every $u\in J(K_v)$, we have $0\leq \mu_v(u) \leq \frac{\ordnop_v(\Delta)}{4}$, where $\Delta$ is the discriminant of $C$. 
\item Let $N$ be the exponent of $\Phi(\overline{k_v})$. Then, for every $u\in J(K_v)$, we have $\mu_v(u)\in \frac{1}{N}\Z$. Moreover, $N\leq \max\{2,\lfloor{\frac{\ordnop_v(\Delta)^2}{3}\rfloor}\}$.
\item If our fixed equation for $C$ determines a model over $\OO_v$ with rational singularities, then $\mu_v$ factors through $\Phi(k_v)$.
\end{enumerate}
\end{thm}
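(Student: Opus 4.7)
The plan is to recognise the statement as a direct transcription of the results of M\"uller--Stoll \cite{mueller-stoll} to our setting, and to verify that the translation is legitimate. The key point is that the function $\mu_v$ appearing here is defined to coincide with the function of the same name introduced in \cite[Definition 3.1]{mueller-stoll}; both measure the failure of $\lambda_v^{\naive}$ to be ``almost quadratic'' on the whole of $J(K_v)\setminus\Supp(\Theta)$. Before invoking their results, one checks that in the odd-degree genus $2$ case, $\lambda_v^{\naive}$ as it appears in this paper agrees with the naive local height on $J(K_v)$ considered in \emph{loc.\ cit.}, up to normalisation conventions. Given this, each of the five statements is a direct rephrasing of a cited result.

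First, I would establish (i) and (ii). The local N\'eron function $\lambda_v$ is genuinely quasi-quadratic on $J_{\Theta}(K_v)$ by Proposition \ref{prop:properties_neron_fcts}\thinspace{}\ref{it:quasi_quadraticity}--\ref{it:quasi_parallelogram}, while $\lambda_v^{\naive}$ fails quasi-quadraticity only by a bounded correction term coming from the $v$-adic valuations of the $X_{ij}$-coordinates of $u$, $v$ and $u\pm v$. The difference $\mu_v$ is therefore bounded. On $J_1(K_v)\setminus \Supp(\Theta)$, Lemma \ref{lemma:equals_naive} gives $\lambda_v^{\naive}=\lambda_v$, so $\mu_v$ vanishes on $J_1(K_v)$. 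A straightforward manipulation of the parallelogram law applied to $\mu_v$ shows that $H_v=\mu_v^{-1}(0)$ is closed under addition, hence is a subgroup; its containing $J_1(K_v)$, which itself has finite index in $J(K_v)$, forces $H_v$ to be of finite index. Part (ii) is then automatic.

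For part (iii), I would argue by iterating the quasi-quadratic relation, exploiting the formula $\mu_v(u) = \lim_n n^{-2}(n^2\lambda_v^{\naive}(u) - \lambda_v^{\naive}(nu))$ and bounding the successive correction terms by $\ordnop_v(\Delta)$; the latter controls how far the $v$-adic geometry of the model deviates from good reduction. Parts (iv) and (v) are where the substantive N\'eron-model analysis of \cite{mueller-stoll} enters: one identifies $\mu_v$ with a function on the component group $\Phi(\overline{k_v})$ of the N\'eron model over $\OO_v$, uses the torsion structure of $\Phi(\overline{k_v})$ to bound its exponent $N$ via the cluster picture of the Weierstrass points, and finally, under the hypothesis that the given model has rational singularities, upgrades the factorisation to one through the $k_v$-rational points $\Phi(k_v)$ by exploiting Galois equivariance.

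The main obstacle in a self-contained proof would be the detailed analysis of the N\'eron model of $J$ over $\OO_v$ (and its component group) underlying (iv) and (v); this is the core technical contribution of \cite{mueller-stoll} and the reason why I would import rather than reprove their results here. The verification that the hypotheses of \emph{loc.\ cit.}\ apply to our normalisations and embedding into $\PP^8$ is routine but should be spelled out carefully to ensure consistency.
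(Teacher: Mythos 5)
The paper gives no proof of this theorem: it is stated purely as a citation of \cite{mueller-stoll}, and your proposal — identify $\mu_v$ with the function of \cite[Definition 3.1]{mueller-stoll}, check the normalisations match, and import Theorems 3.10, 11.3, 7.4, Proposition 12.3 and Lemma 12.5 of \emph{loc.\ cit.} — is exactly that approach. Your additional sketches of how the underlying arguments run are reasonable but not required; the substance of the proof lives entirely in the cited reference, as you correctly note.
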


Consider now the function
\begin{equation*}
\nu_v\colon \{P\in C(K_v): y(P)\neq 0,\infty \}\to \Q_p, \qquad \nu_v(P) = \lambda_v(2\iota(P)) + \frac{2}{n_v}\chi_v(2y(P)).
\end{equation*}

Denote by $C(\OO_v)$ the set of affine points with coordinates in $\OO_v$, with respect to our fixed equation for $C$.

\begin{lemma}\label{lemma:nu_v}If $\chi$ is unramified at $v$, the function $\nu_v$ satisfies the following properties:
\begin{enumerate}[label=(\roman*)]
\item\label{lemma:nu_v_integral}  If $P\in C(\OO_v)\setminus \{P:y(P) = 0\}$, then
\begin{equation*}
\nu_v(P) = -\frac{2}{n_v}\chi_v(\max\{|f^{\prime}(x(P))|_v, |2y(P)|_v\}) +\frac{1}{n_v}\mu_v(2\iota(P))\chi_v(\pi_v).
\end{equation*}
In particular, the set $\Gamma_v\colonequals\nu_v(C(\OO_v)\setminus\{P:y(P) = 0\})$ is finite, and there exists a computable finite set $\Gamma_v^{\prime}\subset\Q_p$ such that $\Gamma_v\subset\Gamma_v^{\prime}$. If $v$ is a prime of good reduction for our equation for $C$, we may take $\Gamma_v^{\prime} = \{0\}$.
\item \label{lemma:nu_v_nonintegral} If $P\not\in C(\OO_v)$, then $\nu_v(P) = \frac{8}{n_v}\chi_v(x(P))$. 
\end{enumerate}
\end{lemma}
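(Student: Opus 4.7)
The plan is to start from the decomposition $\lambda_v = \lambda_v^{\naive} + \tfrac{\mu_v(\cdot)\chi_v(\pi_v)}{n_v}$ given in the paragraph preceding the lemma, which reduces the task in both parts to computing $\max_{i,j}\{|X_{ij}(2\iota(P))|_v, 1\}$ in Grant's projective coordinates together with $\mu_v(2\iota(P))$. First I will derive closed-form expressions for the $X_{ij}(2\iota(P))$ in terms of $x(P), y(P)$: the Mumford representation of $[2P - 2\infty]$ is $(u(X), v(X))$ with $u(X) = (X - x(P))^2$ and $v(X) = y(P) + \tfrac{f'(x(P))}{2y(P)}(X - x(P))$, and the dictionary \eqref{eq:p12_p22} yields $X_{12}(2\iota(P)) = -x(P)^2$ and $X_{22}(2\iota(P)) = 2x(P)$. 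For $X_{11}$ I would use Grant's formula \cite[(1.4)]{Grant1990} $\wp_{11}((P_1, P_2)) = \tfrac{F(x_1, x_2) - 2 y_1 y_2}{(x_1 - x_2)^2}$ (where $F$ is the symmetric polynomial satisfying $F(x, x) = 2 f(x)$ and $\partial_1 F(x, x) = f'(x)$) and pass to the diagonal $P_1 \to P_2 = P$ via a second-order Taylor expansion of the numerator; the output is $X_{11}(2\iota(P)) = \tfrac{H(x)}{4 y^2}$ with $H(x) = f'(x)^2 + 4 f(x) G(x)$ and $G(x) = -6 x^3 - 4 b_1 x^2 - 2 b_2 x - b_3$, where $G$ arises from $\tfrac{1}{2}(\partial_1^2 F(x, x) - f''(x))$.

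For part (i), the crucial non-archimedean identity is $\max\{|H(x)|_v, |2y(P)|_v^2\} = \max\{|f'(x)|_v^2, |2y(P)|_v^2\}$ whenever $x(P), y(P) \in \OO_v$. This holds because $|4 f(x) G(x)|_v \leq |2y|_v^2$ for $x \in \OO_v$ (as $|G(x)|_v \leq 1$), so any cancellation inside $H$ can only occur below $|2y|^2$, and a short case split according to whether $|f'|^2 \lessgtr |2y|^2$ gives the identity. Combined with $|X_{12}|_v \leq 1$ and $|X_{22}|_v \leq 1$ on $C(\OO_v)$, this yields $\max_{i,j}\{|X_{ij}|_v, 1\} = \max\{|f'|_v, |2y|_v\}^2 / |2y|_v^2$. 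Feeding this into $\lambda_v^{\naive}$ and cancelling the $|2y|_v^{-2}$ factor against $\tfrac{2}{n_v}\chi_v(2y(P))$ from the definition of $\nu_v$ delivers the claimed formula. Finiteness of $\Gamma_v$ and the construction of $\Gamma_v'$ then follow from Theorem \ref{thm:properties_mu} (which bounds $\mu_v(2\iota(P))$ by $\tfrac{\ordnop_v(\Delta)}{4}$ and forces values in $\tfrac{1}{N}\Z$) together with the fact that $\min\{\ordnop_v(f'(x)), \ordnop_v(2y(P))\}$ is bounded above in terms of $\ordnop_v(\Disc(f))$, since simultaneous near-vanishing of $f$ and $f'$ would force a high-order common zero. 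At a prime of good reduction $v \nmid 2\Disc(f)$, both contributions vanish outright: smoothness of $\tilde{C}$ forces $\max\{|f'|, |2y|\} = 1$, and the triviality of the component group forces $\mu_v \equiv 0$, so $\Gamma_v = \{0\}$.

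For part (ii), the hypothesis $P \notin C(\OO_v)$ forces $\ordnop_v(x(P)) < 0$, and from $y^2 = f(x)$ with $f$ dominated by $x^5$ we get $\ordnop_v(y(P)) = \tfrac{5}{2}\ordnop_v(x(P))$; the uniformiser $s = x^2/y$ at $\infty$ then has $\ordnop_v(s(P)) > 0$, so $\iota(P)$ and hence $2\iota(P)$ lies in $J_1(K_v) \subseteq H_v$, giving $\mu_v(2\iota(P)) = 0$. Reading off the dominant monomials in the formulae above and verifying that the leading coefficient of $H(x)$ is $25 - 24 = 1$ (so no unwanted cancellation), I obtain $|X_{11}|_v = |x|_v^3/|4|_v$, which dominates $|X_{12}|_v = |x|_v^2$, $|X_{22}|_v = |2x|_v$, and $1$. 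Substituting into $\lambda_v^{\naive}$ gives $\lambda_v(2\iota(P)) = \tfrac{3}{n_v}\chi_v(x(P)) - \tfrac{2}{n_v}\chi_v(2)$; adding $\tfrac{2}{n_v}\chi_v(2y(P)) = \tfrac{2}{n_v}\chi_v(2) + \tfrac{5}{n_v}\chi_v(x(P))$ cancels the $\chi_v(2)$ contributions and produces $\nu_v(P) = \tfrac{8}{n_v}\chi_v(x(P))$. The main technical obstacle in both parts is the derivation and justification of the explicit formula for $X_{11}(2\iota(P))$: although the Taylor computation is elementary once Grant's expression for $\wp_{11}((P_1, P_2))$ and the polynomial $F(x_1, x_2)$ are available, these are not spelled out in the excerpt above and must be either unpacked from the $\PP^8$ equations or imported from the classical literature; once this ingredient is in hand, the remainder of the argument is $v$-adic bookkeeping together with an appeal to Theorem \ref{thm:properties_mu}.
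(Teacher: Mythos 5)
Your proposal is correct and follows essentially the same route as the paper: the same decomposition $\nu_v = \nu_v^{\naive} + \tfrac{1}{n_v}\mu_v(2\iota(\cdot))\chi_v(\pi_v)$, the same duplication formulas for $X_{12},X_{22},X_{11}$ (the paper imports them from Flynn--Smart rather than re-deriving them from Grant's $\wp_{11}$), the same ultrametric comparison of the numerator of $X_{11}$ with $|f'(x)|_v^2$ and $|2y|_v^2$, the same monic-degree-$8$ observation for non-integral $P$, and the same appeal to Theorem \ref{thm:properties_mu} for the $\mu_v$ contribution. You are in fact more explicit than the paper about why $\mu_v(2\iota(P))=0$ when $P\notin C(\OO_v)$, although your intermediate claim that $\iota(P)\in J_1(K_v)$ would deserve a word of justification at primes of bad reduction (e.g.\ via the integrality of the expansions of Grant's projective coordinates along the formal completion at $\infty$, or by checking directly from the formulas of Lemma \ref{lemma:Ti_terms_t} that all ratios $X_\ast/X_{111}$ at $2\iota(P)$ have positive valuation).
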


\begin{proof}
For $P$ in the domain of $\nu_v$, let 
\begin{equation*}
 \nu_v^{\naive}(P)= \lambda_v^{\naive}(2\iota(P)) + \frac{2}{n_v}\chi_v(2y(P)).
\end{equation*}
It follows from \cite[\S 2]{FlynnSmart} that 
\begin{align*}
X_{22}(2\iota(P)) = 2x(P), \quad X_{12}(2\iota(P)) = -x(P)^2, \\
X_{11}(2\iota(P)) = \frac{f^{\prime}(x(P))^2 -(2y(P))^2(6x(P)^3+4b_1x(P)^2 + 2b_2x(P) + b_3)}{(2y(P))^2}.
\end{align*}
Therefore, if $P\in C(\OO_v)$, we have 
\begin{equation*}
\nu_v^{\naive}(P) = -\frac{2}{n_v}\chi_v(\max\{|f^{\prime}(x(P))|_v, |2y(P)|_v\}).
\end{equation*}
This can take finitely many values since $\max\{|f^{\prime}(x(P))|_v, |2y(P)|_v\}\geq |2\Disc(f)|_v$, where $\Disc(f)$ is the discriminant of $f$. 
 In particular, if $2\Disc(f)$ is a $v$-adic unit, then $\nu_v^{\naive}(P)$ vanishes. 

If $P\not\in C(\OO_v)$, the coordinates of $P$ satisfy $2\ordnop_v(y(P)) = 5\ordnop_v(x(P))$. Therefore, since the numerator of $X_{11}(2\iota(P))$ is monic of degree $8$ as a polynomial in $x(P)$, we have
\begin{equation*}
\nu_v^{\naive}(P) = \frac{8}{n_v}\chi_v(x(P)).
\end{equation*}
It remains to understand how $\nu_v$ differs from $\nu_v^{\naive}$. For this, we apply Theorem \ref{thm:properties_mu}.
\end{proof}

Assume now for simplicity that $K=\Q$. Recall from \S \ref{subsec:Colm_int} that every locally analytic group homomorphism $J(\Q_p)\to\Q_p$ arises as an extension to $J(\Q_p)$ of a formal group homomorphism to $\G_a$. By abuse of notation, denote by
\begin{equation*}
\mathcal{L}=(\mathcal{L}_1,\mathcal{L}_2)\colon J(\Q_p)\otimes \Q_p\to\Q_p^{2}
\end{equation*}
the $\Q_p$-linear map induced by the strict formal group logarithm.
\begin{ass}\label{ass:QC_int}
The restriction of $\mathcal{L}$ to $J(\Q)\otimes \Q_p$ is injective. 
\end{ass}

\begin{rmk}
If the rank of $J(\Q)$ is at most $2$, Assumption \ref{ass:QC_int} will often be satisfied: if $J$ is simple, see \cite[Conjecture 1]{waldschmidt}.
\end{rmk}
Under Assumption \ref{ass:QC_int}, any $\Q_p$-linear map $J(\Q)\otimes \Q_p\to \Q_p$ is a linear combination of $\mathcal{L}_1$ and $\mathcal{L}_2$, and any quadratic form $J(\Q)\otimes \Q_p\to\Q_p$ is a linear combination of $\mathcal{Q} = \{\mathcal{L}_1^2, \mathcal{L}_1\mathcal{L}_2,\mathcal{L}_2^2\}$. The latter applies in particular to the $p$-adic height $h_p$: there exist $\alpha_1,\alpha_2,\alpha_3\in \Q_p$ such that
\begin{equation}\label{eq:h_p_basis_qf}
 h_p(u) = \alpha_1\mathcal{L}_1^2 (u)+ \alpha_2\mathcal{L}_1(u)\mathcal{L}_2(u) + \alpha_3\mathcal{L}_2^2(u) \qquad \text{for all }u\in J(\Q). 
 \end{equation} 
 Restricting this equality to points of the form $2\iota(P)$, where $P$ is a point on $C$, and writing $h_p$ as a sum of local N\'eron functions, one can use Equation \eqref{eq:h_p_basis_qf} and Lemma \ref{lemma:nu_v}\thinspace{}\ref{lemma:nu_v_integral} to write down a locally analytic function $\rho\colon C(\Z_p)\to \Q_p$ and a finite set $\Gamma\subset\Q_p$ such that $\rho(C(\Z))\subset \Gamma$.  In other words, Lemma \ref{lemma:nu_v} allows us to phrase the quadratic Chabauty method of \cite{BBM0} in terms of $p$-adic N\'eron functions, in place of local Coleman--Gross height pairings.

\begin{rmk}\label{rmk:comparison_not_applicable}
Corollary \ref{cor:lambda_eq_CG} is a comparison result between the Coleman--Gross local height pairings and the $p$-adic N\'eron functions. However, it only applies to divisors with disjoint support, while the method of \cite{BBM0} crucially requires an extension of the Coleman--Gross local heights to arbitrary divisors. Lemma \ref{lemma:nu_v} allows us to give a direct proof that suitable local N\'eron functions can alternatively be used. 
On the other hand, in \cite{BKM22}, we prove a comparison result between these extended Coleman--Gross local height pairings and the N\'eron functions, from which the rephrasing of quadratic Chabauty for $C(\Z)$ in terms of the $\lambda_q$ is straightforward.

We also observe that the explicit examples of \cite{BBM0, BalakrishnanBesserMullerIntegralPoints} rely, for the computation of $\mathcal{L}(\iota(P))$,  on algorithms for Coleman integration on $C$ based on Kedlaya's algorithm \cite{kedlaya}. In our genus $2$ setting, we may replace this step with \S \ref{subsec:implementation_formal_gp_sigma} (see also \S \ref{subsec:diff_first_2nd_third}).
\end{rmk}

We now apply similar ideas to give an explicit quadratic-Chabauty-type method for determining the \emph{rational} points on certain genus $4$ curves that admit degree $2$ maps to curves in the same form as $C$. This is an analogue of the explicit quadratic Chabauty method for the rational points on genus $2$ bielliptic curves due to Balakrishnan--Dogra \cite{BDQCI}; our proof is based on the proofs thereof given in \cite[Proposition 6.5]{Bianchi20} and \cite[Theorem 2.3]{BP22}. 

In particular, we give a proof that does not use Kim's theory \cite{KimP1, Kimunipotent}, but only the properties of the local N\'eron functions that we have described so far. As such, it is much more elementary in nature to the general framework of quadratic Chabauty for rational points of \cite{BDQCI, SplitCartan}. 
 
Unlike the above discussion on integral points on $C$, to the author's knowledge, Proposition \ref{prop:QC_bihyper} below is not a rephrasing of a  result involving Coleman--Gross local heights already appearing in the literature. On the other hand, using the comparison result of \cite{BKM22} mentioned in  Remark \ref{rmk:comparison_not_applicable}, it could be rephrased in terms of Coleman--Gross heights.

Suppose that $X$ is a genus $4$ hyperelliptic curve over $\Q$ given by an equation of the form
\begin{equation}\label{eq:X}
X\colon y^2 = f_5x^{10} + f_4 x^8 + f_3 x^6 + f_2x^4 + f_1x^2 + f_0 , \qquad f_i\in \Z.
\end{equation}
There are maps $\varphi_1\colon X\to C_1$, $\varphi_2\colon X\to C_2$, to the following curves $C_1$ and $C_2$:
\begin{align*}
C_1\colon y^2 = x^5 + f_4x^4 + f_3f_5x^3 + f_2f_5^2x^2 + f_1f_5^3x + f_0 f_5^4,\qquad &\varphi_1(x,y) = (f_5 x^2,f_5^2 y),\\
C_2 \colon y^2 = x^5 + f_1 x^4 + f_0f_2x^3 + f_0^2f_3x^2 + f_0^3f_4x + f_0^4 f_5,\qquad &\varphi_2(x,y) = (f_0x^{-2},f_0^2yx^{-5}).
\end{align*}
Denote by $J_1$ and $J_2$ the Jacobian of $C_1$ and $C_2$, respectively, and by $\mathcal{Q}_i$ the set of quadratic forms $\{\mathcal{L}_1^{2},\mathcal{L}_1\mathcal{L}_2,\mathcal{L}_2^2\}$ for $J_i$. Let $Z =  X(\Q_p)\setminus \{z: x(z)\in \{0,\infty\}\text{ or } y(z) = 0\}$.

\begin{prop}\label{prop:QC_bihyper}
Suppose that $p$ is a prime of good reduction for the Equation \eqref{eq:X}, and that $J_i$ satisfies Assumption \ref{ass:QC_int} for every $i\in\{1,2\}$,  so the global $p$-adic height on $J_i(\Q)$ is equal to the restriction to $J_i(\Q)$ of $\sum_{Q\in \mathcal{Q}_i} \alpha_{Q}^{(i)}Q$, for some $\alpha_Q^{(i)}\in\Q_p$. The function $\rho\colon Z\to \Q_p$, defined by
\begin{align*}
\rho(z) &=\lambda_p(2\iota(\varphi_1(z))) - \lambda_p(2\iota(\varphi_2(z))) - 6\chi_p(x(z))\\ 
&- \sum_{Q\in \mathcal{Q}_1} \alpha_{Q}^{(1)}Q(2\iota(\varphi_1(z))) + \sum_{Q\in \mathcal{Q}_2} \alpha_{Q}^{(2)}Q(2\iota(\varphi_2(z))),
\end{align*}
can be continued to a locally analytic function $\tilde{\rho}\colon X(\Q_p)\to \Q_p$. Moreover, there exists a finite and computable set $\Upsilon\subset\Q_p$ such that
\begin{equation*}
\tilde{\rho}(X(\Q))\subset \Upsilon.
\end{equation*}
\end{prop}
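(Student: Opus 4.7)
The plan is to establish both assertions in parallel, since both hinge on the same rewriting of $\rho(z)$ via the functions $\nu_v$ of Lemma \ref{lemma:nu_v}, which isolate the locally analytic (respectively finite) parts of $\lambda_v\circ 2\iota$.

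For the continuation to $\tilde{\rho}$, using $K=\Q$, $n_p=1$, $\nu_p(P)=\lambda_p(2\iota(P))+2\chi_p(2y(P))$, together with $y(\varphi_1(z))=f_5^2 y(z)$ and $y(\varphi_2(z))=f_0^2 y(z)/x(z)^5$, I will rewrite the definition of $\rho$ so that the $\chi_p\circ y(z)$ terms cancel and $\lambda_p\circ 2\iota\circ\varphi_i$ is replaced by $\nu_p\circ\varphi_i$ plus multiples of $\chi_p(f_5^2/f_0^2)$ and $\chi_p(x(z))$; the resulting expression takes the form
\begin{equation*}
\rho(z)=\nu_p(\varphi_1(z))-\nu_p(\varphi_2(z))-2\chi_p(f_5^2/f_0^2)-16\chi_p(x(z))+\Phi(z),
\end{equation*}
where $\Phi(z):=-\sum_{Q\in\mathcal{Q}_1}\alpha_Q^{(1)}Q(2\iota(\varphi_1(z)))+\sum_{Q\in\mathcal{Q}_2}\alpha_Q^{(2)}Q(2\iota(\varphi_2(z)))$. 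The quadratic part $\Phi$ is locally analytic on all of $X(\Q_p)$ since each $\mathcal{L}_i$ extends to a locally analytic homomorphism $J_i(\Q_p)\to\Q_p$ (\S\ref{subsec:Colm_int}); $\nu_p$ extends locally analytically across the Weierstrass locus of each $C_i$ because at an affine Weierstrass point $W$ the map $\iota$ is \'etale and $2\iota(W)=0$, so $T_1(2\iota(P))$ vanishes to order one in $y(P)$ and the $-2\chi_p$ singularity of $\lambda_p(2\iota(P))$ is exactly cancelled by $+2\chi_p(2y(P))$. The remaining $-16\chi_p(x(z))$ has singularities only at $x(z)\in\{0,\infty\}$: by Lemma \ref{lemma:nu_v}\thinspace{}\ref{lemma:nu_v_nonintegral}, near $x(z)=0$ one has $\nu_p(\varphi_2(z))=8\chi_p(f_0)-16\chi_p(x(z))$ and near $x(z)=\infty$ one has $\nu_p(\varphi_1(z))=8\chi_p(f_5)+16\chi_p(x(z))$, so the combinations $-\nu_p(\varphi_2(z))-16\chi_p(x(z))$ and $\nu_p(\varphi_1(z))-16\chi_p(x(z))$ are constant in their respective neighbourhoods.

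For the finiteness of values on $Z\cap X(\Q)$, Assumption \ref{ass:QC_int} gives $h_p(2\iota(\varphi_i(z)))=\sum_{Q\in\mathcal{Q}_i}\alpha_Q^{(i)}Q(2\iota(\varphi_i(z)))$, so expanding $h_p$ as a sum of local $p$-adic N\'eron functions (Definition \ref{def:global_ht}; $\lambda_\infty=0$) reduces $\rho(z)$ to $-\sum_{v\neq p}(\lambda_v(2\iota(\varphi_1(z)))-\lambda_v(2\iota(\varphi_2(z))))-6\chi_p(x(z))$. Applying at every $v\neq p$ the same $\lambda_v\to\nu_v$ substitution as above and then using the product formula $\sum_v\chi_v(a)=0$ on $\Q^\times$ to move the $\chi_p(f_5^2/f_0^2)$ and $\chi_p(x(z))$ contributions into a sum over $v\neq p$, I arrive at
\begin{equation*}
\rho(z)=-\sum_{v\neq p}\bigl(\nu_v(\varphi_1(z))-\nu_v(\varphi_2(z))-2\chi_v(f_5^2/f_0^2)-16\chi_v(x(z))\bigr).
\end{equation*}
At every prime $v$ of good reduction for $X$ coprime to $f_0f_5$, a case analysis on $\ordnop_v(x(z))$ via Lemma \ref{lemma:nu_v} shows the summand vanishes: when $\ordnop_v(x(z))=0$ both $\varphi_i(z)$ lie in $C_i(\Z_v)$ (with $\nu_v=0$ by good reduction) and all $\chi_v$ terms vanish; when $\ordnop_v(x(z))\neq 0$, exactly one $\varphi_i(z)$ is non-integral and Lemma \ref{lemma:nu_v}\thinspace{}\ref{lemma:nu_v_nonintegral} supplies a $\pm 16\chi_v(x(z))$ cancelling the explicit one. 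Only finitely many $v$ remain; at these, Lemma \ref{lemma:nu_v}\thinspace{}\ref{lemma:nu_v_integral} and Theorem \ref{thm:properties_mu} confine $\nu_v(\varphi_i(z))$ to an explicit finite set, and each $\chi_v$ contribution also takes finitely many values as an integer multiple of $\chi_v$ at a uniformiser. Enumerating over the finitely many exceptional primes produces the finite computable $\Upsilon$, which is then enlarged by the values of $\tilde{\rho}$ at the finitely many points of $X(\Q)\setminus Z$, computed individually.

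The hard step will be the local-at-$p$ analytic extension across the Weierstrass locus of $X$: beyond the formal manipulation, this requires a direct local calculation showing that $T_1(2\iota(P))$ vanishes to exact order $1$ in the uniformiser $y(P)$ at an affine Weierstrass point $W$ of $C_i$, so that the logarithmic singularities of $\lambda_p(2\iota(P))$ and of $-2\chi_p(2y(P))$ inside $\nu_p(P)$ match precisely. This uses that $\iota$ is \'etale at $W$ (its differential $\omega_1(W)$ is non-vanishing) and that $[2]\colon J_i\to J_i$ is \'etale at the origin (for $p$ odd), so the composition has the same vanishing order as $y$ up to a $p$-adic unit.
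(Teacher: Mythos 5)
Your argument for the finiteness of $\tilde{\rho}(X(\Q))$ is essentially the paper's: decompose $h_p$ into local N\'eron functions, use the product formula to push $-6\chi_p(x(z))$ to the primes $q\neq p$, rewrite each local summand via $\nu_q$, and run the integrality case analysis of Lemma \ref{lemma:nu_v}. That half is sound, because at every $q\neq p$ the character $\chi^{\cyc}_q$ is unramified and Lemma \ref{lemma:nu_v} genuinely applies.

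The gap is in the analytic continuation across the points with $x(z)\in\{0,\infty\}$. There you invoke Lemma \ref{lemma:nu_v}\thinspace{}\ref{lemma:nu_v_nonintegral} to assert, e.g., $\nu_p(\varphi_1(z))=8\chi_p(f_5)+16\chi_p(x(z))$ near $x(z)=\infty$. But that lemma is stated, and proved, only for places at which $\chi$ is unramified: its proof runs through $\lambda_v^{\naive}$, which depends only on valuations, whereas at $v=p$ the character $\chi_p=\log_p$ is ramified and $\lambda_p$ is defined via the sigma function. The identity you quote is false at $p$ as an exact formula (only the singular parts agree), so in particular the combinations you declare ``constant'' are merely locally analytic, and the cancellation of the $16\chi_p(x(z))$ singularity is not yet justified. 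What is actually needed is the order of vanishing of $T_1(2\iota(P))$ as $P$ approaches the point at infinity of $C_1$: it is $3$, not $1$ as at the affine Weierstrass points, and combined with the order $-5$ pole of $y$ in the uniformiser $t_1$ at $\infty$ this gives the singular part $-2\cdot 3\,\chi_p(t_1)+2\cdot(-5)\,\chi_p(t_1)=-16\chi_p(t_1)$, whence $\nu_p(\varphi_1(z))-16\chi_p(x(z))$ extends. This is precisely the first case of the paper's Lemma \ref{lemma:Ti_terms_t}, proved by an explicit computation of $T_1(2\iota(P))=-2y\,g_1(x)/g_2(x)$ and its expansion in $t=-x^2/y$. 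Your ``hard step'' paragraph only treats the affine Weierstrass locus (order $1$); the points at infinity of $C_1$ and $C_2$ (reached when $x(z)=\infty$ and $x(z)=0$ respectively) require this separate and different local computation, without which the extension of $\rho$ to all of $X(\Q_p)$ is not established.
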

\begin{proof}
At each point in $X(\Q_p)\setminus Z$, there are exactly two terms of $\rho(z)$ that have a logarithmic singularity:
\begin{itemize}
\item if $x(z) = \infty$, the terms are: $\lambda_p(2\iota(\varphi_1(z)))$ and $-6\chi_p(x(z))$;
\item if $x(z) = 0$, the terms are: $-\lambda_p(2\iota(\varphi_2(z)))$ and $-6\chi_p(x(z))$;
\item if $y(z) = 0$, the terms are: $\lambda_p(2\iota(\varphi_1(z)))$ and $-\lambda_p(2\iota(\varphi_2(z)))$.
\end{itemize}
In each of the three cases, we claim that the logarithmic singularities coming from the two terms cancel out. We prove this for $x(z) = \infty$ and leave the other cases as an exercise to the reader. So assume $x(z) = \infty$ and let $t$ be a uniformiser at $z$, reducing to a uniformiser modulo $p$: without loss of generality, we choose $t = x^{-1}$. Since $\varphi_1$ is unramified at $z$, for any local coordinate $t_1$ at $\varphi_1(z)$, we have $t_1(\varphi_1(z(t))) = tu(t)$ for some unit power series $u(t)\in \Z_p[[t]]$. By definition of $\lambda_p$, the logarithmic term of $\lambda_p(2\iota(\varphi_1(z(t))))$ is the same as that of $-2\chi_p(T_1(2\iota(\varphi_1(z(t)))))$. By Lemma \ref{lemma:Ti_terms_t}, this is $-6\chi_p(t) = 6\chi_p(x(t))$. 

We prove the existence of a finite computable set $\Upsilon$ for $X(\Q)\setminus Z$ (the limiting values will follow by continuity).
If $z\in X(\Q)\setminus Z$, we have
\begin{align*}
\rho(z)& =\sum_{q\neq p}(- \lambda_q(2\iota(\varphi_1(z))) + \lambda_q(2\iota(\varphi_2(z))) +6\chi_q(x(z)))\\
&= \sum_{q\neq p} (- \nu_q(\varphi_1(z)) + \nu_q(\varphi_2(z)) +4\chi_q(f_5f_0^{-1}x(z)^4))=: \sum_{q\neq p} w_q(z).
\end{align*}
Let $Z_q$ be defined analogously to $Z$, but for the prime $q$. We claim that there exists a finite computable set $\Upsilon_q$ such that $w_q(Z_q)\subset \Upsilon_q$, and, moreover, that we can take $\Upsilon_q = \{0\}$ at every prime $q$ of good reduction; the statement of the proposition then follows from this.  In order to prove the existence and properties of $\Upsilon_q$, we analyse the $q$-adic valuation of $x(z)$ and, correspondingly, that of $x(\varphi_1(z))$ and $x(\varphi_2(z))$ and apply Lemma \ref{lemma:nu_v}. We distinguish between two cases: 
\begin{itemize}
\item $\varphi_1(z)$ and $\varphi_2(z)$ are both integral if and only if $-\ordnop_q(f_5)\leq 2\ordnop_q(x(z))\leq \ordnop_q(f_0)$.  In this case, each of the three terms in $w_q(z)$ takes values in a finite computable set (by Lemma \ref{lemma:nu_v}\thinspace \ref{lemma:nu_v_integral} and the fact that $\chi_q(x(z))$ only depends on the valuation of $x(z)$).
\item In the remaining cases, exactly one of $\varphi_1(z)$ and $\varphi_2(z)$ is integral. Say $\varphi_i(z)$ is integral and $\varphi_j(z)$ is not. Then we apply Lemma \ref{lemma:nu_v}\thinspace{}\ref{lemma:nu_v_integral} to $\nu_q(\varphi_i(z))$; moreover, by Lemma \ref{lemma:nu_v}\thinspace{}\ref{lemma:nu_v_nonintegral}, 
\begin{equation*}
(-1)^j \nu_q(\varphi_j(z)) + 4\chi_q(f_5f_0^{-1}x(z)^4) = (-1)^{j}4\chi_q(f_5f_0). \qedhere
\end{equation*} 
\end{itemize}
\end{proof}

\begin{rmk}
With Lemma \ref{lemma:Ti_terms_t} and Lemma \ref{lemma:nu_v} on hand, the proof of Proposition \ref{prop:QC_bihyper} is very similar to the proof of the genus $2$ case in \cite[Theorem 2.3]{BP22}, to which we refer the reader for more details.
\end{rmk}

\begin{example}
Consider
\begin{equation*}
X\colon y^2 = x^{10} - x^6 +1
\end{equation*}
with corresponding genus $2$ curves
\begin{equation*}
C_1\colon y^2 = x^5 - x^3 + 1, \qquad C_2\colon y^2 = x^5 - x^2 + 1.
\end{equation*}
Let $p=5$, let $\chi = \chi^{\cyc}$ and consider the $p$-adic local N\'eron functions on $C_1$ and $C_2$ with respect to their naive sigma functions. The rank of $J_i(\Q)$ is equal to $2$ for each $i$, and the assumptions of Proposition \ref{prop:QC_bihyper} apply. 

The discriminants of $C_1$ and $C_2$ are both equal to $2^8\cdot 7\cdot 431$. Therefore, with reference to the proof of the proposition, in order to determine a suitable set $\Upsilon$ we have to compute $\Upsilon_q$, for $q\in\{2,7,431\}$, which reduces to computing $\Gamma_q$ or a finite superset $\Gamma_q^{\prime}$ (as in Lemma \ref{lemma:nu_v}\thinspace{}\ref{lemma:nu_v_integral}) for each of $C_1$ and $C_2$.  
For $q\in\{7,431\}$, by \cite[Proposition 5.2 and Remark below]{Stoll}, $\mu_q$ is identically zero on $J_i(\Q_q)$, for each $i$. Moreover, there is no point in $C_i(\Z_q)$ for which the function $\nu_q^{\naive}$ defined in the proof of Lemma \ref{lemma:nu_v} is non-trivial.
 Therefore, the set $\Gamma_q$ is equal to $0$ for each of $C_1$ and $C_2$, and $\Upsilon = \Upsilon_2$. 

We omit a proof of the following fact: we may take\footnote{Rather than using Lemma \ref{lemma:nu_v}\thinspace{}\ref{lemma:nu_v_integral}, this set was computed using the comparison results of \cite{BKM22} mentioned in Remark \ref{rmk:comparison_not_applicable} and the results in \cite{BBM0, BalakrishnanBesserMullerIntegralPoints}. We thank Steffen M\"uller for computing that the set $T$ of \cite[Theorem 3.1]{BBM0} is equal to $\{0,-\frac{2}{3}\log_p(2)\}$ for $C_1$, and to $\{0,-\frac{1}{2}\log_p(2)\}$ for $C_2$. The set $\Upsilon$ was deduced from this.}
\begin{equation}\label{eq:Gamma}
\Upsilon_2 = \left\{0, \frac{8}{3}\log_p(2), -2\log_p(2)\right\}. 
\end{equation}

We can compute the expansion of $\tilde{\rho}(z)$ in every residue disc of the reduction map $X(\Q_p)\to \tilde{X}(\F_p)$ and study the set
\begin{equation*}
 \{z\in X(\Q_p):\tilde{\rho}(z) - \upsilon,\text{ for some } \upsilon\in \Upsilon\}\supseteq X(\Q).
\end{equation*}

For example, the points in $X(\Q_p)$ reducing to $\overline{(0,4)}\in X(\F_p)$ are parametrised by $t = \frac{x}{p}\in \Z_p$ and 
\begin{equation*}
\tilde{\rho}(z(t)) = 2\cdot 5 + 2\cdot 5^2  +(2\cdot 5^2 + 2\cdot 5^3 )t^2 + (3\cdot 5^3 )t^4  + O(5^{4}, t^6).
\end{equation*}

\begin{itemize}
\item If $\upsilon =0$ or $\upsilon = -2\log_p(2) = 5 +O(5^2)$, the power series $\rho(z(t)) - \upsilon$ has no zeros in $\Z_p$;
\item If $\upsilon =\frac{8}{3}\log_p(2) = 2\cdot 5 + 2\cdot 5^2 + O(5^4)$, the power series $\tilde{\rho}(z(t)) - \upsilon$  has a double root at $t=0$: this corresponds to the rational point $(0,-1)\in X(\Q)$. The root has multiplicity two because $(0,-1)$ is fixed by the automorphism of $X$ mapping $(x,y)$ to $(-x,y)$, under which $\tilde{\rho}$ is invariant.
\end{itemize}
This example is intended as an informal illustration of Proposition \ref{prop:QC_bihyper}, but upon writing down an argument for \eqref{eq:Gamma} and more details on how we computed the expansion of $\tilde{\rho}$, it could be turned into a proof that $(0,-1)$ is the unique rational point reducing to $\overline{(0,4)}$ modulo $5$. By repeating this procedure in every residue disc of $X(\Q_p)$ (possibly in combination with the Mordell--Weil sieve), it should be possible to determine the full set of rational points $X(\Q)$.
\end{example}

\begin{rmk}\label{rmk:QC_bihyper_gen}
Proposition \ref{prop:QC_bihyper} is phrased in terms of $p$-adic local N\'eron functions, which we defined in this article for genus $2$ curves and, more generally, we can compute on genus $\leq 2$ curves. As previously mentioned, the proposition admits a version in terms of Coleman--Gross local height pairings of a divisor with itself. More generally, in terms of such Coleman--Gross local heights, the result can be extended to rational points on even genus $d-1\geq 2$ hyperelliptic curves of the form:
\begin{equation*}
X\colon y^2 = \sum_{n=0}^{d} f_n x^{2n}, \qquad f_n\in \Z.
\end{equation*}
\end{rmk}

\appendix
\section*{Appendix}
\renewcommand{\thesubsection}{\Alph{subsection}}
\counterwithin{lemma}{subsection}
\counterwithin{rmk}{subsection}
\subsection{Multiples of points away from the theta divisor}\label{app:stevan}
Let $K$ be a field of characteristic $0$ and let $C\colon y^2 = f(x)$ with $f(x)\in K[x]$ monic, of degree $5$, with no repeated roots. Let $J$ be the Jacobian of $C$. Recall that:
\begin{enumerate}[label=(\Roman*)]
\item \label{pr:I} If $u\in J$ is a non-zero point, there exists a unique unordered pair $(P_1,P_2)\in C^{(2)}$ such that $u = [P_1+P_2-2\infty]$. Moreover, $u\in J[2]\setminus\{0\}$ if and only if $P_1$ and $P_2$ are both Weierstrass points.
\item \label{pr:II} $[P_1+P_2 - 2\infty] = 0$ if and only if $P_1 = P_2^{-}$.
 \end{enumerate}
 From these the following lemma easily follows.
\begin{lemma}\label{lemma:aux_app_stevan}
Let $u,v,w\in J$. 
If $u,v,w\in J$ lie on the theta divisor, then $u+v+w\not\in\Supp(\Theta)$, unless $u=-v$ or $u=-w$ or $v=-w$.  
\end{lemma}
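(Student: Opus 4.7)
The plan is to reduce the statement to a direct application of (I) and (II). Since $u, v, w \in \Supp(\Theta)$, I can write $u = [P-\infty]$, $v = [Q-\infty]$, $w = [R-\infty]$ with $P, Q, R \in C$ (allowing, e.g.\ $P = \infty$ when $u = 0$). Assume for contradiction that $u+v+w = [S-\infty]$ for some $S \in C$, while none of $u=-v$, $u=-w$, $v=-w$ holds.

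The key identity, obtained by rewriting $u+v = (u+v+w) - w$ and using $-w = [R^- - \infty]$, is
\[
[P+Q-2\infty] \;=\; [S + R^- - 2\infty] \qquad \text{in } J.
\]
First I would dispose of the degenerate case in which the common value is zero: by (II) this forces either $P = Q^-$ (so $u=-v$, excluded), or $S = R$, in which case $u+v+w = w$, so $u+v = 0$ and again $u=-v$.

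If both sides of the key identity are nonzero, then the uniqueness part of (I) forces the unordered pairs $\{P,Q\}$ and $\{S, R^-\}$ to coincide. The two possible matchings give respectively $Q = R^-$, i.e.\ $v = -w$, or $P = R^-$, i.e.\ $u = -w$, each of which is one of the excluded cases. This exhausts all possibilities and yields the desired contradiction. The argument is essentially a bookkeeping exercise once the key identity has been written down; the only slightly delicate point is dealing consistently with the degenerate sub-case where one side vanishes, which is why the split by (II) is carried out explicitly before invoking the uniqueness in (I).
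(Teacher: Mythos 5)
Your proof is correct and follows essentially the same route as the paper's: write $u=[P-\infty]$, $v=[Q-\infty]$, $w=[R-\infty]$, observe that $u+v+w=[S-\infty]$ is equivalent to $[P+Q-2\infty]=[S+R^{-}-2\infty]$, and conclude from (I) and (II). The paper's proof is just a terser version of the same case analysis you spell out.
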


\begin{proof}
Let $u = [P-\infty], v=[Q-\infty], w = [R-\infty]$. Then $u+v+w = [S-\infty]$ if and only if
\begin{equation*}
[P+Q-2\infty] = [R^{-}+S-2\infty],
\end{equation*}
which gives the claim by \ref{pr:I} and \ref{pr:II}.
\end{proof}

More involved is the proof of following result.
\begin{lemma}[{\hspace{1sp}\cite[Lemma 8.1\thinspace{}(iii)]{muller_de_jong}}]\label{lemma:aux_app_stevan_gaps}
Let $u\not\in J[2]$ and let $k$ be an integer. Then at least one of $ku, (k+1)u, (k+2)u, (k+3)u$ is not in the support of the theta divisor. 
\end{lemma}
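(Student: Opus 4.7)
The plan is to argue by contradiction. Suppose all four points $a_i \colonequals (k+i)u$ (for $i=0,1,2,3$) lie in $\Supp(\Theta)$; I will aim to derive $u \in J[2]$. First observe that $\Theta$ is symmetric: since $\Theta = \{[P-\infty]:P\in C\}$ and $-[P-\infty] = [P^{-}-\infty]$, if $a \in \Supp(\Theta)$ then so is $-a$. In particular, each $-a_i$ is also on $\Theta$.

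The key step is to apply Lemma \ref{lemma:aux_app_stevan} to two carefully chosen triples of points, each consisting of three elements of $\Supp(\Theta)$ whose sum is again on $\Supp(\Theta)$. Consider first the triple $(a_0, -a_1, a_2)$; its sum is $a_1 \in \Supp(\Theta)$. By Lemma \ref{lemma:aux_app_stevan}, one of the three equalities $a_0 = a_1$, $a_0 = -a_2$, or $-a_1 = -a_2$ must hold. The first and third force $u=0$, which contradicts $u \notin J[2]$; hence we must have $a_0 = -a_2$, i.e., $(2k+2)u = 0$. Applying the same argument to the triple $(a_1, -a_2, a_3)$, whose sum is $a_2 \in \Supp(\Theta)$, yields analogously $(2k+4)u = 0$. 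Subtracting gives $2u = 0$, so $u \in J[2]$, contradicting the hypothesis.

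The only obstacle is verifying that the case analysis arising from Lemma \ref{lemma:aux_app_stevan} really collapses as claimed, and that the two applications together genuinely yield consecutive even multiples of $u$ that vanish, whose difference is $2u$. Modulo this bookkeeping, the argument is a short formal manipulation using only the symmetry of $\Theta$ and the previous lemma.
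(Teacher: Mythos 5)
Your argument is correct, and it is worth noting that the paper itself offers no proof of this statement: it is quoted from \cite{muller_de_jong}, Lemma 8.1\thinspace{}(iii). What you have produced is therefore a self-contained derivation from Lemma \ref{lemma:aux_app_stevan}, which the paper does prove. The two applications check out: for the triple $(a_0,-a_1,a_2)$ (all on $\Supp(\Theta)$ by symmetry of $\Theta$) the sum is $a_0-a_1+a_2=(k+1)u=a_1\in\Supp(\Theta)$, so Lemma \ref{lemma:aux_app_stevan} forces $a_0=a_1$, $a_0=-a_2$ or $a_1=a_2$; the first and third give $u=0\in J[2]$, leaving $(2k+2)u=0$. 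The triple $(a_1,-a_2,a_3)$ likewise yields $(2k+4)u=0$, and subtracting gives $2u=0$, the desired contradiction. One small point you silently use and could make explicit: $0\in\Supp(\Theta)$ (take $P=\infty$), so the degenerate cases where some $a_i=0$ do not invalidate the hypotheses of Lemma \ref{lemma:aux_app_stevan}, and $u=0$ does indeed contradict $u\notin J[2]$. Modulo stating that, your proof is complete and arguably preferable to a bare citation, since it uses only the symmetry of $\Theta$ and machinery already established in the appendix.
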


The following lemma and its proof are due to S.\ Gajovi\'c.
\begin{lemma}[Gajovi\'c]\label{lemma:stevan}
Let $u_1,u_2,u_3,u_4\in J$ such that none of $u_1,u_2,u_3,u_4$ is a torsion point. Then there exists an integer $1\leq m\leq 41$ such that none of $mu_1$, $mu_2$, $mu_3$ and $mu_4$ belongs to the support of the theta divisor.
\end{lemma}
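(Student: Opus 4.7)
The strategy is to turn the geometric statement into a combinatorial one. For each $i \in \{1,2,3,4\}$ set
\[
B_i \colonequals \{m \in \Z_{>0} : m u_i \in \Supp(\Theta)\}.
\]
The claim is that $\{1,\dots,41\} \not\subset B_1 \cup B_2 \cup B_3 \cup B_4$. The first half of the plan extracts strong structural constraints on each $B_i$ from Lemma \ref{lemma:aux_app_stevan}; the second half argues combinatorially that four sets of this shape cannot cover $\{1,\dots,41\}$.

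The first structural property I would establish is that each $B_i$ is \emph{sum-free}: if $m_1, m_2 \in B_i$ (possibly equal) then $m_1 + m_2 \notin B_i$. To see this, apply Lemma \ref{lemma:aux_app_stevan} to the triple $(m_1 u_i, m_2 u_i, 0)$; all three points lie on $\Theta$ since $0 = [\infty - \infty] \in \Theta$. The three exceptional cases in the lemma become $m_1 u_i = -m_2 u_i$ (i.e.\ $(m_1+m_2)u_i = 0$), $m_1 u_i = 0$, and $m_2 u_i = 0$, each of which forces a nontrivial torsion relation on $u_i$, contradicting the non-torsion hypothesis. Hence $(m_1+m_2)u_i \notin \Supp(\Theta)$. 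The second property I would prove is that each $B_i$ is \emph{difference-free}: if $m_1 \neq m_2 \in B_i$ then $|m_1 - m_2| \notin B_i$. Here one applies the same lemma to $(m_1 u_i, -m_2 u_i, 0)$, using that $\Theta$ is symmetric (since $\iota(P^-) = -\iota(P)$) so that $-m_2 u_i \in \Theta$ as well; the exceptional cases again collapse to the excluded torsion/equality situations.

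Having reduced the claim to the combinatorial statement ``four sum-and-difference-free subsets of $\Z_{>0}$ cannot cover $\{1,\dots,41\}$'', I would argue via the $3$-adic structure of such sets. The canonical sum-and-difference-free families in $\Z_{>0}$ are the level sets
\[
L_{k,r} \colonequals \bigl\{m : \nu_3(m) = k, \ m/3^k \equiv r \pmod 3\bigr\}, \qquad k \geq 0, \ r \in \{1,2\};
\]
each $L_{k,r}$ is sum-free (sums of elements $\equiv r \pmod 3$ are $\equiv 2r \not\equiv r \pmod 3$) and difference-free (differences are divisible by $3^{k+1}$). Four such levels, say $L_{0,1}, L_{0,2}, L_{1,1}, L_{1,2}$, cover exactly the integers with $\nu_3(m) \leq 1$, so 9 is uncovered in $\{1,\dots,41\}$. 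An inductive argument (showing that the sharp bound $N(k)$ for which $\{1,\dots,N(k)\}$ can be covered by $k$ sum-and-difference-free sets satisfies a recursion of the form $N(k) = 3 N(k-1)+1$, with base cases $N(1)=1$, $N(2)=4$) gives $N(3)=13$, $N(4)=40$, from which $\{1,\dots,41\}$ is not coverable.

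The main obstacle is this last combinatorial step, in particular obtaining the sharp threshold $41$. Using only sum-freeness one gets only the weaker bound $m \leq 45$ via the Schur number $S(4)=44$; the improvement to $41$ genuinely requires combining the sum-free and difference-free conditions, and the inductive/level argument must rule out non-standard configurations of sum-and-difference-free sets that do not respect the 3-adic stratification.
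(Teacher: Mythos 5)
Your geometric reductions are fine as far as they go: applying Lemma \ref{lemma:aux_app_stevan} to the triple $(m_1u_i, m_2u_i, 0)$ (using $0=[\infty-\infty]\in\Supp(\Theta)$) does show each $B_i$ is sum-free, and the symmetry of $\Theta$ gives difference-freeness. But there are two problems. First, difference-freeness is \emph{redundant}: if $x,y\in S$ with $x>y$ and $x-y\in S$, then $y+(x-y)=x\in S$ already violates sum-freeness. So your two structural constraints collapse to sum-freeness alone, and your target combinatorial statement is then \emph{false}: since the Schur number $S(4)=44$, the set $\{1,\dots,44\}$ admits a partition into four sum-free (hence automatically sum-and-difference-free) classes, which restricts to a cover of $\{1,\dots,41\}$. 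Consequently the "3-adic level set" argument cannot work; indeed the asserted recursion $N(k)=3N(k-1)+1$ with equality is unjustified and already fails at $k=4$ for the Schur-type quantity it is tracking ($44\neq 40$), and optimal sum-free colourings (e.g.\ $\{1,4\},\{2,3\}$ for $k=2$, and the known Schur colourings for $k=3,4$) do not respect the 3-adic stratification.

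The missing ingredient is the \emph{three-term} sum constraint: applying Lemma \ref{lemma:aux_app_stevan} to three nonzero multiples $k_1u_i,k_2u_i,k_3u_i$ (the exceptional cases $(k_j+k_l)u_i=0$ being excluded by non-torsion) gives $k_1+k_2+k_3\notin B_i$ whenever $k_1,k_2,k_3\in B_i$, not necessarily distinct. This is genuinely stronger than sum-freeness and is what brings the threshold below the Schur bound of $44$ down to $41$. The paper also records a second constraint from Lemma \ref{lemma:aux_app_stevan_gaps} (no four consecutive integers with the first three in $S_i$ and the fourth too), although it remarks that this only speeds up, and does not sharpen, the final step. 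That final step is not a clean induction at all: the paper verifies by an exhaustive \texttt{Magma} search that no partition of $\{1,\dots,41\}$ into four sets satisfying these constraints exists. To repair your proof you would need to add the three-sum constraint and then either reproduce such a finite check or find a genuinely new combinatorial argument; the route through sum- and difference-freeness alone cannot succeed.
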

\begin{proof} 
Assume by contradiction that such an integer does not exist. Then we can partition $\{1,\dots, 41\}$ into four subsets $S_1,S_2,S_3,S_4$ such that if $i\in \{1,\dots, 4\}$ and $k\in S_{i}$ then $ku_i\in \Supp(\Theta)$.  By Lemmas \ref{lemma:aux_app_stevan} and \ref{lemma:aux_app_stevan_gaps}, the set $S_i$ satisfies:
\begin{enumerate}[label=(\roman*)]
\item\label{it:pr_1_partition} if $k_1,k_2,k_3\in S_i$ (not necessarily distinct), then $k_1+k_2\not\in S_i$ and $k_1+k_2+k_3\not\in S_i$;
\item\label{it:pr_2_partition} if $k, k+1, k+2\in S_i$ then $k+3\not\in S_i$.
\end{enumerate}
The repository \cite{github_padic_g2} contains \texttt{Magma} \cite{magma} code to check that there exists no partition of $\{1,\dots, 41\}$ into four subsets $S_{1},S_2, S_3, S_4$, each satisfying properties \ref{it:pr_1_partition} and \ref{it:pr_2_partition}. 
\end{proof}

\begin{rmk}
In the proof of Lemma \ref{lemma:stevan}, considering both constraints \ref{it:pr_1_partition} and \ref{it:pr_2_partition}, rather than just \ref{it:pr_1_partition}, does not lead to any improvement in the upper bound for $m$. However, it speeds up the \texttt{Magma} computation.
\end{rmk}

\subsection{Formal group parameters expansions}
Let $K_v$ be the completion of a number field $K$ at a non-archimedean place $v$, let $\OO_v$ be its ring of integers, and let $C\colon y^2 = f(x)$, where $f(x) = x^5+b_1x^4+b_2x^3+b_3x^2+b_4x+b_5\in \OO_v[x]$ has no repeated roots.
\begin{lemma}\label{lemma:Ti_terms_t}
Suppose that the equation for $C$ has good reduction and let $P = (x,y)\in C(K_v)$ be a point reducing to a Weierstrass point modulo $v$. Then $2\iota(P)\in J_1(K_v)$ and 
\begin{equation*}
T_1(2\iota(P)) =  -2y\frac{g_1(x)}{g_2(x)},\qquad T_2(2\iota(P))= -2y\frac{g_3(x)}{g_2(x)},
\end{equation*}
where 
\begin{align*}
g_1(x) &=  f^{\prime}(x)^2 -4f(x)(6x^3 + 4b_1x^2 +2b_2 x + b_3)\\
g_2(x) &= -g_1(x)f^{\prime}(x) + f(x)\tilde{g}_2(x)\\
\tilde{g}_2(x) &= 4 x^{7} -8 b_{2} x^{5} -32 b_{3} x^{4} + 4\left(b_{2}^{2} - 4 b_{1} b_{3} - 11 b_{4}\right) x^{3} \\
&\ \ -8\left(3 b_{1} b_{4} + 7 b_{5}\right) x^{2}  -4\left(b_{2} b_{4} + 8 b_{1} b_{5}\right) x - 8 b_{2} b_{5}\\
g_3(x) &= xg_1(x) - 2f(x)(x^4 + b_2x^2+b_4).
\end{align*}
In particular, if $t$ is a uniformiser at a Weierstrass point $P_0\in C(K_v)$ which reduces to a uniformiser modulo $v$, there exist unit power series $u_1(t), u_2(t)\in \OO_v[[t]]$ and a power series $w_2(t)\in t^2\OO_v[[t]]$ such that
\begin{align*}
T_1(2\iota(x(t),y(t))) &= \begin{cases}
2t^3u_1(t) & \text{if } P_0 =\infty;\\
2tu_1(t) & \text{otherwise.}
\end{cases}\\
 T_2(2\iota(x(t),y(t))) &= \begin{cases}
2tu_2(t) & \text{if } P_0 =\infty;\\
2x(0)t u_2(t) + w_2(t)& \text{otherwise.}
\end{cases}
\end{align*}
\end{lemma}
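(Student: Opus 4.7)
The plan is to directly compute the values of the relevant projective coordinates of $J$ at $2\iota(P)$ as rational functions of $(x,y)$, and then take the ratios defining $T_1$ and $T_2$. Two of the five coordinates we need are immediate: by \eqref{eq:p12_p22}, $X_{12}(2\iota(P)) = -x^2$ and $X_{22}(2\iota(P)) = 2x$. The third, $X_{11}(2\iota(P)) = g_1(x)/(4f(x))$, is a classical duplication formula on the Kummer surface; it can be extracted from \cite[\S 2]{FlynnSmart} (and is already noted in the proof of Lemma \ref{lemma:nu_v}). From these three values and the algebraic identity $X = (X_{11}X_{22} - X_{12}^2 + b_2 X_{12} - b_4)/2$ induced by \eqref{eq:X_and_p} and $\wp = \wp_{11}\wp_{22}-\wp_{12}^2$, a direct substitution gives $X(2\iota(P)) = g_3(x)/(4f(x))$, which yields exactly the asserted polynomial $g_3(x) = xg_1(x) - 2f(x)(x^4+b_2x^2+b_4)$.

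The heart of the computation is the value $X_{111}(2\iota(P))$. I would use Lemma \ref{lemma:inv_difder}\thinspace{}\eqref{lemma_part:inv_difder_2}, which gives $2X_{111} = \partial_1 X_{11}$. At a generic point $u = [P_1+P_2-2\infty]\in J\setminus\Theta$ with $P_i = (x_i,y_i)$, inverting the matrix $\bigl(\begin{smallmatrix}1/(2y_1) & 1/(2y_2)\\ x_1/(2y_1) & x_2/(2y_2)\end{smallmatrix}\bigr)$ expressing $\Omega_i$ in terms of $dx_j/(2y_j)$ produces the explicit formulas
\begin{equation*}
\partial_1 x_1 = \frac{2x_2y_1}{x_2-x_1},\quad \partial_1 x_2 = \frac{-2x_1y_2}{x_2-x_1},\quad \partial_1 y_j = \frac{f'(x_j)}{2y_j}\partial_1 x_j.
\end{equation*}
Applying these via the chain rule to the generic expression $\wp_{11} = (F(x_1,x_2) - 2y_1y_2)/(2(x_1-x_2)^2)$ (with $F$ the standard symmetric polynomial) and then passing to the diagonal $(P_1,P_2)\to (P,P)$ by Taylor expansion in $x_2-x_1$ yields, after simplification, $X_{111}(2\iota(P)) = g_2(x)/(8y^3)$; the explicit shape of $\tilde g_2(x)$ will fall out of this calculation. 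Taking ratios then gives $T_1 = -X_{11}/X_{111} = -2yg_1(x)/g_2(x)$ and $T_2 = -X/X_{111} = -2yg_3(x)/g_2(x)$. The integrality claim $2\iota(P)\in J_1(K_v)$ is then immediate: the good reduction hypothesis together with $\tilde P$ being a Weierstrass point gives $\tilde y = 0$ (if $P_0$ is affine) and $f'(\tilde x)\neq 0$, hence $g_2(\tilde x)\equiv -f'(\tilde x)^3\not\equiv 0\pmod v$, and the factor $-2y$ supplies the positive valuation; the case $\tilde P_0 = \infty$ is checked analogously using the uniformiser expansion below.

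For the consequence, I would substitute the two natural uniformiser expansions and track leading orders. If $P_0 = \infty$, a uniformiser is $t$ with $x = t^{-2}(1+O(t^2))$ and $y = t^{-5}(1+O(t^2))$; since $g_1,g_2,g_3$ are polynomials in $x$ with leading terms $x^8, -x^{12}, -x^9$ respectively, the factors $t^{-16}, -t^{-24}, -t^{-18}$ dominate and, after cancellation with $-2y = -2t^{-5}(1+O(t^2))$, give $T_1 = 2t^3 u_1(t)$ and $T_2 = 2tu_2(t)$ with $u_1,u_2\in \OO_v[[t]]^\times$ (the sign in $u_2$ being harmless). If $P_0 = (x_0,0)$ is affine, then $t=y$ is a uniformiser; good reduction gives $f'(x_0)\in\OO_v^\times$, and the implicit function theorem gives $x(t) = x_0 + t^2/f'(x_0) + O(t^4)\in\OO_v[[t]]$. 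Evaluating at $t=0$ gives $g_1(x_0) = f'(x_0)^2$, $g_2(x_0) = -f'(x_0)^3$, $g_3(x_0) = x_0f'(x_0)^2$, so $g_1/g_2, g_3/g_2\in\OO_v[[t]]$ with respective leading coefficients $-1/f'(x_0)$ and $-x_0/f'(x_0)$. Multiplying by $-2t$ yields the assertions with $u_1(0) = 1/f'(x_0)\in\OO_v^\times$, and with $T_2 = 2x_0 t u_2(t) + w_2(t)$ where $u_2\in\OO_v[[t]]^\times$ absorbs the linear term in $t$ (or is arbitrary if $x_0=0$) and $w_2\in t^2\OO_v[[t]]$ collects the higher-order terms.

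The main obstacle I anticipate is the computation of $X_{111}(2\iota(P))$: the generic formula for $\wp_{11}$ has $(x_1-x_2)^{-2}$ singularities, and the derivation $\partial_1$ produces further $(x_1-x_2)^{-1}$ factors, so all the visible poles must cancel in the limit $P_1\to P_2$ before one extracts the explicit polynomial $\tilde g_2$. This is a routine but lengthy symbolic manipulation, best confirmed with a computer algebra system; once done it pins down $g_2$ uniquely, and the remaining steps are essentially bookkeeping.
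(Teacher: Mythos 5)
Your proposal is correct and follows essentially the same route as the paper: compute $X_{11}$, $X_{12}$, $X_{22}$, $X$ and $X_{111}$ at $2\iota(P)$ as explicit rational functions of $(x,y)$, take the ratios defining $T_1,T_2$, and then expand in the two kinds of uniformiser (your checks $g_1(x_0)=f'(x_0)^2$, $g_2(x_0)=-f'(x_0)^3$, $g_3(x_0)=x_0f'(x_0)^2$ and the leading orders $t^{-16},-t^{-24},-t^{-18}$ at infinity match the paper's exactly). The only substantive difference is the hardest coordinate: the paper obtains $X_{111}(2\iota(P))=g_2(x)/(8y^3)$ by taking the diagonal limit of Grant's explicit formula (1.4) for $X_{111}([P_1+P_2-2\infty])$, whereas you re-derive it from $\partial_1 X_{11}=2X_{111}$ (Lemma \ref{lemma:inv_difder}) and explicit formulas for $\partial_1$ on $C^{(2)}$ — both require the same pole-cancellation in the limit $P_1\to P_2$, so this is a cosmetic rather than structural difference.
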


\begin{proof}
Recalling the formulae for $X_{ij}(2\iota(P))$ used in the proof of Lemma \ref{lemma:nu_v}, we see that 
\begin{align*}
X_{11}(2\iota(P)) = \frac{g_1(x)}{4y^2}, \qquad X(2\iota(P)) = \frac{g_3(x)}{4y^2}.
\end{align*}
Using the formulae for $X_{ijk}([P_1+P_2-2\infty])$ of \cite[(1.4)]{Grant1990}, we find, taking limits as $(x_1,y_1)$ and $(x_2,y_2)$ both go to $(x,y)$, that
\begin{equation*}
X_{111}(2\iota(P)) = \frac{g_2(x)}{8y^3}.
\end{equation*}
As far as the expansions in $t$ are concerned, it suffices to show the claim for one such local coordinate $t$, since any other can be obtained from $t$ by multiplication by a unit power series in $\OO_v[[t]]$. 

If $P_0 = \infty$, let $t= -\frac{x^2}{y}$. Then 
\begin{align*}
x(t) = t^{-2}  + O(1)\in \OO_v((t)),\qquad y(t) = -t^{-5} + O(t^{-3})\in \OO_v((t))\\
g_1(x(t)) =t^{-16}+ O(t^{-14}),\quad g_2(x(t)) = -t^{-24} + O(t^{-22}) \quad g_3(x(t)) = -t^{-18} + O(t^{-16}).
\end{align*}
If $P_0 = (x_0,y_0)$ with $y_0 = 0$, then $|f^{\prime}(x_0)|_v=1$, since $C$ has good reduction. If we let $t = \frac{y}{f^{\prime}(x_0)}$, we have
\begin{align*}
x(t) = x_0+ O(t^2)\in \OO_v[[t]],\qquad 
y(t) = f^{\prime}(x_0)t\\
g_1(x(t)) = f^{\prime}(x_0)^2 + O(t^2), \quad g_2(x(t)) = -f^{\prime}(x_0)^3 + O(t^2), \quad g_3(x(t)) = x_0f^{\prime}(x_0)^2 + O(t^2). 
\end{align*}
The result follows, upon noticing that the assumption that the equation for $C$ has good reduction implies that $v\nmid 2$. 
\end{proof}
\subsection{Sigma functions expansion}\label{app:sigma_expansion}
The expansion of $\sigma_v^{(c)}(T)$ up to terms of total degree at most $8$ is given by:
\begin{equation*}
\begin{aligned}
\hspace{-50pt}\sigma_v^{(c)}(T)& = T_{1} + \left(\frac{1}{2} b_{3} + \frac{1}{2} c_{11}\right) T_{1}^{3} + c_{12} T_{1}^{2} T_{2} + \left(\frac{1}{2} c_{22}\right) T_{1} T_{2}^{2}\\
 &+ \left(\frac{3}{8} b_{3}^{2} + \frac{5}{12} b_{2} b_{4} - \frac{5}{3} b_{1} b_{5} + \frac{7}{12} b_{3} c_{11} + \frac{1}{8} c_{11}^{2} + \frac{1}{3} b_{4} c_{12}\right) T_{1}^{5}\\
  &+ \left(\frac{5}{6} b_{3} c_{12} + \frac{1}{2} c_{11} c_{12} + \frac{1}{3} b_{4} c_{22} - \frac{10}{3} b_{5}\right) T_{1}^{4} T_{2} + \left(\frac{1}{2} c_{12}^{2} + \frac{1}{4} b_{3} c_{22} + \frac{1}{4} c_{11} c_{22} - \frac{1}{2} b_{4}\right) T_{1}^{3} T_{2}^{2}\\
  &+ \left(-\frac{2}{3} b_{1} c_{12} + \frac{1}{2} c_{12} c_{22} + \frac{1}{3} c_{11}\right) T_{1}^{2} T_{2}^{3} + \left(-\frac{2}{3} b_{1} c_{22} + \frac{1}{8} c_{22}^{2} - \frac{1}{12} b_{2} + \frac{1}{3} c_{12}\right) T_{1} T_{2}^{4}\\
   &+ \bigg(\frac{5}{16} b_{3}^{3} + \frac{391}{360} b_{2} b_{3} b_{4} + \frac{13}{30} b_{1} b_{4}^{2} + \frac{13}{30} b_{2}^{2} b_{5} - \frac{547}{90} b_{1} b_{3} b_{5} + \frac{439}{720} b_{3}^{2} c_{11} + \frac{73}{120} b_{2} b_{4} c_{11} - \frac{73}{30} b_{1} b_{5} c_{11}\\
    &+ \frac{11}{48} b_{3} c_{11}^{2} + \frac{1}{48} c_{11}^{3} + \frac{61}{90} b_{3} b_{4} c_{12} - \frac{3}{5} b_{2} b_{5} c_{12} + \frac{1}{6} b_{4} c_{11} c_{12} + \frac{1}{18} b_{4}^{2} c_{22} - \frac{113}{90} b_{4} b_{5}\bigg) T_{1}^{7} \\
    &+ \bigg(\frac{89}{120} b_{3}^{2} c_{12} + \frac{49}{60} b_{2} b_{4} c_{12} - \frac{79}{15} b_{1} b_{5} c_{12} + \frac{3}{4} b_{3} c_{11} c_{12} + \frac{1}{8} c_{11}^{2} c_{12} + \frac{1}{3} b_{4} c_{12}^{2} + \frac{17}{30} b_{3} b_{4} c_{22} - \frac{3}{5} b_{2} b_{5} c_{22} \\
    &+ \frac{1}{6} b_{4} c_{11} c_{22} + \frac{4}{5} b_{4}^{2} - \frac{181}{15} b_{3} b_{5} - \frac{14}{3} b_{5} c_{11}\bigg) T_{1}^{6} T_{2} + \bigg(-b_{1} b_{4} c_{12} + \frac{7}{12} b_{3} c_{12}^{2} + \frac{1}{4} c_{11} c_{12}^{2} + \frac{3}{16} b_{3}^{2} c_{22}\\
    & + \frac{5}{24} b_{2} b_{4} c_{22} - \frac{17}{6} b_{1} b_{5} c_{22} + \frac{7}{24} b_{3} c_{11} c_{22} + \frac{1}{16} c_{11}^{2} c_{22} + \frac{1}{2} b_{4} c_{12} c_{22} - \frac{3}{4} b_{3} b_{4} - \frac{5}{2} b_{2} b_{5} - \frac{1}{4} b_{4} c_{11} - \frac{34}{3} b_{5} c_{12}\bigg) T_{1}^{5} T_{2}^{2}\\
   &   + \bigg(-\frac{5}{9} b_{1} b_{3} c_{12} - \frac{1}{3} b_{1} c_{11} c_{12} + \frac{1}{6} c_{12}^{3} - \frac{11}{9} b_{1} b_{4} c_{22} \\
    &  + \frac{5}{12} b_{3} c_{12} c_{22} + \frac{1}{4} c_{11} c_{12} c_{22} + \frac{1}{6} b_{4} c_{22}^{2} - \frac{1}{9} b_{2} b_{4} + \frac{5}{18} b_{3} c_{11} + \frac{1}{6} c_{11}^{2} - \frac{43}{18} b_{4} c_{12} - \frac{20}{3} b_{5} c_{22}\bigg) T_{1}^{4} T_{2}^{3} \\
     & + \bigg(-\frac{2}{3} b_{1} c_{12}^{2} - \frac{1}{3} b_{1} b_{3} c_{22} - \frac{1}{3} b_{1} c_{11} c_{22} + \frac{1}{4} c_{12}^{2} c_{22} + \frac{1}{16} b_{3} c_{22}^{2} + \frac{1}{16} c_{11} c_{22}^{2} - \frac{1}{24} b_{2} b_{3} + \frac{1}{2} b_{1} b_{4} \\
      &- \frac{1}{24} b_{2} c_{11} - \frac{5}{6} b_{3} c_{12} + \frac{1}{2} c_{11} c_{12} - \frac{9}{4} b_{4} c_{22} - \frac{3}{2} b_{5}\bigg) T_{1}^{3} T_{2}^{4} \\
      &+ \bigg(\frac{6}{5} b_{1}^{2} c_{12} - b_{1} c_{12} c_{22} + \frac{1}{8} c_{12} c_{22}^{2} - \frac{3}{5} b_{1} c_{11}       - \frac{41}{60} b_{2} c_{12} + \frac{1}{3} c_{12}^{2} - b_{3} c_{22} + \frac{1}{6} c_{11} c_{22} - \frac{1}{5} b_{4}\bigg) T_{1}^{2} T_{2}^{5} \\
      &+ \left(\frac{64}{45} b_{1}^{2} c_{22} - \frac{1}{3} b_{1} c_{22}^{2} + \frac{1}{48} c_{22}^{3} + \frac{19}{90} b_{1} b_{2} - \frac{37}{45} b_{1} c_{12} - \frac{77}{120} b_{2} c_{22} + \frac{1}{6} c_{12} c_{22} - \frac{1}{15} b_{3} + \frac{1}{18} c_{11}\right) T_{1} T_{2}^{6} \\
      &+O(T_1,T_2)^9.
\end{aligned}
\end{equation*}

\bibliographystyle{amsalpha}
\bibliography{biblio}
\end{document}